\author[1]{Pablo Cubillos\thanks{pacubill@ucm.es}}
\author[1]{Julián López-Gómez\thanks{jlopezgo@ucm.es}}
\author[2]{Andrea Tellini\thanks{Corresponding author: andrea.tellini@upm.es}}
\affil[1]{\small Universidad Complutense de Madrid\\ Departamento de Análisis Matemático y Matemática Aplicada\\ Plaza de las Ciencias 3\\ 28040 Madrid, Spain}
\affil[2]{\small Universidad Polit\'ecnica de Madrid\\ E.T.S.I.D.I.\\ Departamento de Matem\'atica Aplicada a la Ingenier\'ia Industrial\\ Ronda de Valencia 3\\ 28012 Madrid, Spain}
\title{\textbf{High multiplicity of positive solutions in a superlinear problem of Moore-Nehari type}\thanks{This work has been supported by the Research Project PID2021-123343NB-I00 of the Ministry of Science and Innovation of Spain, by the Institute of Interdisciplinary Mathematics of Complutense University of Madrid. A. T. has also been supported by the INdAM-GNAMPA project \lq\lq Analisi qualitativa di problemi differenziali non lineari" and the Ramón y Cajal program RYC2022-038091-I, funded by MCIN/AEI/10.13039/501100011033 and by the FSE+.}}
\date{\today}
\definecolor{fst-blue}{RGB}{0,128,255}
\theoremstyle{plain}
\newtheorem{theorem}{Theorem}[section]
\theoremstyle{definition}
\newtheorem{remark}[theorem]{Remark}
\theoremstyle{remark}
\numberwithin{equation}{section}
\newcommand\Item[1][]{%
	\ifx\relax#1\relax  \item \else \item[#1] \fi
	\abovedisplayskip=0pt\abovedisplayshortskip=0pt~\vspace*{-\baselineskip}}
\newcommand{\field}[1] {\mathbb{#1}}
\newcommand{\R}{\field{R}}
\def\a{\alpha}
\def\b{\beta}
\def\e{\varepsilon}
\def\D{\Delta}
\def\G{\Gamma}
\def\l{\lambda}
\def\s{\sigma}
\def\t{\theta}
\def\v{\varphi}
\def\ua{\uparrow}
\def\da{\downarrow}
\def\k{\kappa}
\newcommand{\mc}{\mathcal}
\newcommand{\pushright}[1]{\ifmeasuring@#1\else\omit\hfill$\displaystyle#1$\fi\ignorespaces}
\newcommand{\pushleft}[1]{\ifmeasuring@#1\else\omit$\displaystyle#1$\hfill\fi\ignorespaces}
\begin{document}
\maketitle

\begin{abstract}
In this paper we consider a superlinear one-dimensional elliptic boundary value problem that generalizes the one studied by Moore and Nehari in \cite{MN}. Specifically, we deal with piecewise-constant weight functions in front of the nonlinearity with an arbitrary number $\k\geq 1$ of vanishing regions. We study, from an analytic and numerical point of view, the number of positive solutions, depending on the value of a parameter $\l$ and on $\k$.
\par
Our main results are twofold. On the one hand, we study analytically the behavior of the solutions, as $\l\da-\infty$, in the regions where the weight vanishes. Our result leads us to conjecture the existence of $2^{\k+1}-1$ solutions for sufficiently negative $\l$. On the other hand, we support such a conjecture with the results of numerical simulations which also shed light on the structure of the global bifurcation diagrams in $\l$ and the profiles of positive solutions.
\par
Finally, we give additional numerical results suggesting that the same high multiplicity result holds true for a much larger class of weights, also arbitrarily close to situations where there is uniqueness of positive solutions.
 \end{abstract}

\smallskip
\noindent \textbf{Keywords:}  superlinear problem, multiplicity of positive solutions, numerical path-following, global bifurcation diagrams

\smallskip
\noindent \textbf{2020 MSC:} 35J25, 34B08, 35J60, 65N06, 65P30

\section{Introduction}

\label{sec:1}
\noindent In this paper we study the structure of the set of positive solutions for the one-dimensional boundary value problem
\begin{equation}
	\label{1.1}
	\left \{ \begin{array}{ll}
		-u''=\lambda u+a(x) u^3\qquad \hbox{in} \;\; (0,1),\\
		u(0)=u(1)=0, \end{array} \right.
\end{equation}
where $\l\in\R$ is treated as a parameter, and the weight $a\colon [0,1]\to [0,+\infty)$ is a piece-wise continuous function
of the type
\begin{equation}
	\label{1.2}
	a(x)=\left\{ \begin{array}{ll} 0 & \quad \hbox{if}\;\; x\in J_h,\\
		1 & \quad \hbox{if}\;\; x\in [0,1]\setminus J_h,\end{array}\right.
\end{equation}
where $J_h$ is a union of a certain number of intervals, all of them with length $h\in (0,1)$, so that $a$ is symmetric about $0.5$ (see \eqref{1.5} for the precise definition of the weights that we are going to consider). Thus, $a\geq 0$ vanishes in $J_h$ and equals $1$ in $[0,1]\setminus J_h$.

\par
This problem with $\l=0$ and the particular choice $J_h=\left(\frac{1-h}{2},\frac{1+h}{2}\right)$ was introduced in 1959 by E.~A. Moore and Z.~Nehari in \cite{MN}, who established the existence of three positive solutions for some values of $h\in(0,1)$. This is in strong contrast with the classical case when $a\equiv 1$ in $[0,1]$, where \eqref{1.1} becomes the autonomous (conservative) second order boundary value problem
\begin{equation}
	\label{1.3}
	\left \{ \begin{array}{ll}
		-u''=\lambda u+ u^3\qquad \hbox{in} \;\; (0,1),\\
		u(0)=u(1)=0, \end{array} \right.
\end{equation}
which has a unique positive solution $u_\l$ for each $\l<\pi^2$, and does not admit any positive solution if $\l \geq \pi^2$ (see, e.g., L\'opez-G\'omez, Mu\~{n}oz-Hern\'andez and Zanolin \cite[Proposition 2.1]{LGMHZ}). In addition, $(\l,u_\l)$ is a continuous curve such that
\begin{equation*}
\lim_{\l\da -\infty} \|u_\l\|_2=+\infty,\qquad \lim_{\l\ua \pi^2}\|u_\l\|_2=0.
\end{equation*}
Figure \ref{Fig1} shows a plot of the bifurcation diagram of positive solutions of \eqref{1.3}.
In this and all subsequent bifurcation diagrams of this work, we plot the value of the parameter $\l$ in abscissas versus the value of the $L^2$-norm (or the associated discrete version in the case of numerical experiments) of the corresponding solution in ordinates.
Observe, in particular, that positive solutions emanate subcritically from $u=0$ at $\l=\pi^2$.

\begin{figure}[htb]
	\centering
	\begin{tikzpicture}[scale=0.81]
		\begin{axis}[
			tick label style={font=\scriptsize},
			axis y line=middle,
			axis x line=middle,
			xtick={0, 9.87},
			ytick=\empty,
			xticklabels={$0$,$\pi^2$},
			yticklabels={},
			xlabel={\small $\l$},
			ylabel={\small $\|u\|_2$},
			every axis x label/.style={
				at={(ticklabel* cs:1.0)},
				anchor=west,
			},
			every axis y label/.style={
				at={(ticklabel* cs:1.0)},
				anchor=south,
			},
			width=10cm,
			height=6cm,
			xmin=-20,
			xmax=12,
			ymin=-0.8,
			ymax=6]
			\addplot[domain=-20:9.87,color=black,line width=1pt,smooth,samples=200]{sqrt(9.87-x)};
		\end{axis}
	\end{tikzpicture}
	\vspace{-0.2cm}
	\caption{Bifurcation diagram of positive solutions of \eqref{1.3}.}
	\label{Fig1}
\end{figure}
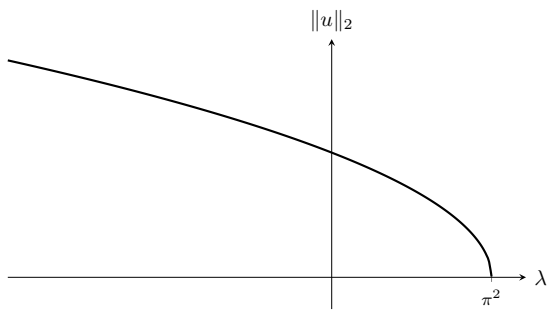

Actually, thanks to a recent result by L\'{o}pez-G\'{o}mez and Sampedro \cite{LGS}, the curve $(\l,u_\l)$ and the remaining continua that form the set of positive solutions of \eqref{1.1}, computed in this paper, consists of a set of analytic arcs plus a discrete set of branching (or bifurcation) points.
\par
Although in this work we focus on positive solutions, we point out that, more recently, the existence of nodal solutions for the following variant of the Moore--Nehari problem
\begin{equation}
	\label{1.4}
	\left \{ \begin{array}{ll}
		-u''=\lambda u+a(x) |u|^{p-1}u \qquad \hbox{in} \;\; (0,1),\\
		u(0)=u(1)=0, \end{array} \right.
\end{equation}
has been analyzed in Gritsans and Sadyrbaev  \cite{GS} for $\l=0$ and $p>1$, in Kajikiya \cite{Ka21} for
$\l=0$ and $0<p<1$, as well as in L\'{o}pez-G\'{o}mez, Mu\~{n}oz-Hern\'{a}ndez and Zanolin \cite{LGMHZ}, where the minimal global bifurcation diagrams of nodal solutions in terms of the parameter $\l$ were obtained by means of some techniques going back to Rabinowitz \cite{Rabgb, Rab71, Rab74} and L\'{o}pez-G\'{o}mez and Rabinowitz \cite{LGR15, LGR17, LGR20}.
\par
Weighted boundary value problems like \eqref{1.1} with $\mathrm{int\,}a^{-1}(0)\neq \emptyset$ are referred to as \emph{degenerate}, of \emph{superlinear} type if $a\gneq 0$, and of \emph{sublinear} type if $a\lneq 0$.
Degenerate problems have attracted a lot of attention during the last few decades starting with Br\'{e}zis and Oswald \cite{BO}, Ouyang \cite{Ou} and Fraile et al. \cite{FKLGM},  as they have been pivotal in developing the theory of metasolutions, which are fundamental for the study of the dynamics of positive solutions in their parabolic counterparts when $a\lneq 0$ (see the monograph  \cite{LGBM}, as well as the list of references therein). Sublinear, possibly degenerate, problems exhibit uniqueness of positive solutions. Instead, the study of nodal solutions for such problems, which has been recently initiated by L\'{o}pez-G\'{o}mez and Rabinowitz \cite{LGR15,LGR16,LGR17,LGR20}, L\'{o}pez-G\'{o}mez, Molina-Meyer and  Rabinowitz \cite{LGMMR}, and the authors \cite{CLGT,CLGTS}, has shown that the number of 1-node solutions increases as soon as the number of components of $a^{-1}(0)$ increases.

\par
One of the main goals of this work is to show that, in the superlinear case $a\gneq 0$, such a complexity already arises for positive solutions, again when one considers weight functions $a(x)$ where $a^{-1}(0)$ has several components. Precisely, we will consider a class of weight functions $a(x)=a_{\kappa,\e}(x)$, which are piece-wise constant, symmetric about $0.5$ and are constructed as follows. First of all, for any given integer $\kappa\geq 1$, we choose $\kappa$ open subintervals of $(0,1)$, $(\a_i,\b_i)$, $i\in\{1,\ldots,\kappa\}$, such that:
\begin{itemize}
	\item $0<\a_1<\b_1<\a_2<\b_2<\cdots<\a_\kappa<\b_\kappa<1$;
	\item $0<h=\b_i-\a_i$ for all $i\in\{1,\ldots,\kappa\}$, i.e., these $\kappa$ intervals have the same length, $h>0$;
	\item $(\a_i,\b_i)$ is the symmetric interval, with respect to $0.5$, of $(\a_{\kappa+1-i},\b_{\kappa+1-i})$ for all $i\in \{1,...,\kappa\}$.
\end{itemize}
Observe that, when $\kappa$ is odd, 0.5 is the midpoint of the central interval $(\a_{\frac{\k+1}{2}},\b_{\frac{\k+1}{2}})$.

Then, once the choice of these intervals has been made, we define the weight functions
\begin{equation}
	\label{1.5}
	a_{\kappa,\e}(x):=\left\{ \begin{array}{ll} 1 &\quad \hbox{if}\;\; x\in[0,1]\setminus \bigcup_{i=1}^\kappa
		(\a_i,\b_i),\\[1ex] \e &\quad \hbox{if}\;\; x\in\bigcup_{i=1}^\kappa (\a_i,\b_i), \end{array}\right.
\end{equation}
where $\e\in[0,1]$ is regarded as a secondary bifurcation parameter for our problem. Using the terminology mentioned above, notice that \eqref{1.1} with $a=a_{\k,\e}$ is superlinear degenerate for $\e=0$, while it is (purely) superlinear if $\e>0$. Moreover,  as $\e$ changes from $0$ to $1$, this parameter establishes a continuous deformation between the degenerate problem and  the classical autonomous problem \eqref{1.3}, with $a\equiv 1$ in $[0,1]$.

The first main result of this work (see Theorem \ref{th2.1}) is a theoretical result which shows that any possible positive solution $u_\lambda$ of problem \eqref{1.1} with $a(x)=a_{\k,0}(x)$, $\k\geq 1$, converges, as $\l\da-\infty$, to $0$ in the region where $a=0$; this allows us to conjecture a multiplicity result for \eqref{1.1} (see Remark \ref{re2.2}).

\par
Then, we will focus on the structure of the global bifurcation diagrams of positive solutions of \eqref{1.1} with $a=a_{\k,\e}$. Ascertaining such a structure analytically is impossible with the existing mathematical tools. For this reason, a systematic numerical study is required both to get a first complete picture of the behavior of positive solutions of \eqref{1.1}, and to give some hints on the kind of results one can expect to prove theoretically, on the base of new techniques. The second main point of this work is the presentation of the results of these numerical experiments (see Sections \ref{sec:3}--\ref{sec:6}).

\par
To perform such a numerical study, we have used some sophisticated numerical path-following techniques, like those pioneered and developed, e.g., by Keller and Yang \cite{KY} and Keller \cite{Ke}, in order to determine the global bifurcation diagrams of positive solutions of \eqref{1.1} using $\lambda$ as the main bifurcation parameter (see Section \ref{sec:7} for the details on the numerical methods used in this work).

\par
As we adopt the point of view of global bifurcation theory, we regard a solution of \eqref{1.1} as a  pair $(\l,u)$ consisting of a value of $\l$ and a piece-wise  $\mc{C}^2$ function satisfying the boundary value problem \eqref{1.1}. Since $(\l,0)$ solves \eqref{1.1} for all $\l\in\R$, it is referred to as the trivial solution. Dealing with symmetric weights $a_{\k,\e}$ simplifies the analysis, since, otherwise, the secondary bifurcations on the bifurcation diagrams will be lost, giving rise to more components and leading to a much more intricate situation. As a theoretical reference to this phenomenon in a problem similar to \eqref{1.1} but with a different weight and different boundary conditions, the reader is referred to L\'opez-G\'omez and Tellini \cite{LGT} and Tellini \cite{T15}, where it has been shown that breaking the symmetry of the weight function produces the split of the bifurcation curves and may give rise to an arbitrarily large number of components in the bifurcation diagrams.
\par
Subsequently, we show that \eqref{1.1} cannot admit a positive solution
if $\l\geq \pi^2$. Indeed suppose that $a\gneq 0$ and that \eqref{1.1} has some positive solution, say $u\gneq 0$. Then,
\begin{equation*}
	(-D^2 -a(x) u^2)u =\l u
\end{equation*}
in $(0,1)$, where  $D^2 u = u''$, and $u(0)=u(1)=0$. Thus, by the uniqueness of the principal eigenvalue,
\begin{equation}
	\label{1.6}
	\l = \s[-D^2-a(x)u^2],
\end{equation}
where, for any given $V\in\mathcal{C}[0,1]$,  $\s[-D^2+V]$ denotes the smallest eigenvalue of the linear eigenvalue problem
\begin{equation*}
\left\{\begin{array}{ll} -\v '' + V \v =\sigma \v & \quad\hbox{in}\;\; (0,1), \\
	\v(0)=\v(1)=0.\end{array}\right.
\end{equation*}
The reader is sent, e.g., to \cite[Ch. 7]{LGBE} for the existence and uniqueness, in a very general setting, of the principal eigenvalue, and to \cite{LG96} and \cite[Ch. 8]{LGBE} for its main properties.  Since $u\gneq 0$ is a principal eigenfunction associated to $\s[-D^2-a(x)u^2]$, $u$ is strongly positive, i.e., $u(x)>0$ for all $x\in (0,1)$, $u'(0)>0$, and $u'(1)<0$. Otherwise, $u=0$ in $[0,1]$ by the fundamental existence and uniqueness theorem for linear second order equations. Therefore, since $au^2\gneq 0$ in $(0,1)$, by the monotonicity of the principal eigenvalue with respect to $V$, we find from \eqref{1.6} that
\begin{equation*}
\l = \s[-D^2-a(x)u^2] < \s[-D^2]=\pi^2,
\end{equation*}
as claimed. The local bifurcation of positive solutions from the trivial solution $(\l,0)$ at $\l=\pi^2$ follows from the main result of Crandall and Rabinowitz \cite{CR1}, while the existence of a global unilateral component follows from  L\'{o}pez-G\'{o}mez \cite[Theorem 6.4.3]{LGBM}, and the existence of a priori bounds for positive solutions of \eqref{1.1}, that are uniform on compact subintervals of $\l\in (-\infty,\pi^2)$, was established in L\'{o}pez-G\'{o}mez, Mu\~{n}oz-Hern\'{a}ndez and Zanolin \cite{LGMHZ} for a general class of weight functions including \eqref{1.5}, by adapting the blowing-up techniques of Amann and L\'{o}pez-G\'{o}mez \cite{ALG}. As a consequence of these results, there exists a component, $\mathscr{C}_0^+$, of positive solutions of
\eqref{1.1} sub-critically bifurcating from $(\l,u)=(\pi^2,0)$
such that
\begin{equation*}
\mathcal{P}_\l\left(\overline{\mathscr{C}_0^+}\right)=(-\infty,\pi^2],
\end{equation*}
where $\mathcal{P}_\l$ denotes the $\l$--projection operator $\mathcal{P}_\l(\l,u)=\l$. This component $\mathscr{C}_0^+$ will be referred to as the  \emph{main component} of the set of positive solutions of \eqref{1.1}. As mentioned above, it contains all positive solutions of \eqref{1.1} when $a\equiv 1$.
\par
To conclude this introduction, we summarize the general structure of this paper. Section \ref{sec:2} is devoted to the proof of the  fact that  any possible positive solution $u_\lambda$ of problem \eqref{1.1} with $a(x)=a_{\k,0}(x)$, $\k\geq 1$, converges almost everywhere to $0$ in $J_h=a^{-1}(0)$ as $\l\da-\infty$. From this result, we are lead to the  conjecture that, for sufficiently negative $\l$, problem \eqref{1.1} with $a(x)=a_{\k,0}(x)$, $\k\geq 1$, possesses $2^{\kappa+1}-1$ positive solutions.
\par
In Sections \ref{sec:3}, \ref{sec:4} and \ref{sec:5} we present the numerical results for $a(x)=a_{\k,0}(x)$, with $\k=1$, $\k=2$ and $\k=3$, respectively. Such simulations shed some light on the global structure of the set of positive solutions of \eqref{1.1}, showing how such a structure strongly depends on the integer $\k$. Moreover, we investigate the role of $h$, the length of the vanishing intervals of the weight, on the global structure of the solution set. All the numerical experiments support the validity of the conjecture of Section \ref{sec:2}. 
\par
In Section \ref{sec:6} we analyze how the global bifurcation diagrams vary as $\e$ increases from 0 to 1, establishing an homotopy from the case $a(x)=a_{1,0}(x)$ analyzed in Section \ref{sec:3} and the classical (autonomous) case 
\[
  a= a_{1,1} \equiv 1.
\]
In particular, we construct some examples of weight functions $a_{1,\e}$, with $\e\in(0,1)$, that exhibit, at least, 5 positive solutions for sufficiently negative $\lambda$. These examples leads us to conjecture that such a multiplicity holds true for all $\e\in(0,1)$. This is surprising for two reasons: first, because the multiplicity conjectured in Section \ref{sec:2}, and observed in Section \ref{sec:3}, only gives the existence of 3 solutions for $\e=0$; second, because for the special choice $a=1$ the model has a unique positive solution for every $\l<\pi^2$. Actually, on the base of our numerical experiments, we conjecture that for $a=a_{\kappa,\e}$, $\kappa\geq 1$, the model has an arbitrarily large number of positive solutions (by taking $\kappa$ sufficiently large) even for  $\e<1$ arbitrarily close to $1$. This illustrates how spatial heterogeneities in superlinear models can substantially increase the complexity of the solution set of \eqref{1.1}.

\par
Finally, in Section \ref{sec:7} we present the numerical methods that have been used in the simulations of this paper, commenting the main technical aspects and issues  that we have faced in their implementation.

\section{Point-wise behavior of positive solutions as $\l \da -\infty$}
\label{sec:2}
In this section, we consider the general case $a=a_{\kappa,0}$, $\kappa\geq 1$, and, for such a weight, we study the limiting behavior of the positive solutions of \eqref{1.1} as $\l\da -\infty$.  Our main result complements Theorem 4.10 of L\'{o}pez-G\'{o}mez \cite{LGBM} and Theorem 3.1 of Fencl and L\'{o}pez-G\'{o}mez \cite{FLGN} by  considering the case when $a(x)$ is given by \eqref{1.5} with $\e=0$. Essentially, it establishes that, regardless the value of $h>0$, as $\l\da -\infty$ the mass of any positive solution $u_{\lambda}$ of \eqref{1.1} must be concentrated in one, or several,  of the intervals where $a(x)=1$, as illustrated by Figure \ref{Fig3}.  It can be stated as follows.

\begin{theorem}
	\label{th2.1}
	Suppose that $a=a_{\kappa,0}$ (see \eqref{1.5}) for some integer $\kappa\geq 1$, and, for  $\l<\pi^2$, let $u_\l$ be a positive solution of \eqref{1.1}. Then,
	\begin{equation*}
		%\label{ii.1}
		\lim_{\l\da -\infty} u_\l =0 \;\; \hbox{almost everywhere in }\;\; \bigcup_{i=1}^\kappa \left[\a_i,\b_i\right].
	\end{equation*}
\end{theorem}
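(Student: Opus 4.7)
The plan is to combine the explicit representation of $u_\lambda$ on each vanishing interval with a uniform a priori $L^\infty$ estimate of order $\sqrt{-\lambda}$ on the non-vanishing intervals. First, since $a_{\kappa,0}\equiv 0$ on $(\alpha_i,\beta_i)$, the equation reduces there to $-u''=\lambda u$. Setting $\mu=\sqrt{-\lambda}$, $u_\lambda$ on $[\alpha_i,\beta_i]$ is the unique solution of $u''=\mu^2 u$ with the prescribed endpoint values, so
\[
u_\lambda(x)=\frac{u_\lambda(\alpha_i)\sinh(\mu(\beta_i-x))+u_\lambda(\beta_i)\sinh(\mu(x-\alpha_i))}{\sinh(\mu h)}.
\]
Using $\sinh(\mu h)\ge \tfrac14 e^{\mu h}$ and $\sinh(\mu s)\le \tfrac12 e^{\mu s}$ for $\mu$ large, elementary estimates give, for any fixed $x\in(\alpha_i,\beta_i)$,
\[
u_\lambda(x)\le 4\bigl[u_\lambda(\alpha_i)\,e^{-\mu(x-\alpha_i)}+u_\lambda(\beta_i)\,e^{-\mu(\beta_i-x)}\bigr].
\]
Therefore it suffices to show that the boundary values $u_\lambda(\alpha_i)$ and $u_\lambda(\beta_i)$ grow at most polynomially in $\mu$.

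To prove such a bound I would analyze $u_\lambda$ on each maximal open subinterval $I_j\subset[0,1]$ on which $a_{\kappa,0}\equiv 1$ by means of the conserved Hamiltonian $E=\tfrac12 (u')^2+\tfrac{\lambda}{2}u^2+\tfrac14 u^4$. Because $u_\lambda>0$ throughout $(0,1)$, the phase-plane orbit that carries $u_\lambda$ on $I_j$ stays in $\{u>0\}$ during an $x$-duration equal to $|I_j|>0$, so the half-period of the underlying periodic orbit must satisfy $T/2\ge|I_j|$. Distinguishing the case $E\ge 0$ (orbits encircling the origin, with amplitude $u_+^2=|\lambda|+\sqrt{\lambda^2+4E}\ge 2|\lambda|$) from the case $-\lambda^2/4<E<0$ (orbits encircling the equilibrium $\sqrt{-\lambda}$, with amplitude $u_+\le \sqrt{2|\lambda|}$), and using the explicit period formula for the quartic potential $V(u)=\tfrac{\lambda}{2}u^2+\tfrac14 u^4$, the geometric constraint $T/2\ge|I_j|$ with $|I_j|>0$ fixed forces $u_+\le C\sqrt{-\lambda}$ uniformly as $\lambda\downarrow -\infty$, for a constant $C$ that depends only on the lengths of the intervals in the support of $a_{\kappa,0}$. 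Equivalently, the scaling $v(t)=u_\lambda(t/\sqrt{-\lambda})/\sqrt{-\lambda}$ transforms the equation on $I_j$ into $-v''=-v+v^3$ on an interval of length $|I_j|\sqrt{-\lambda}\to\infty$, and positive solutions there are automatically of order one, so $\|u_\lambda\|_{L^\infty(I_j)}=O(\sqrt{-\lambda})$.

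Combining the two estimates, for any $x\in(\alpha_i,\beta_i)$
\[
u_\lambda(x)\le 4C\sqrt{-\lambda}\,\bigl(e^{-\sqrt{-\lambda}\,(x-\alpha_i)}+e^{-\sqrt{-\lambda}\,(\beta_i-x)}\bigr)\xrightarrow[\lambda\downarrow-\infty]{}0,
\]
which yields pointwise convergence to $0$ in the interior of each $(\alpha_i,\beta_i)$ and therefore a.e.\ convergence on $[\alpha_i,\beta_i]$. The hard part will be the uniform $L^\infty$ estimate of the second paragraph, and within it the \emph{interior} pieces $[\beta_{i-1},\alpha_i]$ where $u_\lambda$ does not vanish at the endpoints: a priori, the orbit carrying the solution could be a large loop around the origin with amplitude much bigger than $\sqrt{-\lambda}$, and ruling this out requires showing that the period of such a large orbit is $O(1/u_+)$, which is incompatible with the fixed positive length $|I_j|$ once $\lambda$ is sufficiently negative.
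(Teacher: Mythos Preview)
Your approach is correct and coincides with the paper's in the hard step---the $O(\sqrt{-\lambda})$ bound on the intervals where $a\equiv 1$---but takes a different (and sharper) route on the vanishing intervals. For the bound on the nonlinear pieces, the paper's Step~1 is exactly your phase-plane argument: it distinguishes orbits inside or on the homoclinic (amplitude automatically $\le\sqrt{-2\lambda}$) from orbits outside it, and for the latter computes the quarter-period explicitly,
\[
T(u_0,\lambda)=\int_0^1\frac{d\theta}{\sqrt{1-\theta^2}\sqrt{\lambda+\tfrac{u_0^2}{2}(1+\theta^2)}}<\frac{\pi}{2\sqrt{\lambda+u_0^2/2}},
\]
then uses positivity on $I_j$ to force $T\ge |I_j|/2$, giving $u_0^2\le -2\lambda+2(\pi/|I_j|)^2$. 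Your final paragraph identifies precisely this argument, including the subtle point that on the interior pieces the solution need not touch $u=0$; note that the bound actually holds for all $\lambda<0$, not only for $\lambda$ sufficiently negative.

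Where you differ is on $(\alpha_i,\beta_i)$. You write the explicit $\sinh$ representation and extract the pointwise estimate
\[
u_\lambda(x)\le C\sqrt{-\lambda}\,\bigl(e^{-\sqrt{-\lambda}\,(x-\alpha_i)}+e^{-\sqrt{-\lambda}\,(\beta_i-x)}\bigr),
\]
which gives genuine pointwise convergence, uniform on compact subsets of the open interval, with an explicit exponential rate. The paper instead tests against the first Dirichlet eigenfunction $\varphi(x)=\sin\frac{\pi(x-\alpha_i)}{h}$ and integrates by parts twice to obtain
\[
\int_{\alpha_i}^{\beta_i}u_\lambda\varphi=\frac{u_\lambda(\beta_i)\varphi'(\beta_i)-u_\lambda(\alpha_i)\varphi'(\alpha_i)}{\lambda-(\pi/h)^2}=O\Bigl(\tfrac{1}{\sqrt{-\lambda}}\Bigr)\longrightarrow 0,
\]
from which it infers a.e.\ convergence. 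Your argument is more elementary and yields a stronger conclusion (full pointwise limit rather than $L^1$-type convergence), while the paper's eigenfunction trick is shorter and avoids writing down the solution formula.
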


\begin{proof} Since we are interested in the limit as $\lambda \da-\infty$, throughout this proof we will assume, without loss of generality, that $\lambda<0$. In addition, for  notational convenience, we will set $\b_0:=0$ and $\a_{\k+1}:=1$.  We  divide the proof into two steps: the first one will analyze the problem in the superlinear part, i.e., when $a=1$, with the goal of establishing the existence of a constant $c\geq 0$ such that
\begin{equation}
\label{2.1}
		u_\l(\a_i)\leq\sqrt{-2\lambda+c} \;\; \hbox{and} \;\; u_\l(\b_i)\leq\sqrt{-2\lambda+c} \;\;
     \hbox{for all} \;\; i\in\{1,\dots,\k\}\;\;\hbox{and}\;\;\l<0.
\end{equation}
Then, the second step will analyze the problem in the linear part, i.e., when $a=0$.
\par
\vspace{0.2cm}
\noindent	\emph{Step 1.}  Observe that, in order to prove \eqref{2.1} it suffices to show that, for every  $i\in\{0,\dots,\k\}$, there exists a constant $c_i\geq 0$  such that
\begin{equation}
		\label{2.2}
		u_\l(\a_{i+1})\leq\sqrt{-2\lambda+c_i} \;\; \hbox{and} \;\; u_\l(\b_i)\leq\sqrt{-2\lambda+c_i}\;\;
\hbox{for all}\;\; \l<0,
\end{equation}
and then take
\[
c:=\max_{i\in\{0,\dots,\k\}}c_i.
\]
Thus, we fix $i\in\{0,\dots,\k\}$ and analyze the problem in $[\b_i,\a_{i+1}]$.   Note that
\[
  u_\l(\a_{\kappa+1})=u_\l(1)=0=u_\l(0)=u_\l(\b_\kappa).
\]
Hence, \eqref{2.2} is satisfied at these particular points.
\par
Since $a=1$ in  the interval $[\b_i,\a_{i+1}]$,  $u_\l$ is a positive solution of the autonomous equation
\begin{equation}
		\label{2.3}
		-u'' = \l u + u^3.
\end{equation}
As the potential energy of \eqref{2.3} is given by
\begin{equation*}
	P(u):= \frac{\l u^2}{2}+\frac{u^4}{4},\qquad u\in\R,
\end{equation*}
the phase portrait for the positive solutions of \eqref{2.3} is the one sketched in Figure \ref{Fig2}, where the two non-negative equilibria of the associated first order system have been represented: the saddle point $(0,0)$ and the nonlinear center $(\sqrt{-\l},0)$. The integral curve
	\begin{equation*}
	\frac{v^2}{2}+P(u)=0
	\end{equation*}
	provides us with a homoclinic connection of $(0,0)$ that intersects the $u$-axis at the equilibrium $u=0$ and at  $u=\sqrt{-2\l}$. This homoclinic connection separates closed orbits, which are located inside it and surround the center $(\sqrt{-\l},0)$,  from the exterior trajectories of Figure \ref{Fig2}, i.e. those starting on the $v$-axis at $(0,v_0)$, with $v_0>0$, and reaching their maximal amplitude at some $(u_0,0)$ with $u_0>\sqrt{-2\l}$.
\par
\begin{figure}[ht!]
		\centering
		\begin{overpic}[scale=0.5]{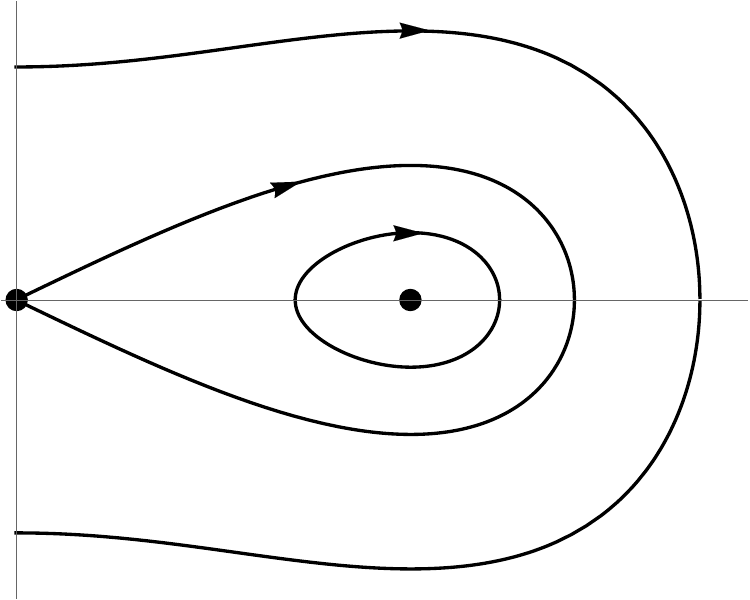}
			%[scale=0.28,grid,tics=10]{Fig_3_1.eps}
			\put (103,39) {\small$u$}
			\put (94,36) {\scriptsize$u_0$}
			\put (76.2,35.5) {\scriptsize$\sqrt{-2\l}$}
			\put (48,33.5) {\scriptsize$\sqrt{-\l}$}
			\put (-4,82) {\small$v=u'$}
			\put (-3,70) {\scriptsize$v_0$}
			\put (-7,8) {\scriptsize$-v_0$}
		\end{overpic}
		\vspace{-0.1cm}
		\caption{Phase portrait of positive solutions of \eqref{2.3}.}
		\label{Fig2}
	\end{figure}

Thanks to the conservation of total energy for equation \eqref{2.3}, any solution of \eqref{1.1} lies, in the interval $[\b_i,\a_{i+1}]$, on one of the orbits sketched in Figure \ref{Fig2}. Thus, we can distinguish two possible cases: either (a)
the solution $(u_\lambda(x),u'_\lambda(x))$ lies on the homoclinic connection or inside it, or (b) it lies  outside the homoclinic.
\par
	In case (a), the bound \eqref{2.2}  holds by simply taking $c_i=0$, since the trajectory does not exceed the maximal amplitude of the homoclinic connection, at $u=\sqrt{-2\l}$.
\par
	In case (b), which occurs, for example, in the intervals $[0,\a_1]$ and $[\b_{\k},1]$, due to the boundary conditions $u_\l(0)=u_\l(1)=0$, we consider the external trajectory $\G$ where the solution $(u_\lambda(x),u'_\lambda(x))$ lies for all $x\in[\b_i,\a_{i+1}]$. By simply having a glance at the phase portrait it is apparent that
\begin{equation}
\label{2.4}
		\max_{x\in[\b_i,\a_{i+1}]} u_\l(x) \leq u_0.
\end{equation}
Now, with the notations introduced in Figure \ref{Fig2}, let
\[
   T= T(u_0,\l)>0
\]
denote the necessary time to reach $(u_0,0)$ from $(0,v_0)$ along the trajectory $\G$.
We claim that
\begin{equation}
\label{ii.5}
  \frac{\a_{i+1}-\b_i}{2} \leq T(u_0,\l).
\end{equation}
Arguing by contradiction, suppose that $T< \frac{\a_{i+1}-\b_i}{2}$, i.e.,
\begin{equation}
\label{ii.6}
   \b_i+2T\in(\b_i,\a_{i+1}).
\end{equation}
Then, by symmetry and the definition of $T$, the unique solution, $U$, of the Cauchy problem
	\begin{equation*}
	\left\{\begin{array}{ll} -u''=\l u+ u^3 & \hbox{in}\;\; (\b_i,\a_{i+1}),\\ u(\b_i)=0, \; u'(\b_i)=v_0,\end{array}\right.
	\end{equation*}
satisfies
\begin{equation}
\label{ii.7}
   U(\b_i+2T)=0,\quad U'(\b_i+2T)=-v_0.
\end{equation}
Next, let  $\tau\geq 0$ denote the time necessary to reach  $(u_\l(\b_i),u'_\l(\b_i))$ from $(0,v_0)$ along the trajectory $\G$. As the differential equation is autonomous in $(\b_i,\a_{i+1})$, the uniqueness of solution for the associated Cauchy problems entails that
\begin{equation}
\label{ii.8}
  u_\l(x)=U(x+\tau)\;\; \hbox{for all}\;\; x\in [\b_i,\a_{i+1}].
\end{equation}
By definition, $\tau<2T$. Thus,  \eqref{ii.6} implies that $\b_i+2T-\tau\in(\b_i,\a_{i+1})$.
Therefore, according to \eqref{ii.7} and \eqref{ii.8}, we find that
\[
	u_\l(\b_i+2T-\tau)=U(\b_i+2T)=0,	
\]
which is impossible, because we are assuming that $u_\l(x)>0$ for all $x\in (\b_i,\a_{i+1})$.
Consequently, \eqref{ii.5} holds.
\par
As the time $T$ can be computed from the identities
\begin{equation*}
		T  = \int_0^T \frac{u'(x)}{v(x)}\,dx=\int_0^T \frac{u'(x)}
		{\sqrt{\l\left(u_0^2-u^2(x)\right)+\frac{1}{2}\left(u_0^4-u^4(x)\right) }}\,dx,
\end{equation*}
by making the changes of variable $\xi = u(x)$ and $\xi = u_0\theta$, respectively, we are lead to the following estimate for $T$
\begin{align*}
		T= T(u_0,\l) & = \int_0^{1} \frac{d\theta}
		{\sqrt{\l\left(1-\t^2\right)+\frac{u_0^2}{2}\left(1-\theta^4\right)}}
\\ &  =  \int_0^{1} \frac{d\theta}{\sqrt{1-\t^2}\sqrt{\l+\frac{u_0^2}{2}(1+\theta^2)}} <
	\frac{1}{\sqrt{\l+\frac{u_0^2}{2}}}\int_0^{1} \frac{d\theta}{\sqrt{1-\t^2}}=
	\frac{\pi}{2\sqrt{\l+\frac{u_0^2}{2}}}.
	\end{align*}
Therefore, from \eqref{ii.5} we conclude that, for any positive solution $u_\l$ of \eqref{1.1},
	\begin{equation*}
	\frac{\a_{i+1}-\b_i}{2}  < 	\frac{\pi}{2\sqrt{\l+\frac{u_0^2}{2}}},
	\end{equation*}
	which entails
\begin{equation*}
	u_0< \sqrt{-2\lambda+2\left(\frac{\pi}{\a_{i+1}-\b_i}\right)^2}.
\end{equation*}
Combining this estimate with \eqref{2.4}, provides us \eqref{2.2} also in case (b).
\par
\vspace{0.2cm}

\noindent	\emph{Step 2.}
	Now we fix $i\in\{1,\dots,\k\}$ and consider problem \eqref{1.1} in $[\a_i,\b_i]$, where $u_\l$ satisfies
	\begin{equation*}
	-u''_\l=\l u_\l.
	\end{equation*}
Multiplying this differential equation by the eigenfunction $\v(x)=\sin \frac{\pi(x-\a_i)}{\b_i-\a_i}$, $x\in [\a_i,\b_i]$, and integrating 	in $[\a_i,\b_i]$, we obtain that
	\begin{equation*}
	-\int_{\a_i}^{\b_i} u''_\l\v = \l \int_{\a_i}^{\b_i} u_\l\v.
	\end{equation*}
	Thus,
	\begin{equation*}
	\l  \int_{\a_i}^{\b_i}\! u_\l\v = -\int_{\a_i}^{\b_i}\! (u'_\l\v)'+\int_{\a_i}^{\b_i}\! u'_\l\v' = u'_\l(\a_i)\v(\a_i)-u'_\l(\b_i)\v(\b_i) +
	\int_{\a_i}^{\b_i}\! u'_\l\v'= \int_{\a_i}^{\b_i}\! u'_\l\v',
	\end{equation*}
	because $\v(\a_i)=\v(\b_i)=0$. Similarly, multiplying the differential equation satisfied by $\v$,
	\begin{equation*}
	-\v''=\left( \frac{\pi}{h}\right)^2\v \quad \hbox{in}\;\; (\a_i,\b_i),
	\end{equation*}
	by $u_\l$ and integrating by parts in $(\a_i,\b_i)$, it follows that
	\begin{equation*}
	\left( \frac{\pi}{h}\right)^2 \int_{\a_i}^{\b_i} u_\l\v = -\int_{\a_i}^{\b_i} u_\l\v'' =
	-\int_{\a_i}^{\b_i} (u_\l \v')'+\int_{\a_i}^{\b_i} u_\l'\v'.
	\end{equation*}
	Hence,
	\begin{equation*}
	\l  \int_{\a_i}^{\b_i} u_\l\v=\int_{\a_i}^{\b_i} u'_\l\v'=  \left( \frac{\pi}{h}\right)^2 \int_{\a_i}^{\b_i} u_\l \v
	+u_\l(\b_i)\v'(\b_i)-u_\l(\a_i)\v'(\a_i)
	\end{equation*}
	and,  since $\l<\pi^2<\left(\frac{\pi}{h}\right)^2$, we conclude that
	\begin{equation}
		\label{ii.9}
		\int_{\a_i}^{\b_i} u_\l \v = \frac{ u_\l(\b_i)\v'(\b_i)-u_\l(\a_i)\v'(\a_i)}{\l-\left( \frac{\pi}{h}\right)^2 }.
	\end{equation}
	Letting $\l\da -\infty$ in \eqref{ii.9},  it becomes apparent from \eqref{2.1} that
	\begin{equation*}
	\lim_{\l\da -\infty}\int_{\a_i}^{\b_i} u_\l(x)\v(x)\,dx=0,
	\end{equation*}
	which entails
	\begin{equation*}
	\lim_{\l\da -\infty}u_\l =0 \;\; \hbox{almost everywhere in}\;\; [\a_i,\b_i]
	\end{equation*}
	and ends the proof.
\end{proof}

\begin{remark}
	\label{re2.2} \rm
	As a consequence of Theorem \ref{th2.1}, as $\l\da -\infty$ \eqref{1.1}  splits into $\kappa+1$ different problems, one in each of the intervals where $a=1$. This strongly supports the validity of the conjecture of G\'{o}mez-Re\~{n}asco and L\'{o}pez-G\'{o}mez \cite{GRLGJDE} not only for	general superlinear indefinite problems, but also for the  class of degenerate superlinear weights considered in Theorem \ref{th2.1}. According to this conjecture, \eqref{1.1} should have, at least, $2^{\kappa+1}-1$ positive solutions for sufficiently negative $\l<0$. Actually, since $a\equiv 1$ on the support of $a(x)$, \eqref{1.1} should have, exactly, $2^{\kappa+1}-1$ positive solutions. All the numerical simulations presented in this paper strongly confirm the validity of such a multiplicity result.
	\par
	Actually, we think that the combinatorial argument by Feltrin and Zanolin \cite{FeZa} involving the topological degree can be combined with some techniques in bifurcation theory to prove the validity of this conjecture (see Feltrin, L\'{o}pez-G\'{o}mez and Sampedro \cite{FLGS}).
\end{remark}

\begin{figure}[h!]
	\centering
	\begin{overpic}[scale=0.24,trim = 1cm 5cm 1cm 5cm, clip]{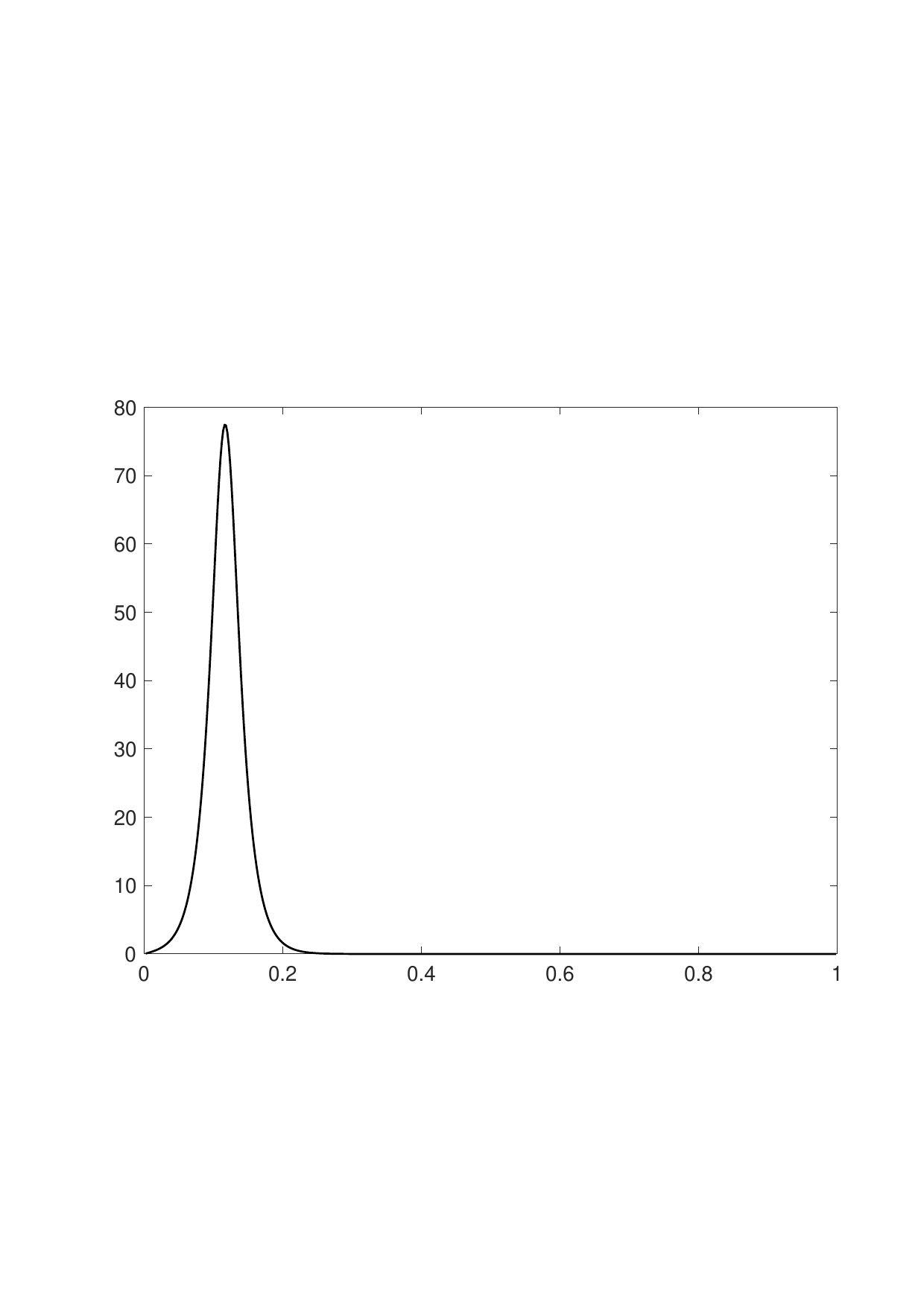}
		%[scale=0.28,grid,tics=10]{Fig_3_1.eps}
		\put (11,81) {\tiny$u(x)$}
		\put (93,14) {\tiny$x$}
		\end{overpic}
	\begin{overpic}[scale=0.24,trim = 1cm 5cm 1cm 5cm, clip]{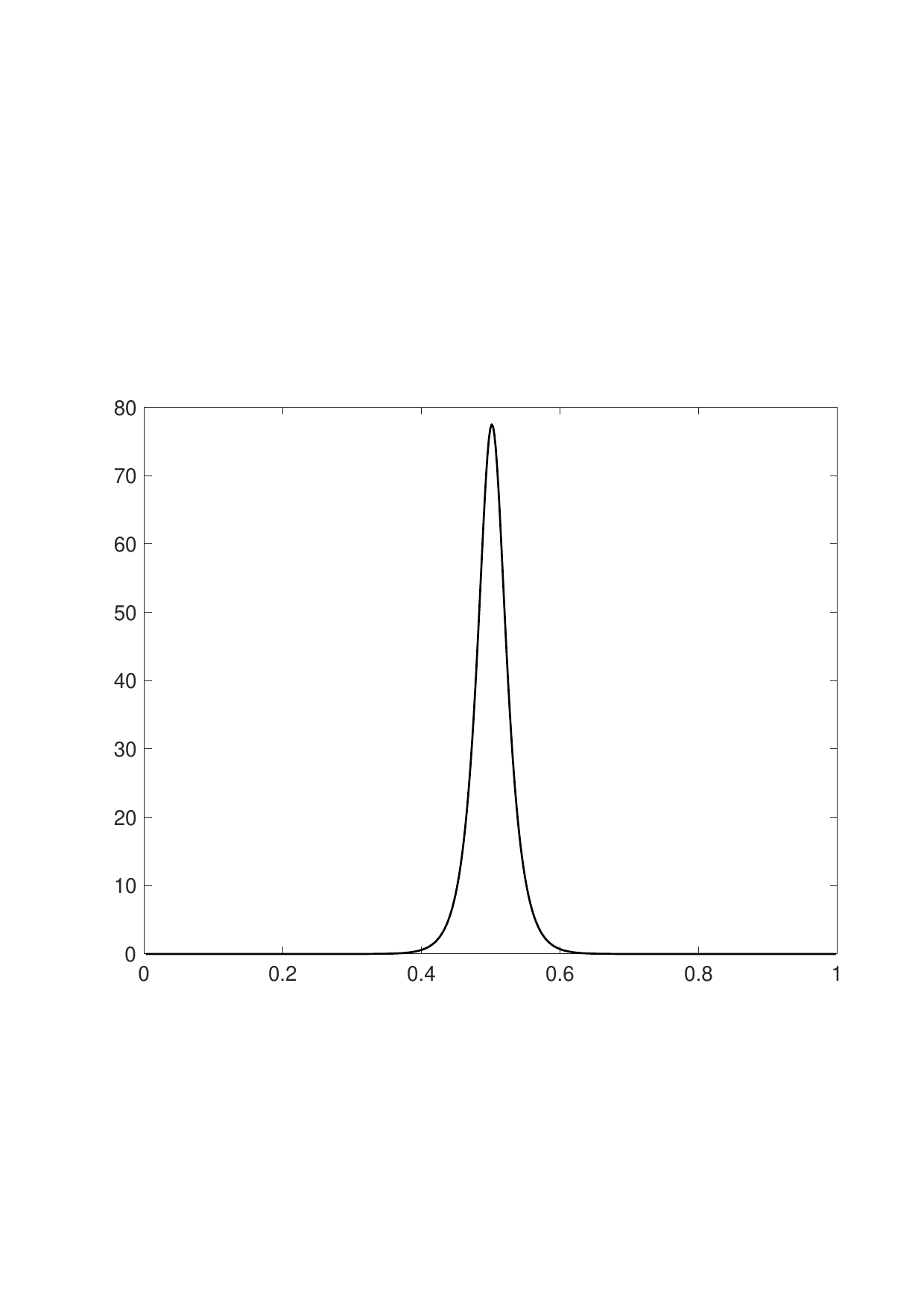}
		\put (11,81) {\tiny$u(x)$}
		\put (93,14) {\tiny$x$}
	\end{overpic}
	\begin{overpic}[scale=0.24,trim = 1cm 5cm 1cm 5cm, clip]{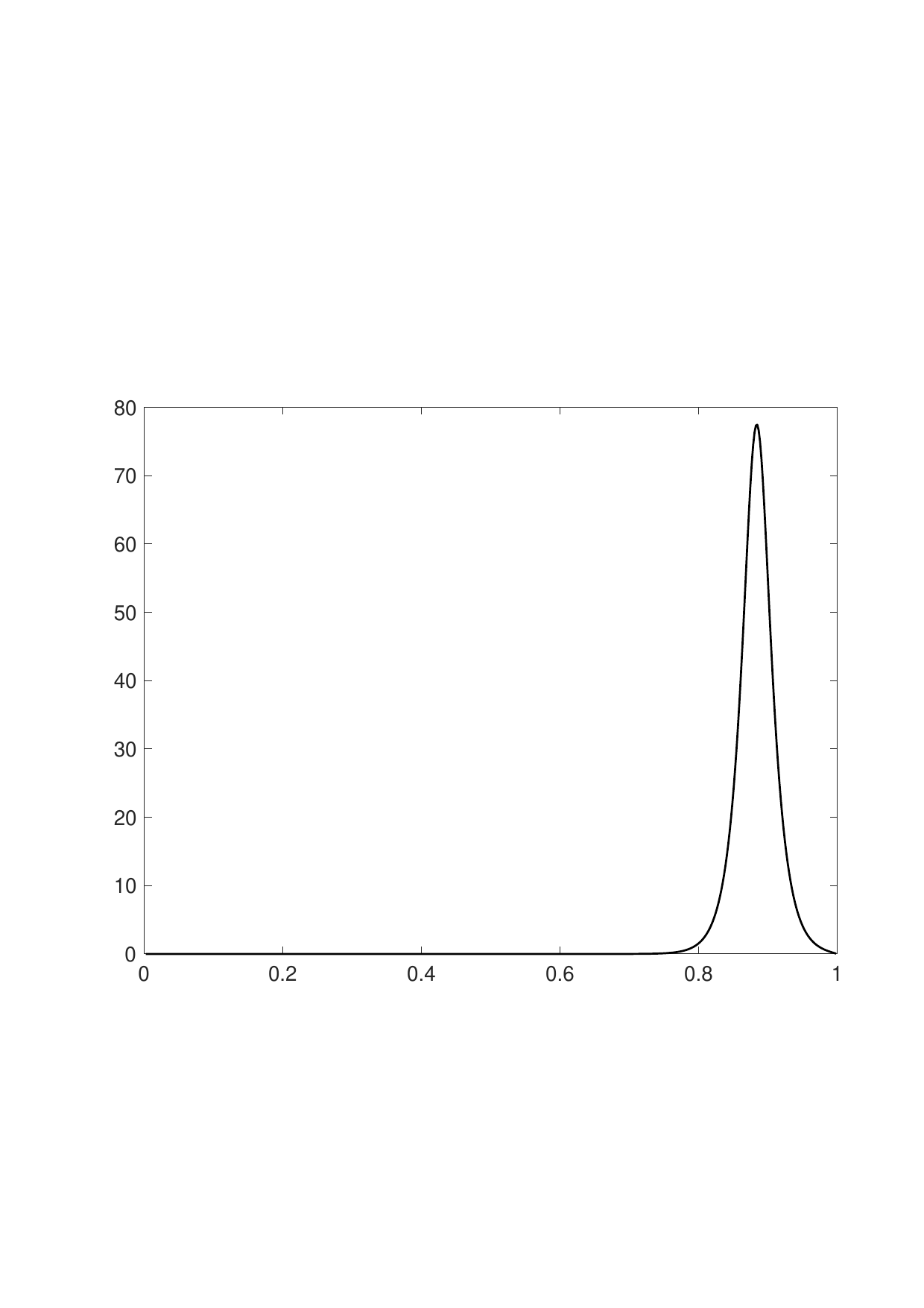}
	\put (11,81) {\tiny$u(x)$}
	\put (93,14) {\tiny$x$}
\end{overpic}
	\\[-2.5em]
	\begin{overpic}[scale=0.24,trim = 1cm 5cm 1cm 5cm, clip]{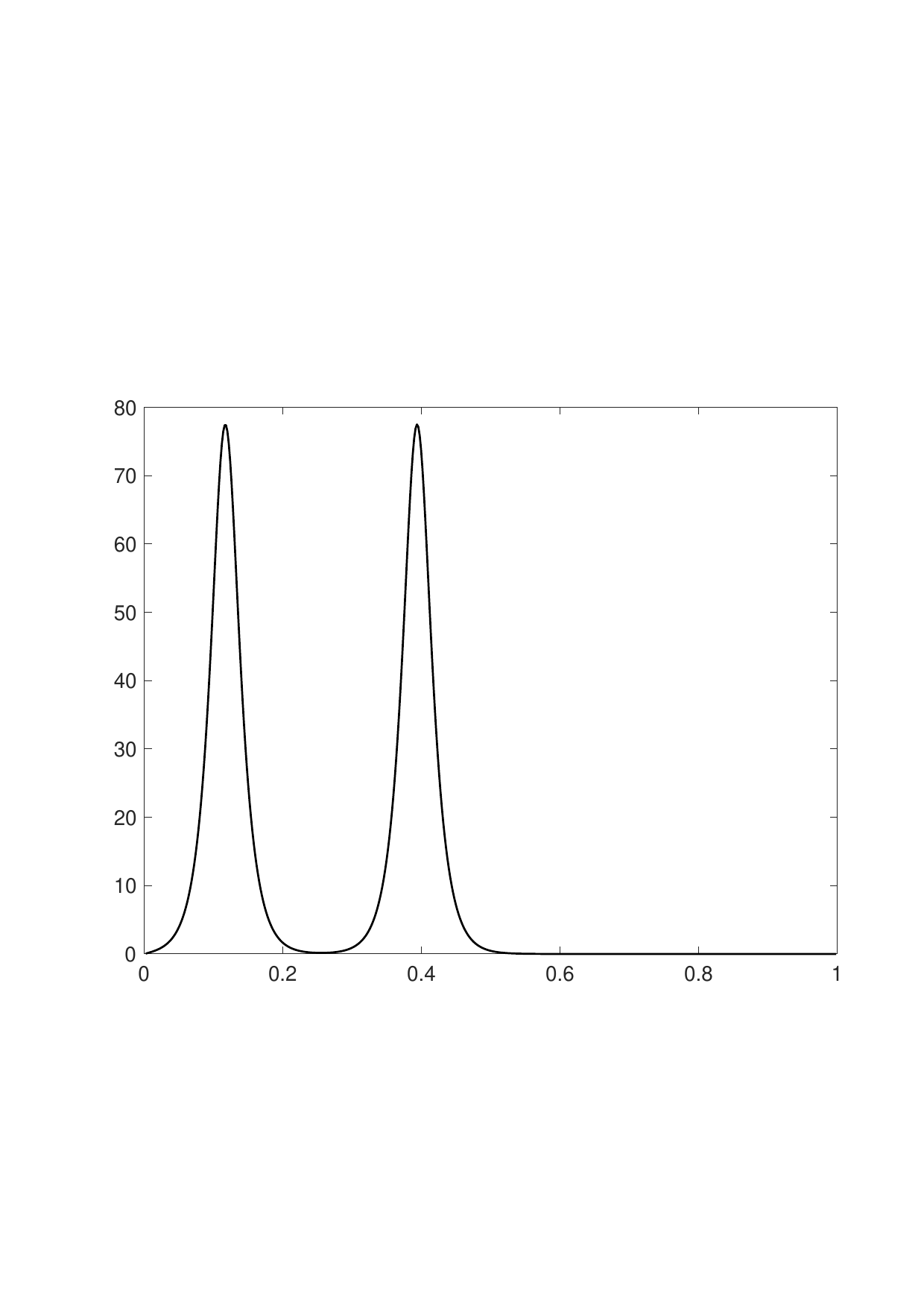}
	\put (11,81) {\tiny$u(x)$}
	\put (93,14) {\tiny$x$}
\end{overpic}
	\begin{overpic}[scale=0.24,trim = 1cm 5cm 1cm 5cm, clip]{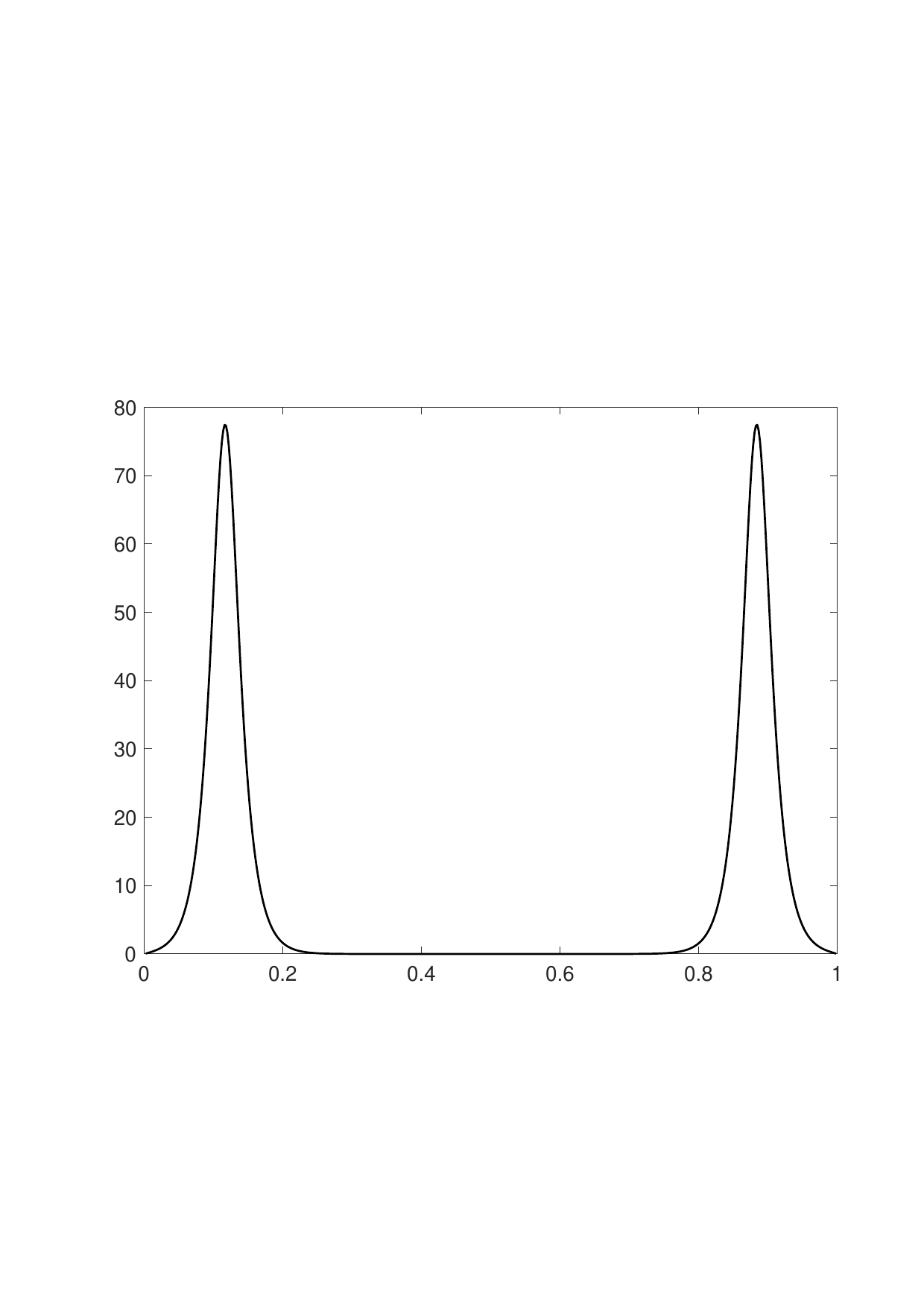}
	\put (11,81) {\tiny$u(x)$}
	\put (93,14) {\tiny$x$}
\end{overpic}
	\begin{overpic}[scale=0.24,trim = 1cm 5cm 1cm 5cm, clip]{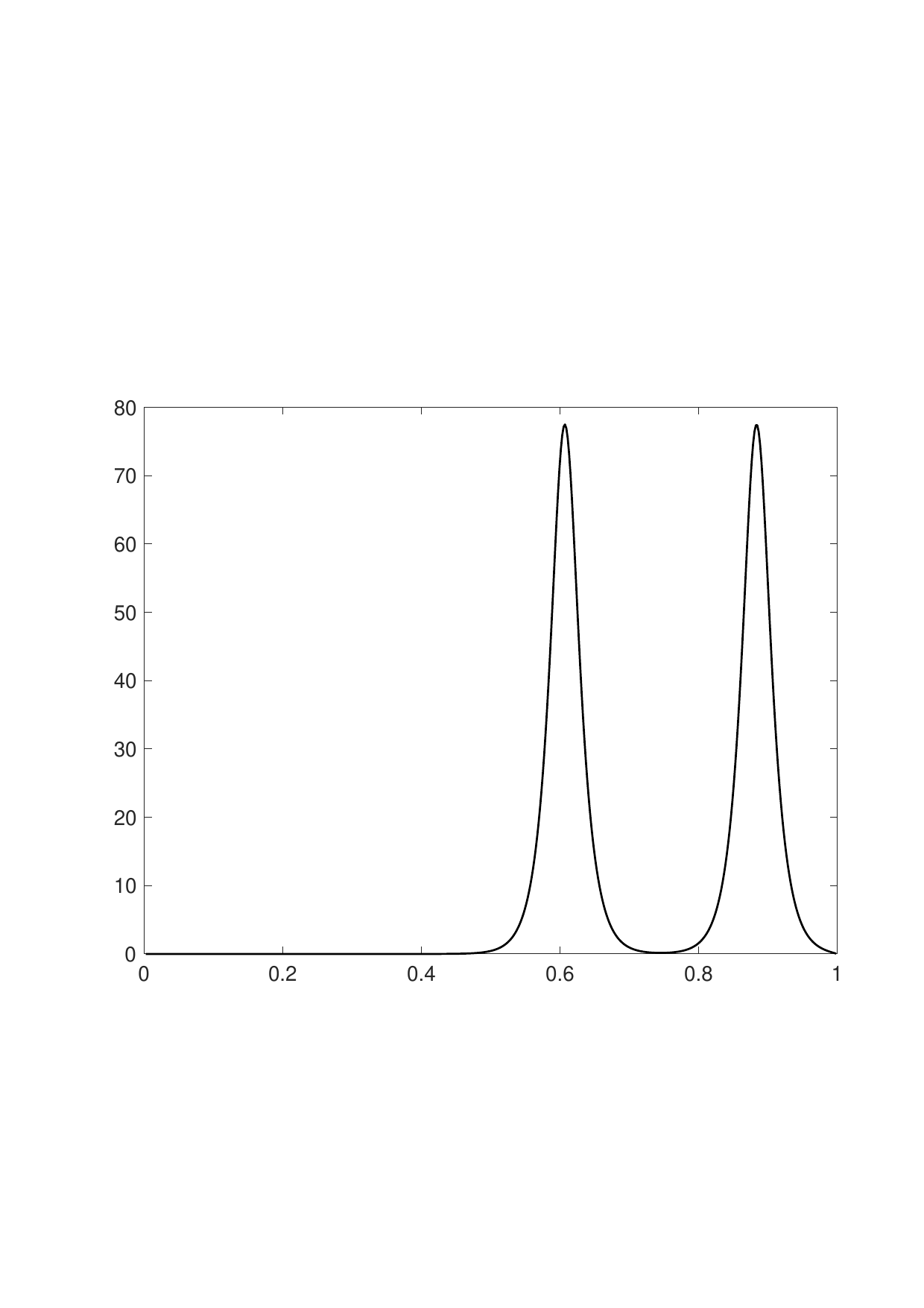}\put (11,81) {\tiny$u(x)$}
	\put (93,14) {\tiny$x$}
\end{overpic}\\[-2.5em]
	\begin{overpic}[scale=0.24,trim = 1cm 5cm 1cm 5cm, clip]{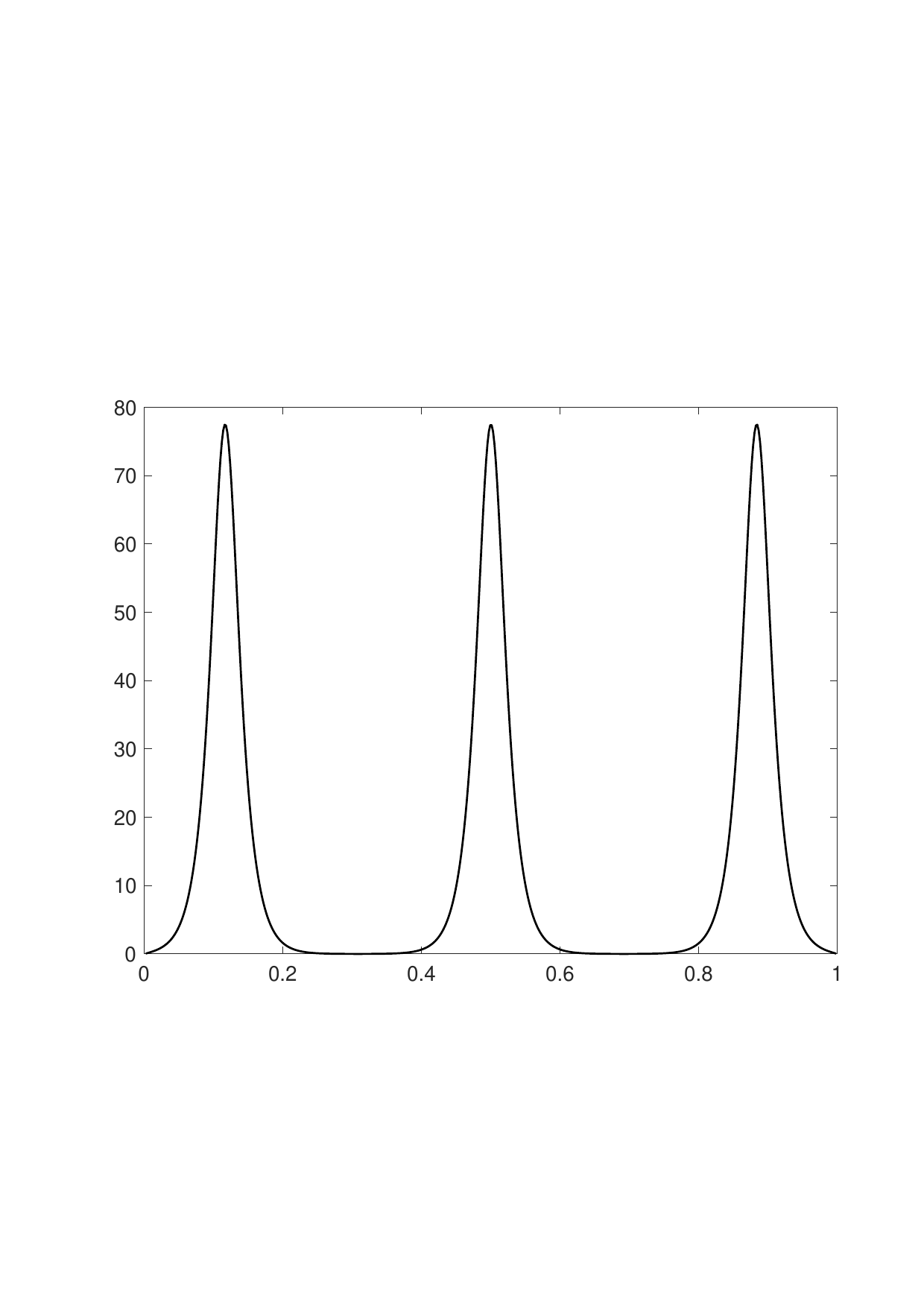}	\put (11,81) {\tiny$u(x)$}
	\put (93,14) {\tiny$x$}
\end{overpic}
	\vspace{-0.4cm}
	\caption{Profiles of the solutions of \eqref{1.1} with $a=a_{2,0}$ for $\l=-3000$. Here $\a_1=0.175$ and $\b_1=0.325$ (thus, $\a_2=0.675$ and $\b_2=0.825$).  These profiles show the  phenomenology described in Theorem \ref{th2.1}.}
	\label{Fig3}
\end{figure}

\section{The case $\kappa =1$ with $h\in (0,1)$}
\label{sec:3}
In this section we compute the global bifurcation diagram of positive solutions of \eqref{1.1} for a series of choices of $a=a_{1,0}$. In these examples, $a^{-1}(0)$ consists of one single interval $J_h=\left(\frac{1-h}{2},\frac{1+h}{2}\right)$. Figure \ref{Fig4} shows one of these weights, corresponding to $h=0.5$.

\par
One of our main goals is to analyze the effects of increasing $h$ on the structure of the set of positive solutions of \eqref{1.1} as well as on their profiles. Thus, we will compute the diagrams for different values of $h$ varying between $h=0.001$ and $h=0.95$.  Remark \ref{re2.2} conjectures the existence of (at least) three positive solutions for sufficiently negative $\l$. This has also been confirmed by the numerical experiments carried out in this section.

\begin{figure}[h!]
	\centering
	\begin{overpic}[scale=0.28,trim = 1cm 6cm 1cm 7cm, clip]{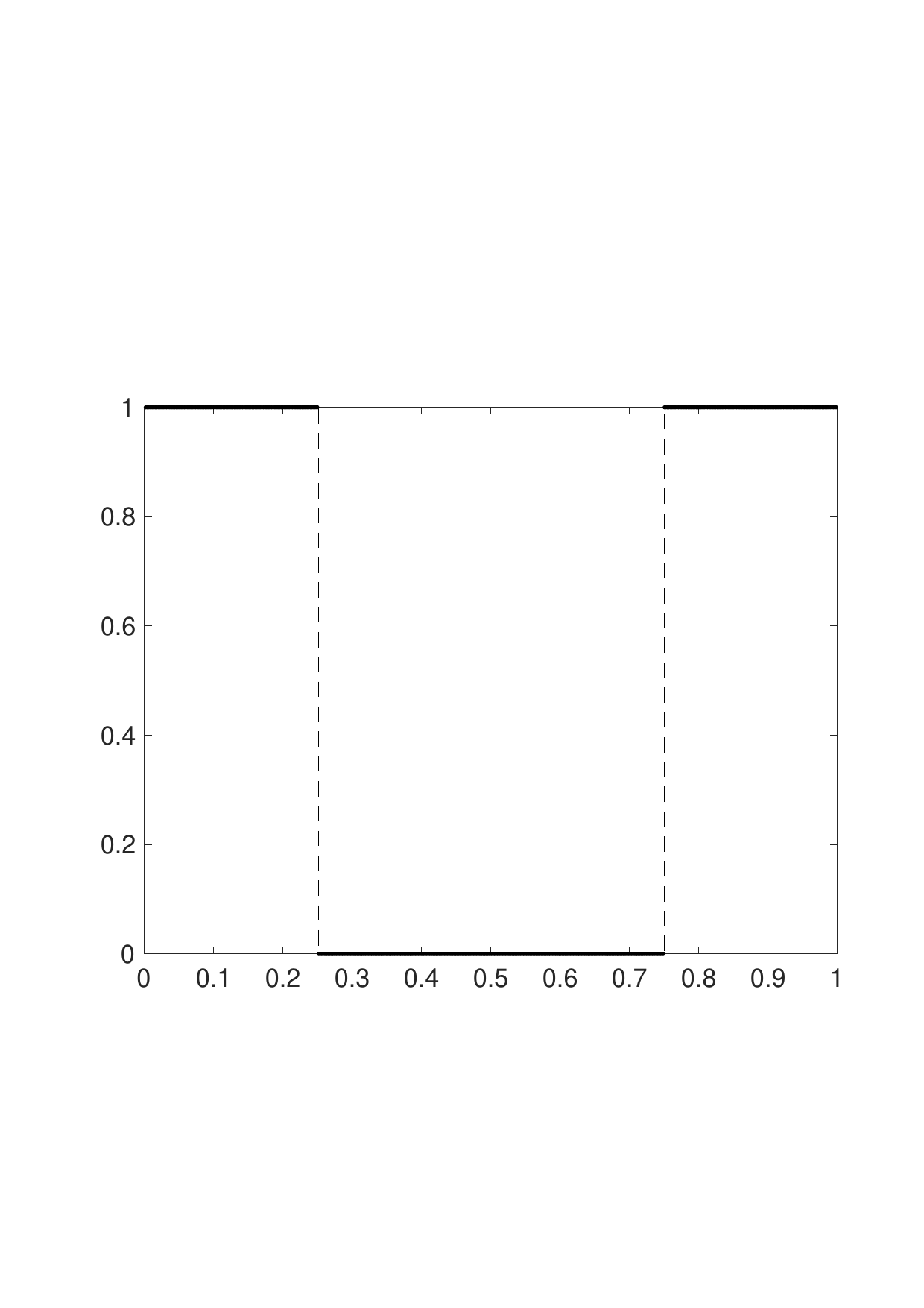} \put (12,78) {\tiny$a(x)$}
	\put (96,10) {\tiny$x$}
\end{overpic}
	\vspace{-0.4cm}
	\caption{An admissible weight function $a(x)$ with $\kappa=1$.}
	\label{Fig4}
\end{figure}

Indeed, using the notation of Section \ref{sec:1}, the component $\mathscr{C}_0^+$ that bifurcates from $(\pi^2,0)$  exhibits a secondary subcritical pitchfork bifurcation at some value of the parameter, $\l_b$, for all computed values of $h$, so that \eqref{1.1} has three positive solutions for each $\l\in (-\infty,\l_b)$. Observe that, throughout this work, bifurcation points in the bifurcation diagrams have been marked with a thick dot as, e.g., in Figure \ref{Fig5}. Table \ref{Tab1} collects the values of the computed  secondary bifurcation points $\l_b$  for a series of values of the parameter $h$. According to our numerical experiments,  $\l_b=\l_b(h)$ satisfies
\begin{equation}
\label{iii.1}
	\lim_{h \da 0} \l_b(h)=-\infty,\qquad \lim_{h\ua 1}\l_b(h)=\pi^2.
\end{equation}

\begin{table}[ht]
	%\label{tb1}
	\caption{Computed values of $\l_{b}$ for a series of $h$.}
	\begin{center}  \footnotesize
		\begin{tabular}{|ccccccccccc|}
			\hline
			$h$ & & $\l_b$ & \vline \qquad \qquad \vline & $h$ & & $\l_b$ & \vline \qquad \qquad \vline & $h$ & & $\l_b$\\\hline
			$0.001$  & & $-57.65679$ & \vline \qquad \qquad \vline & $0.007$  & & $-33.03349$ & \vline \qquad \qquad \vline & $0.30$  & & $+2.03964$ \\\hline
			$0.002$  & & $-48.29521$ & \vline \qquad \qquad \vline & $0.008$  & & $-31.52271$ & \vline \qquad \qquad \vline & $0.50$  & & $+5.34880$ \\\hline
			$0.003$  & & $-43.13164$ & \vline \qquad \qquad \vline & $0.009$  & & $-30.20749$ & \vline \qquad \qquad \vline & $0.70$  & & $+7.37795$ \\\hline
			$0.004$  & & $-39.60097$ & \vline \qquad \qquad \vline & $0.01$  & & $-27.07733$ & \vline \qquad \qquad \vline & $0.80$  & & $+8.21472$ \\\hline
			$0.005$  & & $-36.93564$ & \vline \qquad \qquad \vline & $0.05$  & & $-12.40637$ & \vline \qquad \qquad \vline & $0.89$  & & $+8.95476$ \\\hline
			$0.006$  & & $-34.80407$ & \vline \qquad \qquad \vline & $0.10$  & & $-6.55902$ & \vline \qquad \qquad \vline & $0.95$  & & $+9.44545$ \\\hline
		\end{tabular}
		\label{Tab1}
	\end{center}
\end{table}

Although we do not have an analytical proof of these features yet, the first relation of \eqref{iii.1} is supported by the fact that $a_{1,0}$  approximates $1$  as $h\da 0$, and that \eqref{1.3} has a unique positive solution for every $\l\in (-\infty,\pi^2)$. Thus, we expect the subcritical pitchfork secondary bifurcation at $\l_b$ to approach $-\infty$ as $h\da 0$. On the other hand, problem \eqref{1.1} converges towards a linear problem as $h\ua 1$, as  in such a case $a_{1,0}$ approximates the constant $0$,  which is compatible with the fact that $\lim_{h\ua 1}\l_b(h)=\pi^2$.
\par
Figure \ref{Fig5} shows the computed bifurcation diagram of positive solutions of \eqref{1.1} for $h=0.05$. In this case, by Table \ref{Tab1}, $\l_b=-12.40637$. According to our numerical experiments, the problem admits at least a solution for every $\l<\pi^2$ and at least three solutions if  $\l<\l_b$. It is important to observe that the discrete $L^2$-norm of $u$, which has been used for the vertical axes in all the bifurcation diagrams of this work, make the two secondary branches plotted in red to overlap, because the corresponding solutions are one the reflection of the other around $0.5$. Whenever this occurs, we use, here and in all the subsequent bifurcation diagrams, a thicker line.

\begin{figure}[h!]
	\centering
	\begin{overpic}[scale=0.28,trim = 1cm 5cm 1cm 7cm, clip]{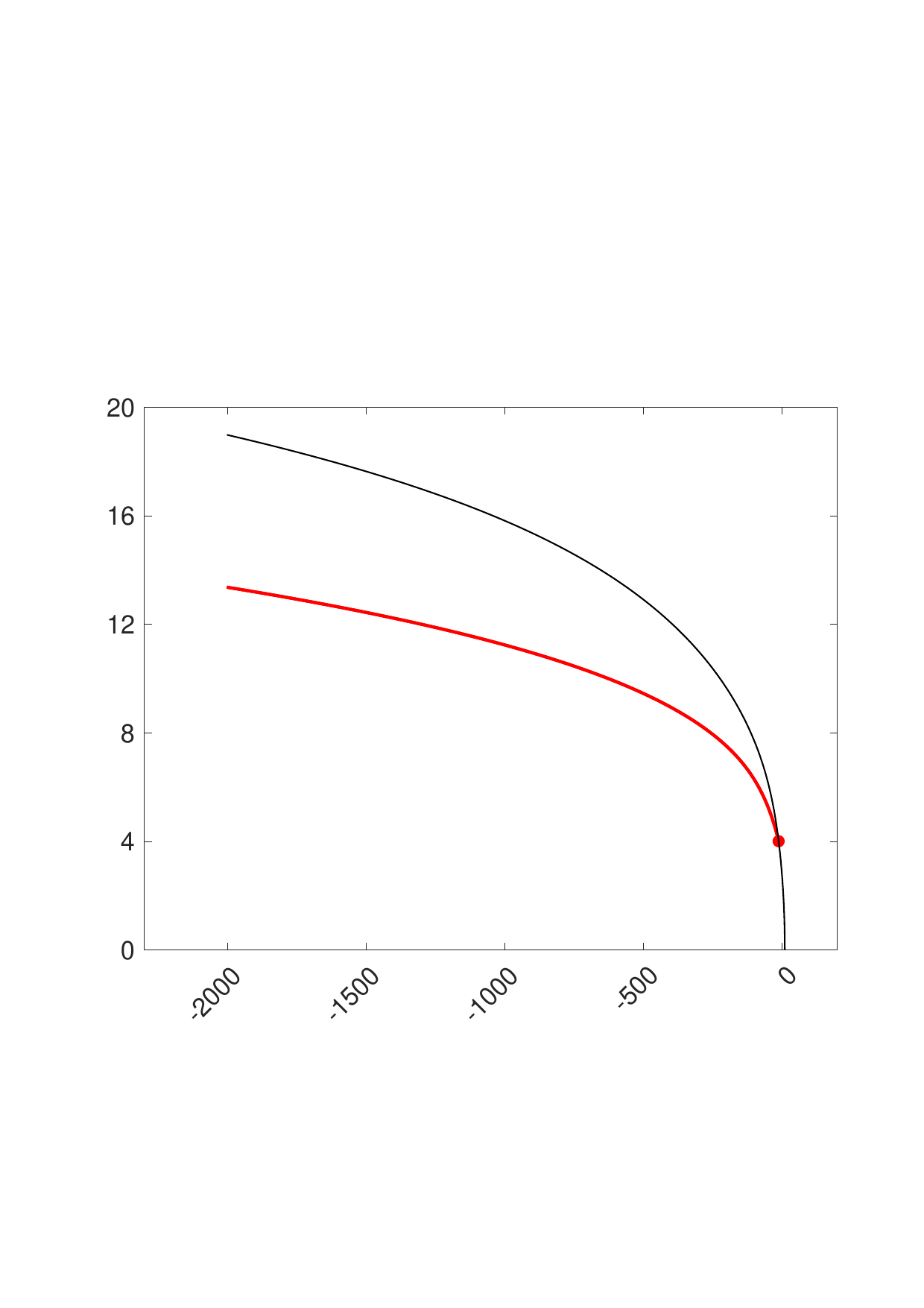} \put (12,84) {\tiny$\|u\|_2$}
	\put (96,15) {\tiny$\l$}
\end{overpic}
	\begin{overpic}[scale=0.28,trim = 1cm 5cm 1cm 7cm, clip]{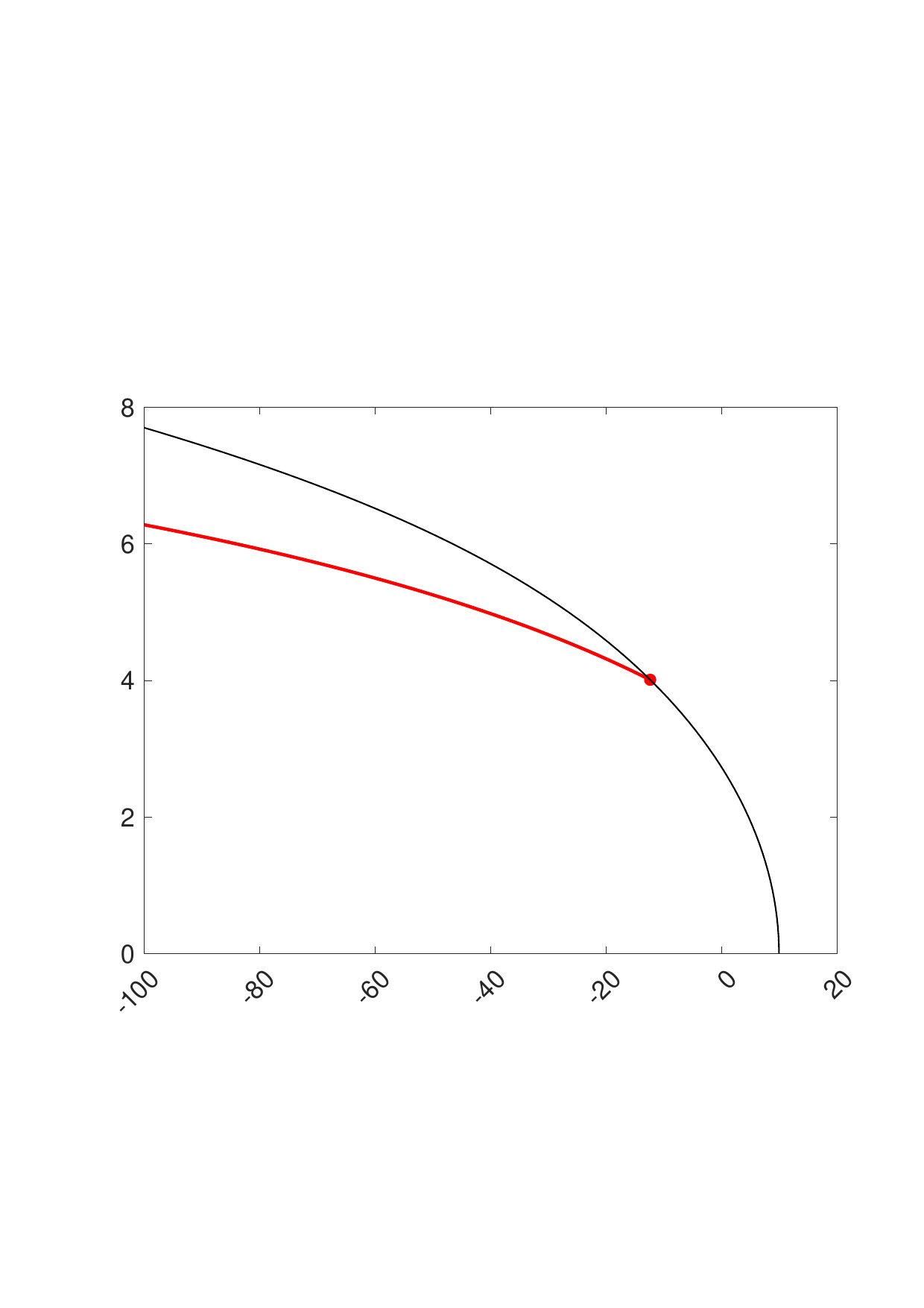} \put (12,84) {\tiny$\|u\|_2$}
	\put (96,15.5) {\tiny$\l$}
\end{overpic}
	\vspace{-0.4cm}
	\caption{Bifurcation diagram relative to \eqref{1.1} for $a(x)=a_{1,0}(x)$ with $h=0.05$ (left) and a zoom of it (right).}
	\label{Fig5}
\end{figure}

The first row of Figure \ref{Fig6} shows in black a series of positive solutions along the branch emanating from $(\pi^2,0)$, all symmetric about $0.5$. The left plot corresponds to profiles for positive values of $\lambda$ where all the solutions are concave and the right plot corresponds to profiles for negative values of $\lambda$, where solutions are not necessarily concave. It should be noted that the last plotted profile on the left coincides with the first profile plotted on the right. In the second row, a series of positive asymmetric solutions lying on each of the secondary branches bifurcating at $\l_b<0$ are plotted in red.

\begin{figure}[h!]
	\centering
	\begin{overpic}[scale=0.28,trim = 1cm 5cm 1cm 7cm, clip]{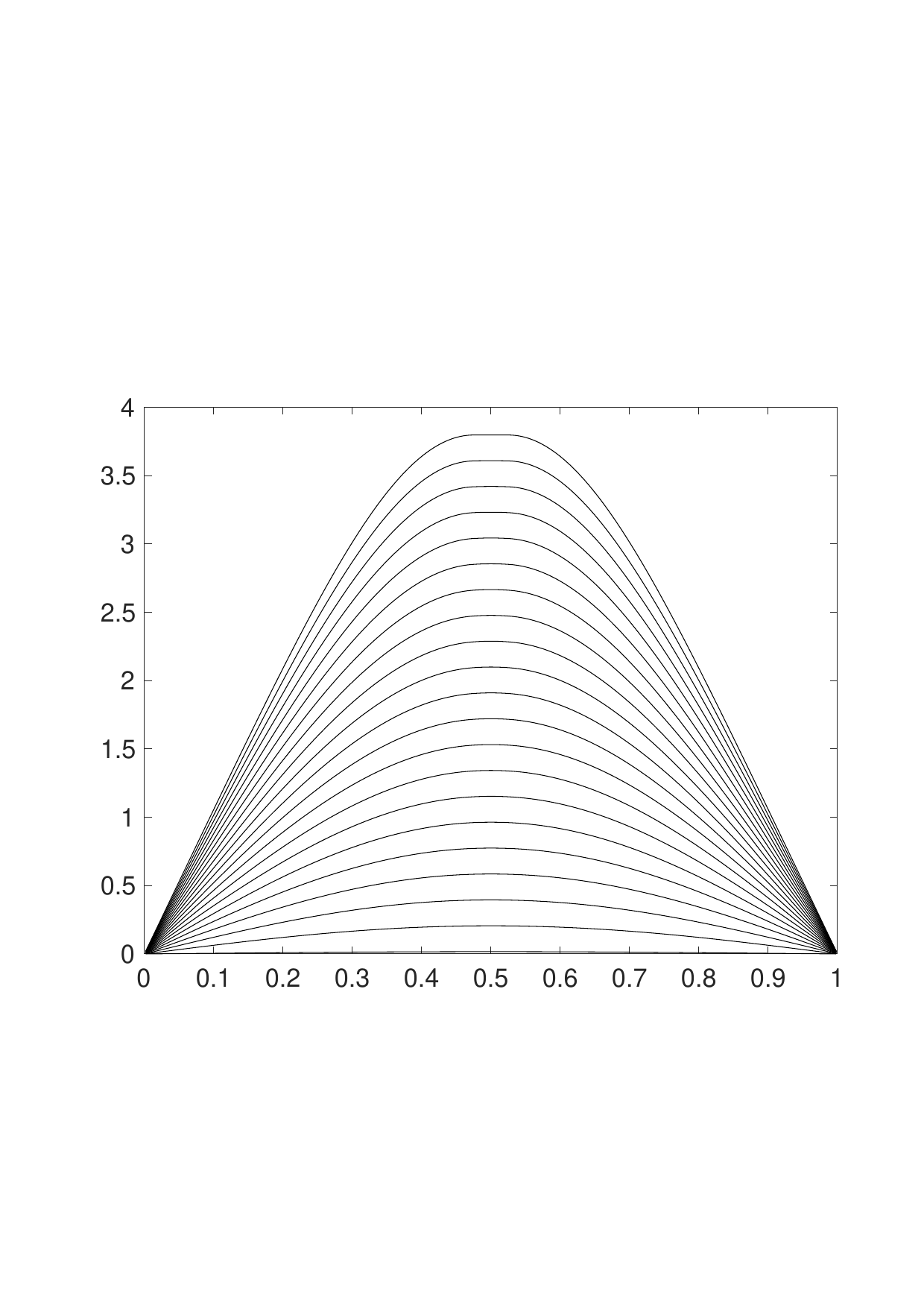} \put (12,82.5) {\tiny$u(x)$}
	\put (96,15) {\tiny$x$}
\end{overpic} \begin{overpic}[scale=0.28,trim = 1cm 5cm 1cm 7cm, clip]{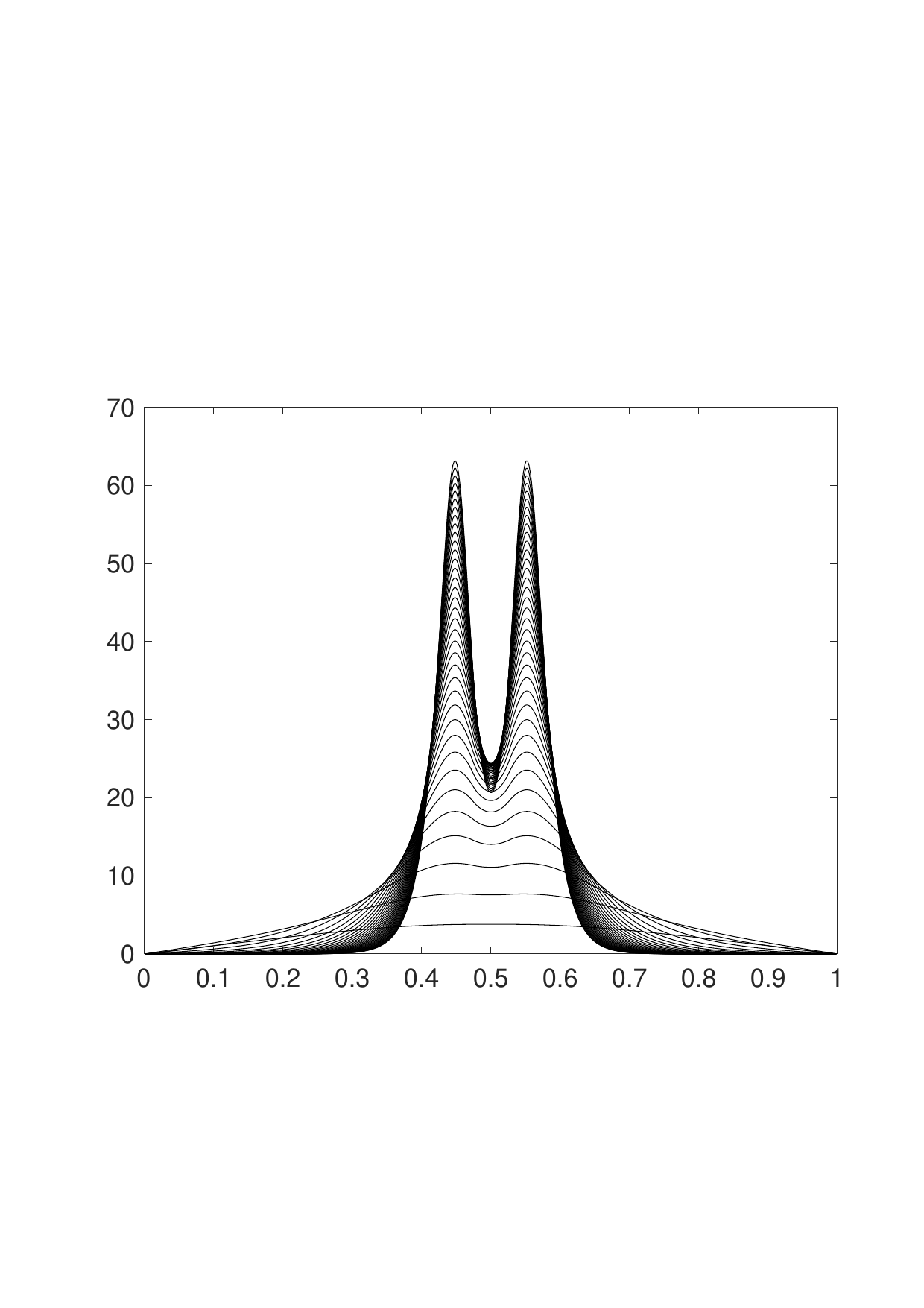} \put (12,82.5) {\tiny$u(x)$}
\put (96,15) {\tiny$x$}
\end{overpic} \\ [-2.5em]
	\begin{overpic}[scale=0.28,trim = 1cm 5cm 1cm 6cm, clip]{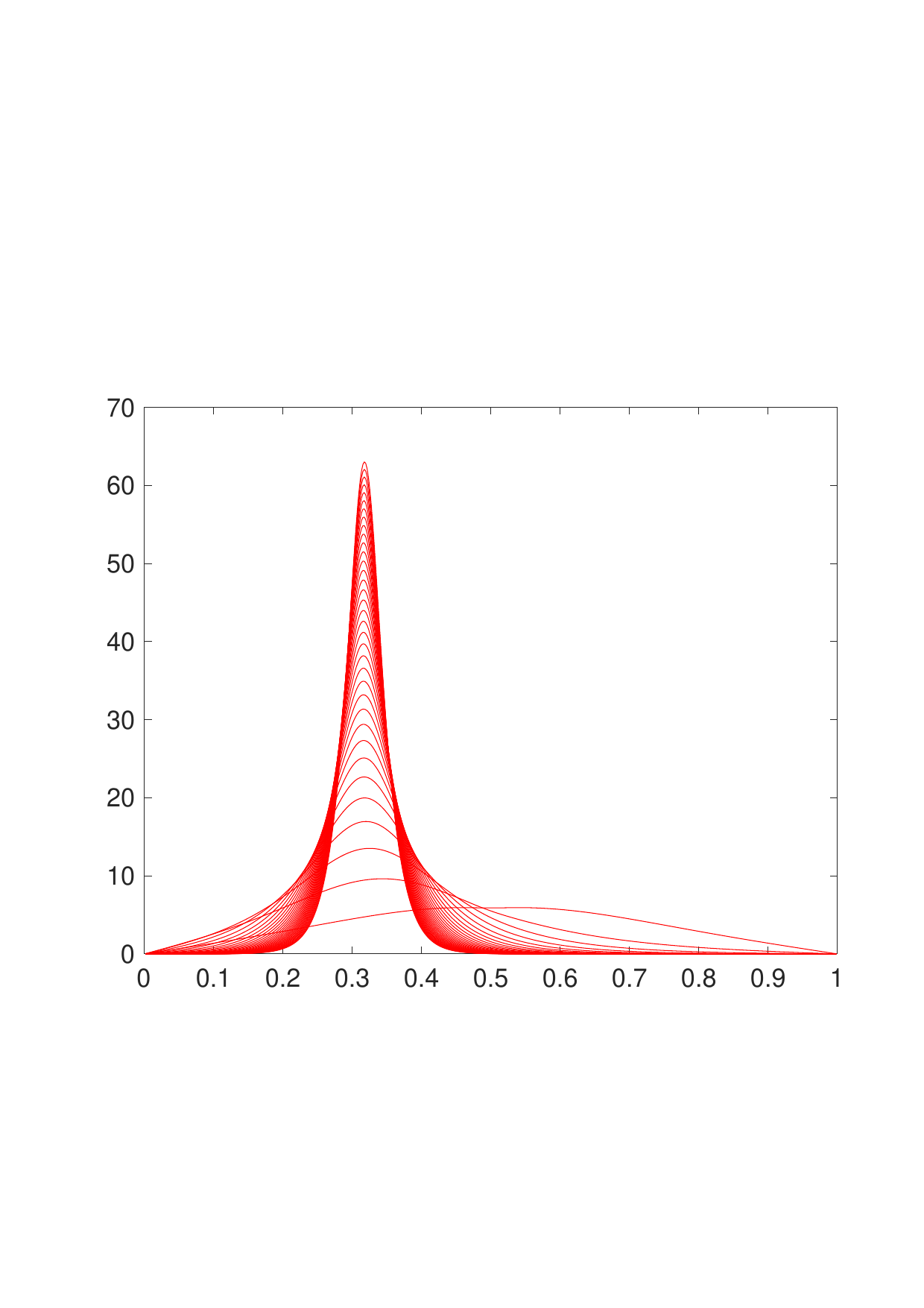} \put (12,82.5) {\tiny$u(x)$}
	\put (96,15) {\tiny$x$}\end{overpic} \begin{overpic}[scale=0.28,trim = 1cm 5cm 1cm 6cm, clip]{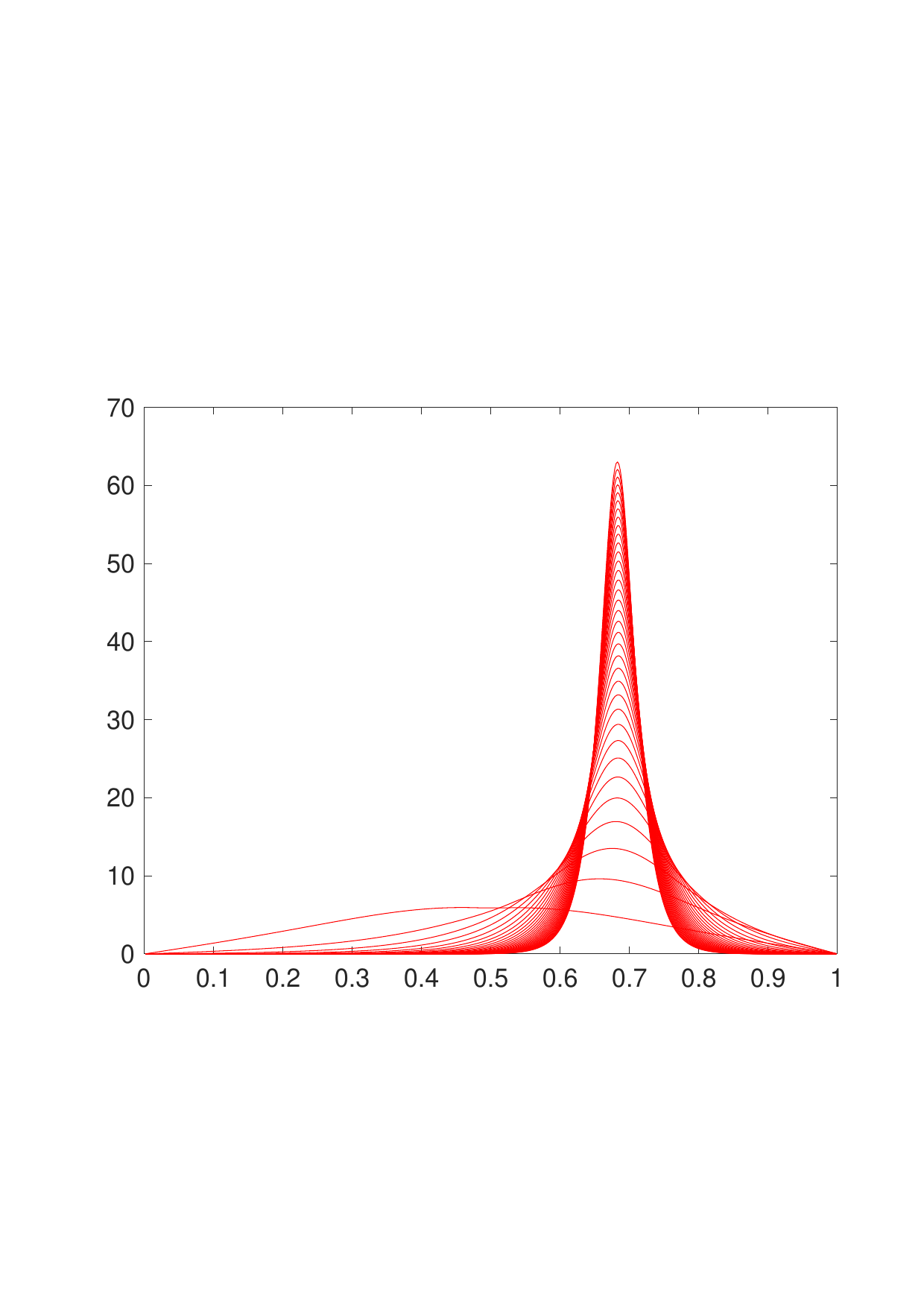} \put (12,82.5) {\tiny$u(x)$}
\put (96,15) {\tiny$x$}\end{overpic}
	\vspace{-0.4cm}
	\caption{A series of plots of positive solutions along the three branches corresponding to the bifurcation diagram of Figure \ref{Fig5}: symmetric branch (first row) for $\l>0$ (left) and $\l<0$ (right), and asymmetric branches for $\l<\l_b$ (second row).}
	\label{Fig6}
\end{figure}

\par
As Table \ref{Tab1} illustrates, our numerical experiments suggest the existence of $h_0 \in (0.1,0.3)$ such that $\lambda_b<0$ if $h\in(0,h_0)$ and $\lambda_b>0$ if $h\in(h_0,1)$. Thus, in Figures \ref{Fig8}, \ref{Fig10} and \ref{Fig12} below, corresponding to $h=0.8$, $h=0.89$ and $h=0.95$ respectively, the secondary bifurcation along the main symmetric branch occurs at a positive value $\l_b$; for this reason we have decided to plot the solutions' profiles along each of the secondary half-branches for $\l>0$ and $\l<0$ separately. We point out that the solutions have the same concavity patterns observed for symmetric solutions along the main bifurcated branch in the case $h=0.05$ (see Figure \ref{Fig6}). Observe that, in all the rows, the last plotted profile of the first column coincides with the first profile plotted in the second column. This allows us to show how the solutions emerge, either from $(\pi^2,0)$ or on the secondary bifurcation point at $\l=\l_b$, and then evolve on the corresponding branch.

\par We analyze now more in depth the solutions' profile for $\l<0$. Fix such a $\l$ and suppose that $x_m\in (0,1)$ is a local maximum of a positive solution $u$ of \eqref{1.1}. Then, $u'(x_m)=0$ and $u''(x_m)\leq 0$. Thus,
\begin{equation*}
0\leq -u''(x_m)=[\lambda + a(x_m) u^2(x_m)]u(x_m)
\end{equation*}
and hence,
\begin{equation*}
a(x_m) u^2(x_m)\geq -\l >0.
\end{equation*}
Therefore, $a(x_m)>0$, which entails $a(x_m)=1$. Consequently, for $\l<0$, the local maxima of the positive solutions can only be attained in the region where $a=1$, i.e., either in $[0,\frac{1-h}{2}]$ or
in  $[\frac{1+h}{2},1]$. As a byproduct, the symmetric solution on $\mathscr{C}_0^+$ must have, at least, two peaks: one on each of the intervals $[0,\frac{1-h}{2}]$ and $[\frac{1+h}{2},1]$, as it is apparent from the top right plots of Figure \ref{Fig6}. Moreover, our numerical experiments reveal that each of the asymmetric solutions has exactly one peak in each of these intervals.

\par
The main features of the bifurcation diagrams plotted in Figure \ref{Fig5} for $h=0.05$ are preserved until
$h$ reaches some intermediate value $h^*\in (0.4,0.5)$. Then, for every $h\in (h^*,1)$, a new phenomenon arises. Namely, the norm of the solutions along $\mathscr{C}_0^+$ increases very quickly in a short interval of $\l$'s as $\l$ separates from $\pi^2$, until it reaches a value denoted by $\l^*$ that lies slightly before $\l_b$. Then, for $\l<\l^*$, the norm decreases again very quickly, as illustrated in the bifurcation diagram of Figure \ref{Fig7}, corresponding to $h=0.8$.

\begin{figure}[h!]
	\centering
	\begin{overpic}[scale=0.28,trim = 1cm 5cm 1cm 7cm, clip]{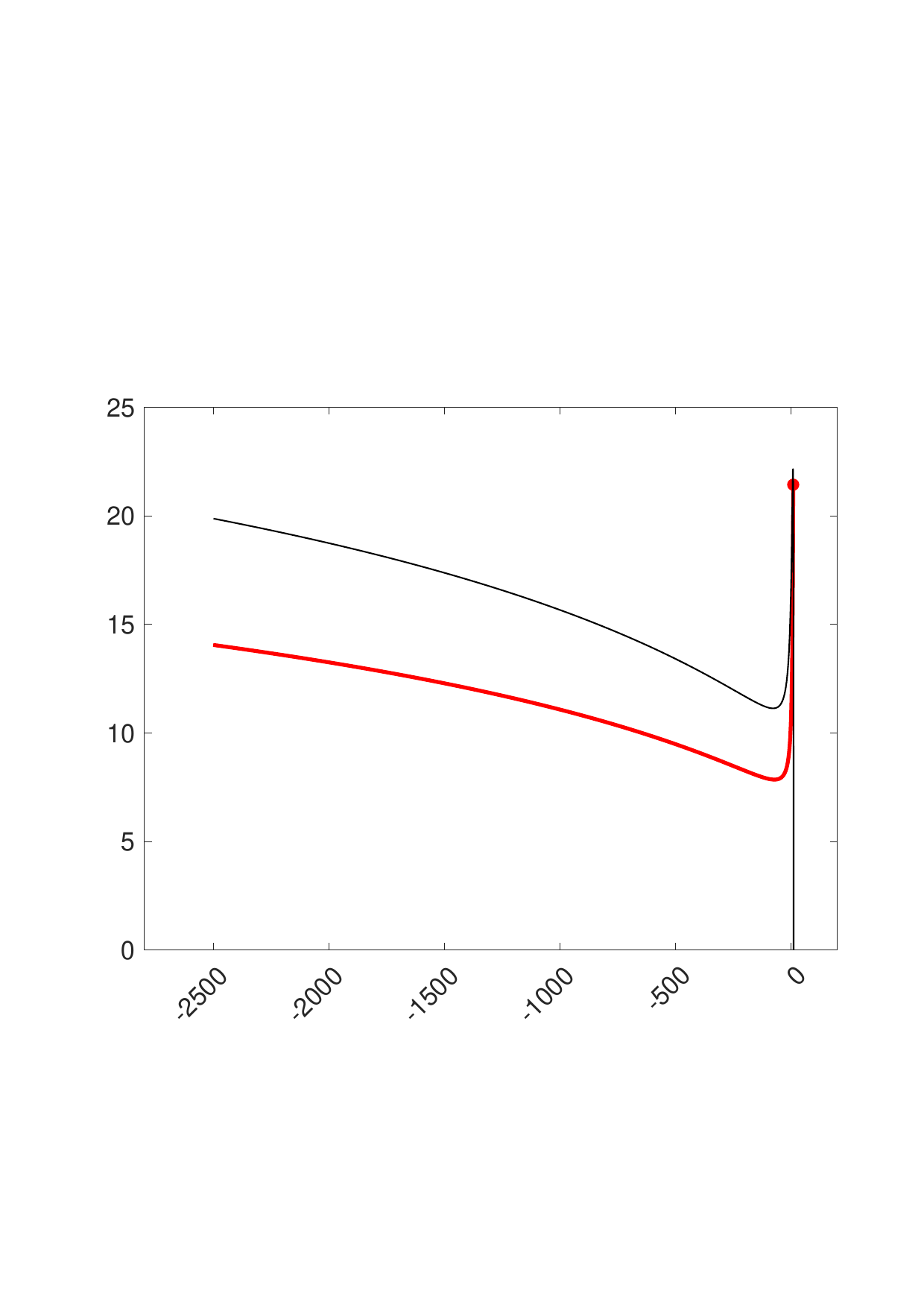} \put (12,84) {\tiny$\|u\|_2$}
		\put (97,15) {\tiny$\l$}
	\end{overpic}
	\begin{overpic}[scale=0.28,trim = 1cm 5cm 1cm 7cm, clip]{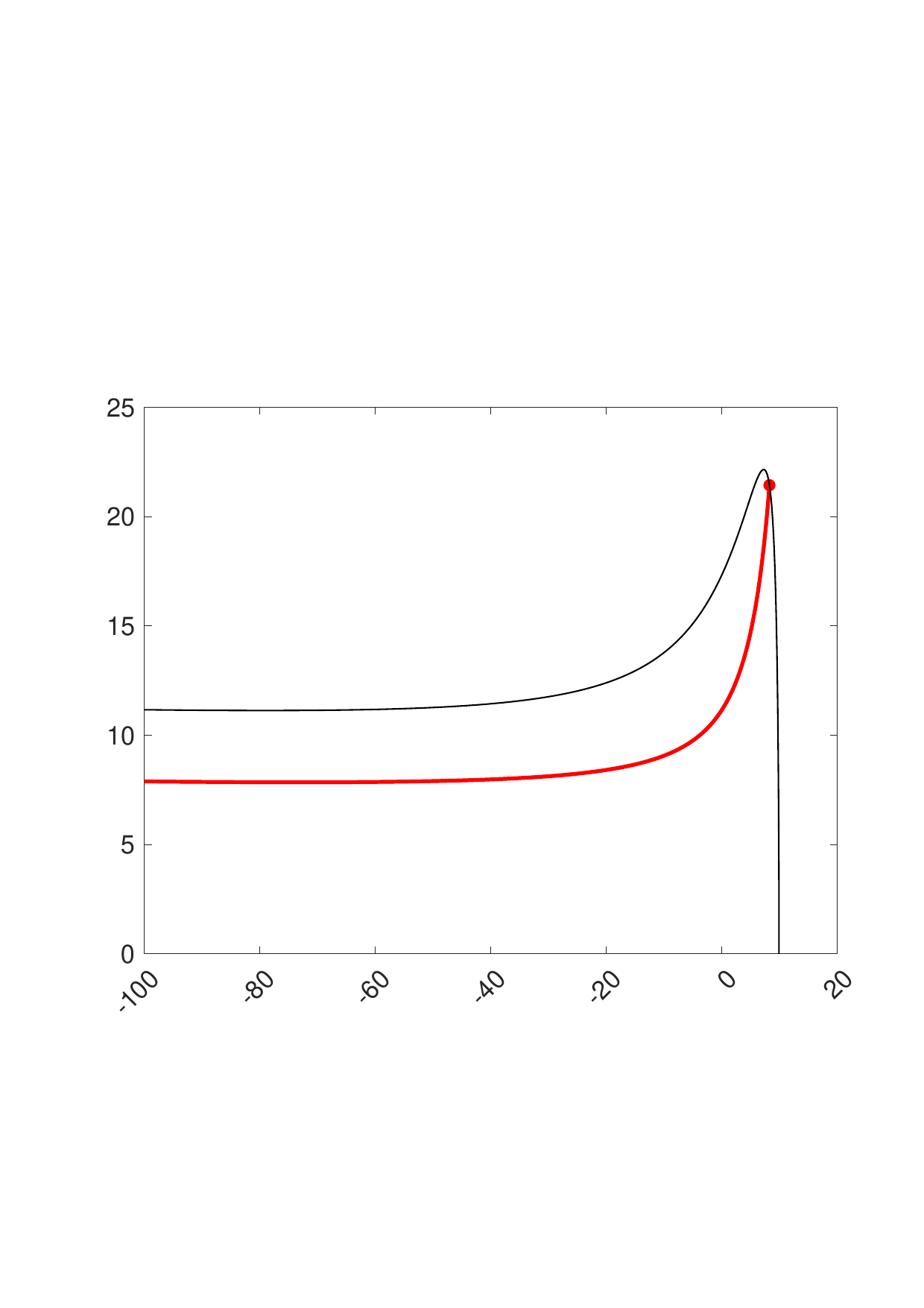} \put (12,84) {\tiny$\|u\|_2$}
		\put (97,15) {\tiny$\l$}
	\end{overpic}
	\vspace{-0.4cm}
	\caption{Bifurcation diagram relative to \eqref{1.1} for $a(x)=a_{1,0}(x)$ with $h=0.8$ (left) and a zoom of it (right).}
	\label{Fig7}
\end{figure}

\begin{figure}[h!]
	\centering
	\begin{overpic}[scale=0.28,trim = 1cm 5cm 1cm 9cm, clip]{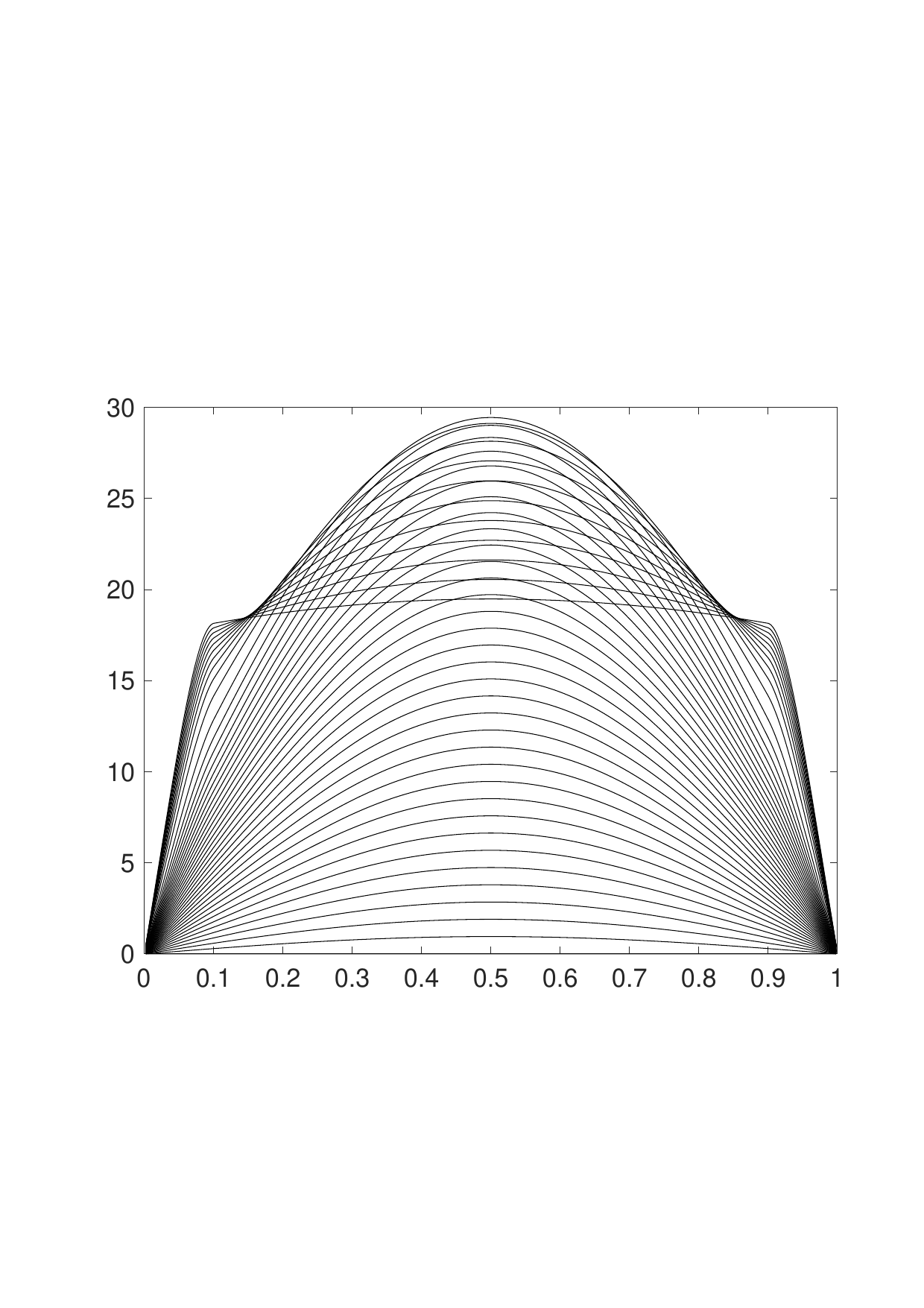} \put (12,83) {\tiny$u(x)$}
		\put (96,15) {\tiny$x$}\end{overpic} \begin{overpic}[scale=0.28,trim = 1cm 5cm 1cm 9cm, clip]{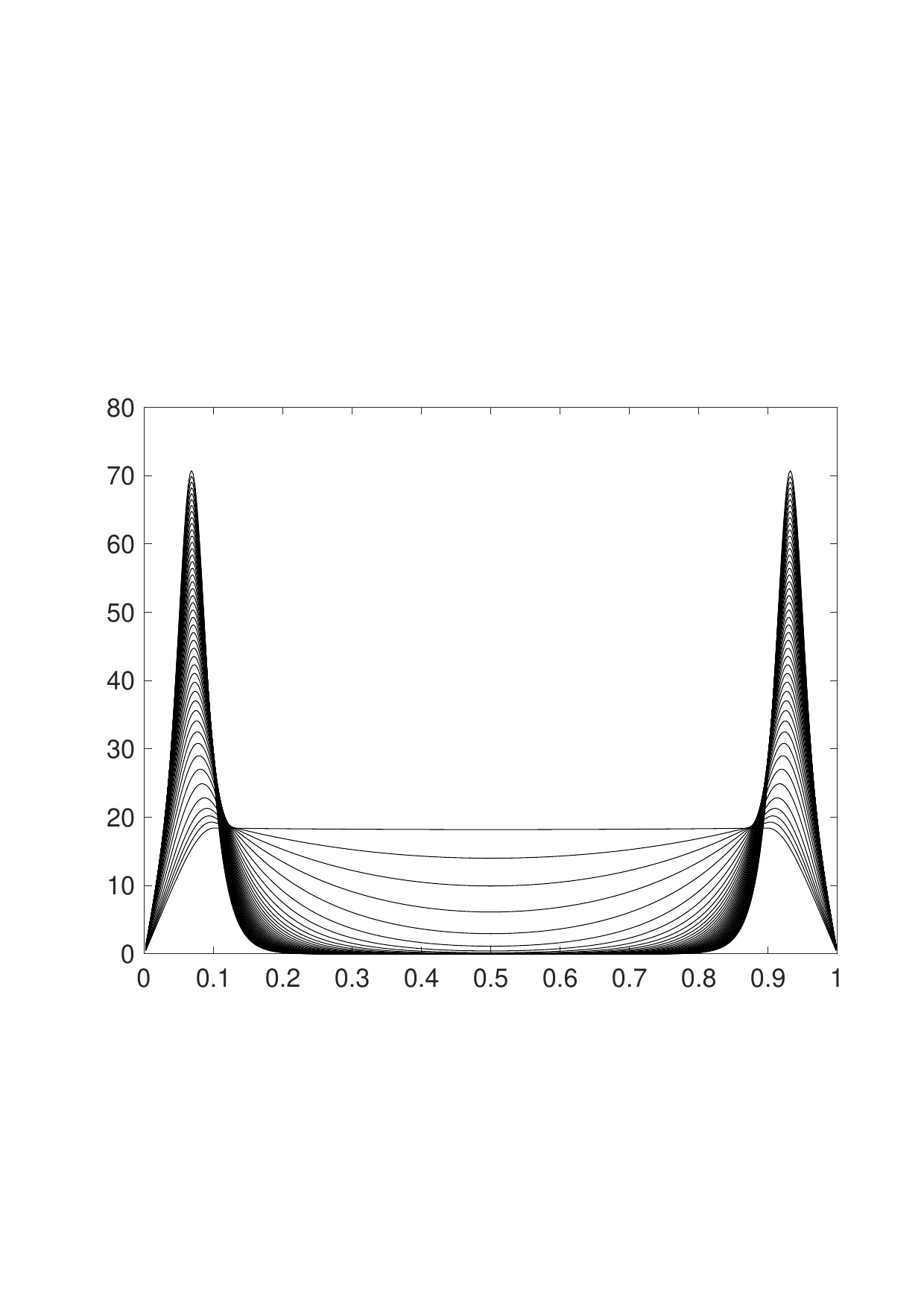} \put (12,83) {\tiny$u(x)$}
		\put (96,15) {\tiny$x$}\end{overpic} \\ [-2.5em]
	\begin{overpic}[scale=0.28,trim = 1cm 5cm 1cm 6cm, clip]{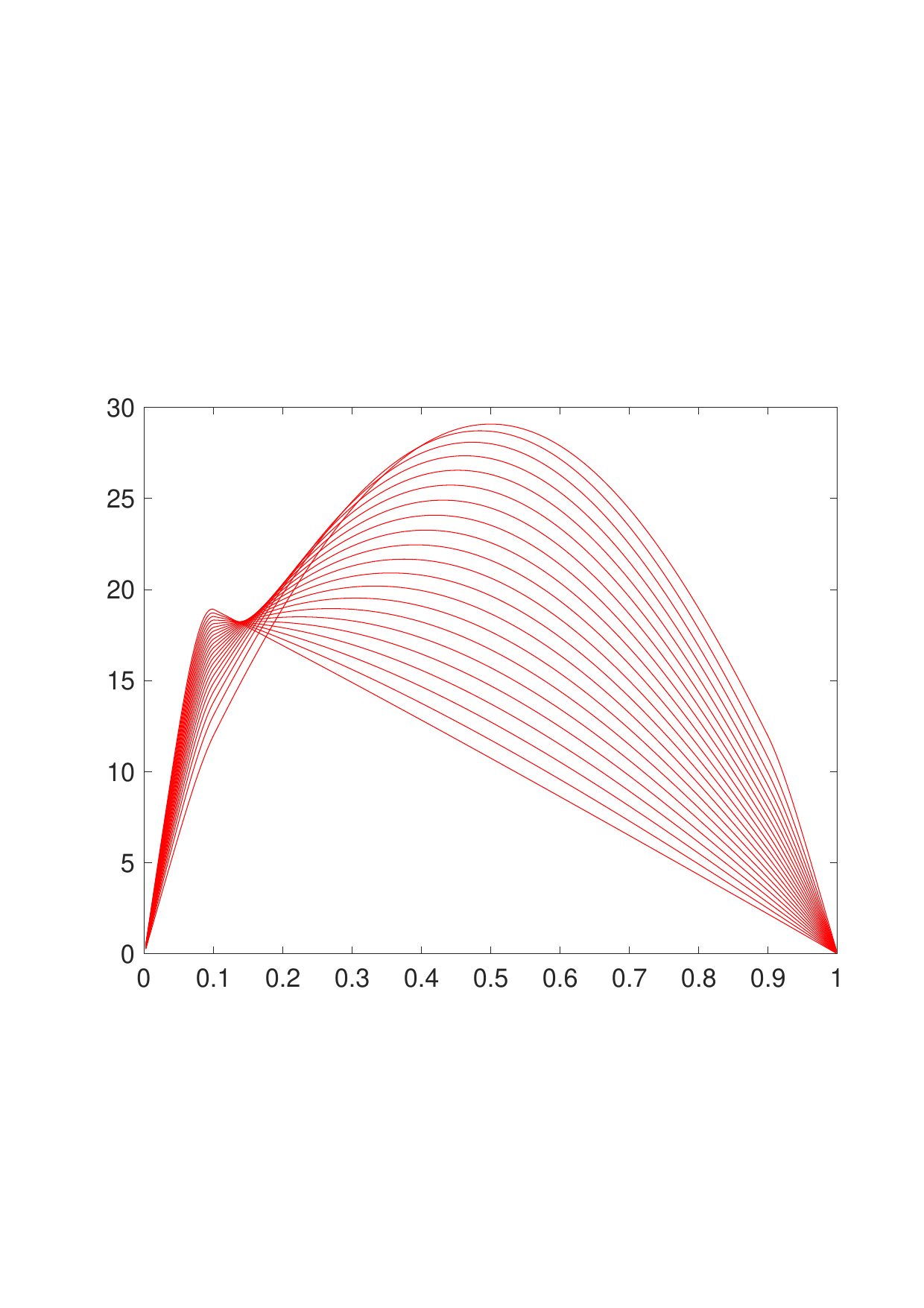} \put (12,83) {\tiny$u(x)$}
		\put (96,15) {\tiny$x$}\end{overpic} \begin{overpic}[scale=0.28,trim = 1cm 5cm 1cm 6cm, clip]{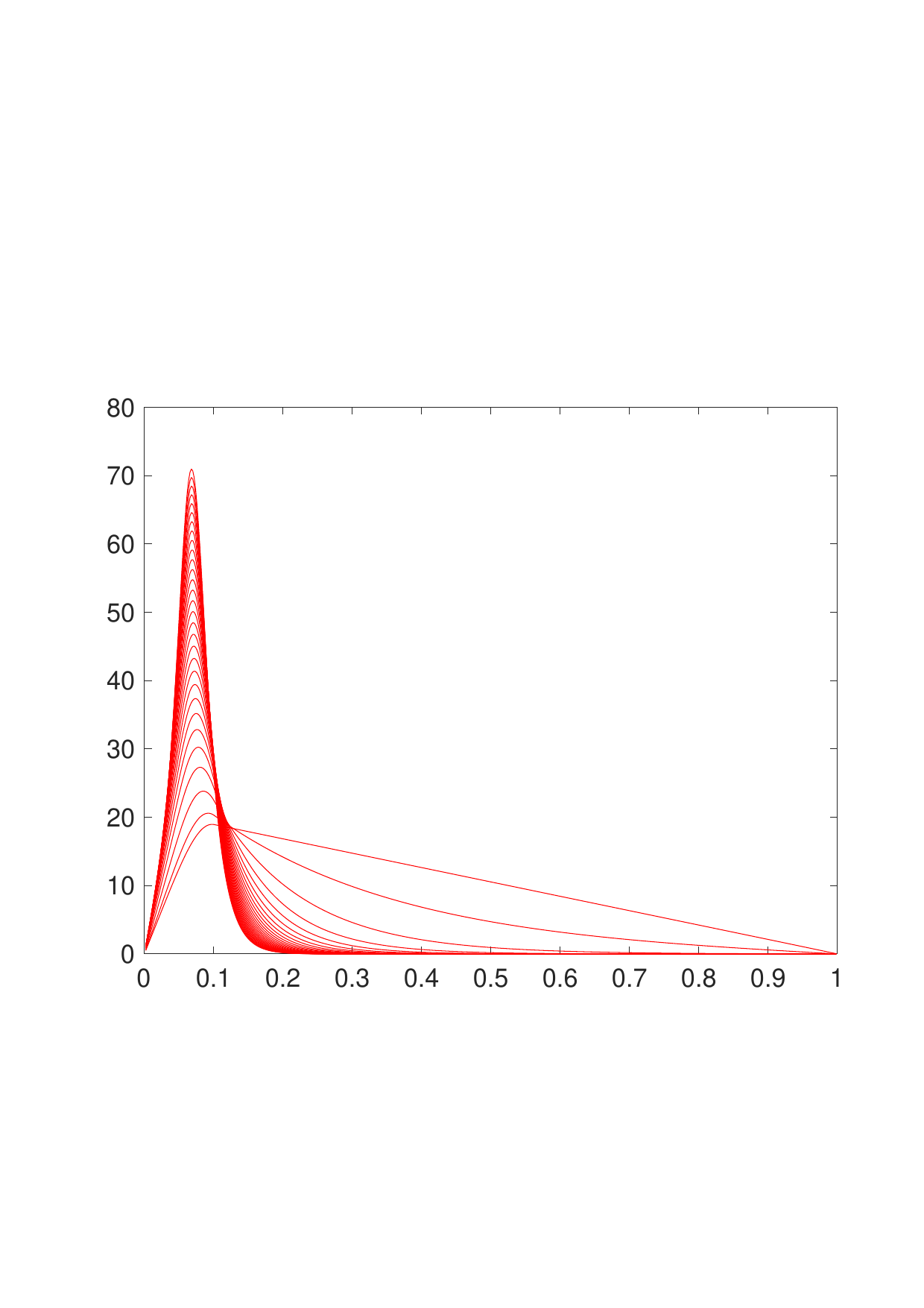} \put (12,83) {\tiny$u(x)$}
		\put (96,15) {\tiny$x$}\end{overpic}\\  [-2.5em]
	\begin{overpic}[scale=0.28,trim = 1cm 5cm 1cm 6cm, clip]{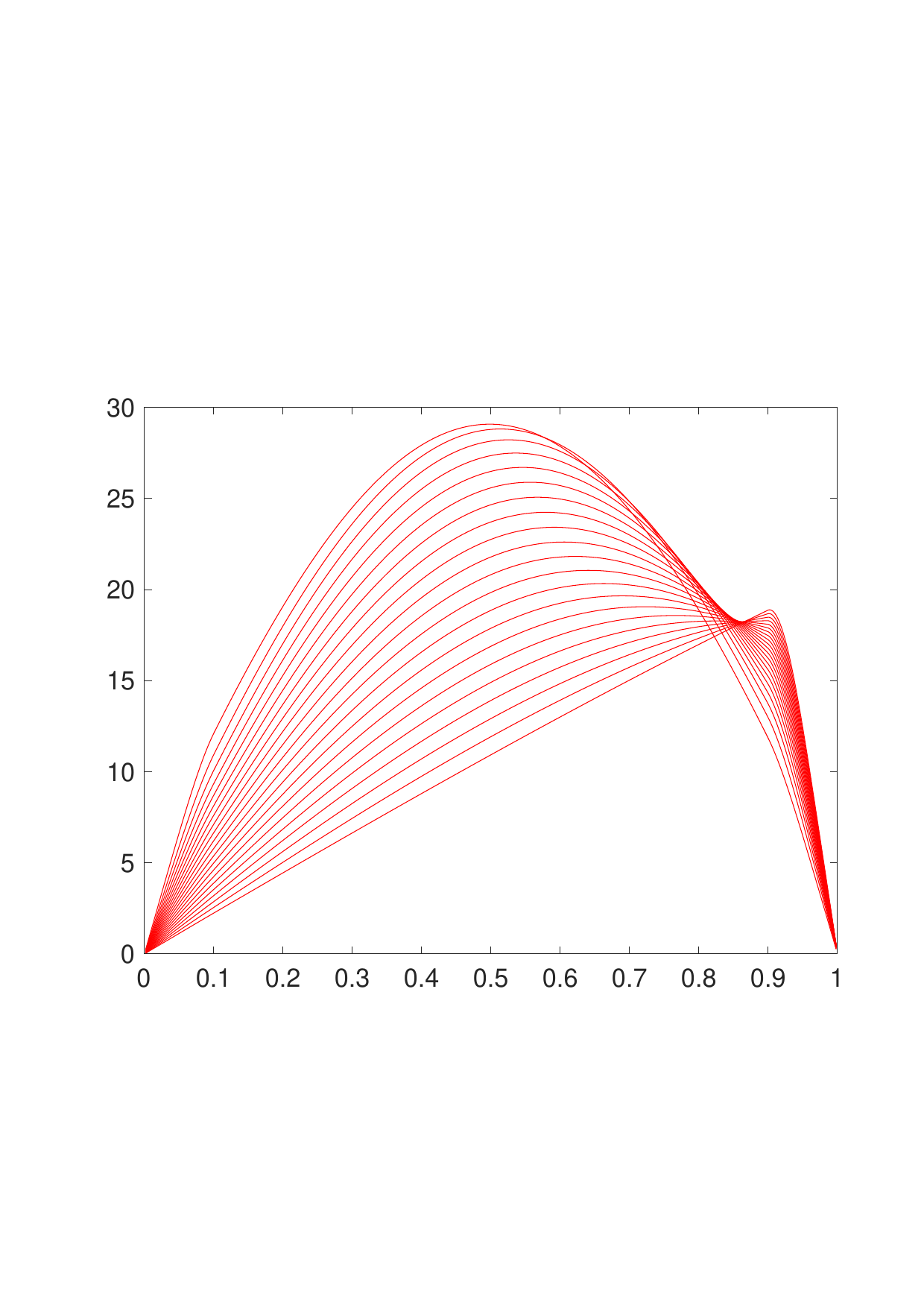} \put (12,83) {\tiny$u(x)$}
		\put (96,15) {\tiny$x$}\end{overpic} \begin{overpic}[scale=0.28,trim = 1cm 5cm 1cm 6cm, clip]{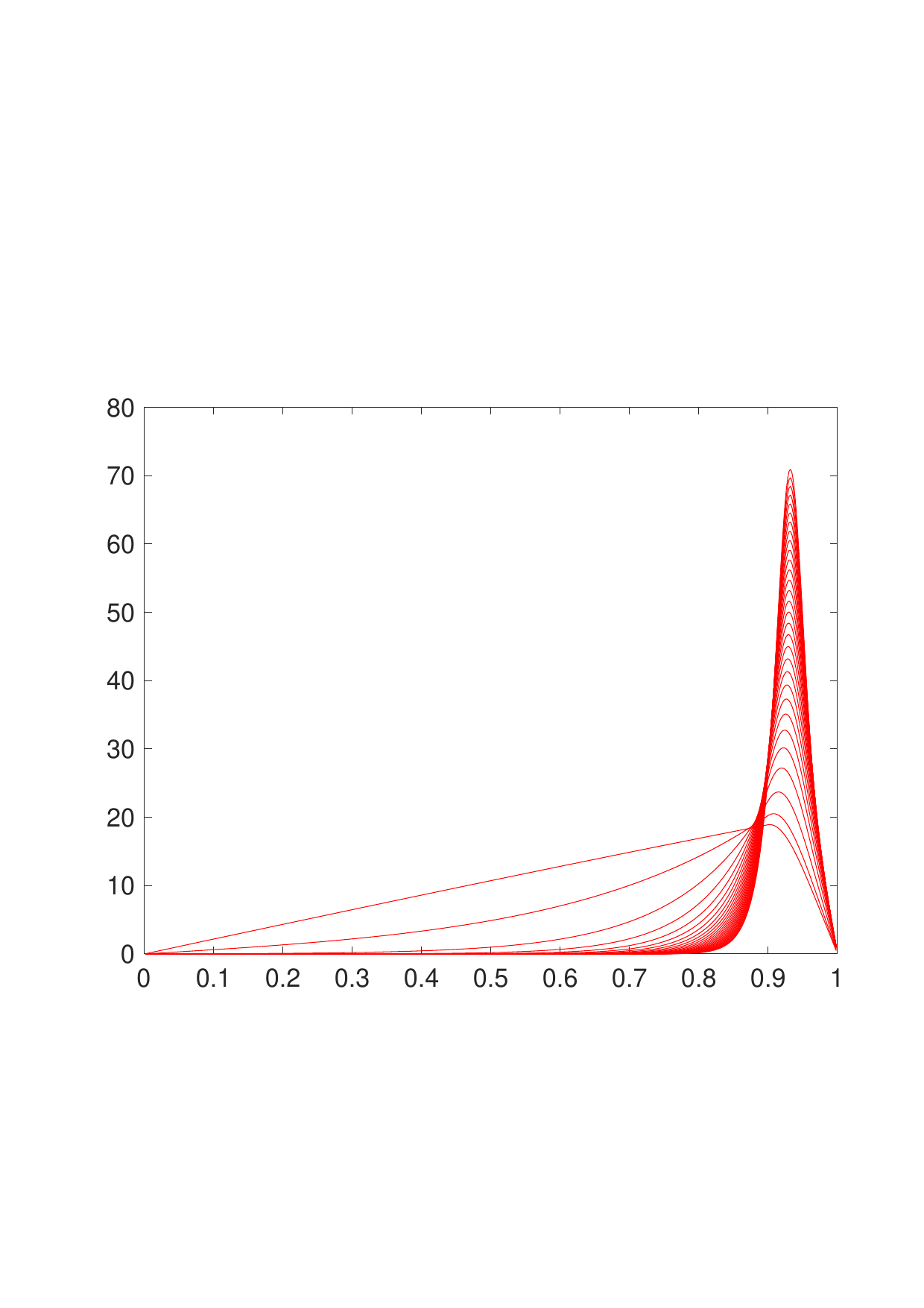} \put (12,83) {\tiny$u(x)$}
		\put (96,15) {\tiny$x$}\end{overpic}
	\vspace{-0.4cm}
	\caption{A series of plots of positive solutions along the three branches corresponding to the bifurcation diagram of Figure \ref{Fig7}: symmetric branch (first row) for $\l>0$ (left) and $\l<0$ (right), and asymmetric branches for $\l<\l_b\sim 8.21472$ (second and third row).}
	\label{Fig8}
\end{figure}

Since, according to Table \ref{Tab1}, $\l_b=8.21472$ for $h=0.8$, the secondary bifurcation value is relatively close to the bifurcation point
of $\mathscr{C}_0^+$ from $(\lambda,0)$, which arises  at $\l=\pi^2\sim 9.86965$. Thus, for $\l\sim\pi^2$, both symmetric and asymmetric solutions of the component $\mathscr{C}_0^+$ are concave and are close to a multiple of $\sin (\pi x)$, the principal eigenfunction of $-D^2$ in $[0,1]$ under Dirichlet boundary conditions. When $\l<0$, instead, one and two peaks arise in the region where $a>0$ for asymmetric solutions and for symmetric solutions, respectively, as shown above.
\par
The closer  $h$ is to $1$, the more emphasized is the rise and subsequent fall of the branches of the bifurcation diagram for $\l$ close to $\pi^2$, which is well illustrated in Figure \ref{Fig9} for the case $h=0.89$. The corresponding solutions' profile are shown in Figure \ref{Fig10}, where one can observe similar patterns as those described above. In this case, according to Table \ref{Tab1}, $\l_b\sim 8.95476$.

\begin{figure}[h!]
	\centering
	\begin{overpic}[scale=0.28,trim = 1cm 5cm 1cm 7cm, clip]{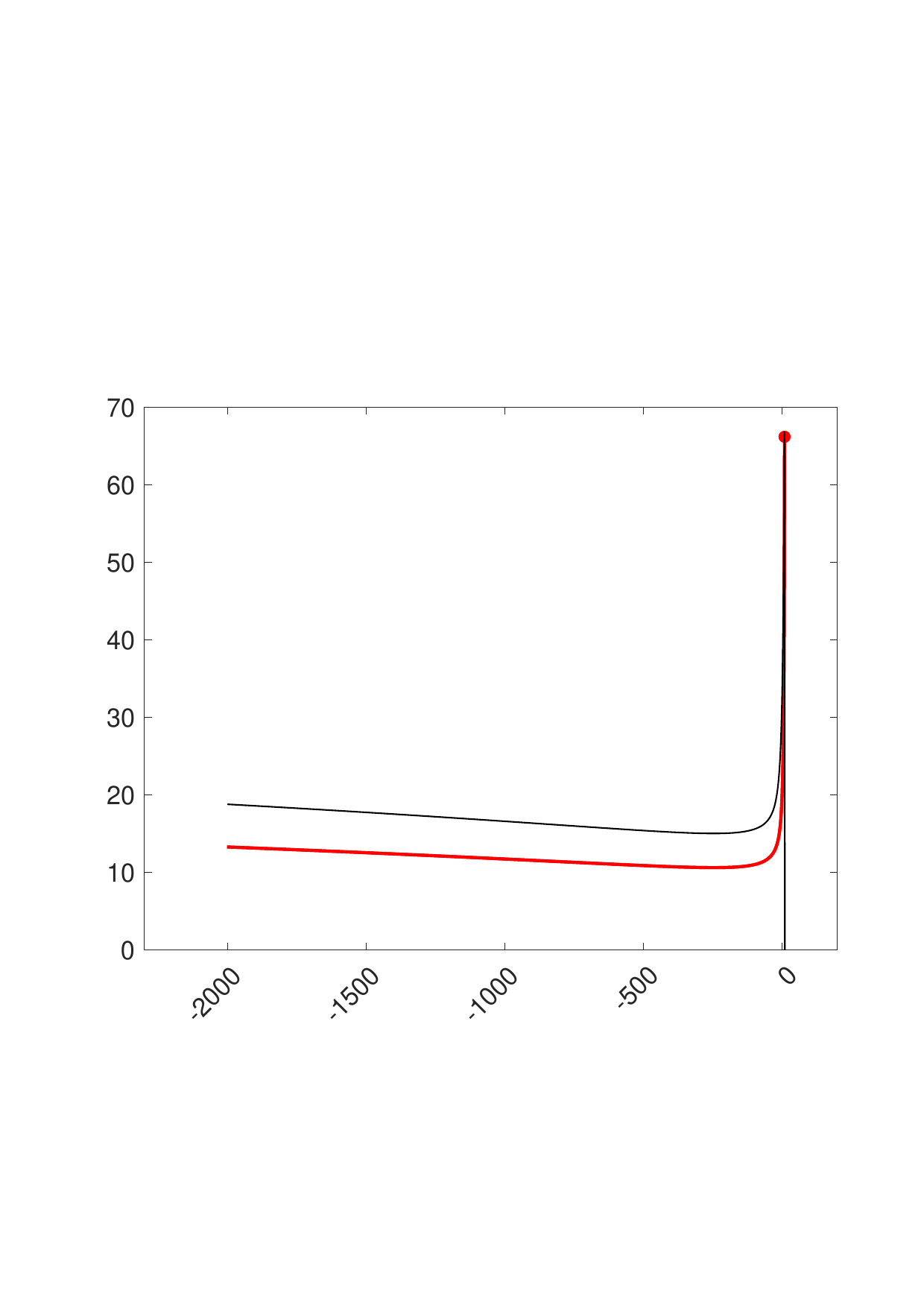} \put (12,84) {\tiny$\|u\|_2$}
	\put (97,15) {\tiny$\l$}
\end{overpic}
	\begin{overpic}[scale=0.28,trim = 1cm 5cm 1cm 7cm, clip]{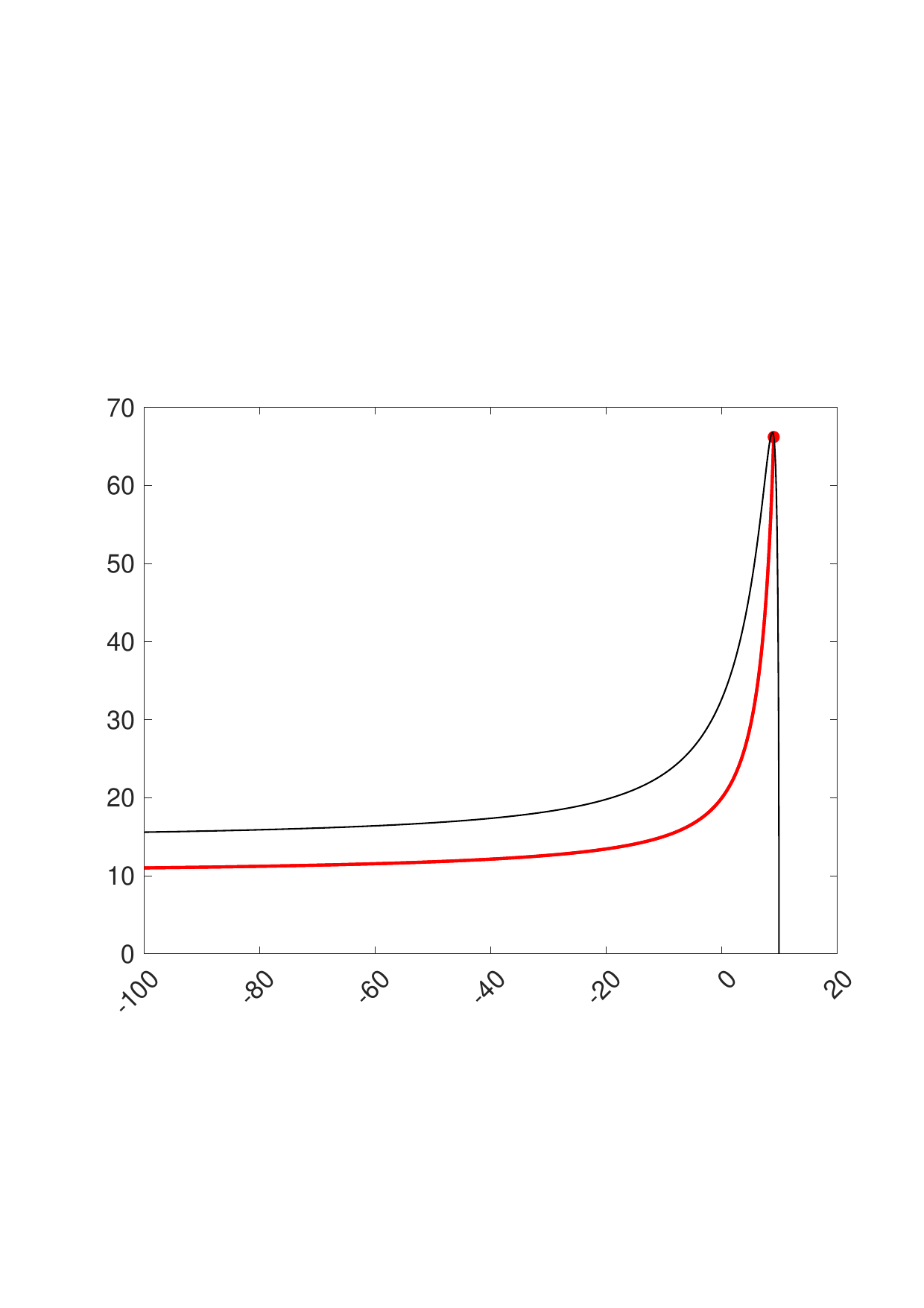} \put (12,84) {\tiny$\|u\|_2$}
	\put (97,15) {\tiny$\l$}
\end{overpic}
	\vspace{-0.4cm}
	\caption{Bifurcation diagram relative to \eqref{1.1} for $a(x)=a_{1,0}(x)$ with $h=0.89$ (left) and a zoom of it (right).}
	\label{Fig9}
\end{figure}

\begin{figure}[h!]
	\centering
	\begin{overpic}[scale=0.28,trim = 1cm 5cm 1cm 9cm, clip]{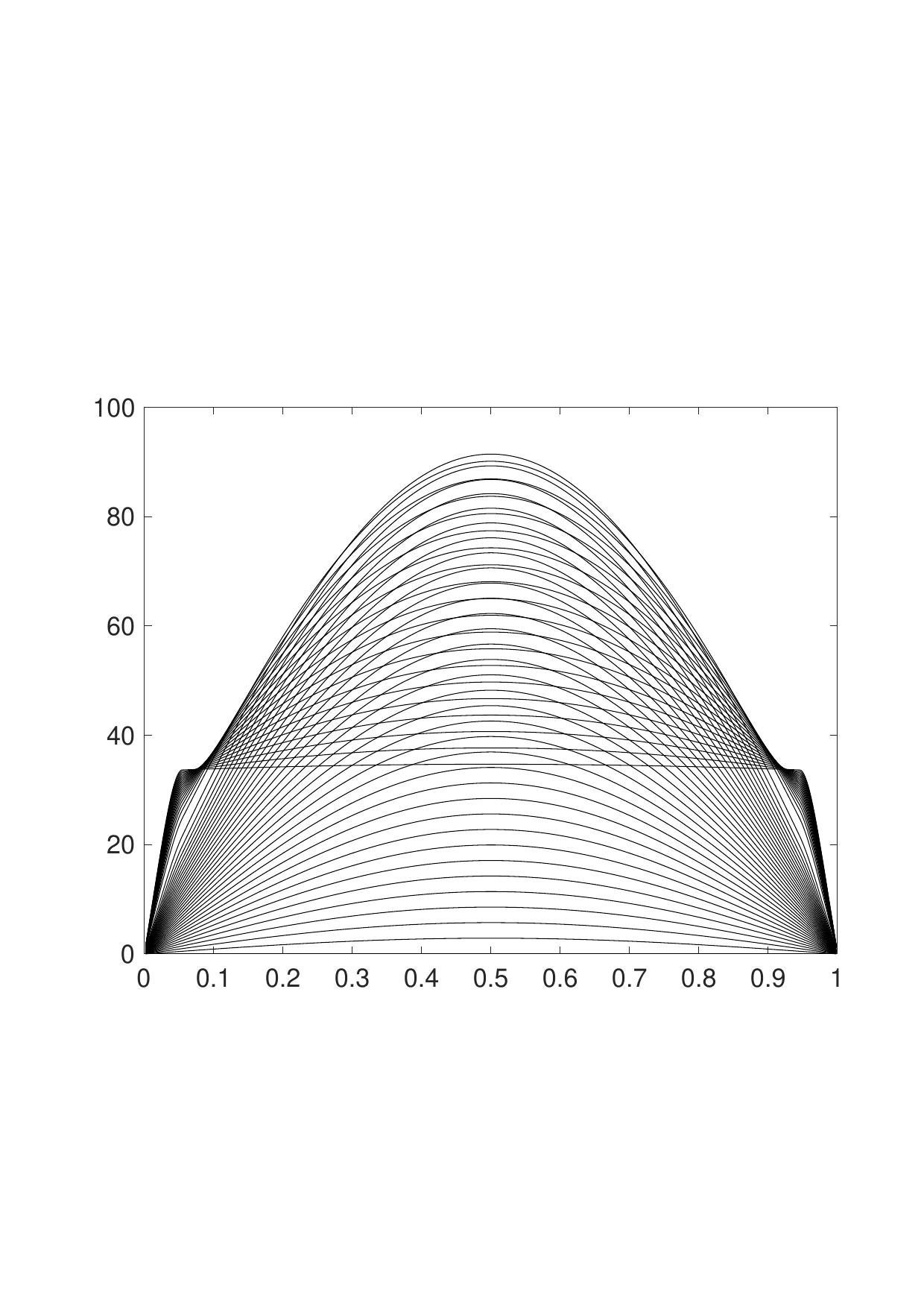}  \put (12,83) {\tiny$u(x)$}
	\put (96.5,15) {\tiny$x$}\end{overpic} \begin{overpic}[scale=0.28,trim = 1cm 5cm 1cm 9cm, clip]{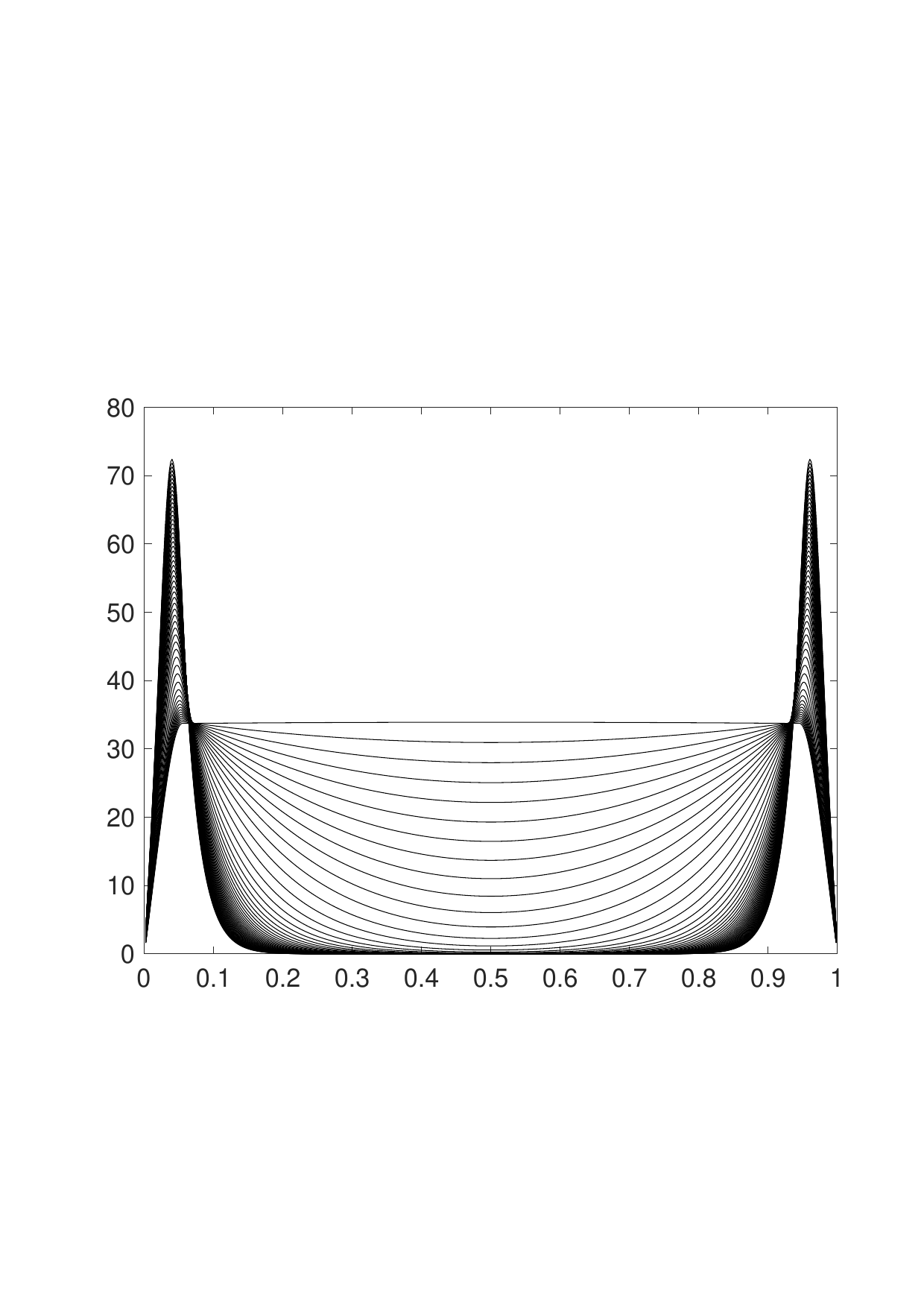}  \put (12,83) {\tiny$u(x)$}
\put (96.5,15) {\tiny$x$}\end{overpic}\\ [-2.5em]
	\begin{overpic}[scale=0.28,trim = 1cm 5cm 1cm 5cm, clip]{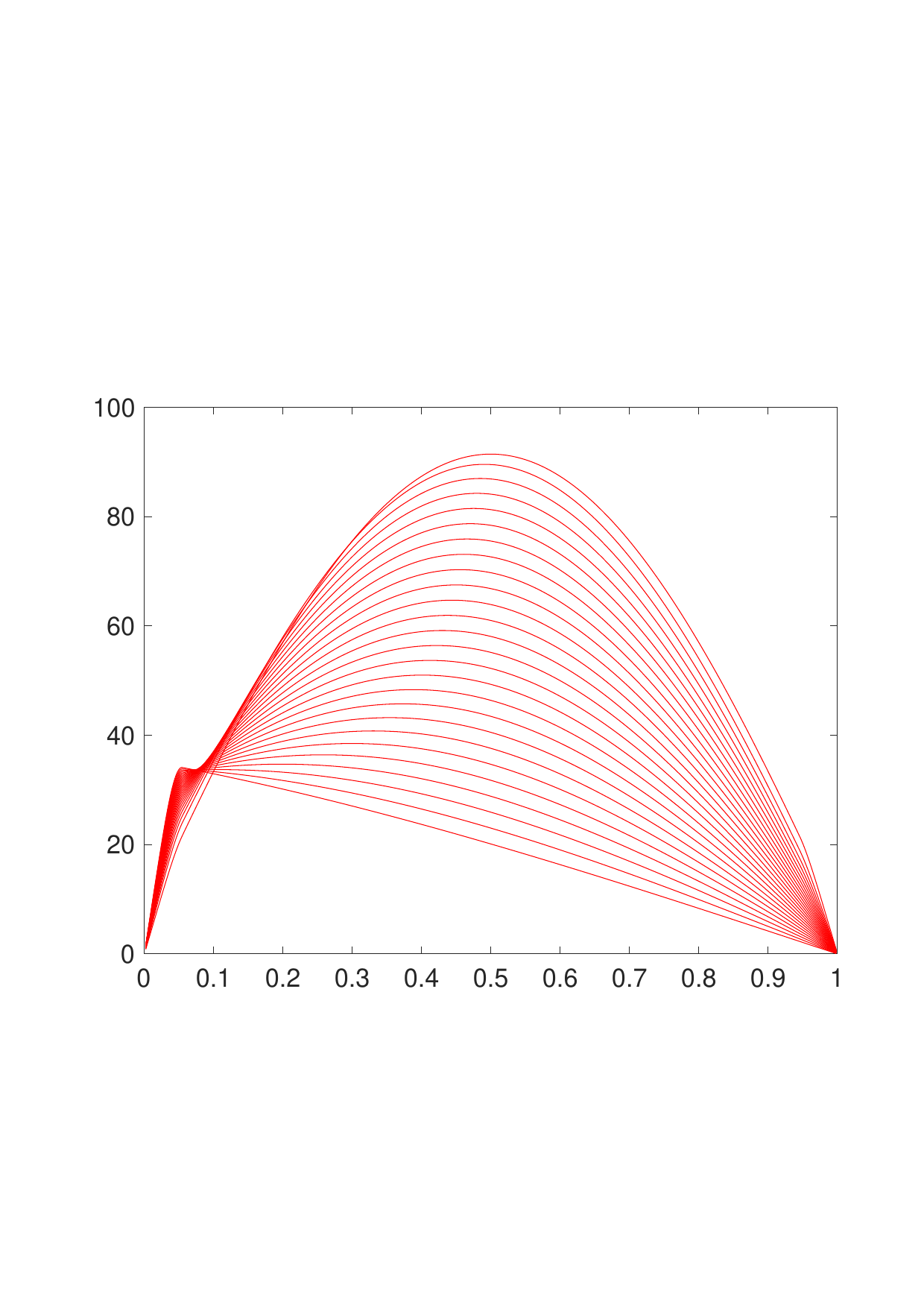}  \put (12,80) {\tiny$u(x)$}
	\put (93,15) {\tiny$x$}\end{overpic} \begin{overpic}[scale=0.28,trim = 1cm 5cm 1cm 5cm, clip]{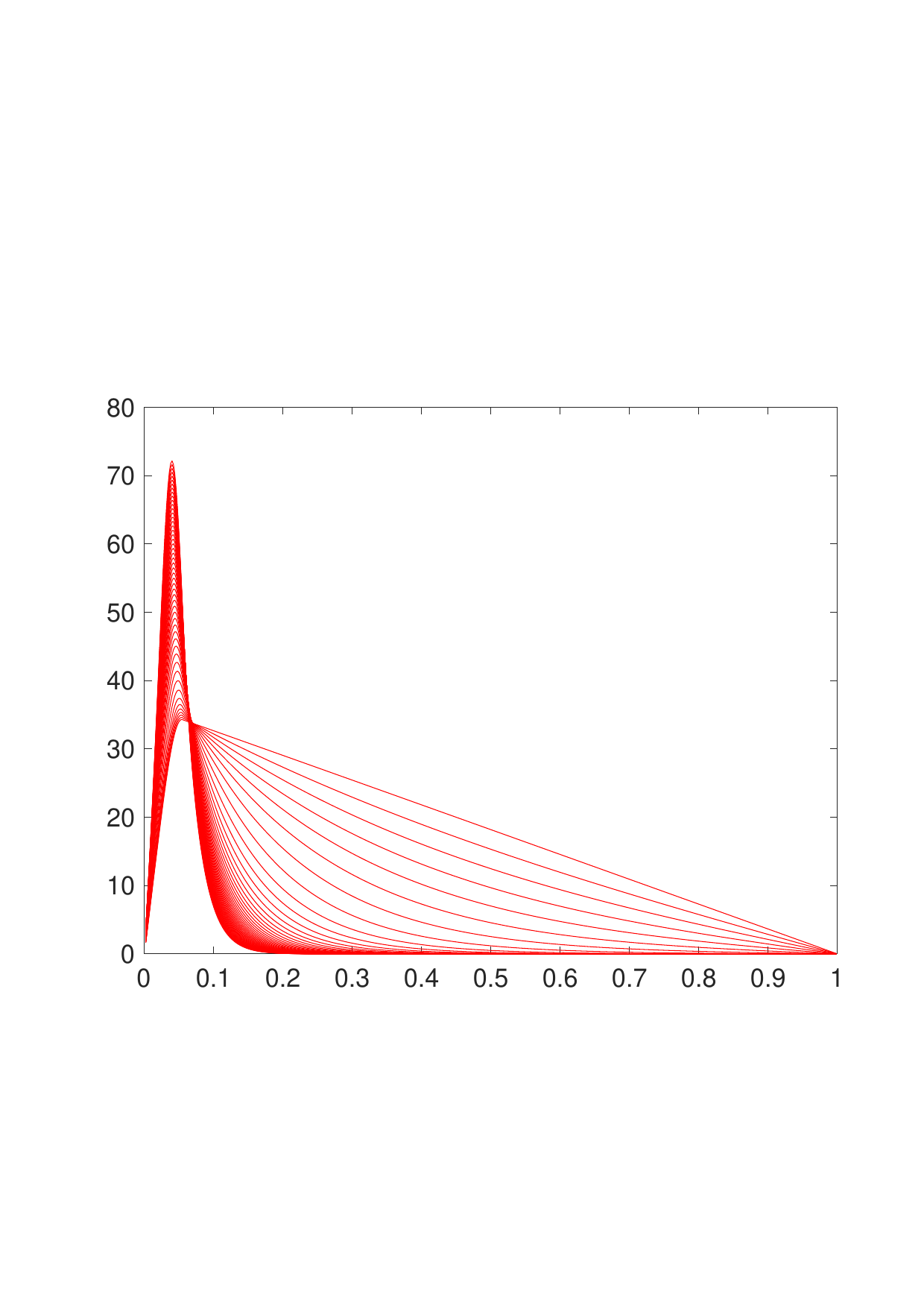}  \put (12,80) {\tiny$u(x)$}
\put (93,15) {\tiny$x$}\end{overpic}\\[-2.5em]
	\begin{overpic}[scale=0.28,trim = 1cm 5cm 1cm 5cm, clip]{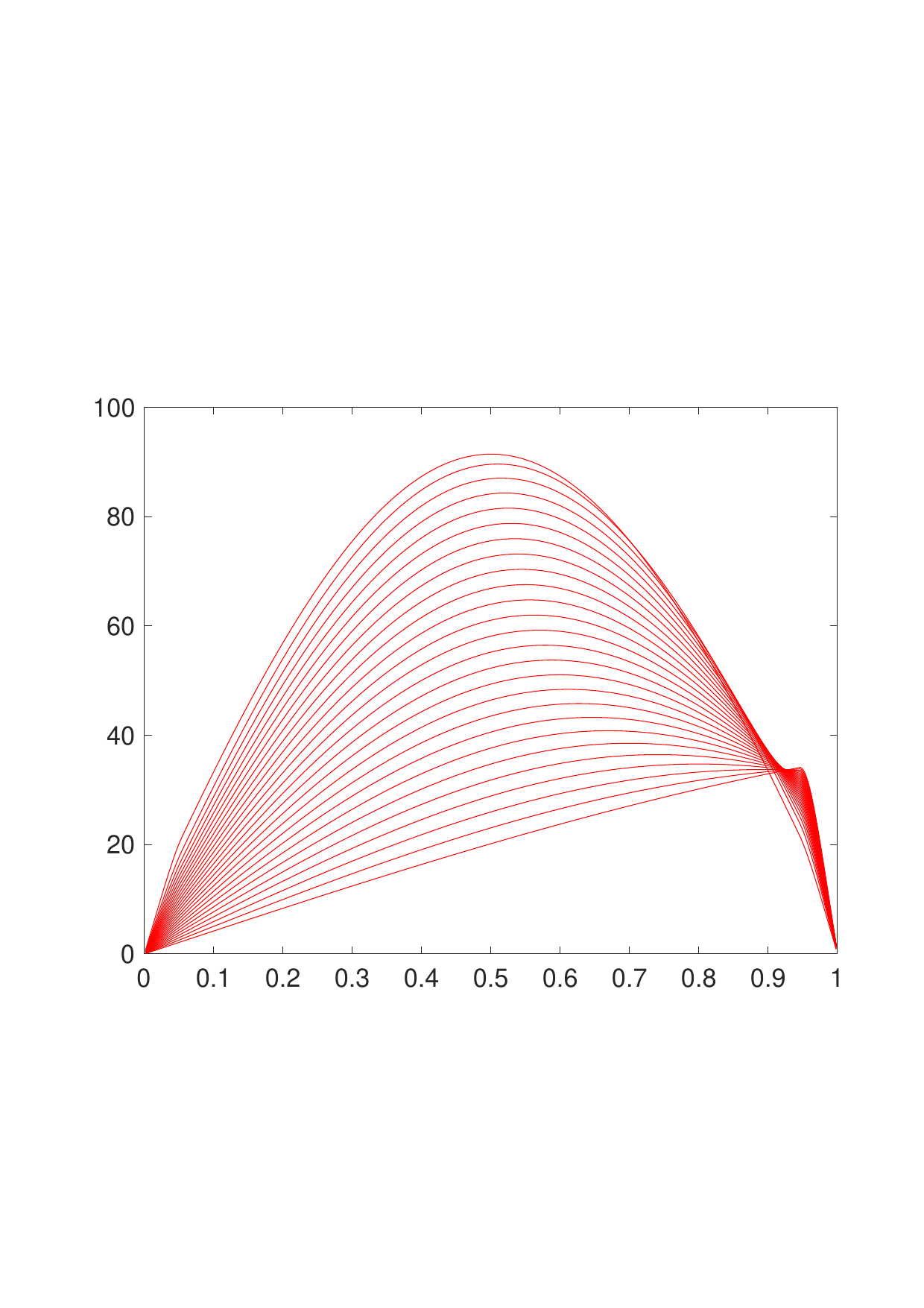}  \put (12,80) {\tiny$u(x)$}
		\put (93,15) {\tiny$x$}\end{overpic} \begin{overpic}[scale=0.28,trim = 1cm 5cm 1cm 5cm, clip]{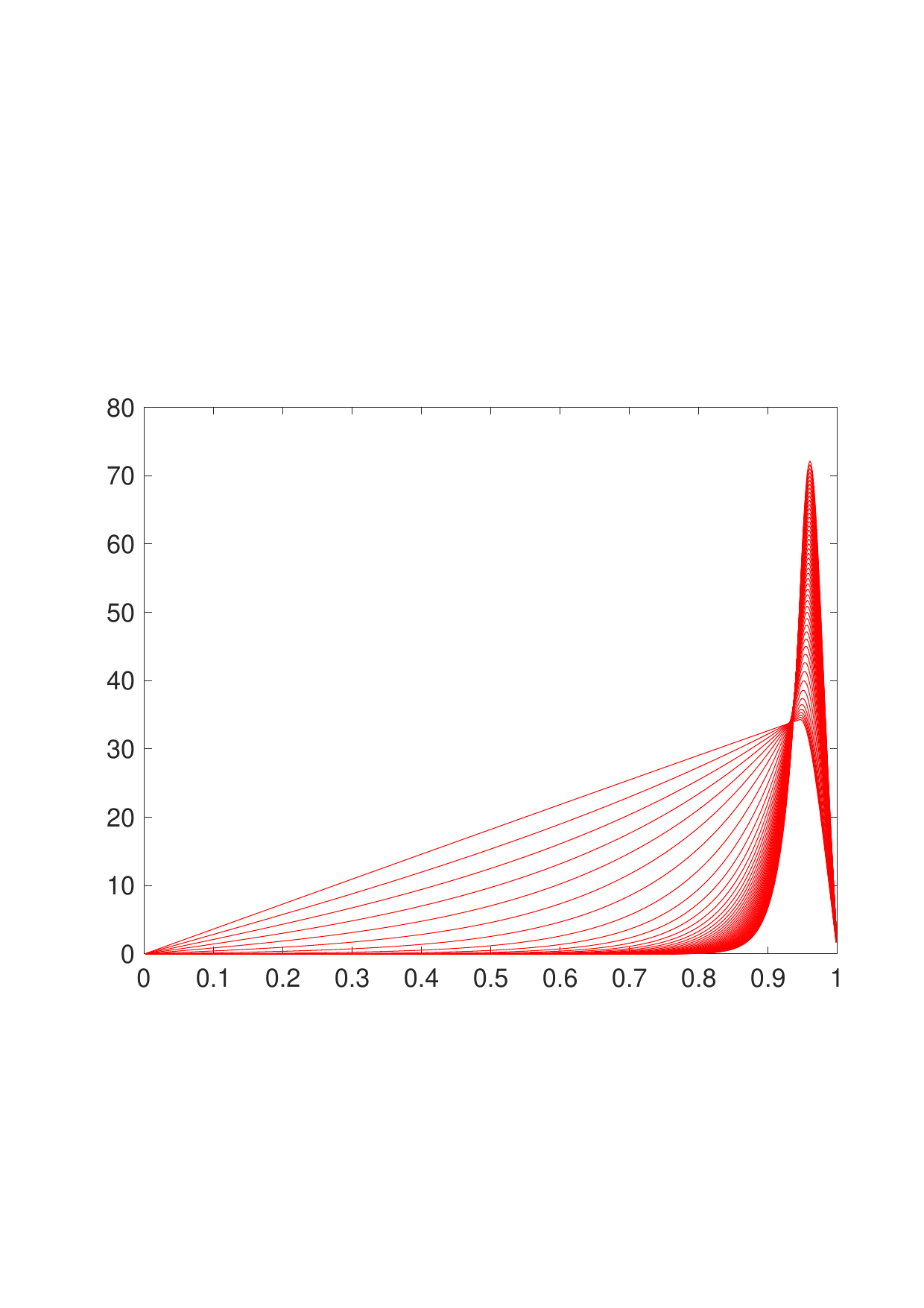}  \put (12,80) {\tiny$u(x)$}
		\put (93,15) {\tiny$x$}\end{overpic}
	\vspace{-0.4cm}
	\caption{A series of plots of positive solutions along the three branches corresponding to the bifurcation diagram of Figure \ref{Fig9}: symmetric branch (first row) for $\l>0$ (left) and $\l<0$ (right), and asymmetric branches for $\l<\l_b\sim 8.95476$ (second and third row).}
	\label{Fig10}
\end{figure}

Finally, Figures \ref{Fig11} and \ref{Fig12} show the global bifurcation diagram corresponding to $h=0.95$ and a series of plots of positive solutions on each of the three branches respectively. In this case, since $\l_b\sim 9.44545$, $\pi^2\sim 9.86965$, and the intervals where $a=1$ shorten to $[0,0.025]$ and $[0.975,1]$ while $a=0$ in $(0.025,0.975)$, the global bifurcation diagram is extremely steep in a neighborhood of $\pi^2$, adding a number of technical difficulties to its computation (see Section \ref{sec:7}). Thus, we have stopped our numerical experiments here.

\begin{figure}[h!]
	\centering
	\begin{overpic}[scale=0.28,trim = 1cm 5cm 1cm 9cm, clip]{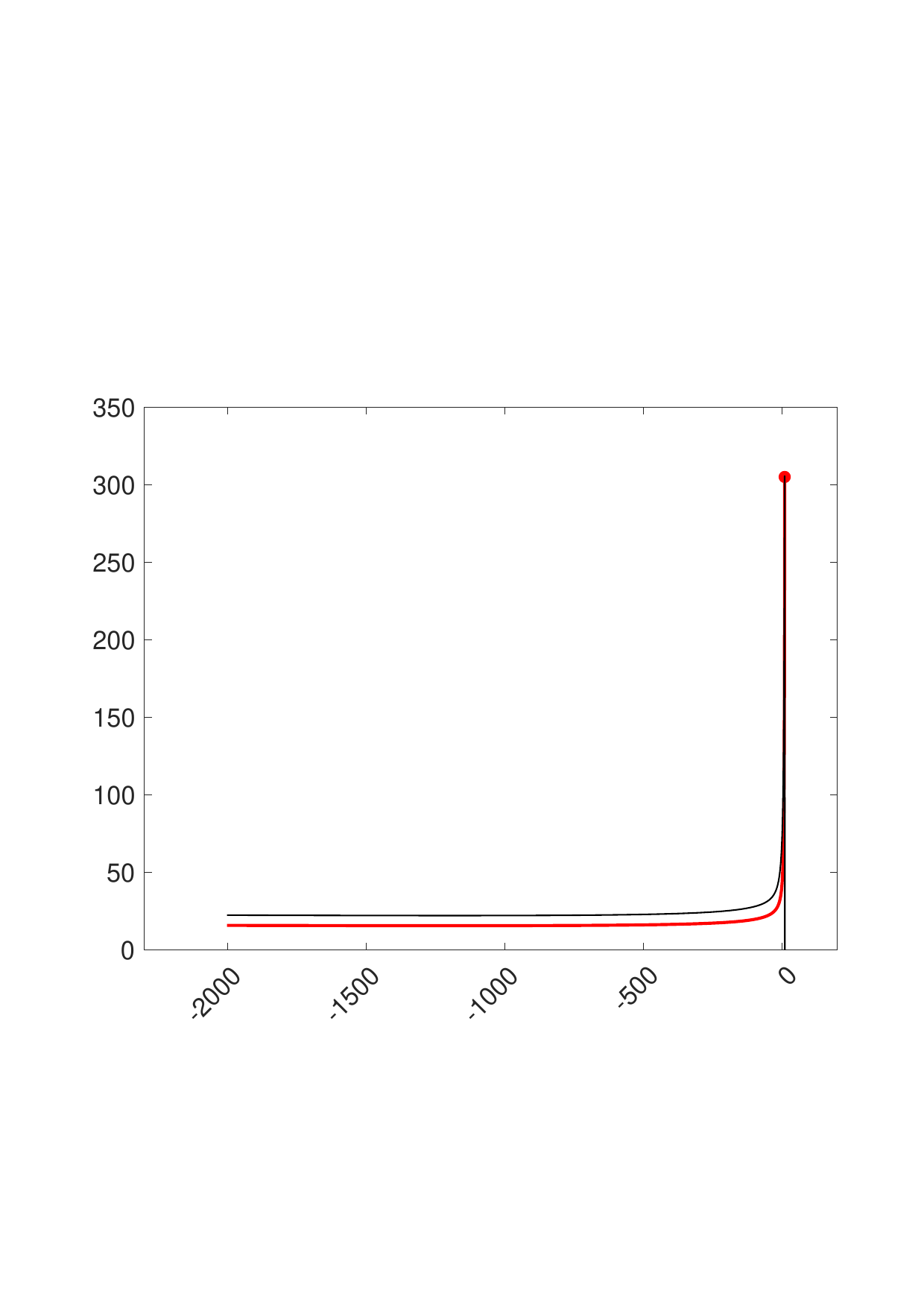} \put (12,84) {\tiny$\|u\|_2$}
	\put (97,15) {\tiny$\l$}
\end{overpic}
	\begin{overpic}[scale=0.28,trim = 1cm 5cm 1cm 9cm, clip]{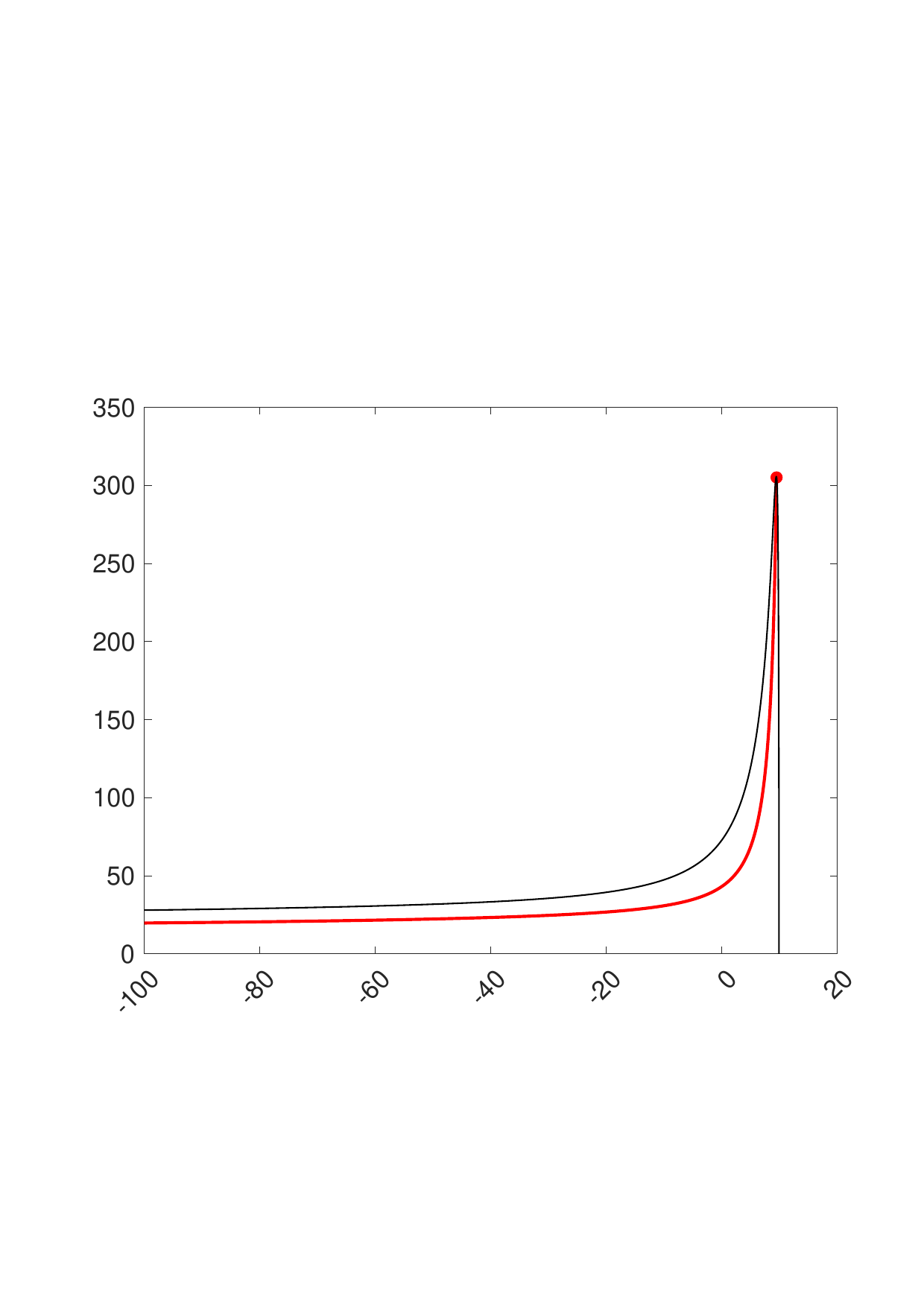} \put (12,84) {\tiny$\|u\|_2$}
	\put (97,15) {\tiny$\l$}
\end{overpic}
	\vspace{-0.4cm}
	\caption{Bifurcation diagram relative to \eqref{1.1} for $a(x)=a_{1,0}(x)$ with $h=0.95$ (left) and a zoom of it (right).}
	\label{Fig11}
\end{figure}

\begin{figure}[ht!]
	\centering
	\begin{overpic}[scale=0.28,trim = 1cm 5cm 1cm 9cm, clip]{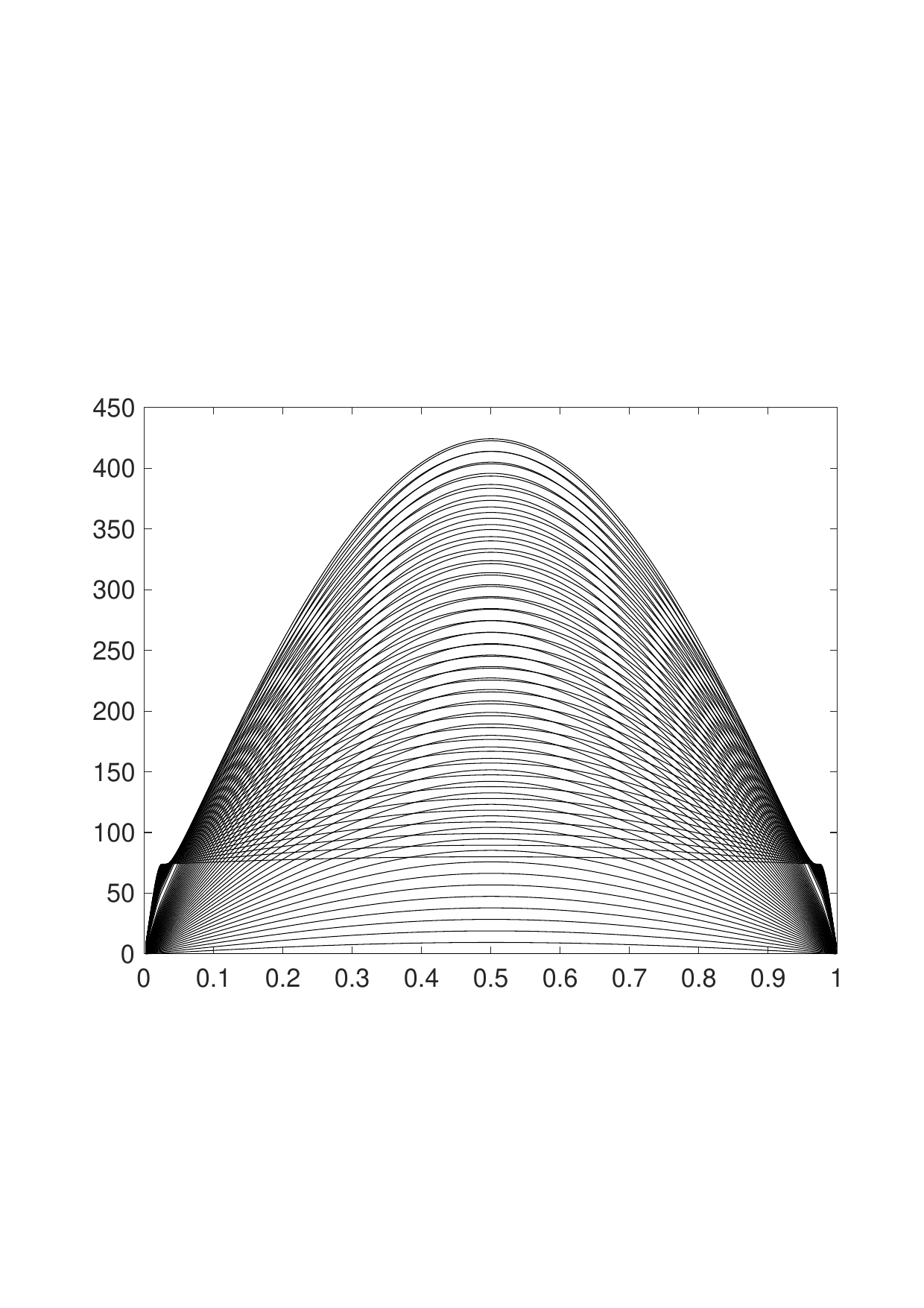} \put (12,83) {\tiny$u(x)$}
	\put (96.5,15) {\tiny$x$}\end{overpic} \begin{overpic}[scale=0.28,trim = 1cm 5cm 1cm 9cm, clip]{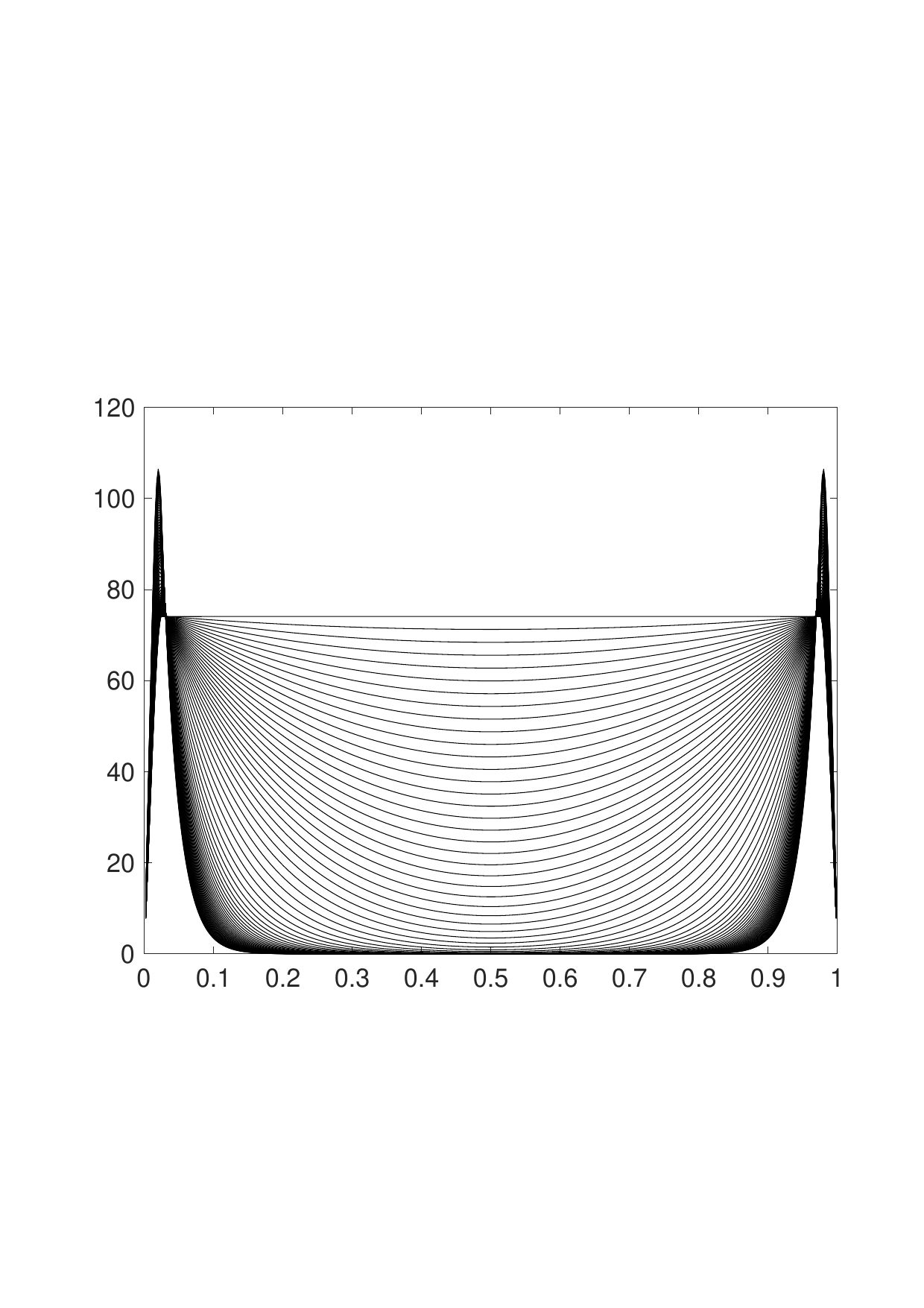} \put (12,83) {\tiny$u(x)$}
	\put (96.5,15) {\tiny$x$}\end{overpic} \\ [-2.5em]
	\begin{overpic}[scale=0.28,trim = 1cm 5cm 1cm 5cm, clip]{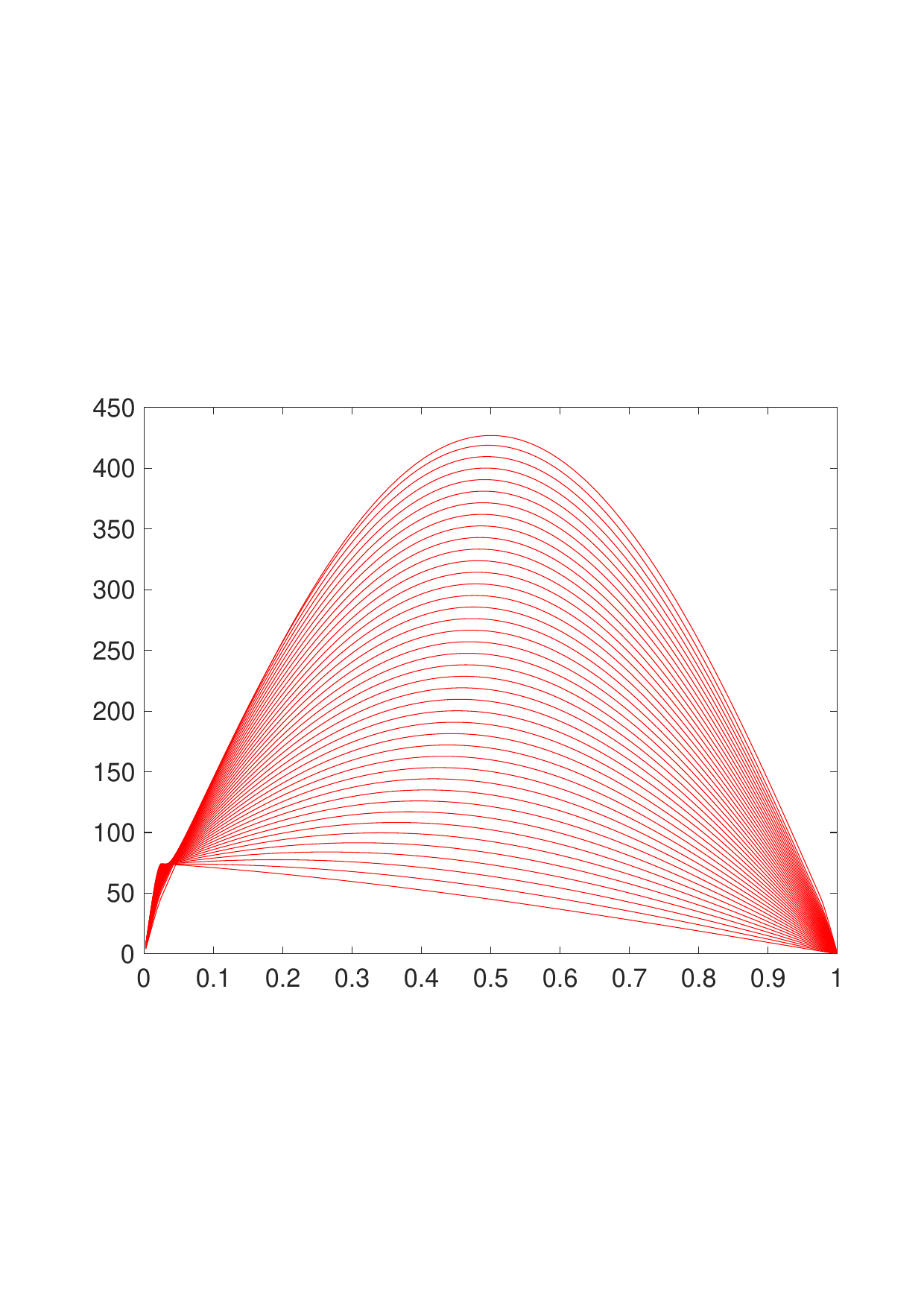} \put (12,80) {\tiny$u(x)$}
	\put (93,15) {\tiny$x$}\end{overpic} \begin{overpic}[scale=0.28,trim = 1cm 5cm 1cm 5cm, clip]{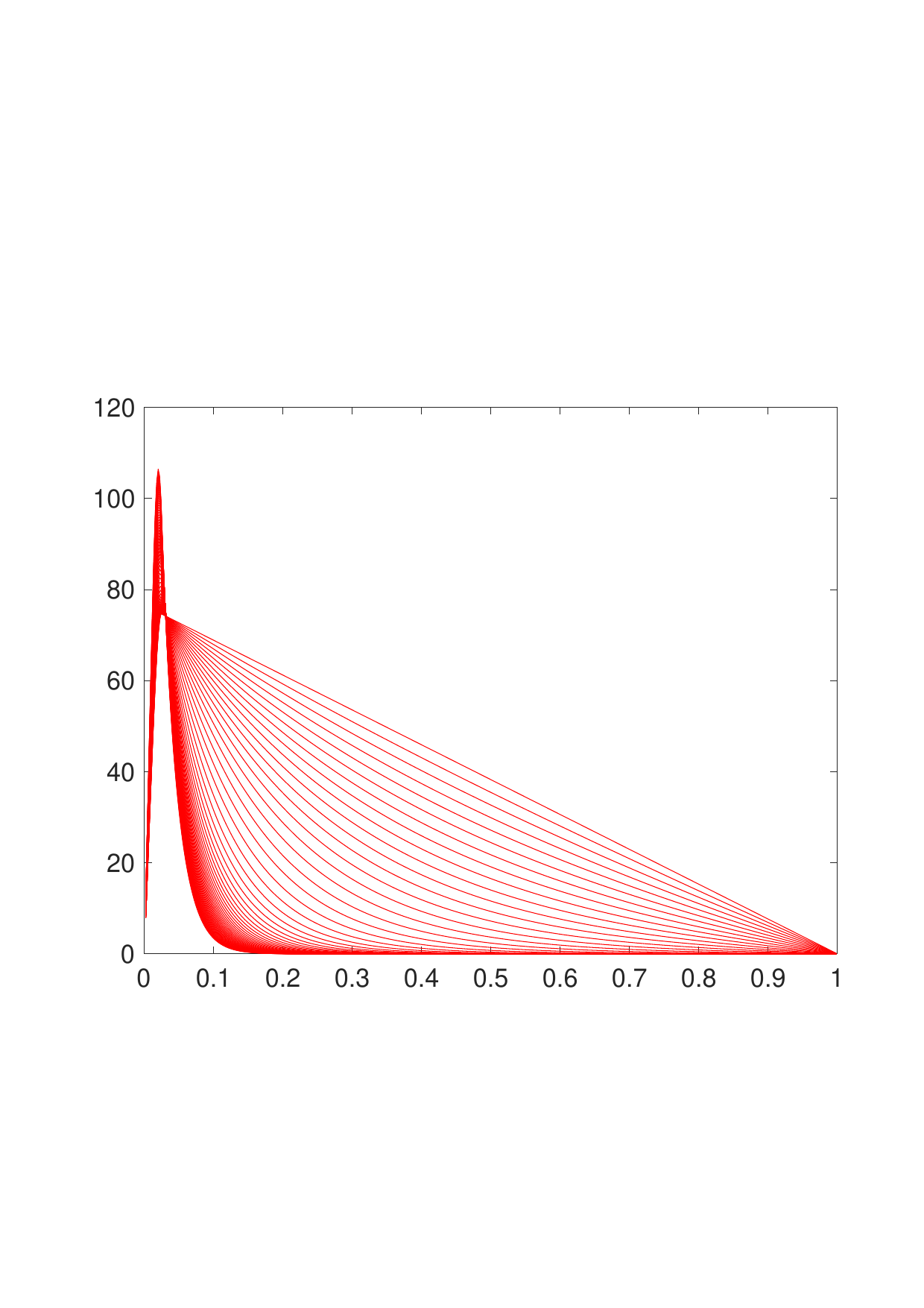} \put (12,80) {\tiny$u(x)$}
	\put (93,15) {\tiny$x$}\end{overpic}\\[-2.5em]
	\begin{overpic}[scale=0.28,trim = 1cm 5cm 1cm 5cm, clip]{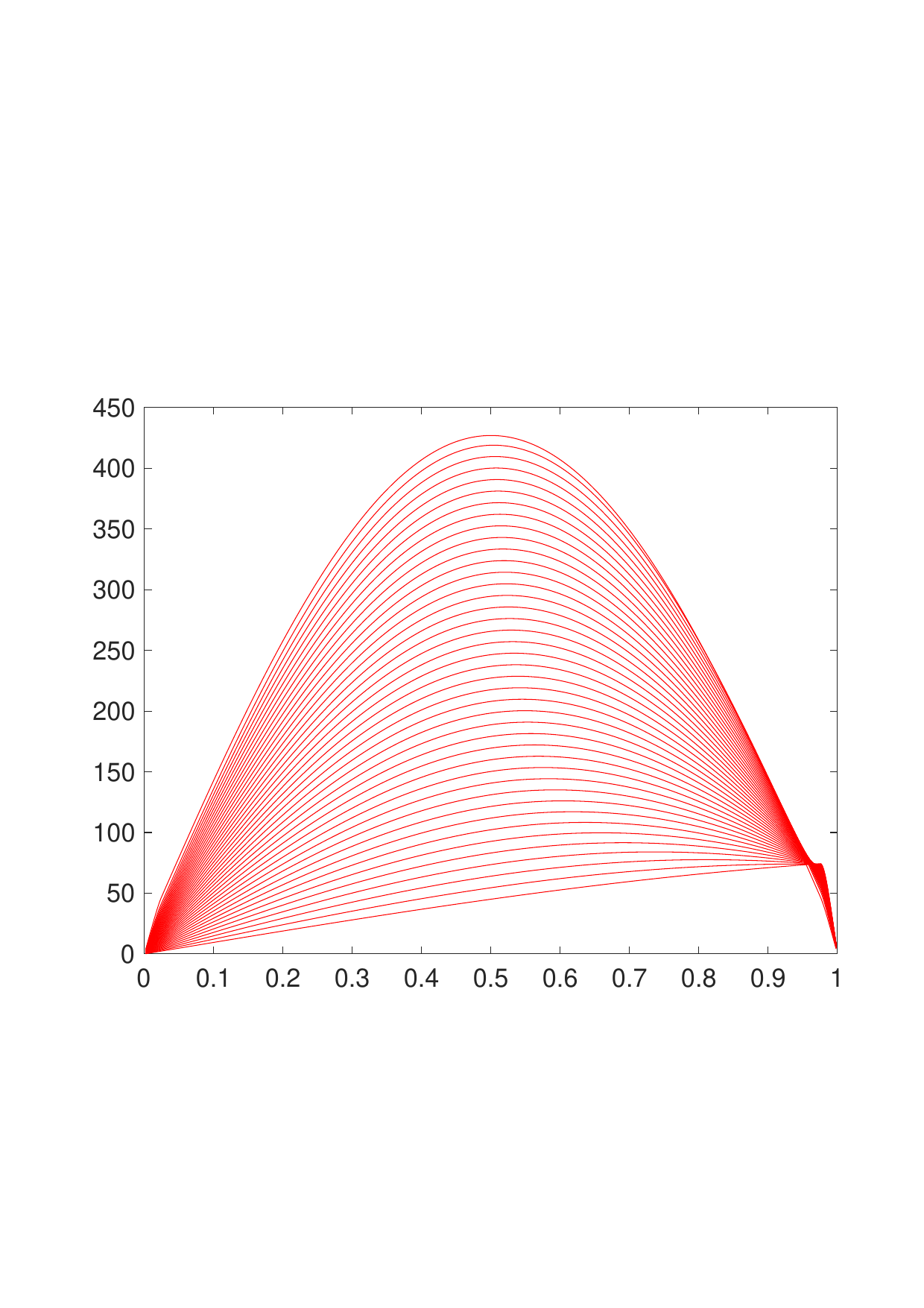} \put (12,80) {\tiny$u(x)$}
	\put (93,15) {\tiny$x$}\end{overpic} \begin{overpic}[scale=0.28,trim = 1cm 5cm 1cm 5cm, clip]{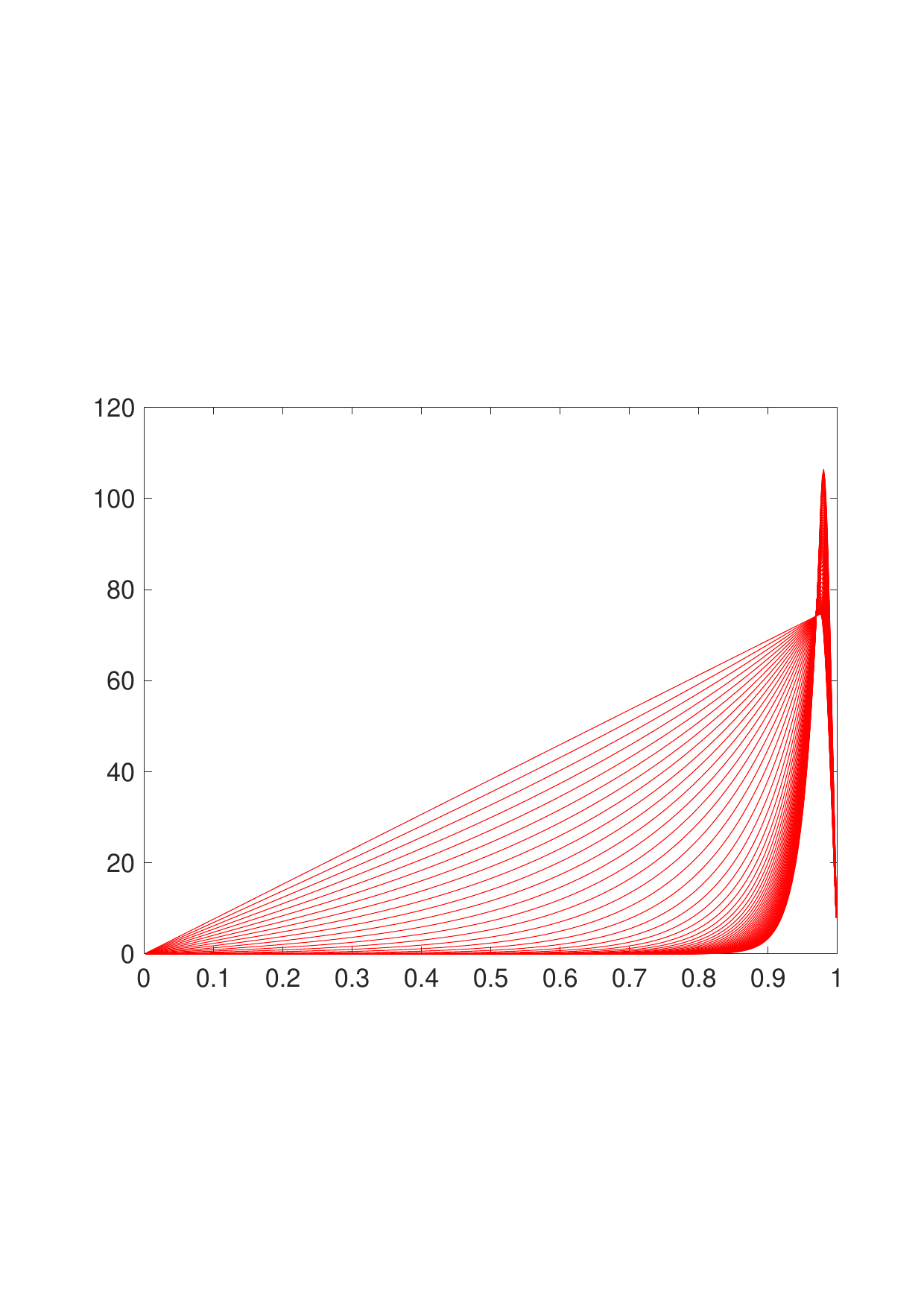} \put (12,80) {\tiny$u(x)$}
	\put (93,15) {\tiny$x$}\end{overpic}
	\vspace{-0.4cm}
	\caption{A series of plots of positive solutions along the three branches corresponding to the bifurcation diagram of Figure \ref{Fig11}: symmetric branch (first row) for $\l>0$ (left) and $\l<0$ (right), and asymmetric branches for $\l<\l_b\sim 9.44545$ (second and third row).}
	\label{Fig12}
\end{figure}

\section{The case $\kappa =2$ with $h\in (0,0.5)$}
\label{sec:4}

In this section,  we compute the global bifurcation diagrams of positive solutions of \eqref{1.1} for a series of choices of $a=a_{2,0}$ with $h\in (0,0.5)$. More precisely, we consider
\begin{equation}
	\label{4.1}
	a(x)=\begin{cases}
		0 & \text{if $x\in J_h:=\left(0.25-\frac{h}{2},0.25+\frac{h}{2}\right)\cup\left(0.75-\frac{h}{2},0.75+\frac{h}{2}\right)$,} \\
		1 & \text{if $x\in[0,1]\setminus J_h$.}
	\end{cases}
\end{equation}
Thus, in these examples, $a^{-1}(0)$ consists of two intervals. Figure \ref{Fig13} shows an example of this type of weight for $h=0.25$.

\begin{figure}[h!]
	\centering
	\begin{overpic}[scale=0.28,trim = 1cm 5cm 1cm 7cm, clip]{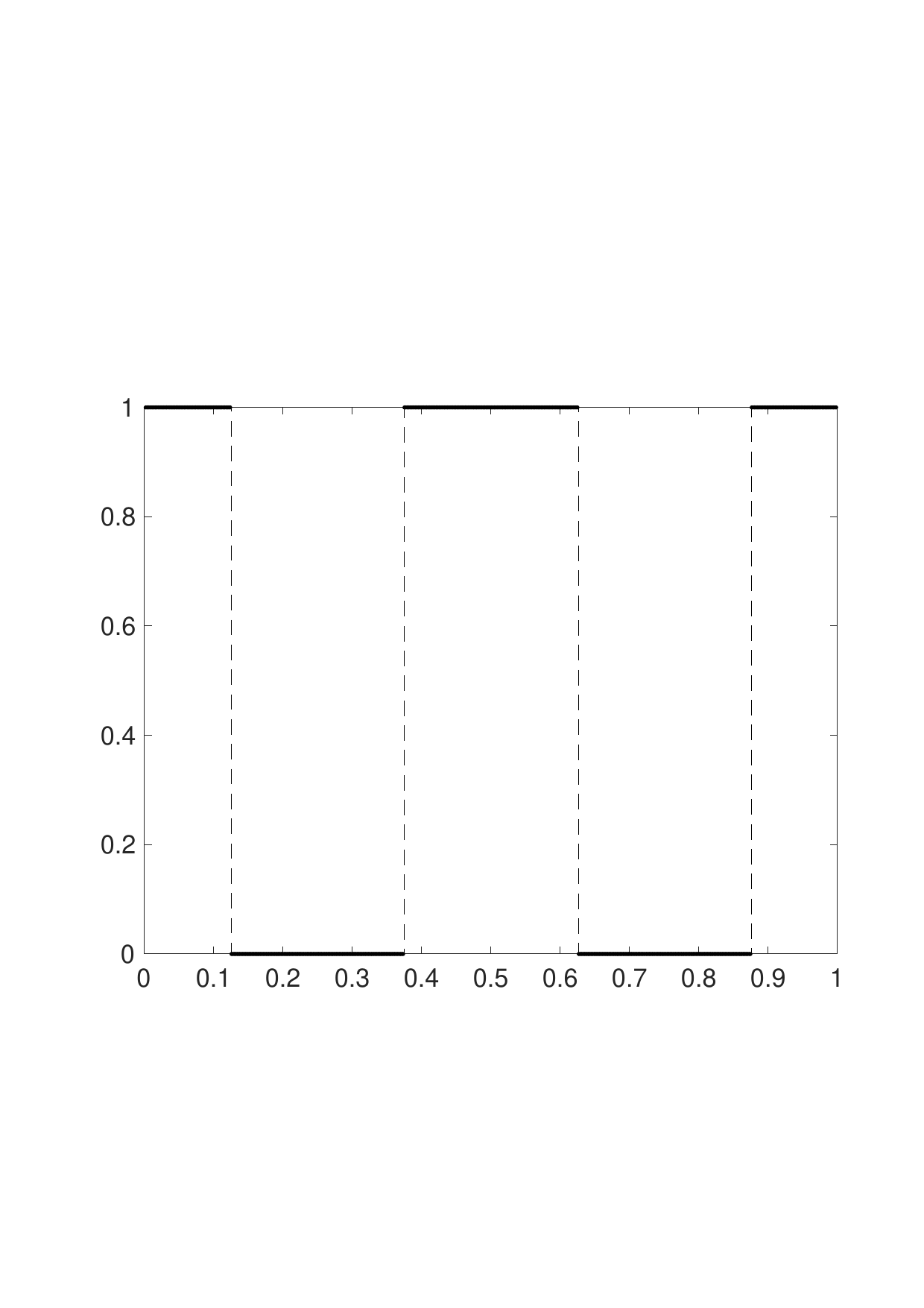} \put (13.5,83.5) {\tiny$a_{2,0}(x)$}
	\put (96.5,15) {\tiny$x$}
\end{overpic}
	\vspace{-0.8cm}
	\caption{An example of weight $a_{2,0}$ defined in \eqref{4.1} with $h=0.25$.}
	\label{Fig13}
\end{figure}

According to our numerical experiments, for all our choices of \eqref{4.1} with $h\in (0,0.5)$, the set of positive solutions of \eqref{1.1} consists of the main component $\mathscr{C}_0^+$, emanating from $u=0$ at $\l=\pi^2$, plotted in black, plus three subcritical global foliations separated away from the remaining components of the set of positive solutions. Observe that in Figure \ref{Fig14}, which corresponds to the weight of Figure \ref{Fig13}, one can distinguish only two of these additional components because two of them overlap due to the choice of the discrete $L^2$-norm on the vertical axis. Indeed, since $a(x)$ is symmetric about $0.5$, if $(\l,u(\cdot))$ is an asymmetric solution of \eqref{1.1}, then $(\lambda,u(1-\cdot))$ is another solution but has the same $L^2$-norm as $(\l,u(\cdot))$. These components, isolated from the remaining components of the diagram, are usually referred to as \emph{isolas} in the literature.
\par
In addition, observe that, according to the convention used in this paper to identify bifurcation points, the branches represented in the diagram of Figure \ref{Fig14} do not intersect, indicating the absence of bifurcation points. In particular, as opposed to the case of Section \ref{sec:3}, no secondary bifurcations have been detected on $\mathscr{C}_0^+$.

As it is apparent from the bifurcation diagram of Figure \ref{Fig14}, since the turning points of the isolas occur for $\l=-26.0214$ and $\l=-41.5460$, problem \eqref{1.1} with the weight $a(x)$ given in \eqref{4.1} with $h=0.25$ has at least one positive solution for every $\l<\pi^2$, at least five positive solutions for every $\l<-26.0214$ and at least seven positive solutions for $\l<-41.5460$.

\begin{figure}[h!]
	\centering
	\begin{overpic}[scale=0.28,trim = 1cm 5cm 1cm 7cm, clip]{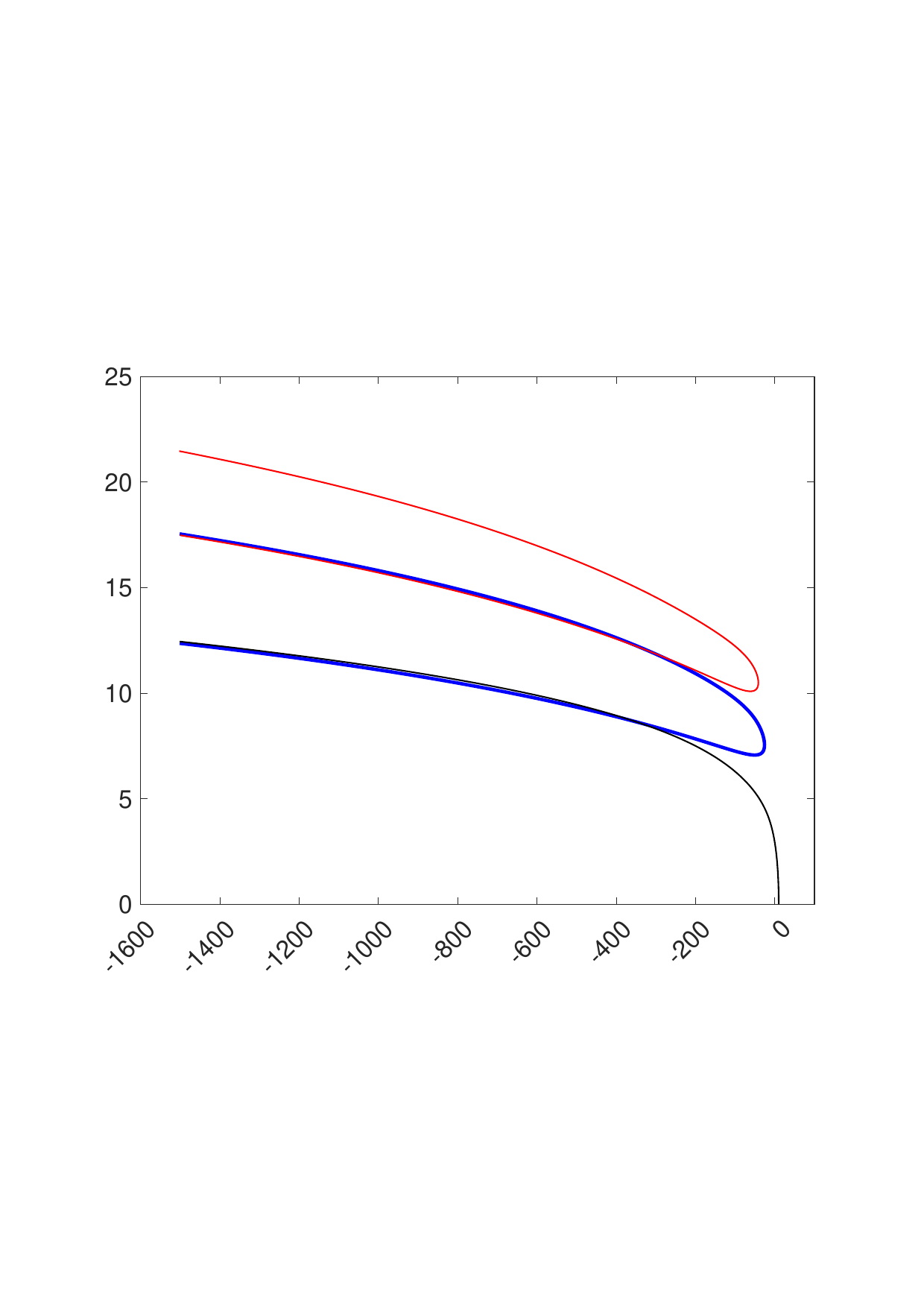}
		\put (12,87) {\tiny$\|u\|_2$}
		\put (93.5,20) {\tiny$\l$}
	\end{overpic}
	\vspace{-0.8cm}
	\caption{Computed bifurcation diagram relative to \eqref{1.1} for $a(x)=a_{2,0}(x)$ given by \eqref{4.1} with $h=0.25$.}
	\label{Fig14}
\end{figure}
Figure \ref{Fig15} collects a series of graphs of positive solutions along each of the components plotted in Figure \ref{Fig14}. In order to easily identify these solutions in the bifurcation diagram of Figure \ref{Fig14}, the profiles have been plotted using the same color of the component they belong to. Precisely, the first row of Figure \ref{Fig15} collects the plots of some symmetric solutions of the bifurcation diagram in Figure \ref{Fig14}. In particular, a series of positive solutions along the main branch $\mathscr{C}_0^+$ have been plotted in black and a series of positive solutions on the upper isola in red. The second and third rows collect the plots of a series of positive solutions along the upper and lower half branches of the two (superimposed) lower isolas of Figure \ref{Fig14}.

\par
In agreement with the conjecture presented in Remark \ref{re2.2} based on Theorem \ref{th2.1}, for sufficiently negative $\l<0$, problem \eqref{1.1} has $2^3-1=7$ positive solutions: one with three peaks, three with two peaks, and three with a single peak, as illustrated in Figure \ref{Fig15}.

\begin{figure}[h!]
	\centering
	\begin{tabularx}{\linewidth}{X}
		\hfill \begin{overpic}[scale=0.241,trim = 1cm 5cm 1cm 9cm, clip]{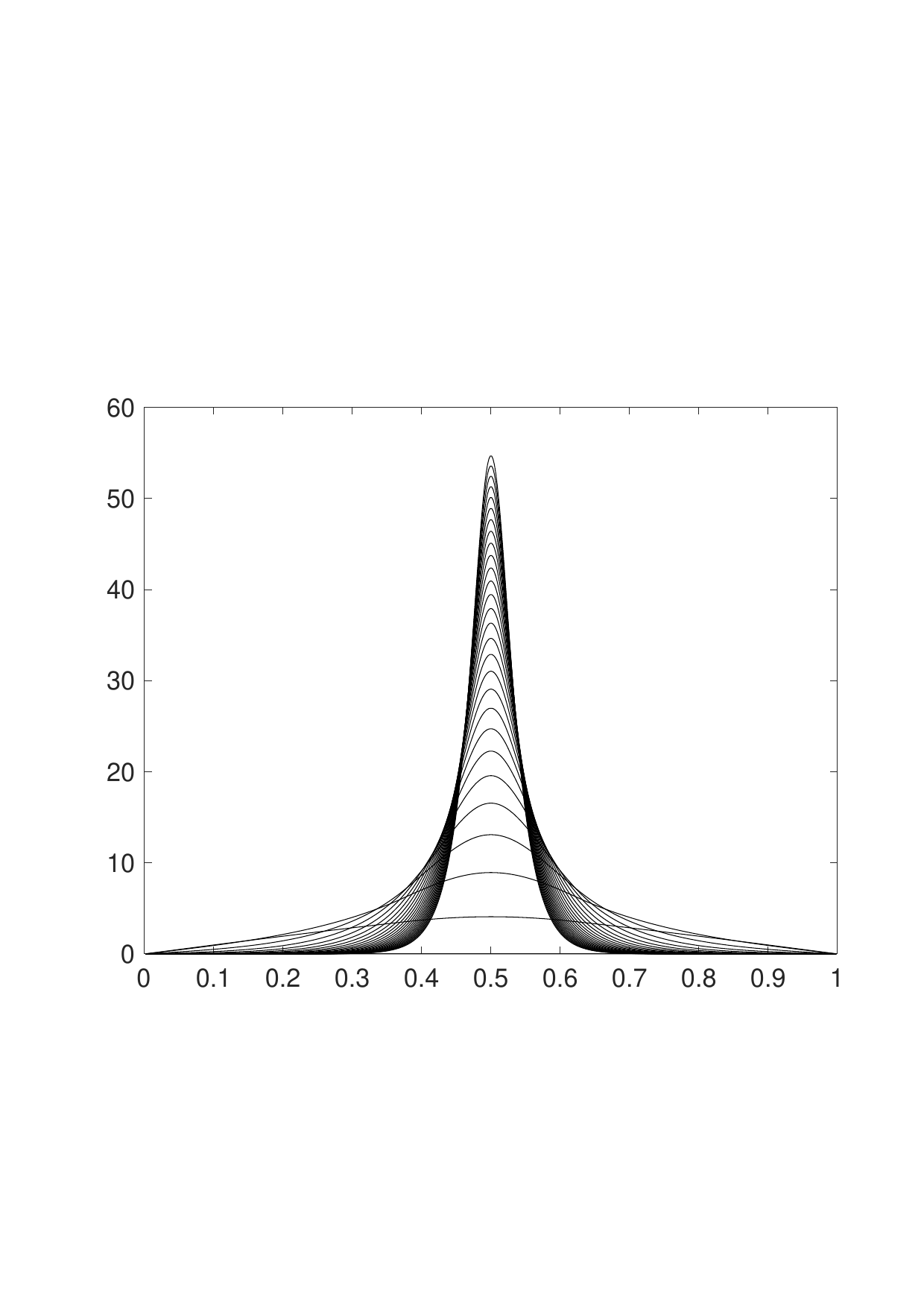} \put (12,83.5) {\tiny$u(x)$}
		\put (97,15) {\tiny$x$}\end{overpic}
		\hfill \begin{overpic}[scale=0.241,trim = 1cm 5cm 1cm 9cm, clip]{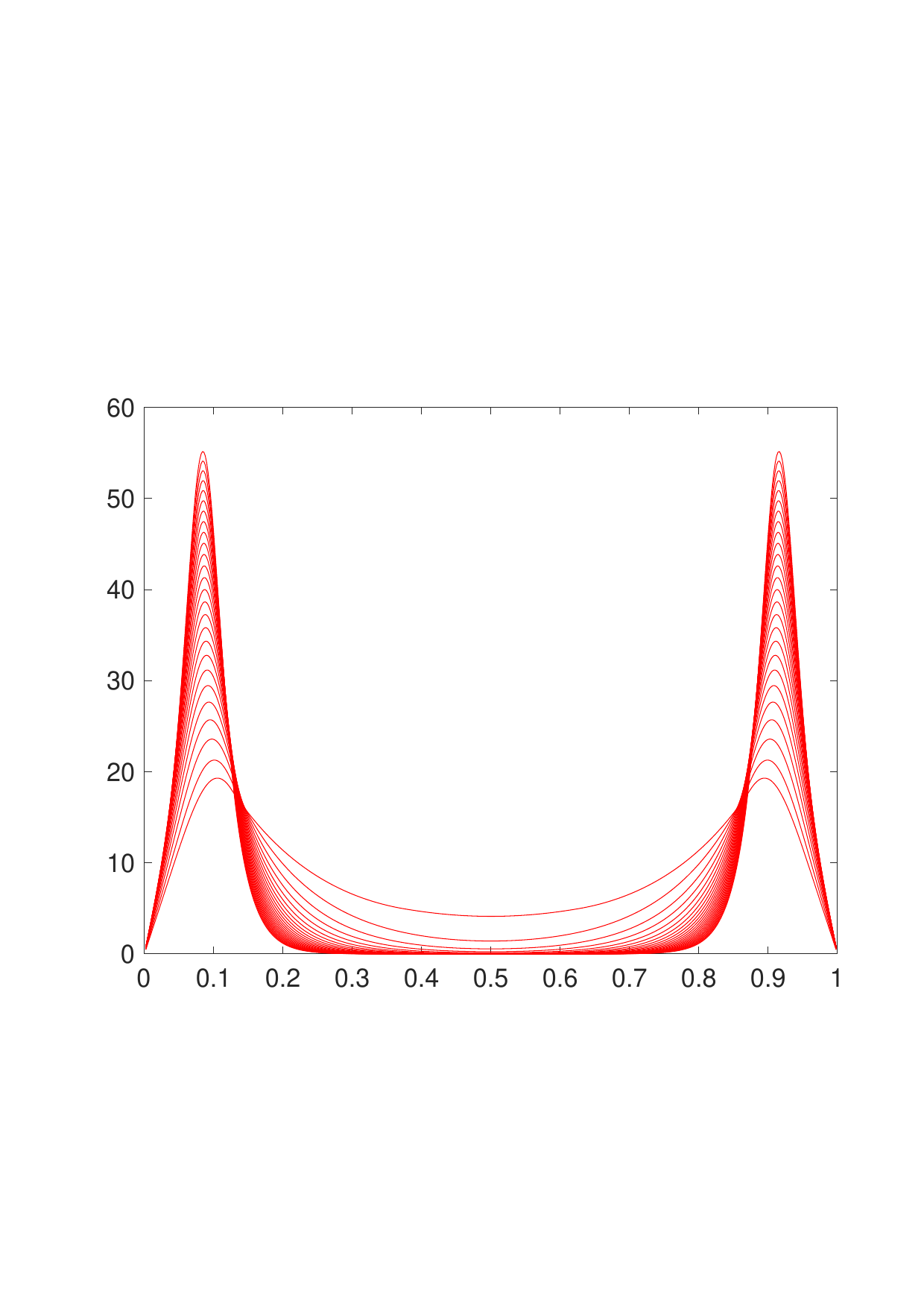} \put (12,83.5) {\tiny$u(x)$}
		\put (97,15) {\tiny$x$}\end{overpic}
		\hfill \begin{overpic}[scale=0.241,trim = 1cm 5cm 1cm 9cm, clip]{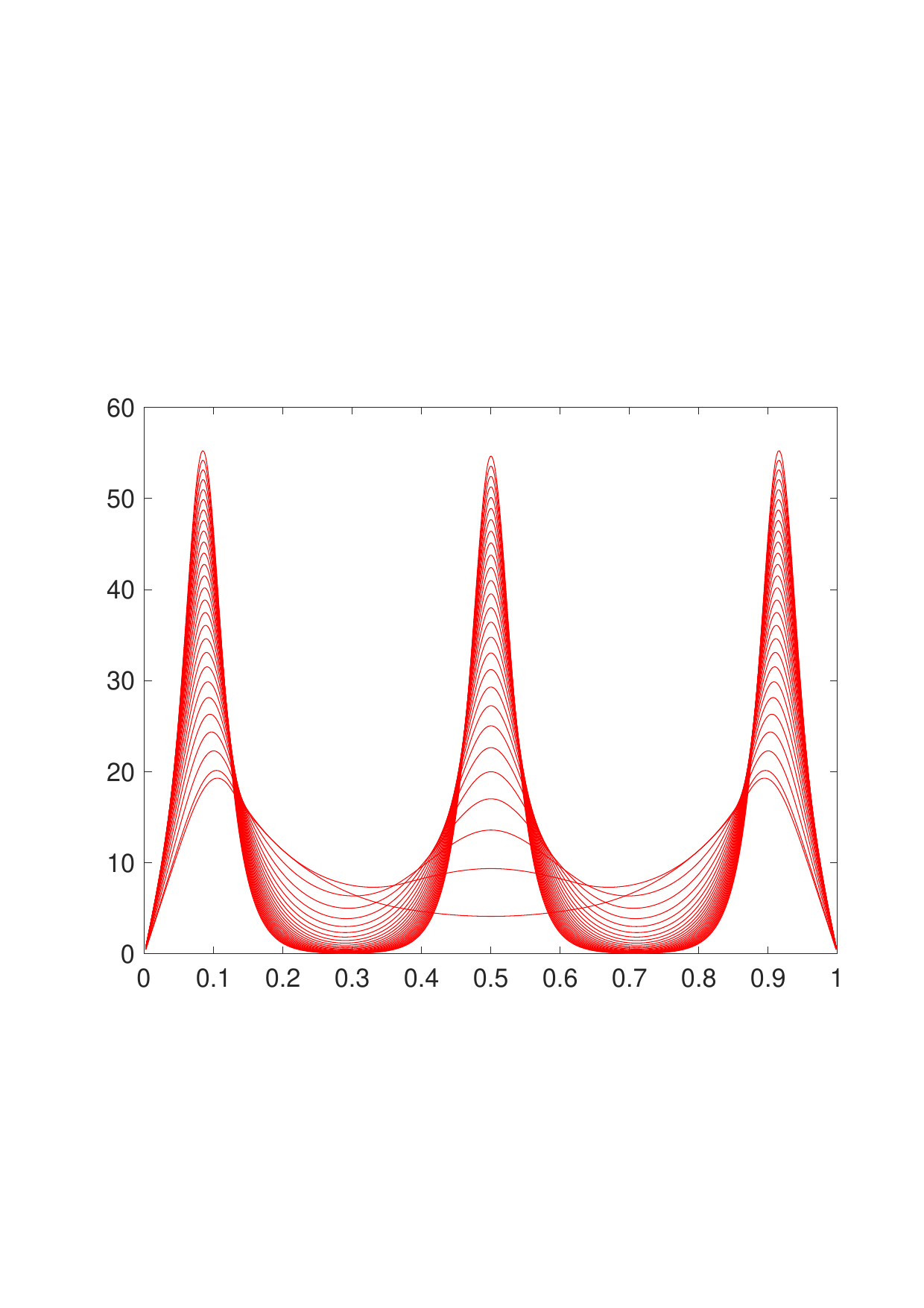} \put (12,83.5) {\tiny$u(x)$}
		\put (97,15) {\tiny$x$}\end{overpic} \hfill\null \\[-3em]
		\hfill \begin{overpic}[scale=0.241,trim = 1cm 5cm 1cm 5cm, clip]{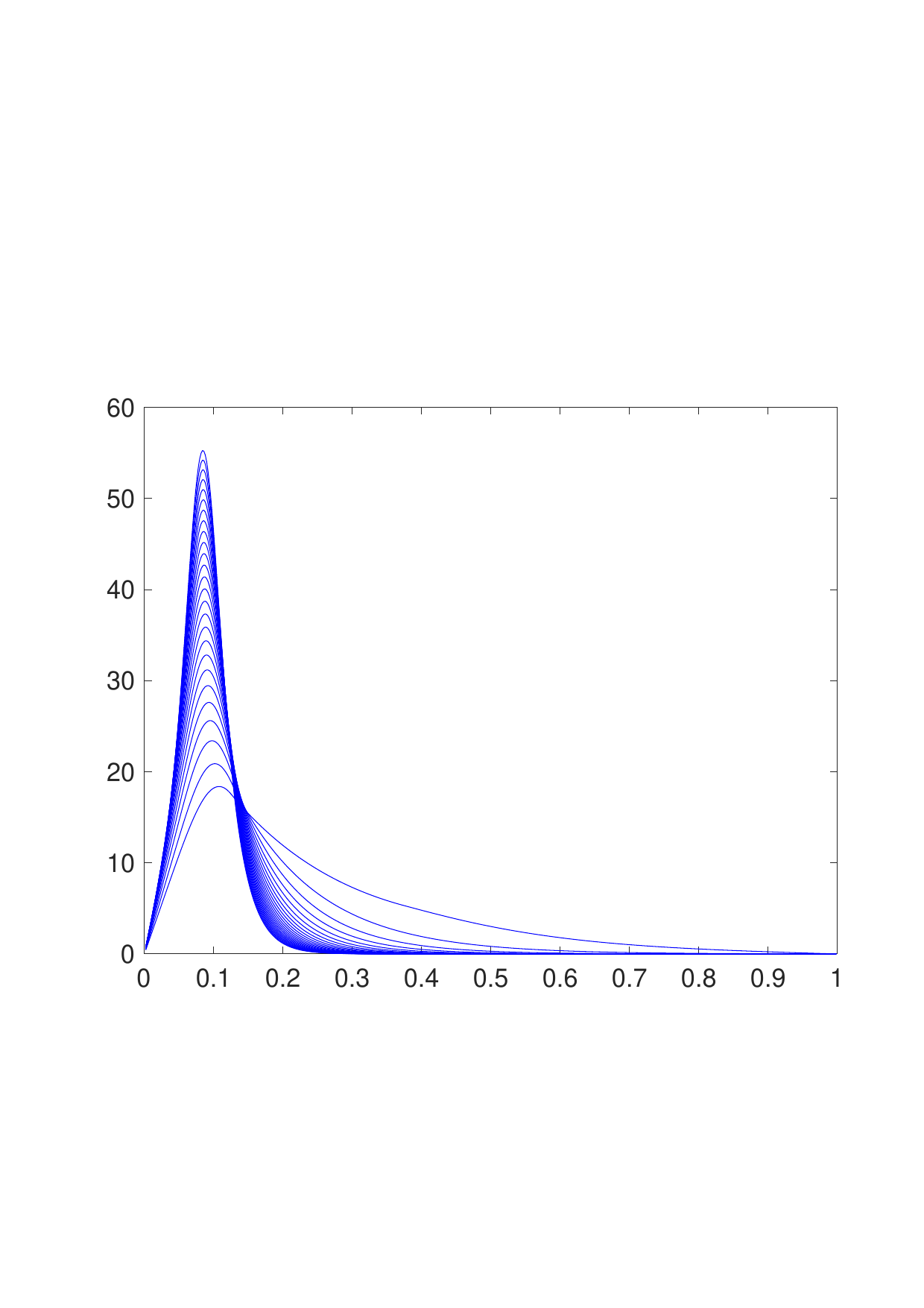} \put (12,80.5) {\tiny$u(x)$}
		\put (93.5,15) {\tiny$x$}\end{overpic}
		\hfill \begin{overpic}[scale=0.241,trim = 1cm 5cm 1cm 5cm, clip]{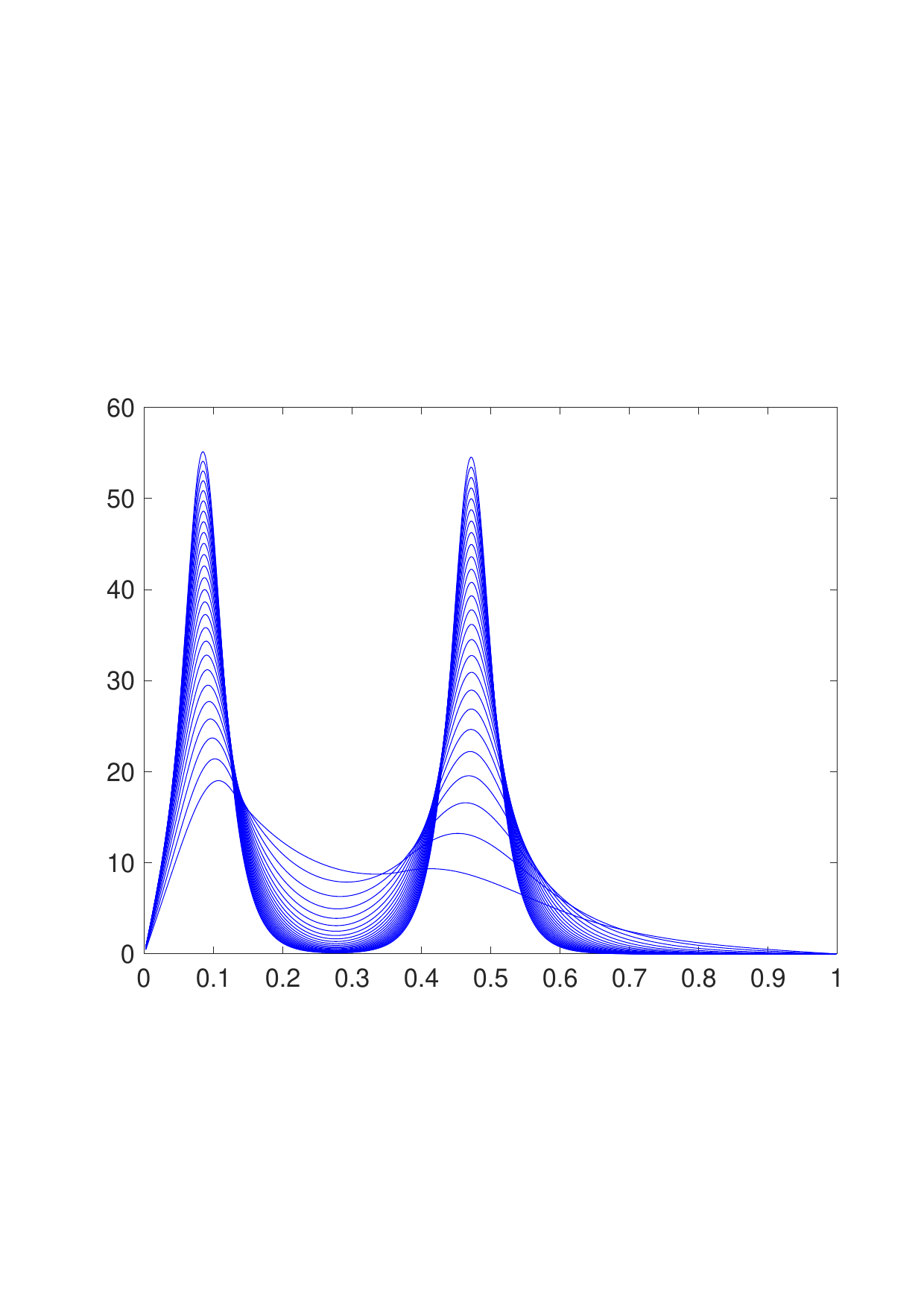} \put (12,80.5) {\tiny$u(x)$}
		\put (93.5,15) {\tiny$x$}\end{overpic} \hfill\null \\[-3em]
		\hfill \begin{overpic}[scale=0.241,trim = 1cm 5cm 1cm 5cm, clip]{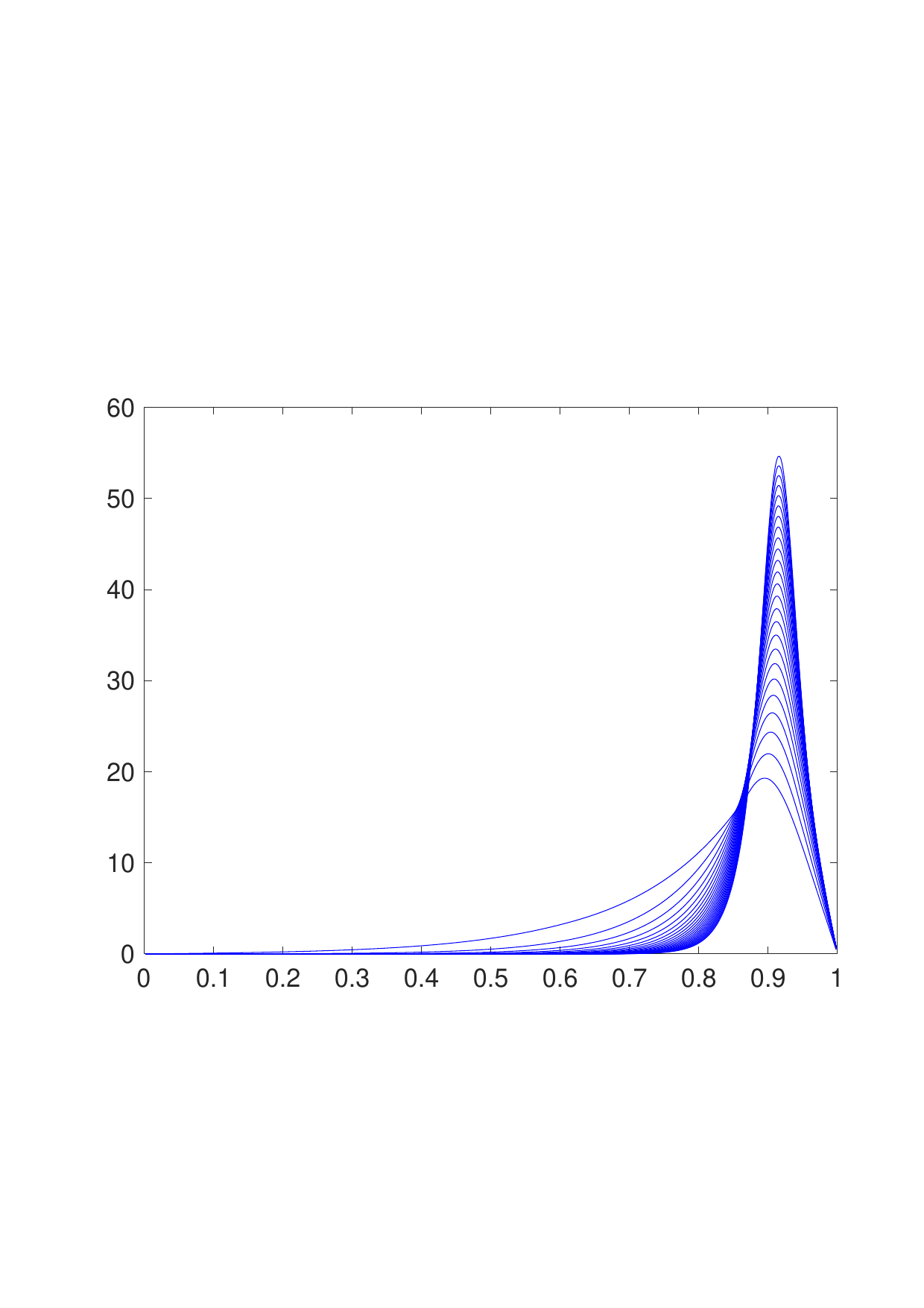} \put (12,80.5) {\tiny$u(x)$}
		\put (93.5,15) {\tiny$x$}\end{overpic}
		\hfill \begin{overpic}[scale=0.241,trim = 1cm 5cm 1cm 5cm, clip]{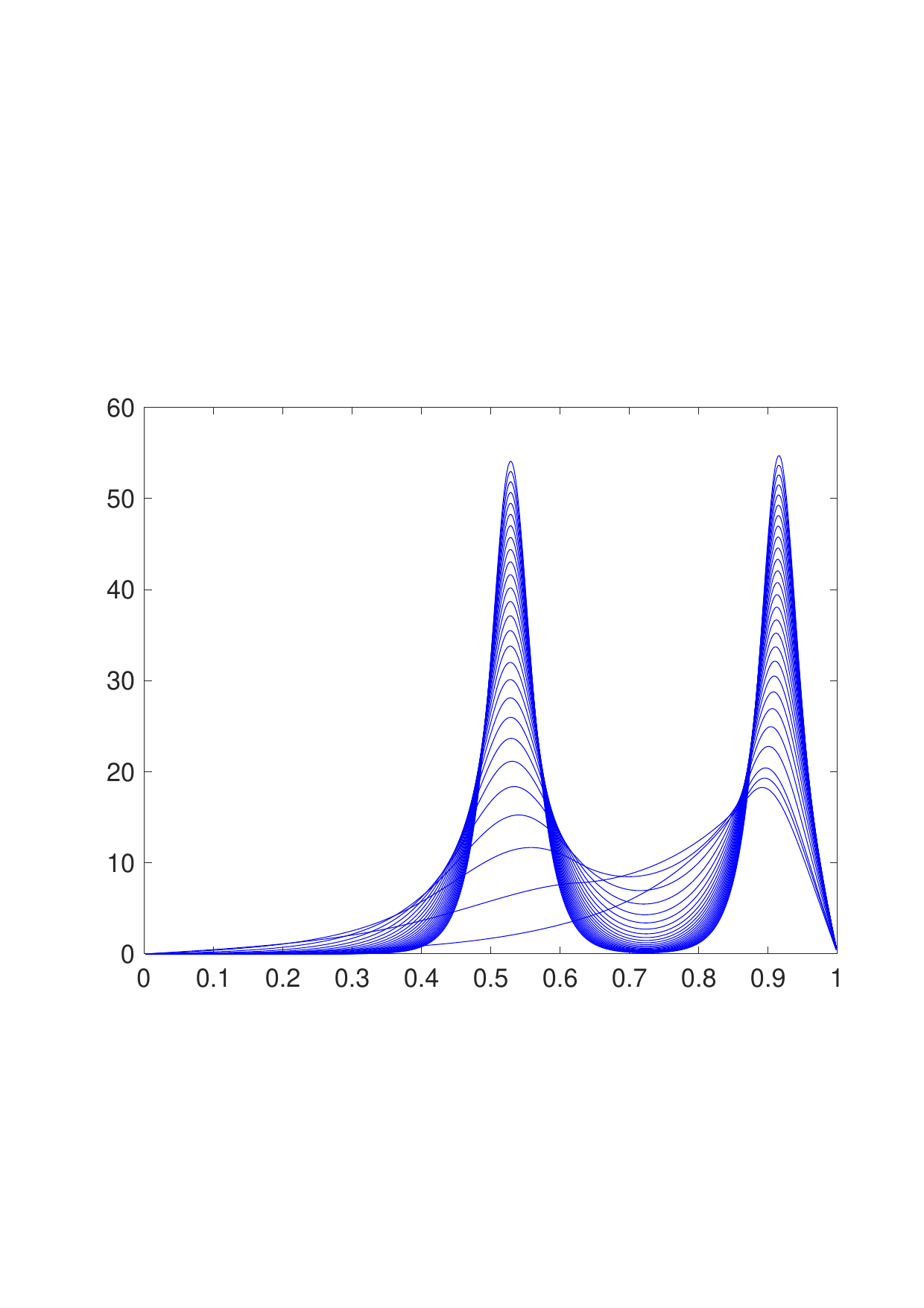} \put (12,80.5) {\tiny$u(x)$}
		\put (93.5,15) {\tiny$x$}\end{overpic} \hfill\null
	\end{tabularx}
	\vspace{-0.8cm}
	\caption{A series of positive solutions along the different branches of the bifurcation diagram of Figure \ref{Fig14}.}
	\label{Fig15}
\end{figure}

Figure \ref{Fig16} shows the global bifurcation diagrams computed for the weight \eqref{4.1} with $h=0.35$, $h=0.45$ and $h=0.49$. Note that, as $h$ increases, the subcritical turning points of the three isolas of positive solutions approximate $\pi^2$ from the left, and the right parts of the isolas approach the vertical line $\l=\pi^2$ as $\l\ua \pi^2$. Up to the best of our knowledge, this phenomenon has not been observed before.

\begin{figure}[h!]
	\centering
	\begin{overpic}[scale=0.255,trim = 1cm 5cm 1cm 7cm, clip]{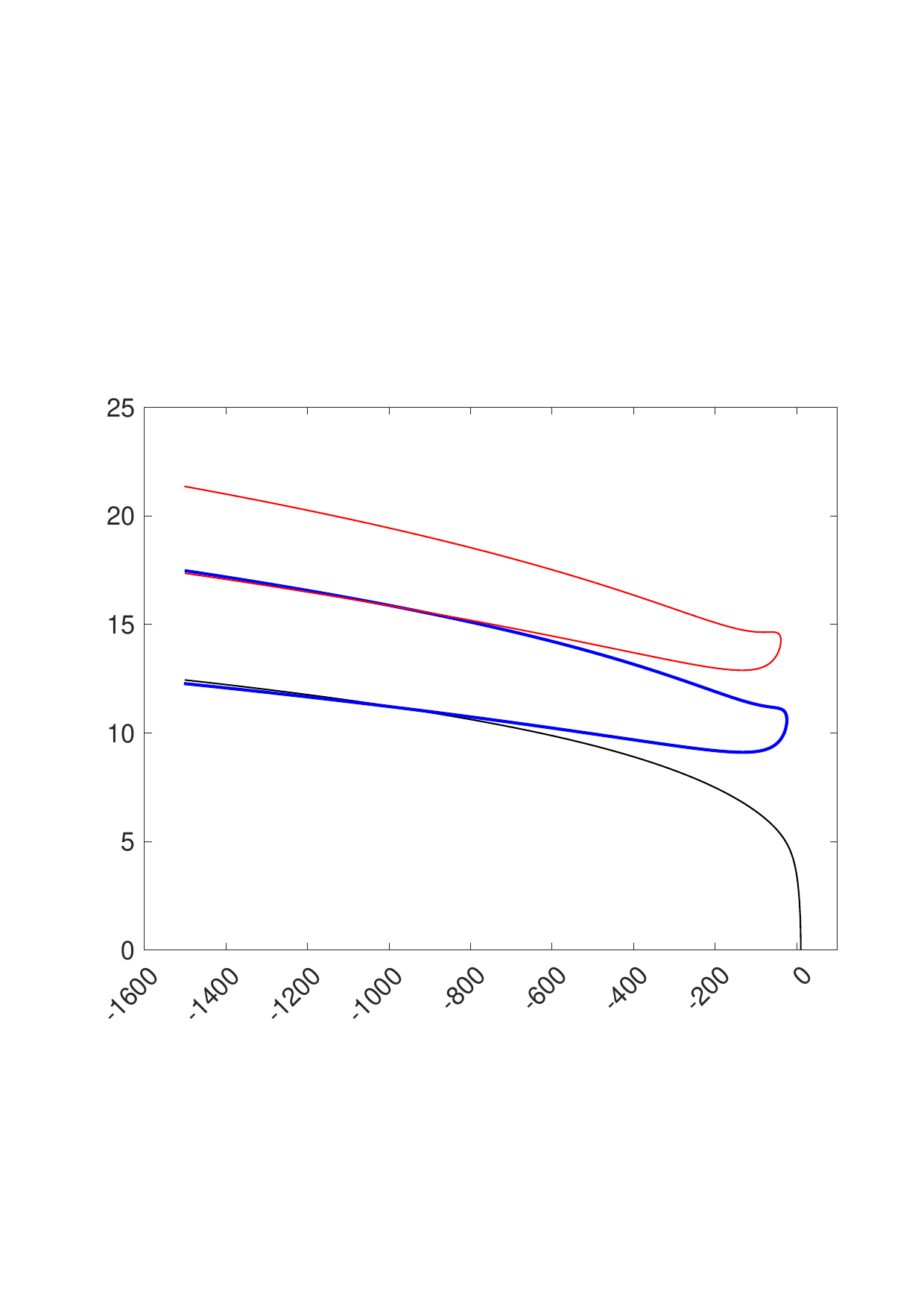} \put (12,84.5) {\tiny$\|u\|_2$}
	\put (96.5,15) {\tiny$\l$}
\end{overpic}
	\begin{overpic}[scale=0.255,trim = 1cm 5cm 1cm 7cm, clip]{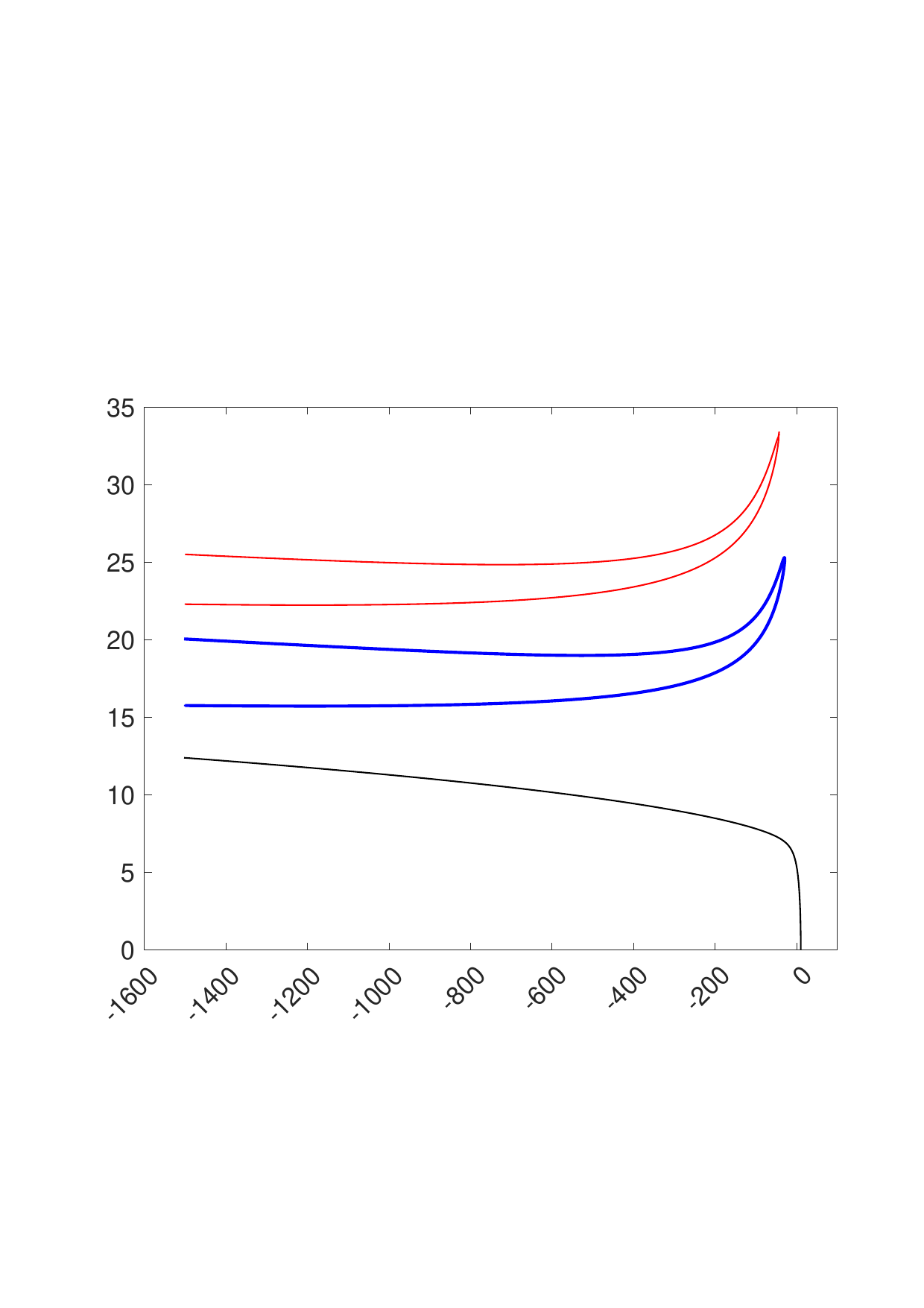} \put (12,84.5) {\tiny$\|u\|_2$}
		\put (96.5,15) {\tiny$\l$}
\end{overpic}
	\begin{overpic}[scale=0.255,trim = 1cm 5cm 1cm 7cm, clip]{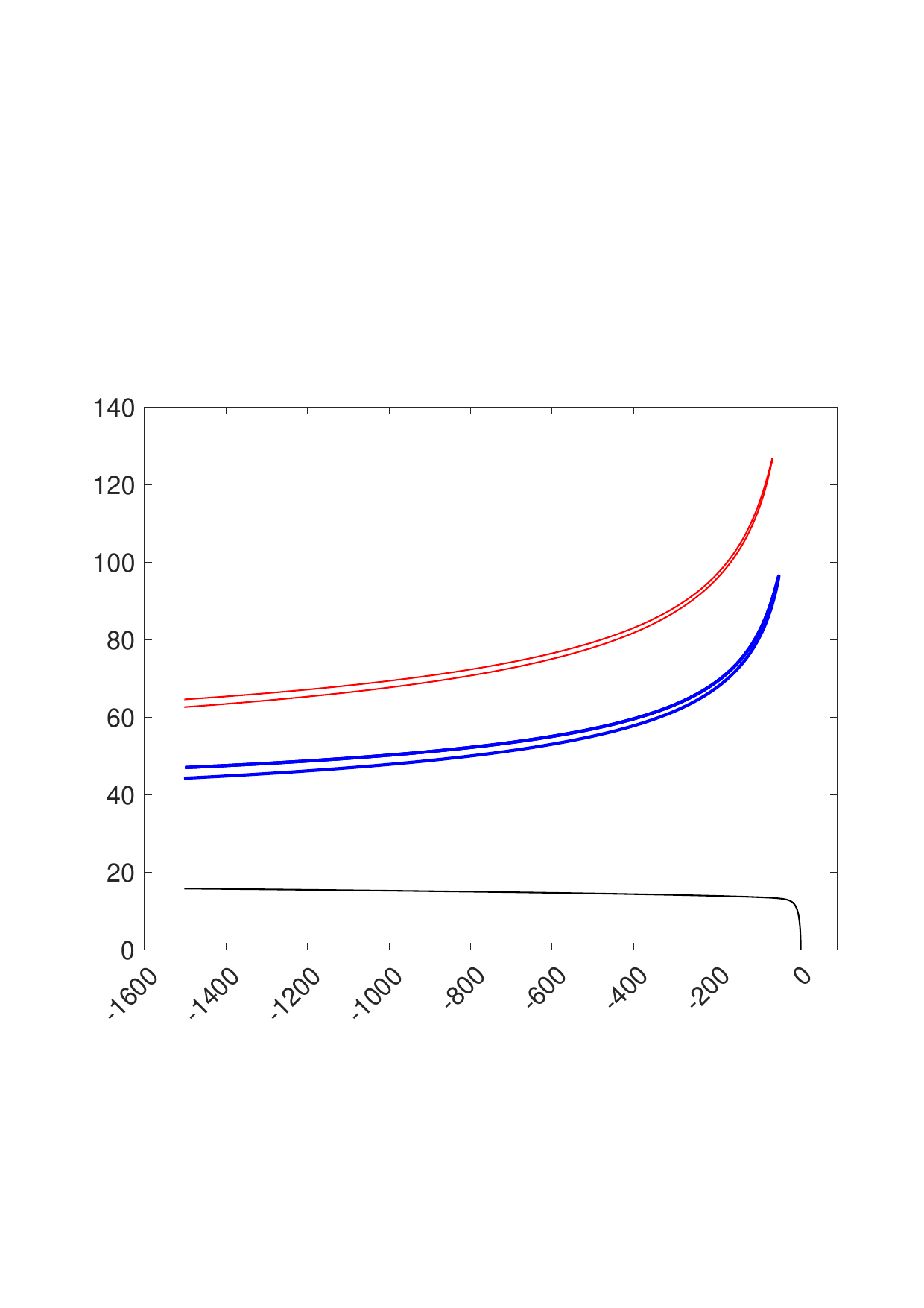} \put (12,84.5) {\tiny$\|u\|_2$}
		\put (96.5,15) {\tiny$\l$}
\end{overpic}
	\vspace{-0.4cm}
	\caption{Bifurcation diagrams of \eqref{1.1} for the weight \eqref{4.1}  with $h=0.35$ (left), $h=0.45$ (center) and $h=0.49$ (right).}
	\label{Fig16}
\end{figure}

\section{The case $\kappa =3$ with $h=0.3$}
\label{sec:5}

In this section we consider \eqref{1.1} with $a=a_{3,0}$, $h=0.3$, and $\frac{\a_1+\b_1}{2}=\frac{1}{6}$. Observe that, due its symmetry, this information is sufficient to completely identify the weight function $a(x)$, plotted in Figure \ref{Fig17}.
\begin{figure}[h!]
	\centering
	\begin{overpic}[scale=0.28,trim = 1cm 5cm 1cm 7cm, clip]{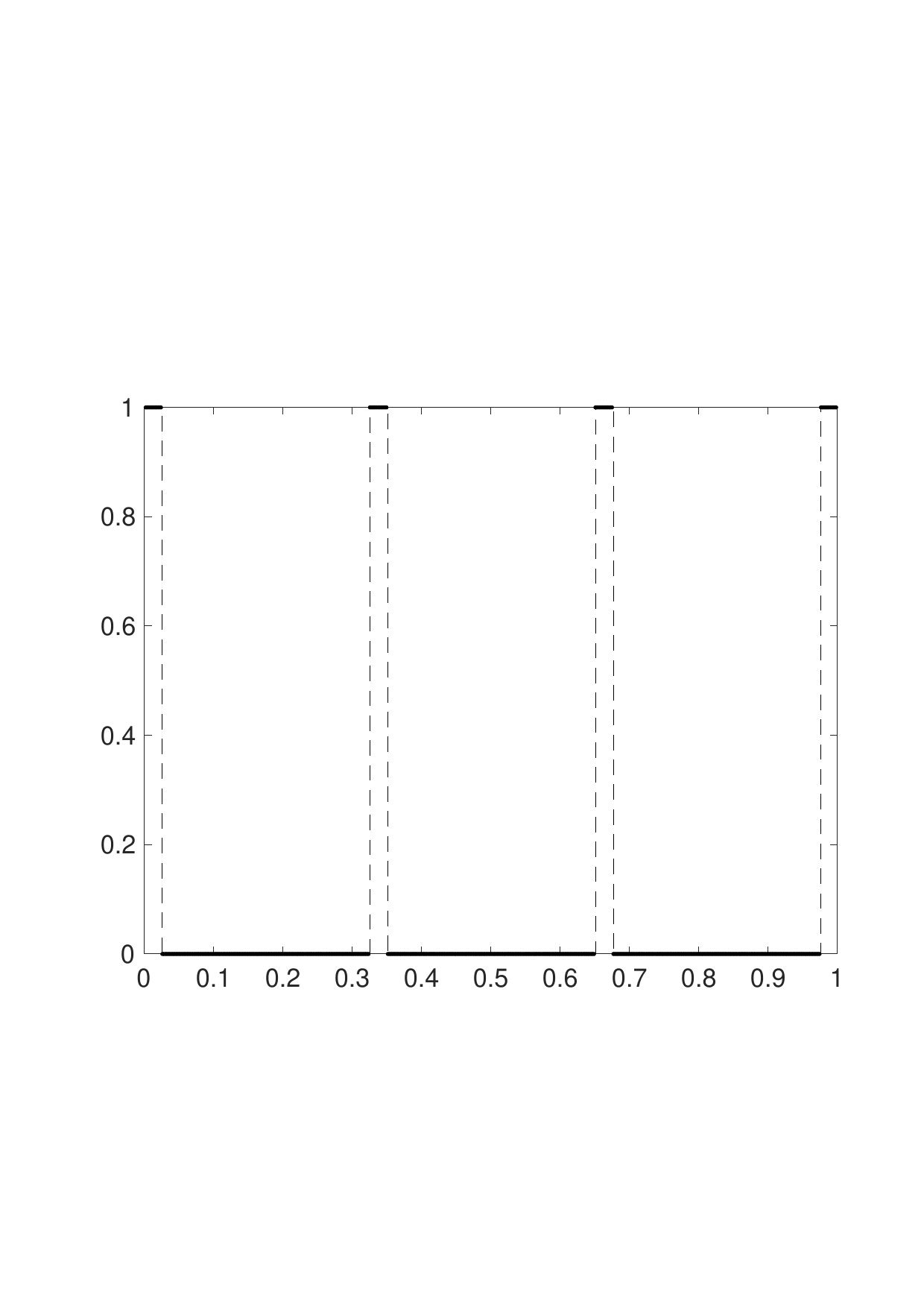} \put (13.5,83.5) {\tiny$a_{3,0}(x)$}
	\put (96.5,15) {\tiny$x$}
\end{overpic}
	\vspace{-0.8cm}
	\caption{Plot of $a_{3,0}$ for $h=0.3$.}
	\label{Fig17}
\end{figure}
According to Theorem \ref{th2.1} and Remark \ref{re2.2}, for such a choice, \eqref{1.1} should have $2^4-1=15$ positive solutions for sufficiently negative $\l<0$, which is confirmed by the numerical experiments (see Figures \ref{Fig18} and \ref{Fig19}).

\par
We proceed now with a brief description of the main features of the bifurcation diagram computed for this particular choice of $a(x)$ (see Figure \ref{Fig18}).  The component $\mathscr{C}_0^+$ bifurcating from $(\l,u)=(\pi^2,0)$ has a secondary subcritical pitchfork bifurcation at some $\l=\l_{b,1}\sim -0.7296$, so that, for every $\l<\l_{b,1}$, \eqref{1.1} has, at least, three positive solutions on $\mathscr{C}_0^+$.
The first row of Figure \ref{Fig19} shows, from left to right, a series of profiles of solutions along the main symmetric branch for $\l>0$ and for $\l<0$ and, then, a series of profiles of solutions along the two secondary half-branches bifurcating at $\l=\l_{b,1}$. In the case of symmetric solutions, for sufficiently negative $\l$, two peaks develop in the innermost intervals where $a=1$ while, for solutions along the secondary branches, only one peak located on either innermost interval where $a=1$ appears. As usual, observe that the secondary branches are superimposed in Figure \ref{Fig18}.

\begin{figure}[h!]
	\centering
	\begin{overpic}[scale=0.28,trim = 1cm 5cm 1cm 7cm, clip]{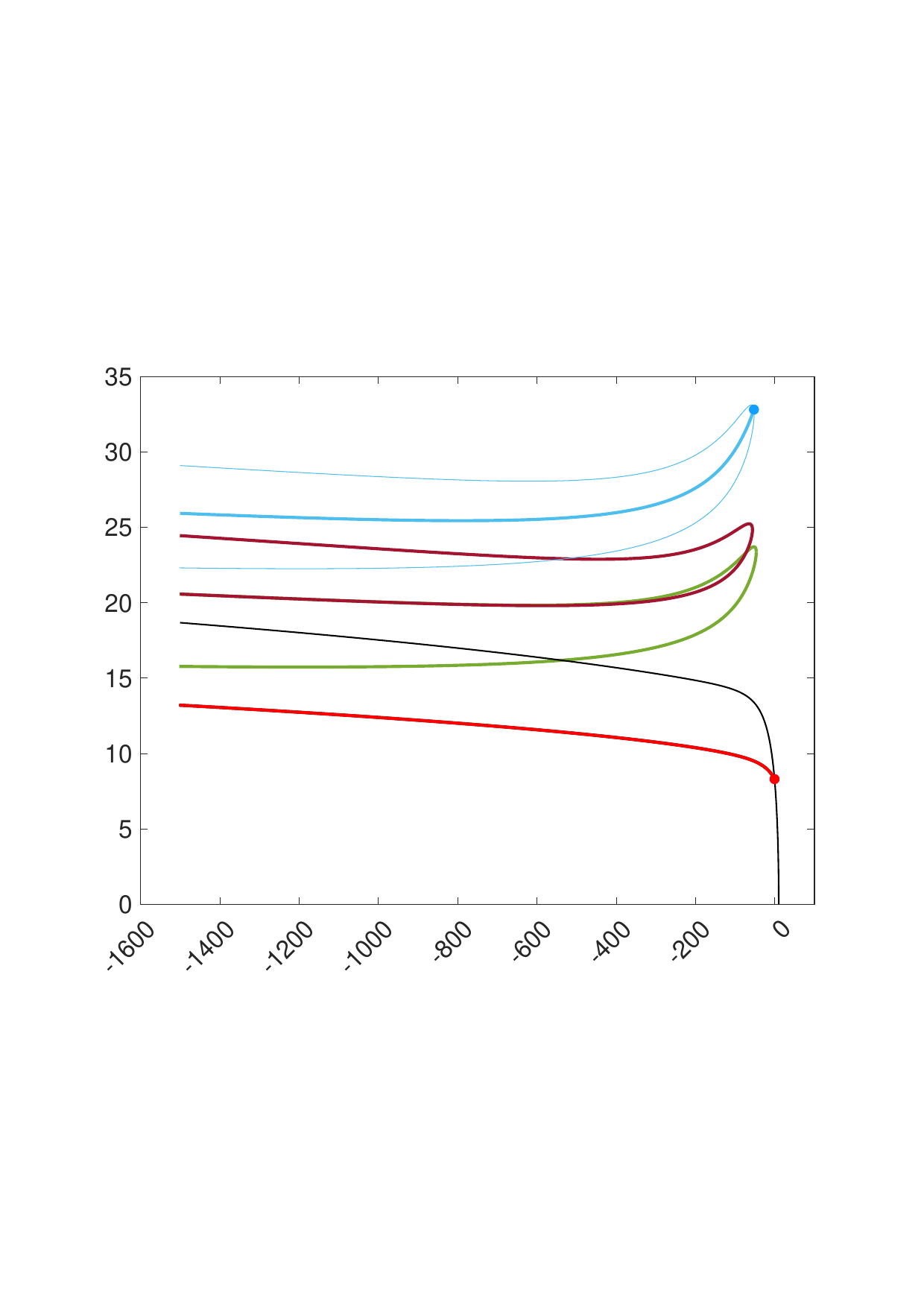} \put (12,87) {\tiny$\|u\|_2$}
	\put (94,20) {\tiny$\l$}
\end{overpic}
	\vspace{-0.8cm}
	\caption{Bifurcation diagram of \eqref{1.1} for $a=a_{3,0}$ with $h=0.3$ considered in Section \ref{sec:5}.}
	\label{Fig18}
\end{figure}

\par
Solutions along the lower two (superimposed) isolas, plotted in olive green in the second row of Figure \ref{Fig18} have, for sufficiently negative $\l$, either one peak located on the outermost intervals where $a=1$ or two peaks located in both left or right intervals where $a=1$. Solutions along the two mid (superimposed) isolas, plotted in brown in the third row of Figure \ref{Fig18} have, for sufficiently negative $\l$, either two peaks located at alternate intervals where $a=1$ or three peaks located in consecutive intervals where $a=1$.

\par
Finally, the set of positive solutions contains an upper isola with a subcritical turning point at $\l_t=-52.0364$, together with two global  branches emanating from it, subcritically, at some value of the parameter $\l=\l_{b,2}\sim -52.6275$. This component has been plotted in light blue in Figure \ref{Fig18}. Observe that, in the bifurcation diagram of Figure \ref{Fig18}, only three from a total of four branches of this component can be distinguished, because the solutions on two of these bifurcated half-branches are symmetric about $0.5$ of each other.
The last row of Figure \ref{Fig19} collects a series of plots of positive solutions of \eqref{1.1} on each of these four branches.
\par
Overall, for sufficiently negative $\l$, \eqref{1.1} has $4$ solutions with a single peak, $6$ solutions with two peaks, $4$ solutions with  three peaks, and $1$ solution with four peaks,  making a total of $15$ positive solutions, as predicted by Theorem \ref{th2.1} and Remark \ref{re2.2}.
\par
On each of the components represented in Figure \ref{Fig18}, the  degree of instability of the solutions, measured by the dimension of their unstable manifolds, changes by one every time a turning or a bifurcation point is crossed, much like in L\'{o}pez-G\'{o}mez, Molina-Meyer and Tellini \cite{LGMMT,LGMMT2}, and Fencl and L\'{o}pez-G\'{o}mez \cite{FLGN}.

\begin{figure}[h!]
	%\hspace{-3cm}
	\begin{tabular}{cccc}
		\begin{overpic}[scale=0.2,trim = 1cm 5cm 1cm 9cm, clip]{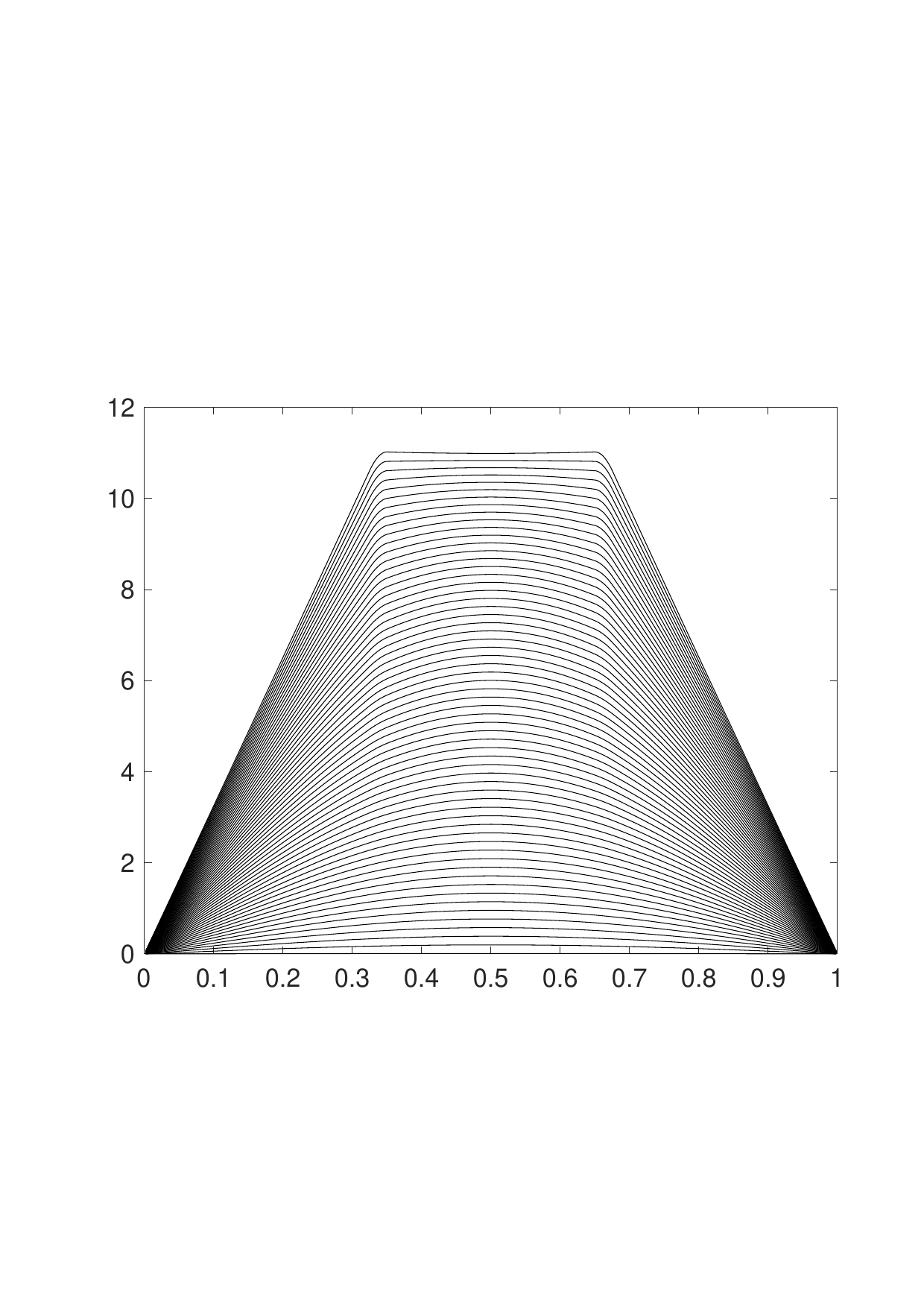}  \put (12,83.5) {\tiny$u(x)$}
		\put (96.5,15) {\tiny$x$}\end{overpic}  &
		\begin{overpic}[scale=0.2,trim = 1cm 5cm 1cm 9cm, clip]{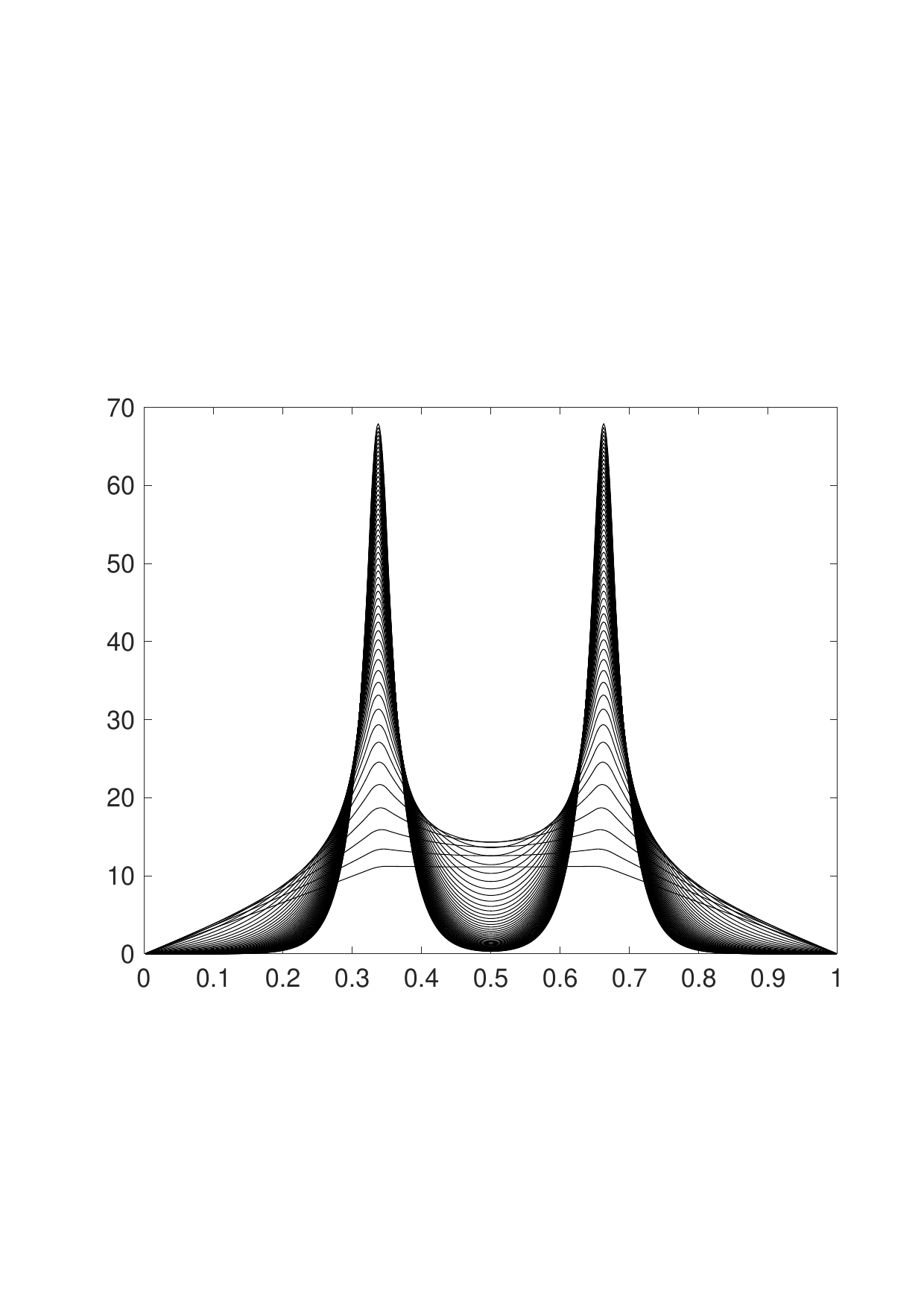}  \put (12,83.5) {\tiny$u(x)$}
		\put (96.5,15) {\tiny$x$}\end{overpic} &
		
		\begin{overpic}[scale=0.2,trim = 1cm 5cm 1cm 9cm, clip]{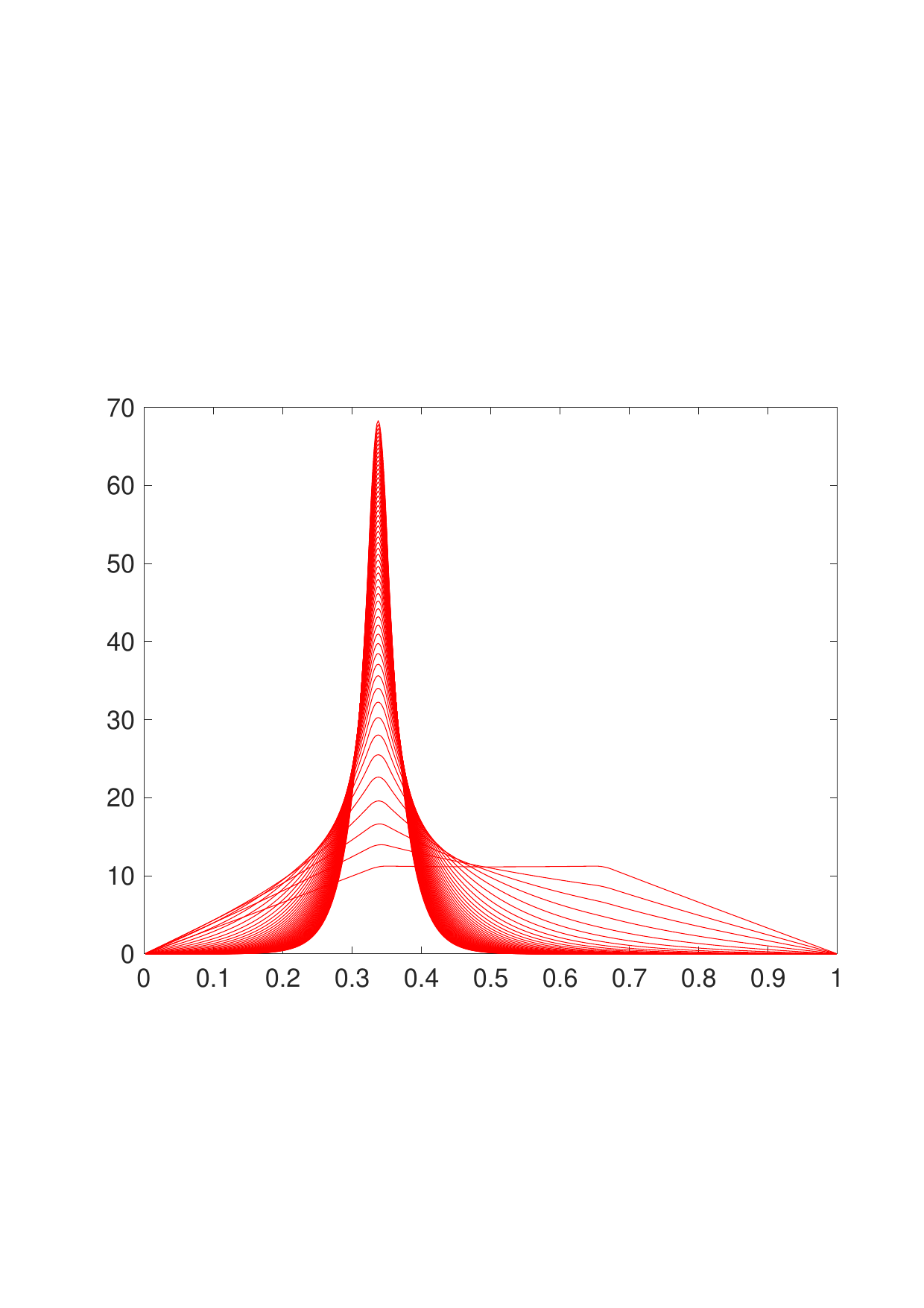}  \put (12,83.5) {\tiny$u(x)$}
		\put (96.5,15) {\tiny$x$}\end{overpic} &
		\begin{overpic}[scale=0.2,trim = 1cm 5cm 1cm 9cm, clip]{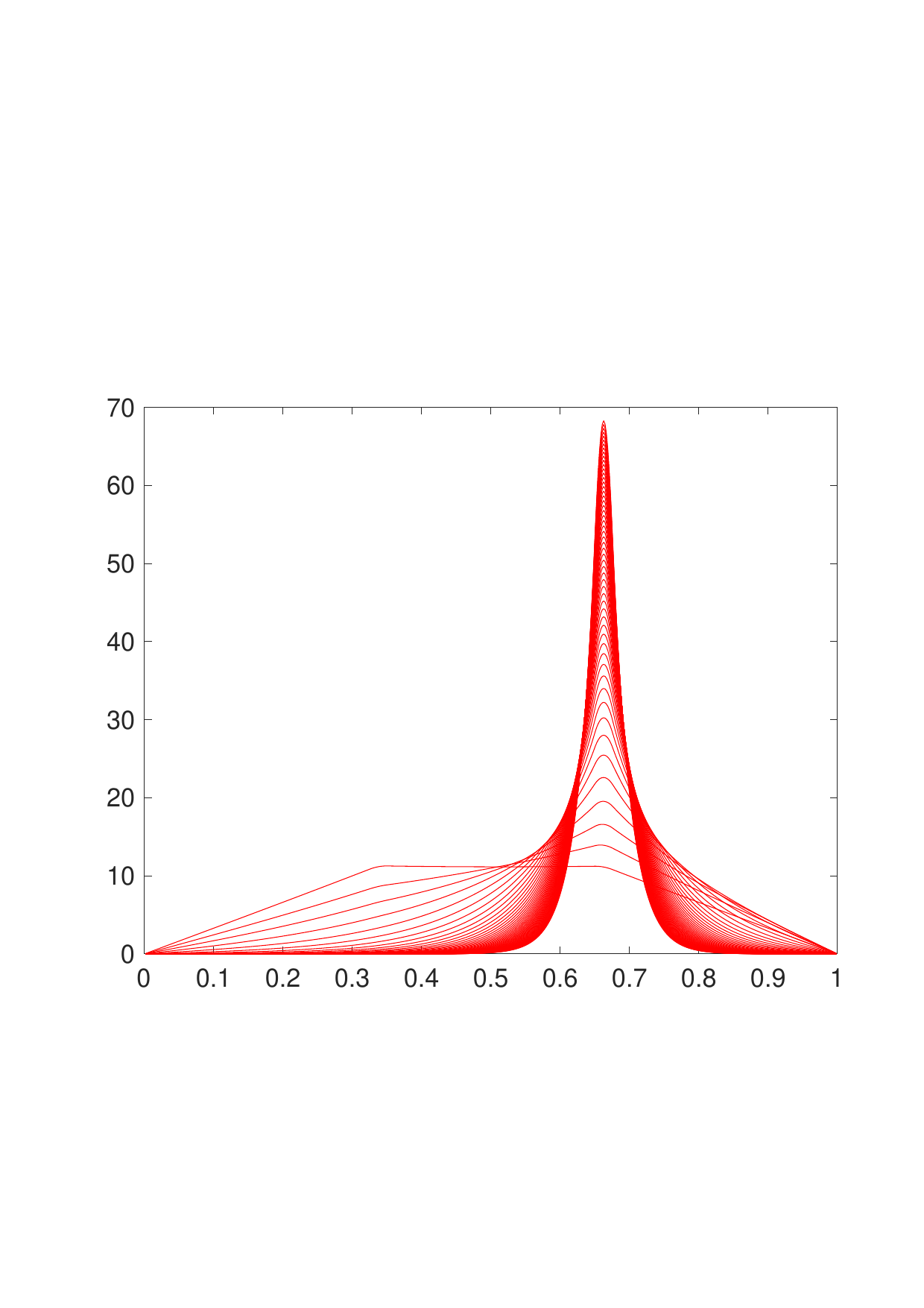}  \put (12,83.5) {\tiny$u(x)$}
		\put (96.5,15) {\tiny$x$}\end{overpic}  \\[-2.5em]
		
		\begin{overpic}[scale=0.2,trim = 1cm 5cm 1cm 5cm, clip]{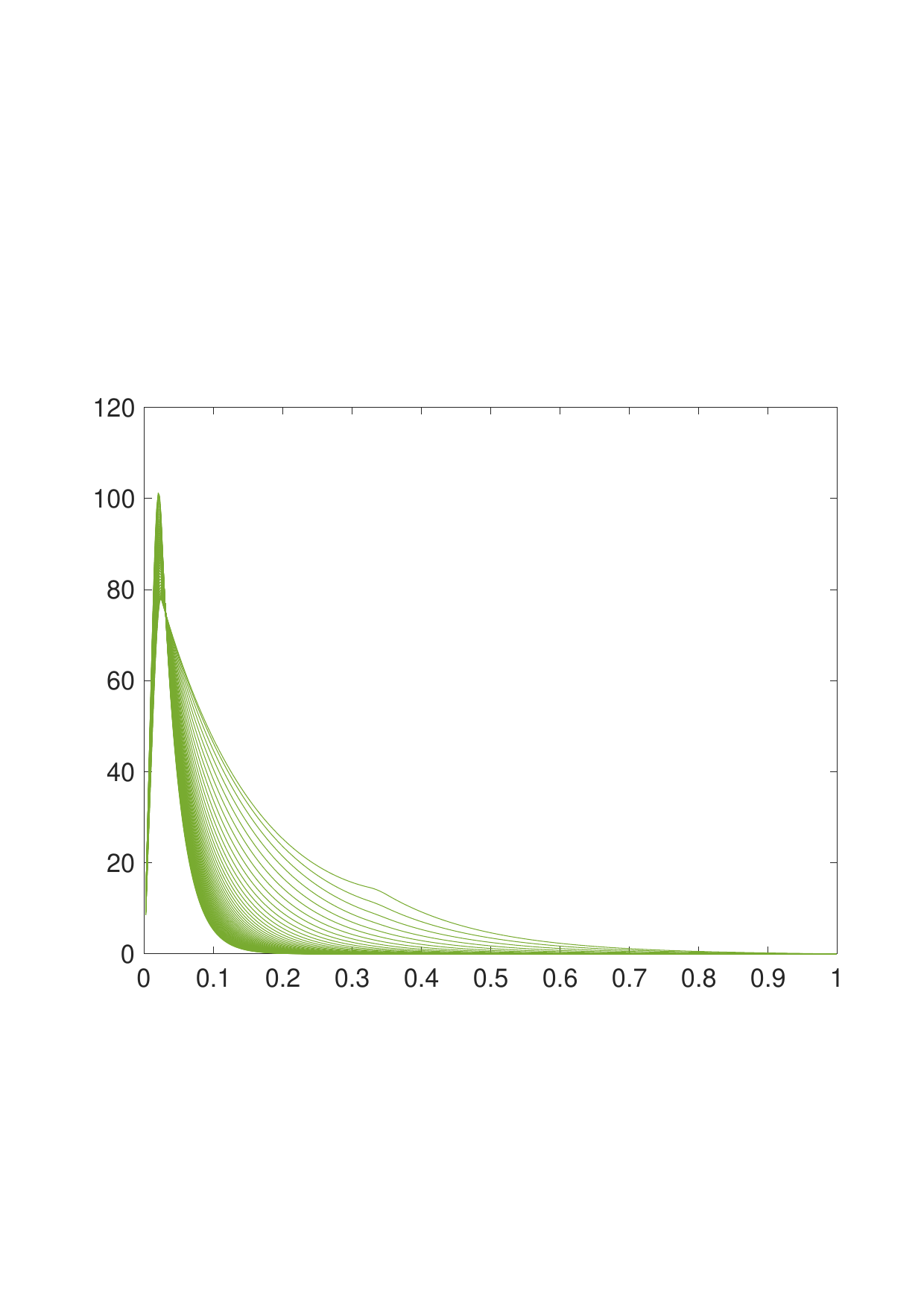}  \put (12,80.5) {\tiny$u(x)$}
		\put (93.5,15) {\tiny$x$}\end{overpic} &
		\begin{overpic}[scale=0.2,trim = 1cm 5cm 1cm 5cm, clip]{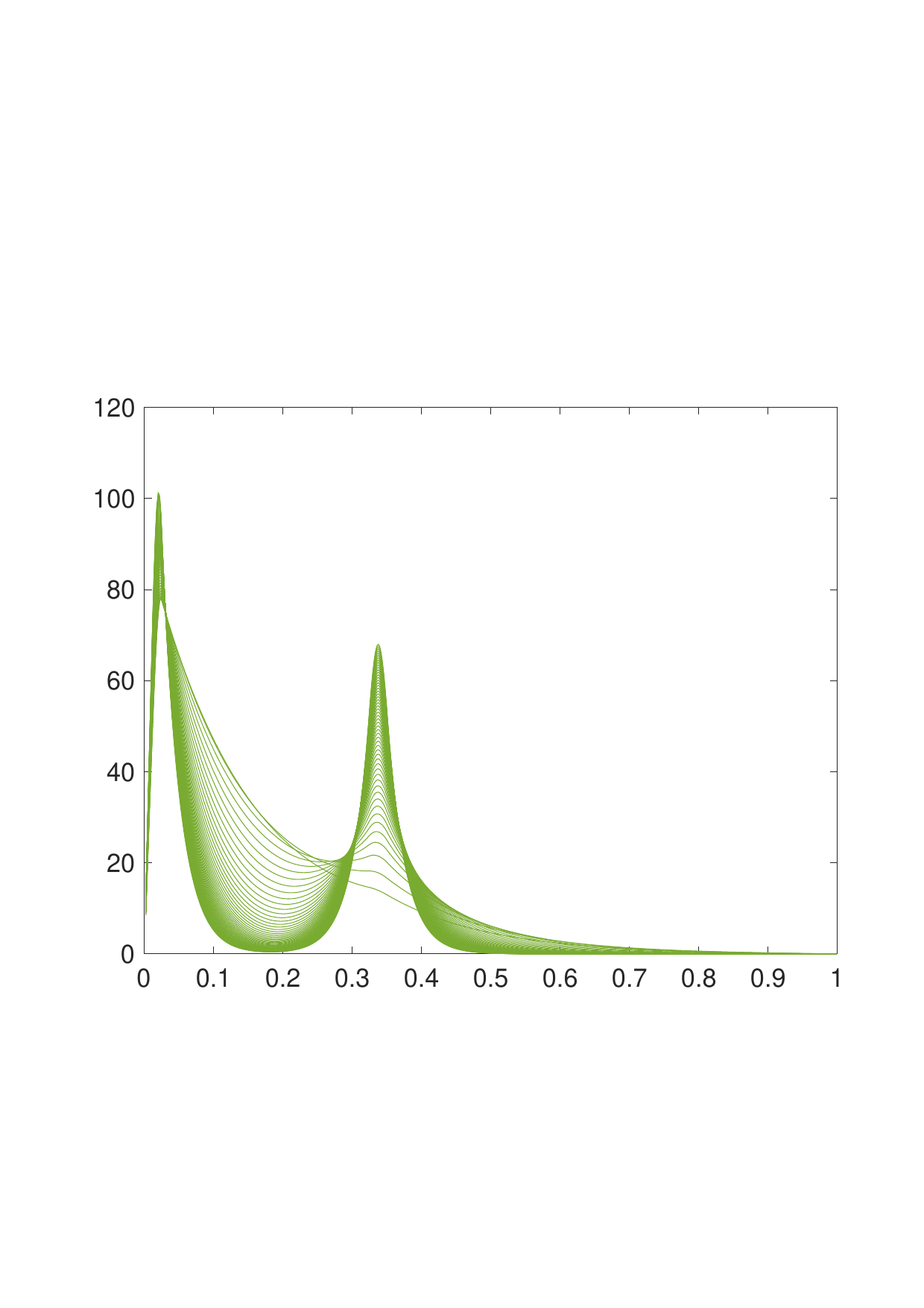}  \put (12,80.5) {\tiny$u(x)$}
		\put (93.5,15) {\tiny$x$}\end{overpic}  &
		\begin{overpic}[scale=0.2,trim = 1cm 5cm 1cm 5cm, clip]{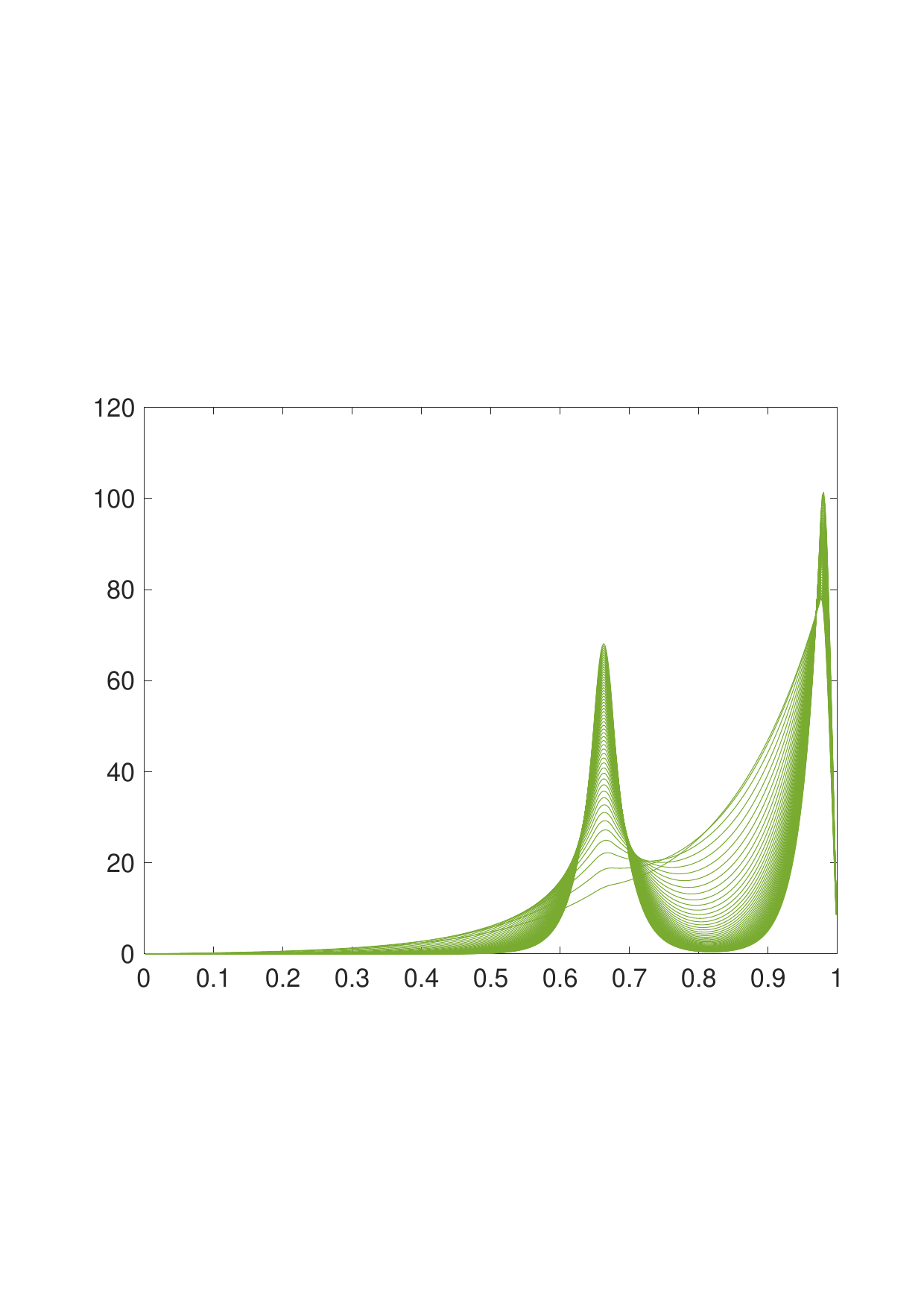}  \put (12,80.5) {\tiny$u(x)$}
		\put (93.5,15) {\tiny$x$}\end{overpic} &
		\begin{overpic}[scale=0.2,trim = 1cm 5cm 1cm 5cm, clip]{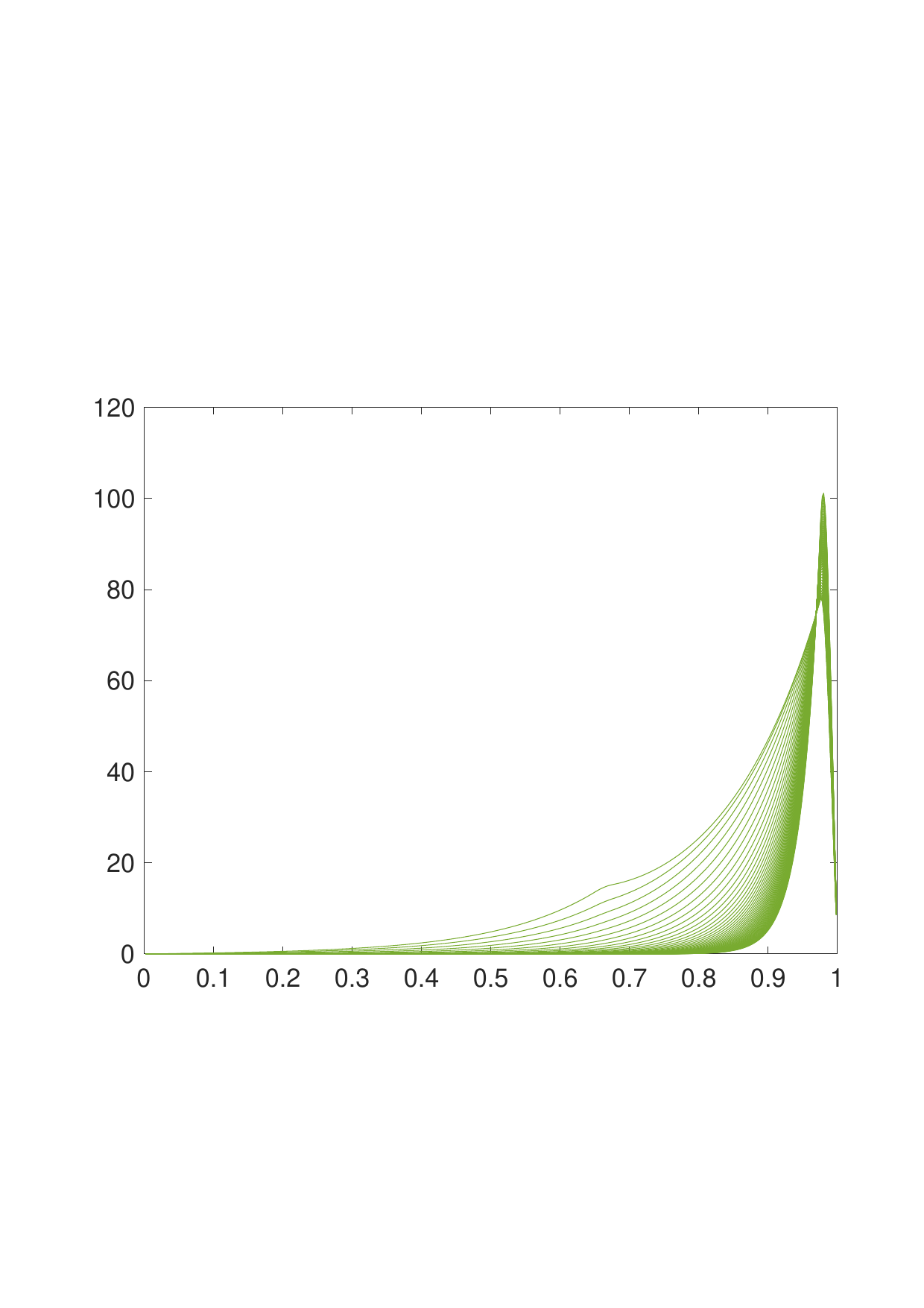}  \put (12,80.5) {\tiny$u(x)$}
		\put (93.5,15) {\tiny$x$}\end{overpic} \\[-2.5em]
		
		\begin{overpic}[scale=0.2,trim = 1cm 5cm 1cm 5cm, clip]{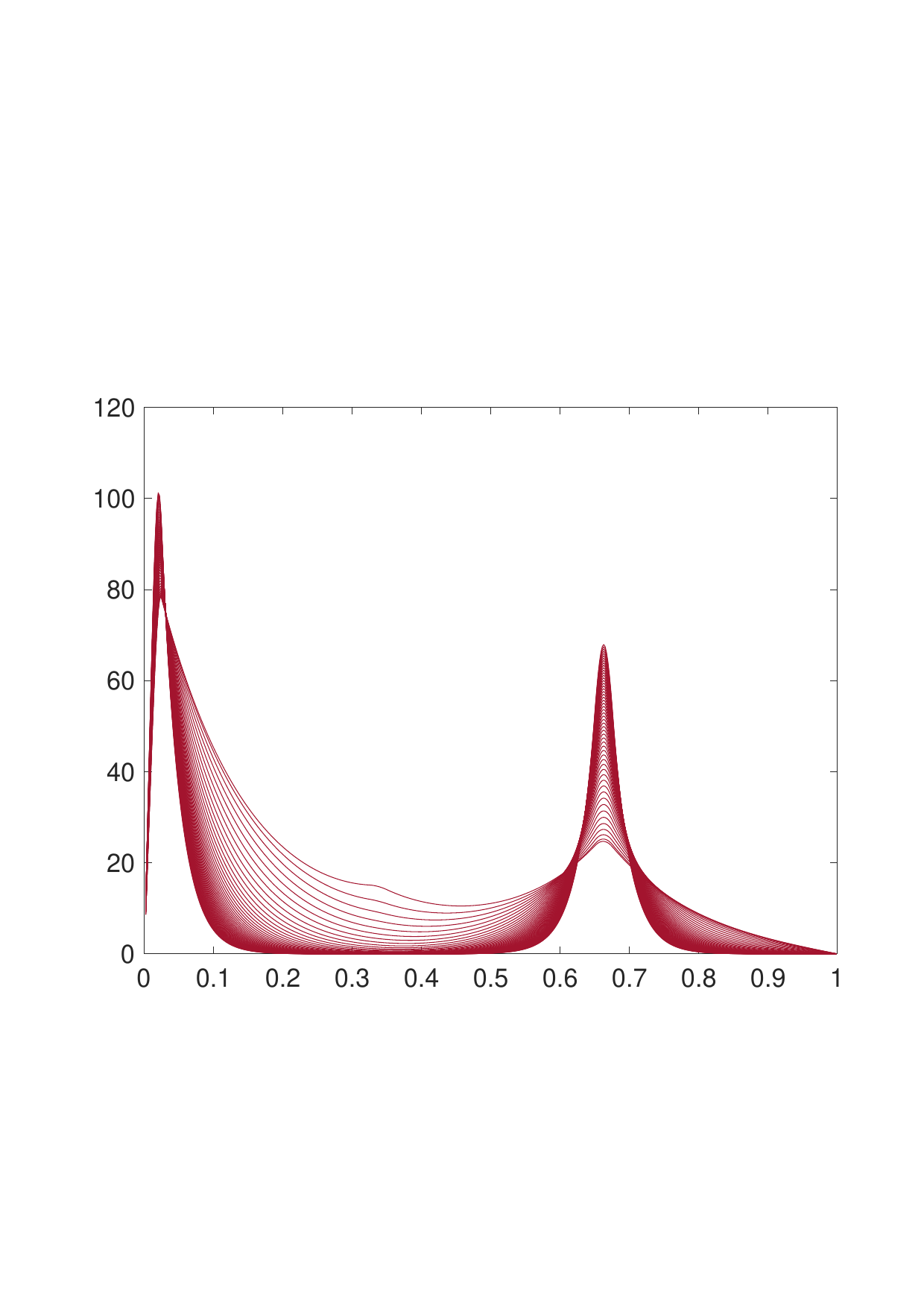}  \put (12,80.5) {\tiny$u(x)$}
		\put (93.5,15) {\tiny$x$}\end{overpic} &
		\begin{overpic}[scale=0.2,trim = 1cm 5cm 1cm 5cm, clip]{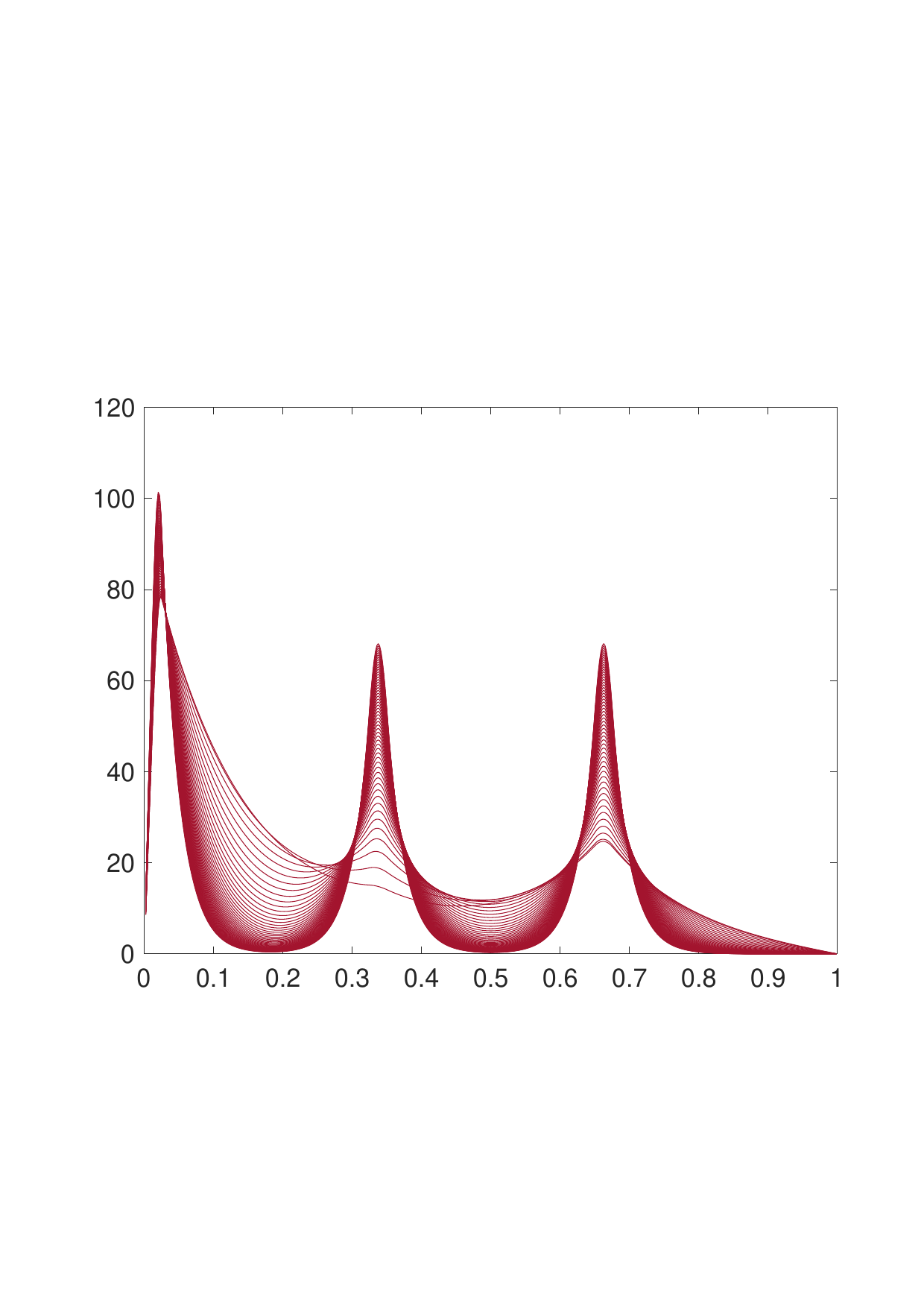}  \put (12,80.5) {\tiny$u(x)$}
		\put (93.5,15) {\tiny$x$}\end{overpic} &
		\begin{overpic}[scale=0.2,trim = 1cm 5cm 1cm 5cm, clip]{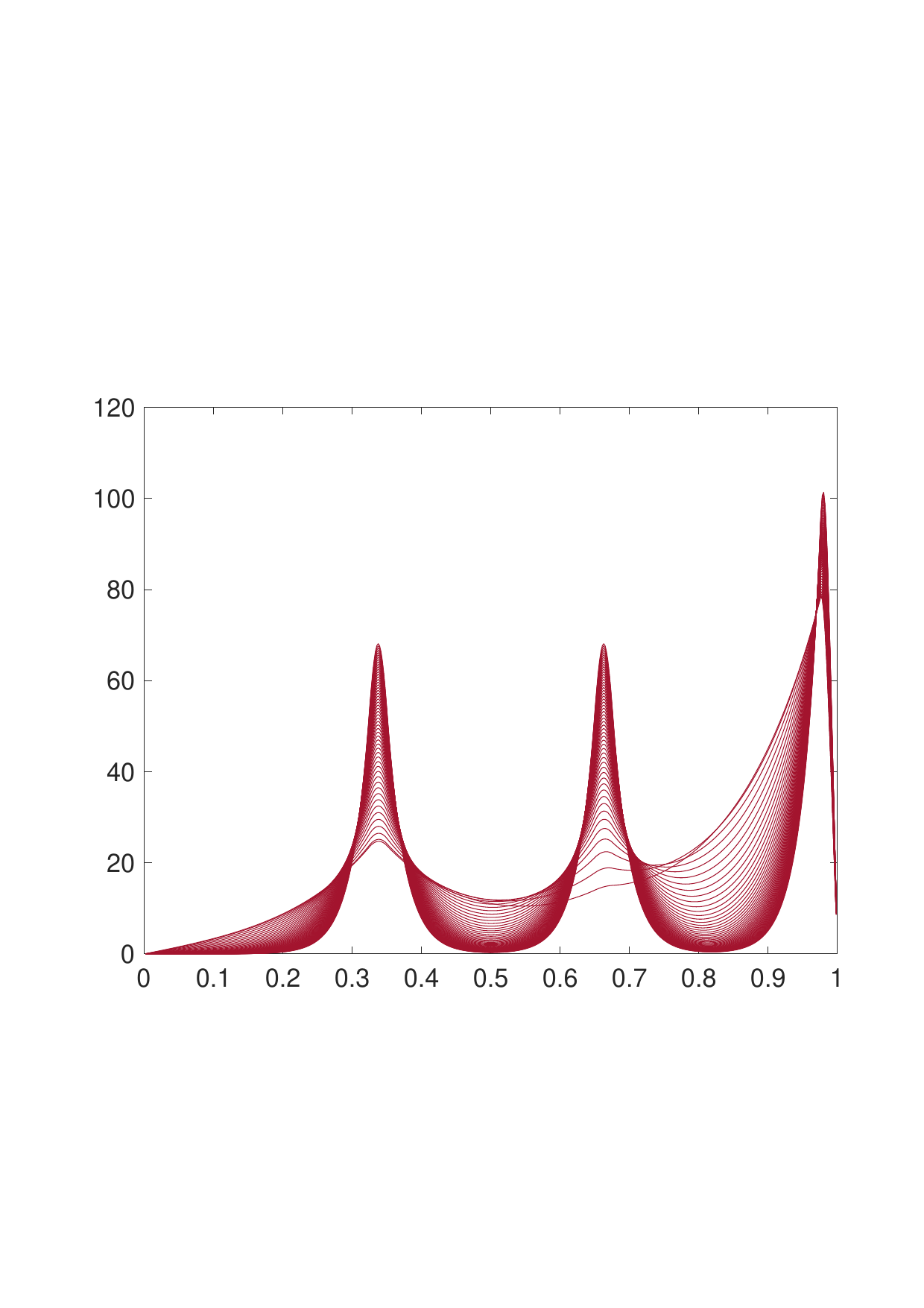}  \put (12,80.5) {\tiny$u(x)$}
		\put (93.5,15) {\tiny$x$}\end{overpic}  &
		\begin{overpic}[scale=0.2,trim = 1cm 5cm 1cm 5cm, clip]{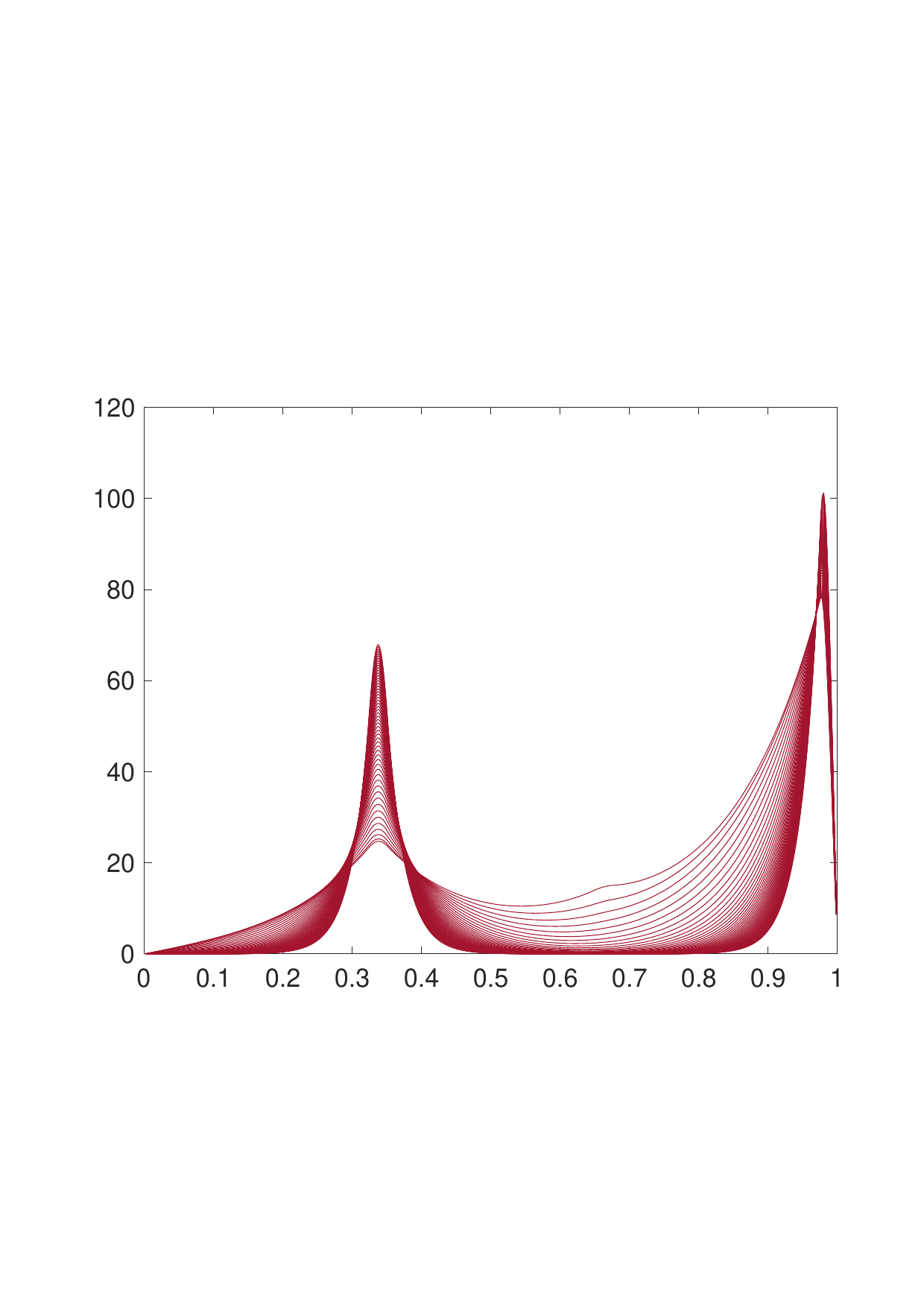}  \put (12,80.5) {\tiny$u(x)$}
		\put (93.5,15) {\tiny$x$}\end{overpic} \\[-2.5em]
		
		\begin{overpic}[scale=0.2,trim = 1cm 5cm 1cm 5cm, clip]{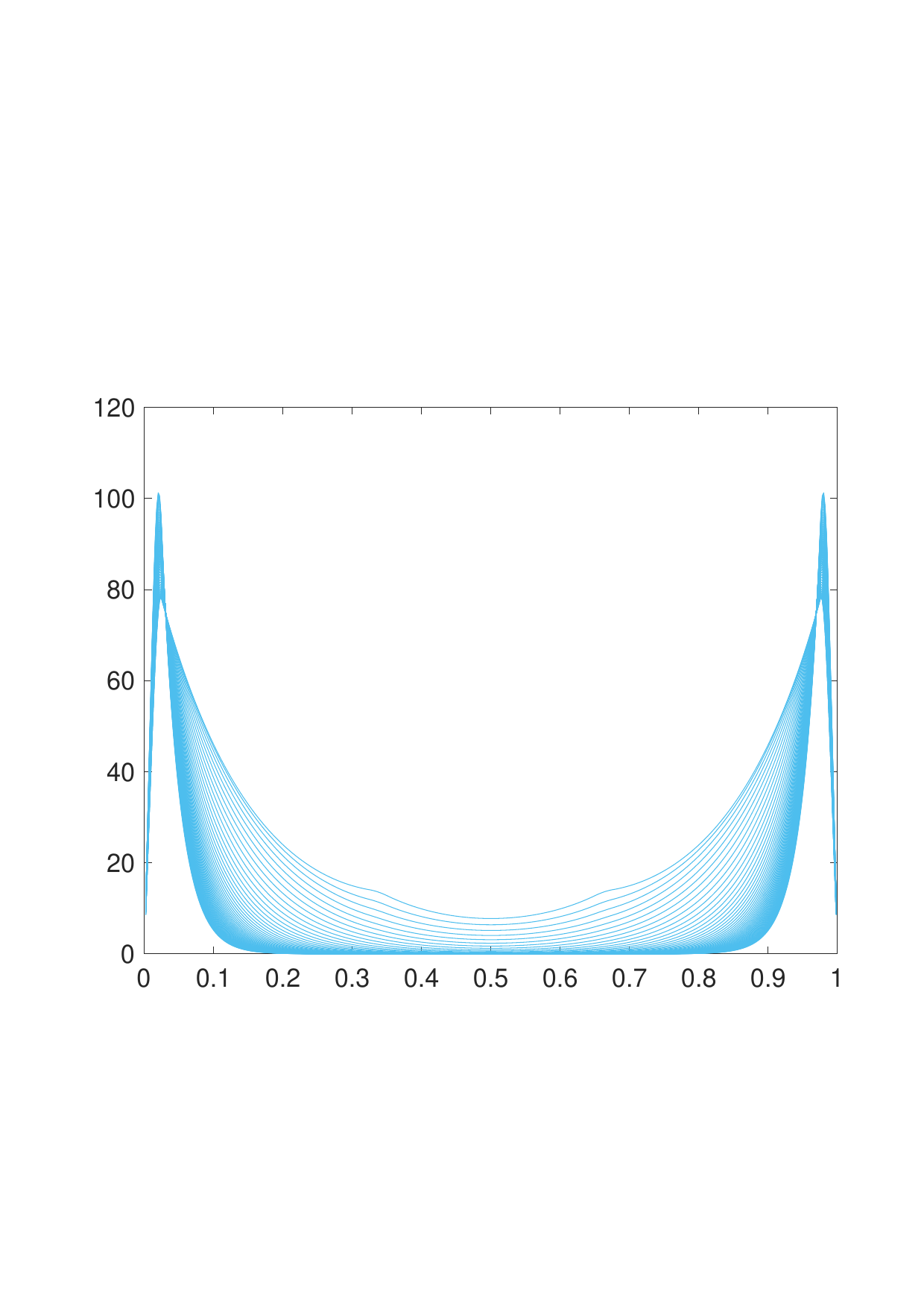}  \put (12,80.5) {\tiny$u(x)$}
		\put (93.5,15) {\tiny$x$}\end{overpic}  &
		\begin{overpic}[scale=0.2,trim = 1cm 5cm 1cm 5cm, clip]{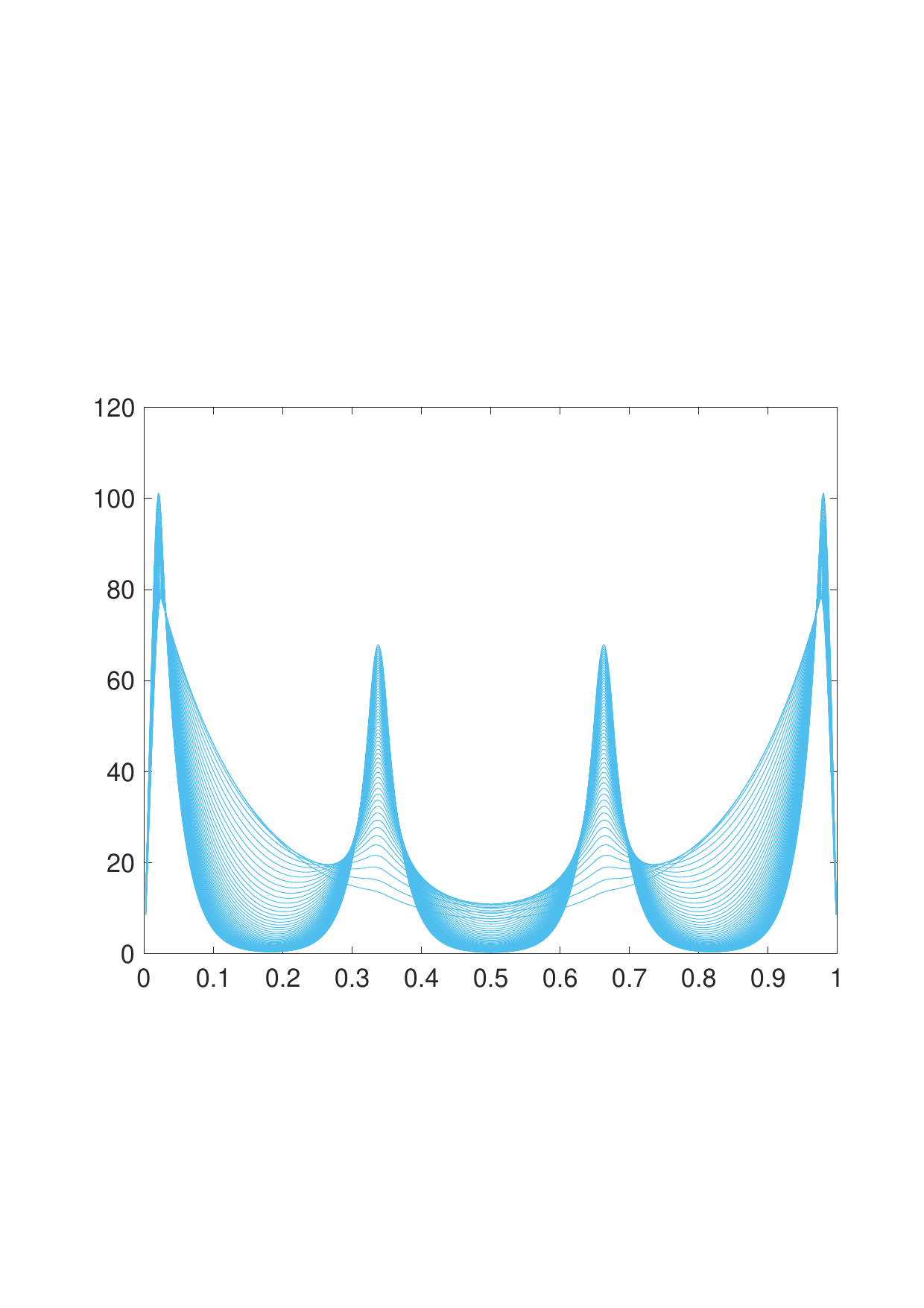}  \put (12,80.5) {\tiny$u(x)$}
		\put (93.5,15) {\tiny$x$}\end{overpic} &
		\begin{overpic}[scale=0.2,trim = 1cm 5cm 1cm 5cm, clip]{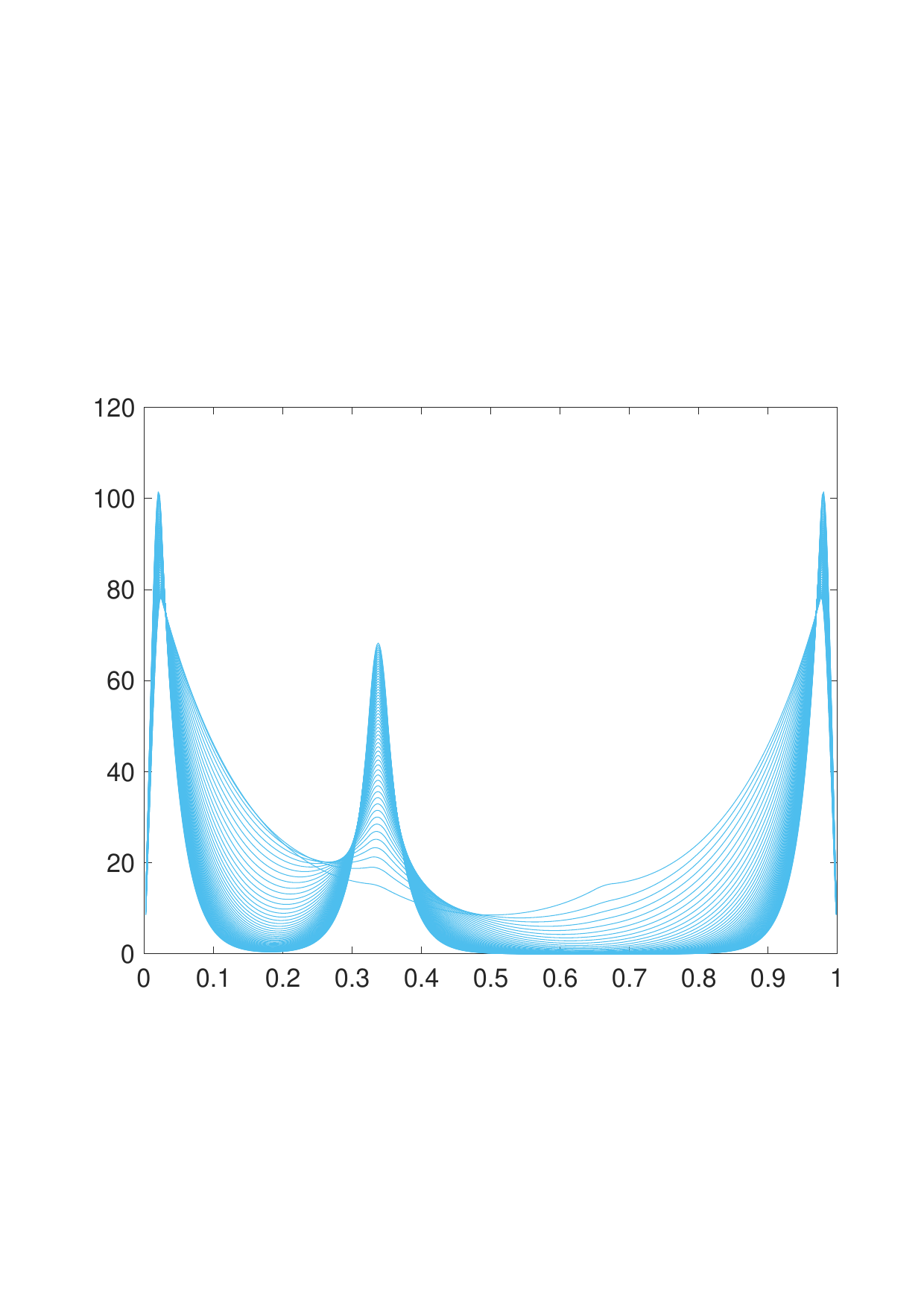}  \put (12,80.5) {\tiny$u(x)$}
		\put (93.5,15) {\tiny$x$}\end{overpic}  &
		\begin{overpic}[scale=0.2,trim = 1cm 5cm 1cm 5cm, clip]{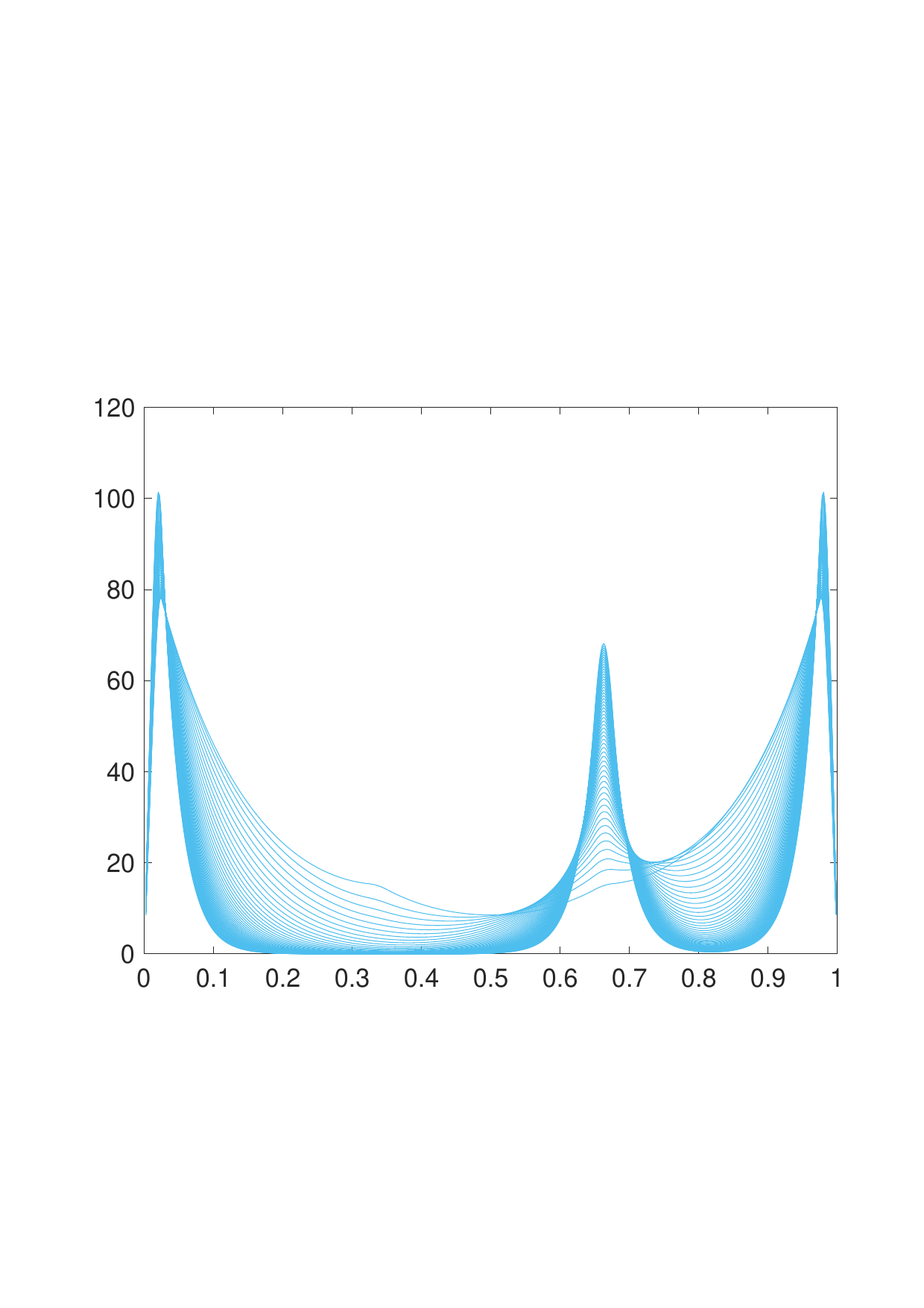}  \put (12,80.5) {\tiny$u(x)$}
		\put (93.5,15) {\tiny$x$}\end{overpic}
	\end{tabular}
	\vspace{-0.4cm}
	\caption{A series of positive solutions along the different branches of the bifurcation diagram of Figure \ref{Fig18}.}
	\label{Fig19}
\end{figure}

\section{Effects of varying $\e\in [0,1]$ in case $\kappa=1$ with $h=0.1$}
\label{sec:6}

In this section, we are going to show the result of some numerical experiments suggesting that, for $a=a_{1,\e}$ with $h=0.1$ and sufficiently negative $\l$, problem \eqref{1.1} exhibits multiplicity of positive solutions not only for $\e=0$, as already discussed in Section \ref{sec:3}, but also for $\e \in (0,1)$ and, in particular, as close to $1$ as desired. Observe that this is in strong contrast with the uniqueness of positive solutions, which holds true for every $\l<\pi^2$ when $a\equiv 1$ (see, e.g., L\'opez-G\'omez, Mu\~{n}oz-Hern\'andez and Zanolin \cite[Proposition 2.1]{LGMHZ}). In addition, the results of our numerical experiments show that the multiplicity in the case $\e \in (0,1)$ can be even higher than for $\e=0$.

\begin{figure}[h!]
	\centering
	\begin{overpic}[scale=0.28,trim = 1cm 5cm 1cm 7cm, clip]{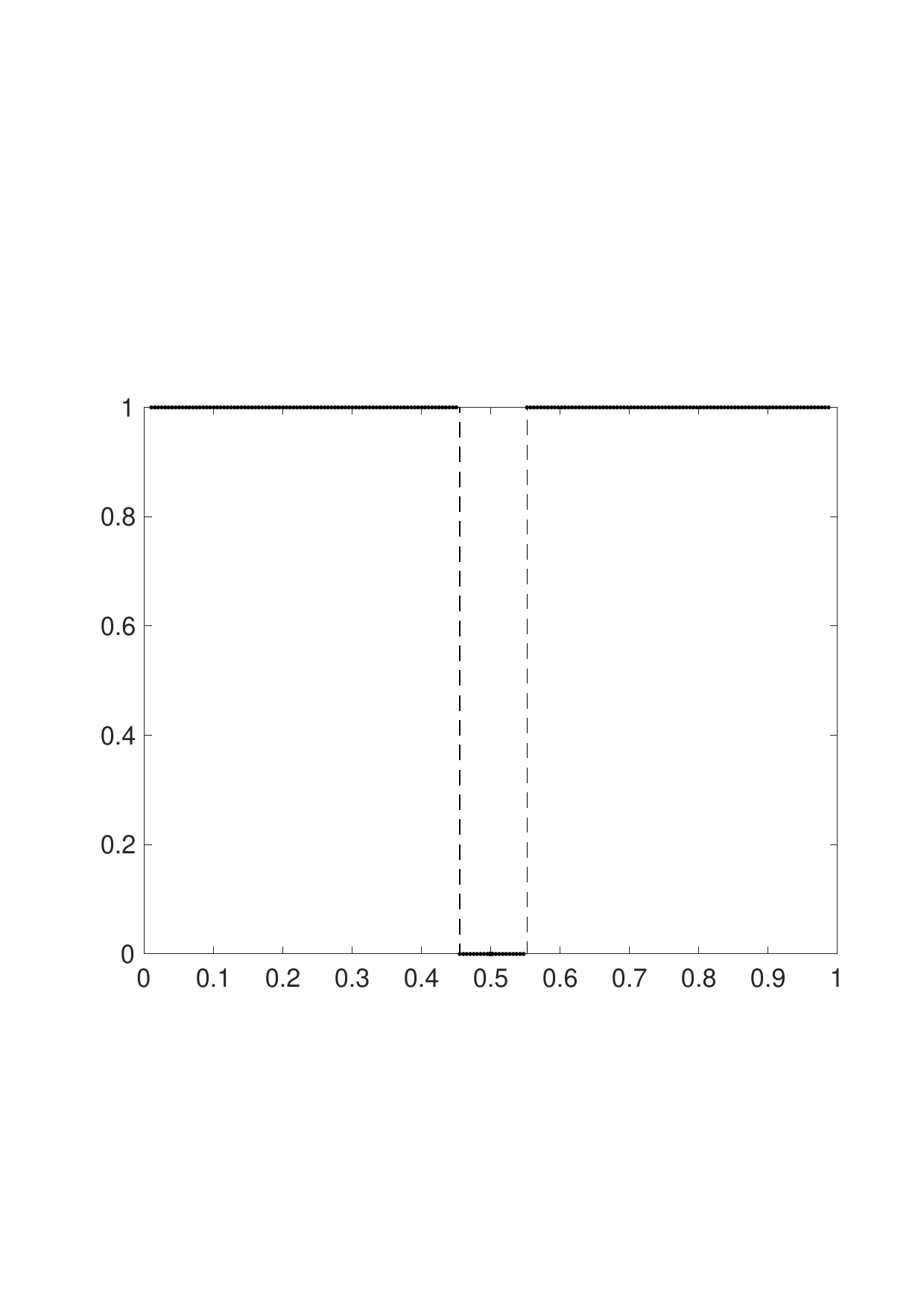} \put (13.5,83.5) {\tiny$a_{1,0}(x)$}
		\put (96.5,15) {\tiny$x$}
	\end{overpic}
		\vspace{-0.8cm}
		\caption{Plot of the weight function $a_{1,0}$ for $h=0.1$.}
		\label{Fig20}
	\end{figure}
	
	Figure \ref{Fig21} illustrates the resulting bifurcation diagram for $\e=0$, which consists of the component $\mathscr{C}_0^+$ bifurcating from $u=0$ at $\l=\pi^2$ and exhibits a secondary bifurcation of subcritical pitchfork type.
	
	\begin{figure}[h!]
		\centering
		\begin{overpic}[scale=0.28,trim = 1cm 5cm 1cm 5cm, clip]{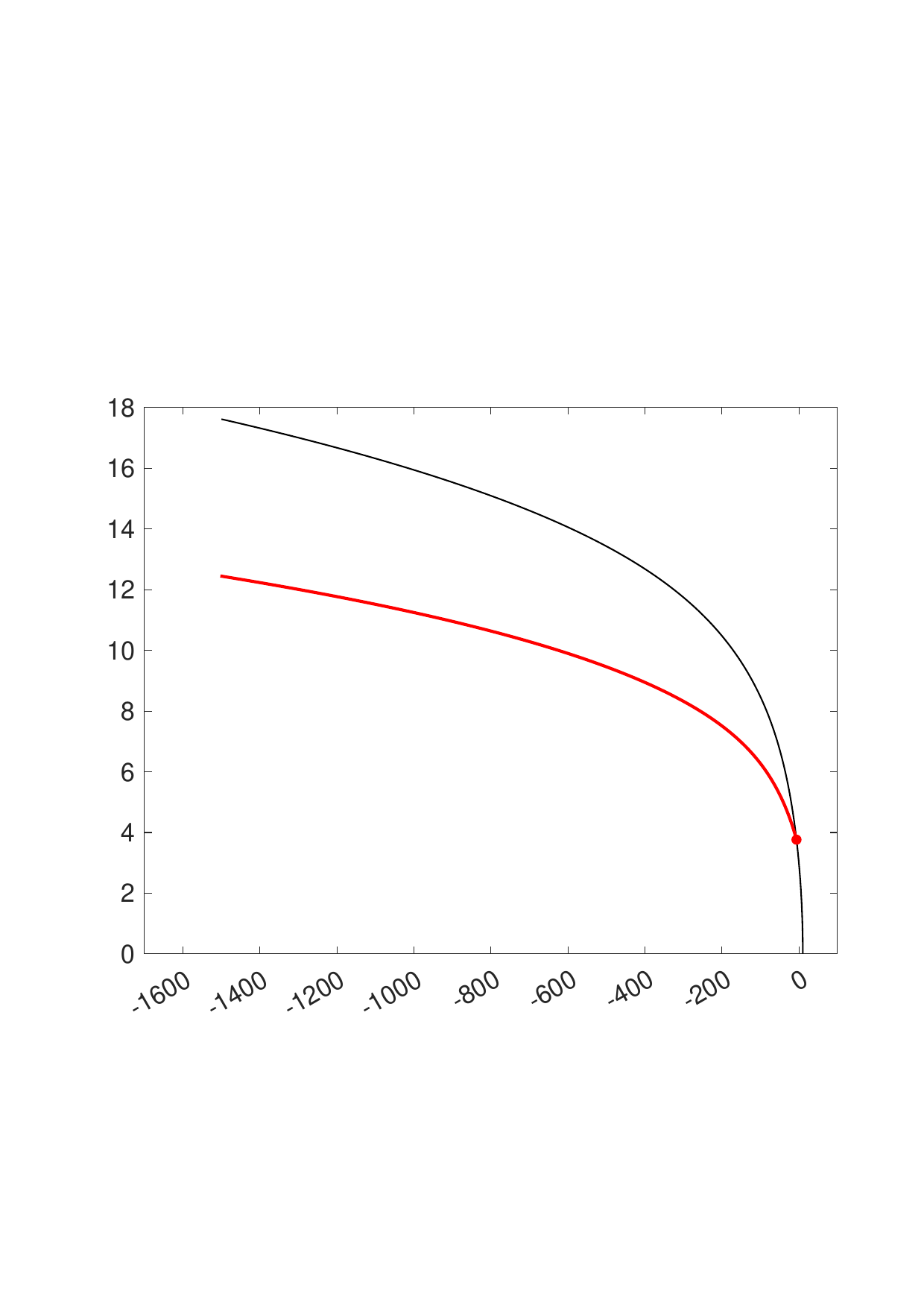}
					\put (12.5,81) {\tiny$\|u\|_2$}
					\put (93,14) {\tiny$\l$}
				\end{overpic}
		\vspace{-0.8cm}
		\caption{Bifurcation diagram for \eqref{1.1} with $a=a_{1,\e}$, $h=0.1$ and $\e=0$.}
		\label{Fig21}
	\end{figure}

	Figure \ref{Fig22} shows a series of positive solutions along each of the three branches of the bifurcation diagram. Symmetric solutions along the primary branch are plotted in black in Figure \ref{Fig22}, for $\l>0$ (upper left, with one peak) and for $\l<0$ (upper right, with two peaks). Asymmetric solutions along the secondary branches are plotted in red and have a single peak. Remember that these secondary branches overlap in the bifurcation diagram of Figure \ref{Fig21} since the $L^2$-norm of these solutions is the same for every $\l$ where they are defined, since they are symmetric about 0.5 from one another. These results are similar to those presented in Section \ref{sec:3}, where the weight function has also one well, i.e. $\k=1$, simply with different values of its amplitude $h$.
	
	\begin{figure}[ht!]
		\centering
		\begin{overpic}[scale=0.28,trim = 1cm 5cm 1cm 8cm, clip]{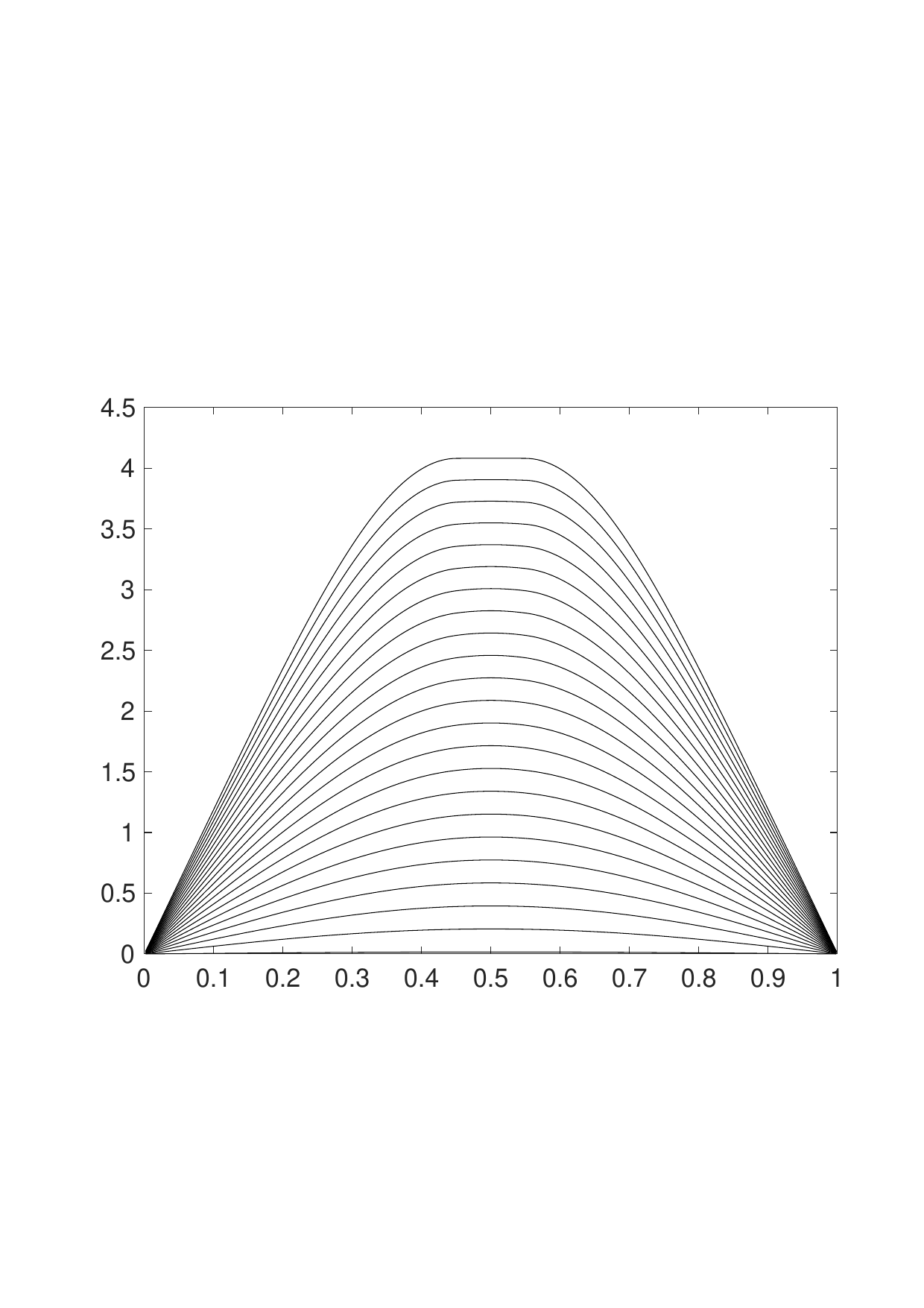} \put (12,83.5) {\tiny$u(x)$}
					\put (96.5,15) {\tiny$x$}\end{overpic} \begin{overpic}[scale=0.28,trim = 1cm 5cm 1cm 8cm, clip]{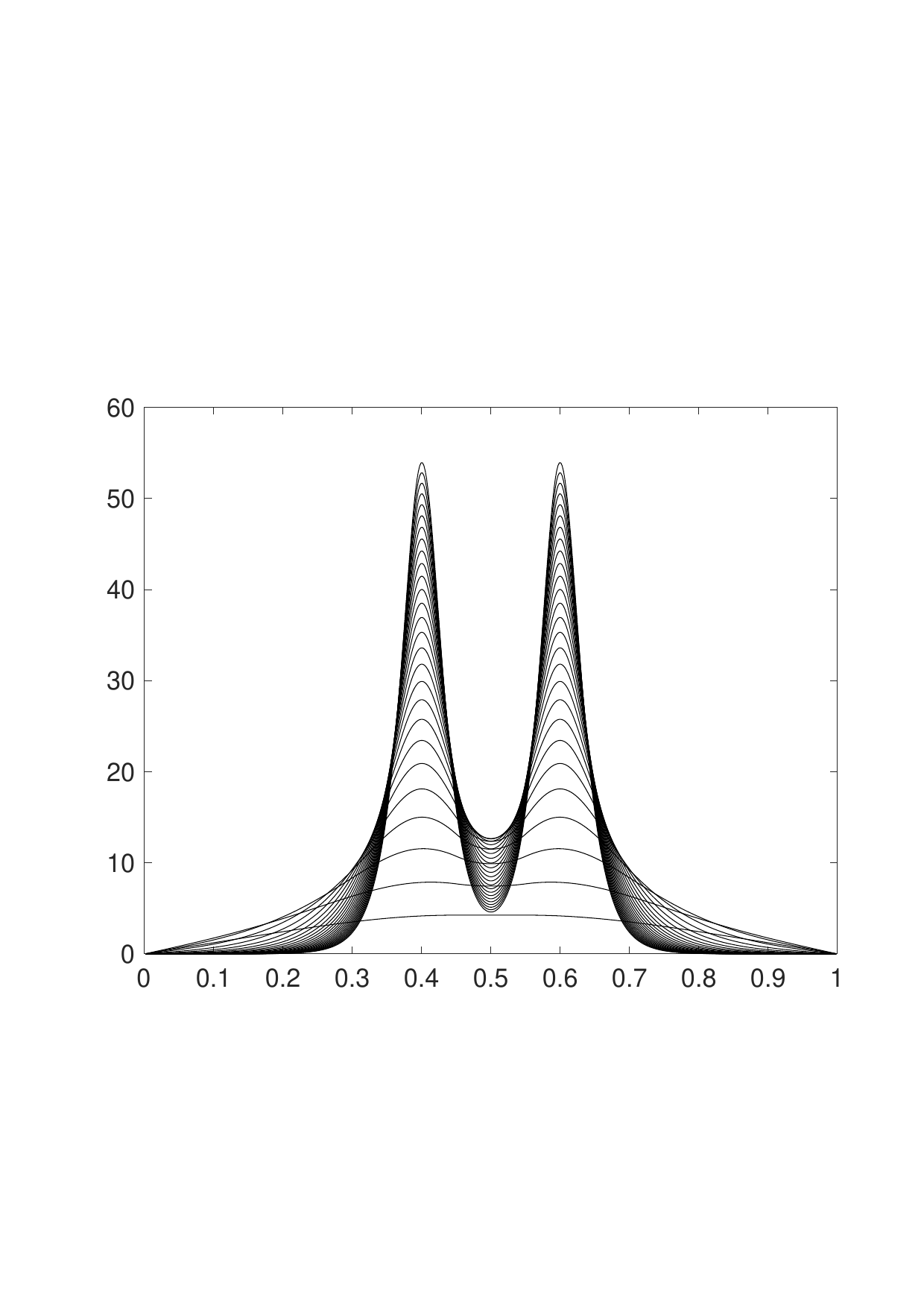} \put (12,83.5) {\tiny$u(x)$}
					\put (96.5,15) {\tiny$x$}\end{overpic} \\[-2.5em]
		\begin{overpic}[scale=0.28,trim = 1cm 5cm 1cm 5cm, clip]{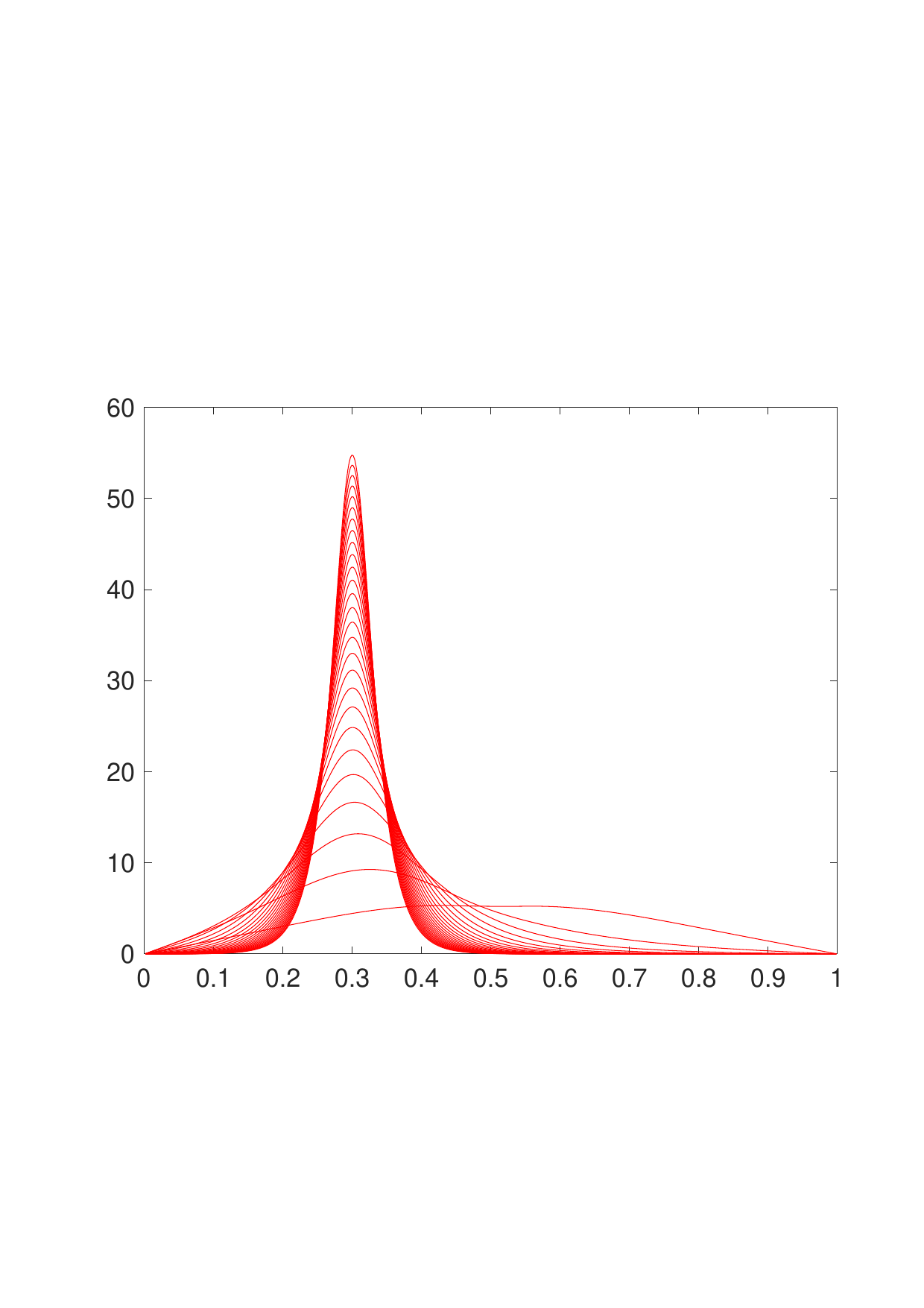} \put (12,80.5) {\tiny$u(x)$}
					\put (93.5,14.5) {\tiny$x$}\end{overpic}
				 \begin{overpic}[scale=0.28,trim = 1cm 5cm 1cm 5cm, clip]{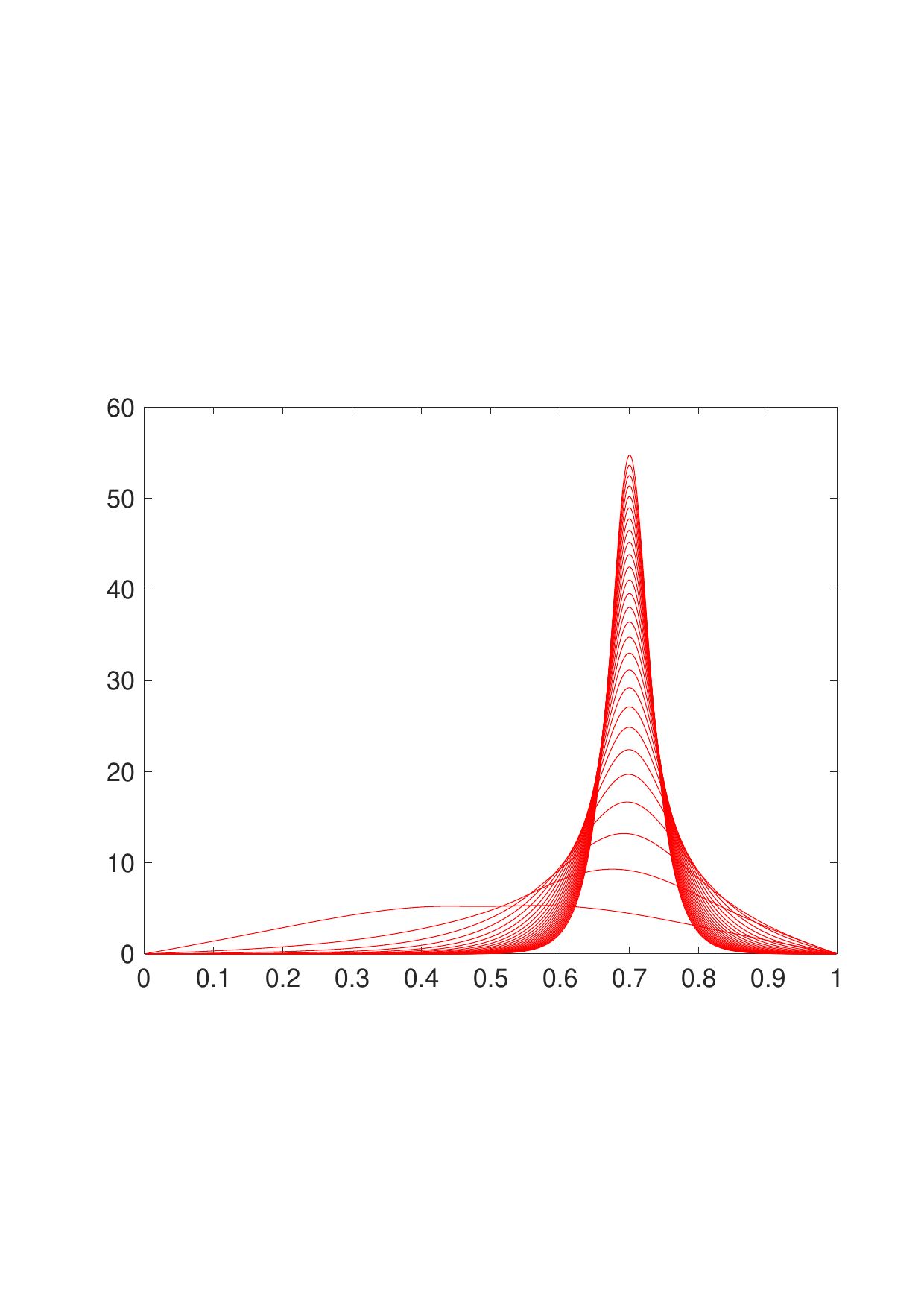} \put (12,80.5) {\tiny$u(x)$}
					\put (93.5,14.5) {\tiny$x$}\end{overpic}
		\vspace{-0.4cm}
		\caption{A series of plots of positive solutions along the three branches corresponding to the bifurcation diagram of Figure \ref{Fig21}. Profiles on the symmetric branch have been plotted for $\l>0$ (upper left) and for $\l<0$ (upper right). Asymmetric profiles lie on the secondary branches.}
		\label{Fig22}
	\end{figure}

Figure \ref{Fig23} shows the bifurcation diagram for $\e=0.3$. As in the case $\e=0$, the bifurcation diagram consists of the component $\mathscr{C}_0^+$, with a secondary bifurcation of subcritical pitchfork type. Nevertheless, an additional component of positive solutions arises: an isola with a subcritical turning point at $\l_t=-1111.65254$. In Section \ref{sec:7}, together with some theoretical and other practical aspects of our numerical simulations, we will explain how this isolated component has been detected. Thus, according to our simulations, in this case problem \eqref{1.1} admits 5 positive solutions for every $\l<\l_t$.

\begin{figure}[ht!]
	\centering
	\begin{overpic}[scale=0.3,trim = 1cm 3cm 1cm 7cm, clip]{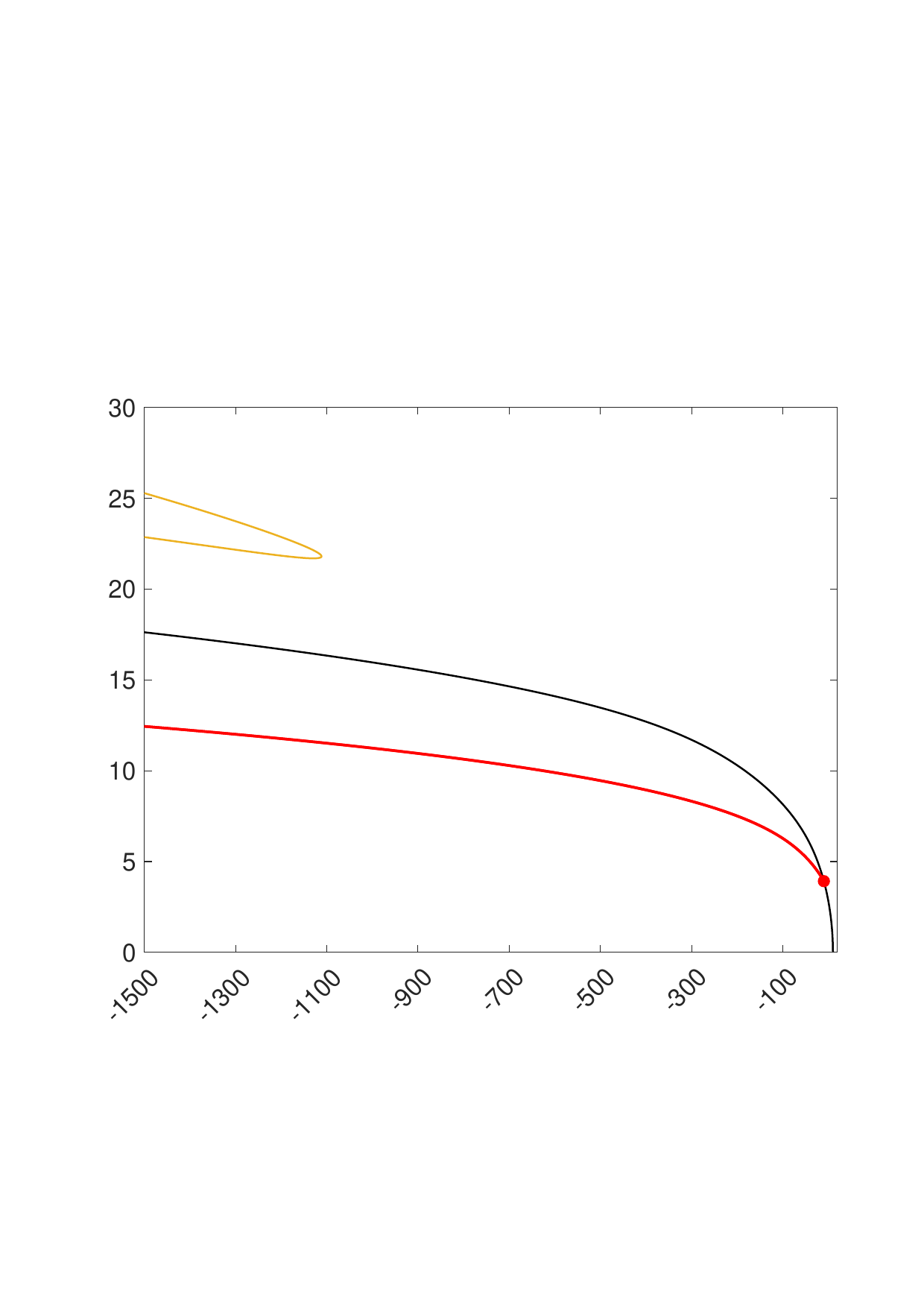}
		\put (12.5,91) {\tiny$\|u\|_2$}
		\put (94,25) {\tiny$\l$}
	\end{overpic}
	\vspace{-1.0cm}
	\caption{Bifurcation diagram for \eqref{1.1} with $a=a_{1,\e}$, $h=0.1$ and $\e=0.3$.}
	\label{Fig23}
\end{figure}

Figure \ref{Fig24} shows  a series of positive solutions along each of the branches of the bifurcation diagram. As above, the profiles plotted in black in Figure \ref{Fig24} correspond to a series of symmetric solutions along the primary branch for $\l>0$ (left) and for $\l<0$ (right), and solutions with a single peak, plotted in red in the second row, belong to each of the two secondary branches, which are overlapped in Figure \ref{Fig23}. The profiles on the isola (plotted in orange, according to the color used in the bifurcation diagram) are symmetric and exhibit a single peak.

\begin{figure}[ht!]
	\centering
	\begin{overpic}[scale=0.28,trim = 1cm 5cm 1cm 7cm, clip]{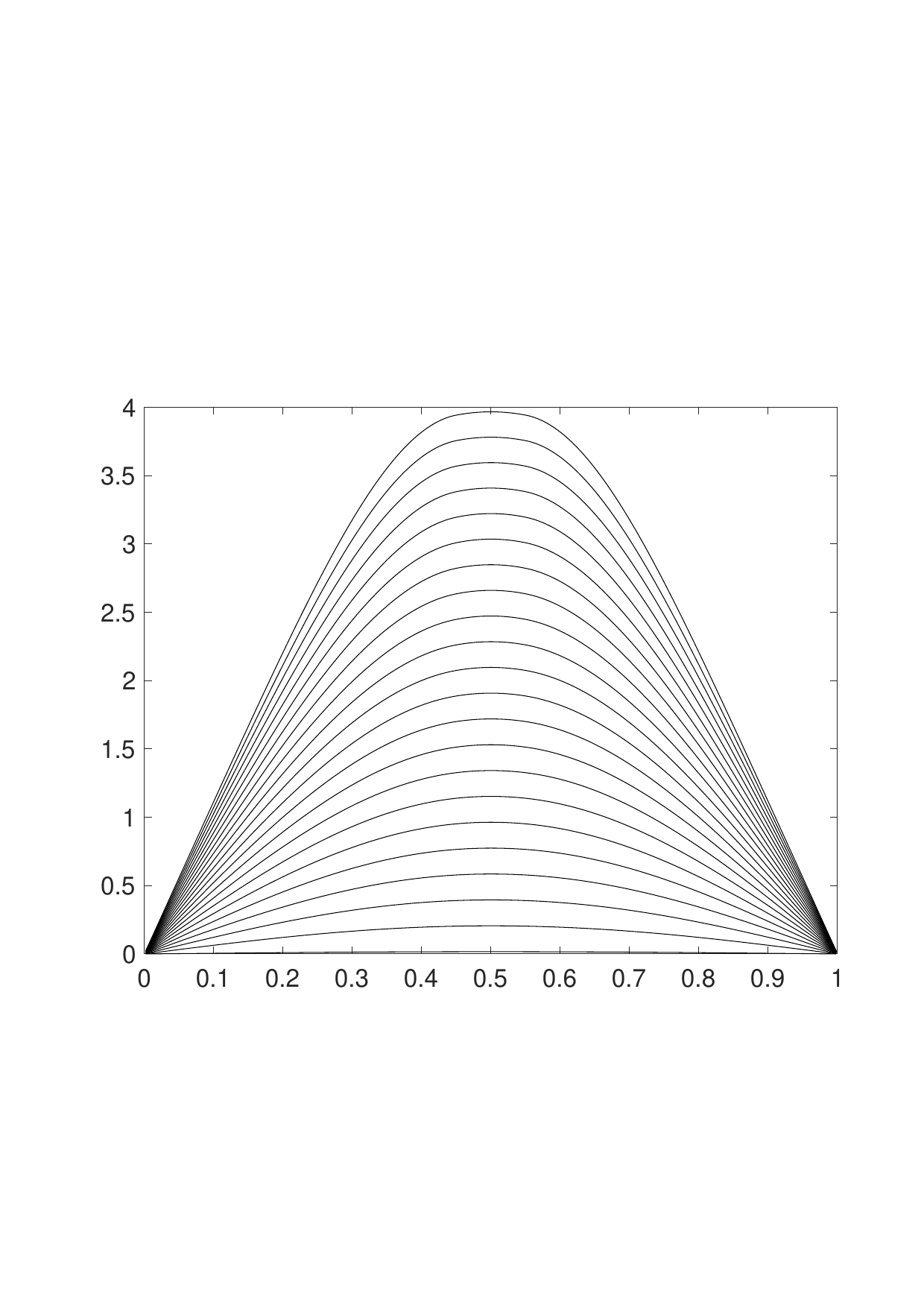} \put (12.3,83.6) {\tiny$u(x)$}
		\put (96.5,15) {\tiny$x$}\end{overpic} \begin{overpic}[scale=0.28,trim = 1cm 5cm 1cm 7cm, clip]{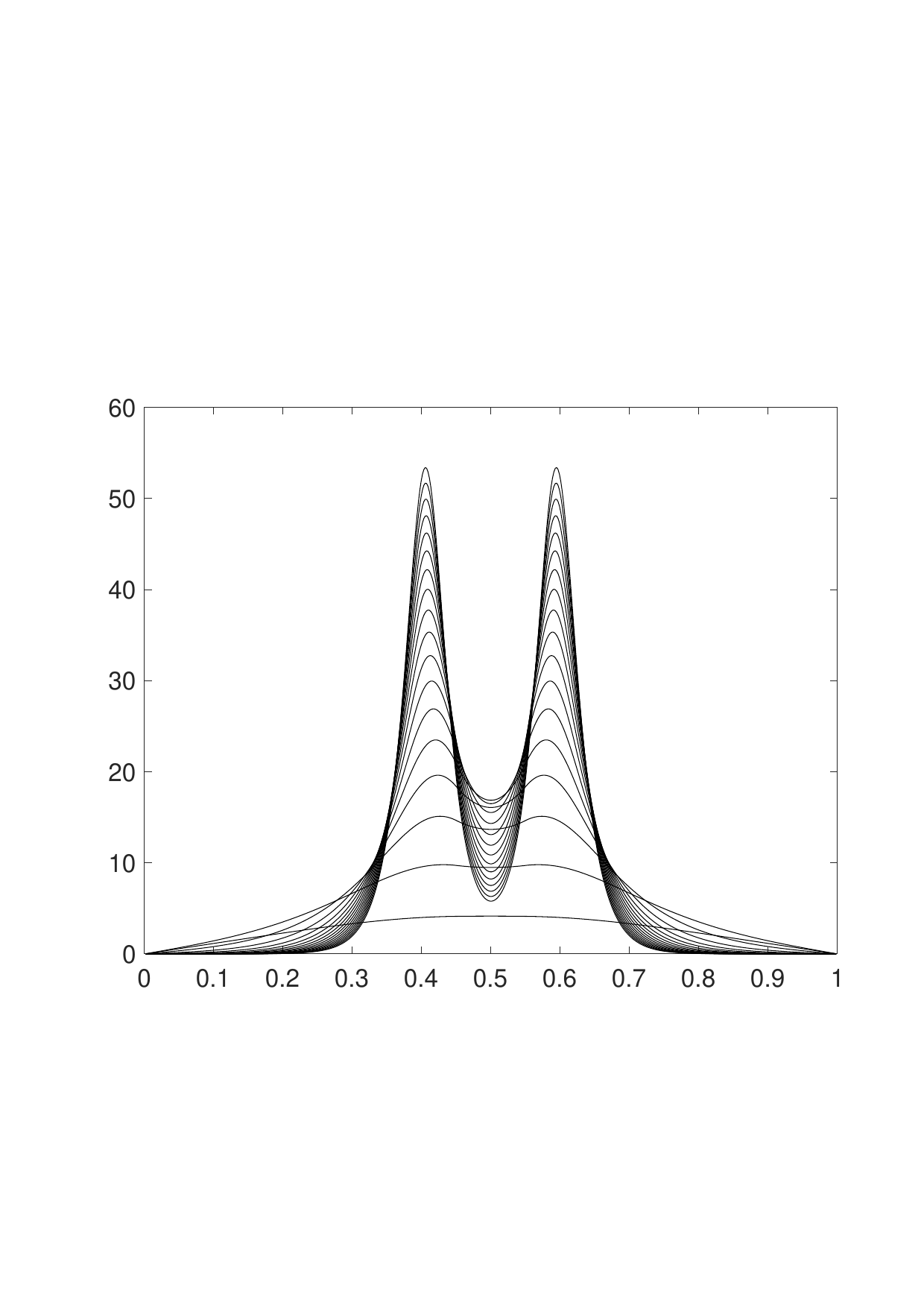} \put (12.3,83.6) {\tiny$u(x)$}
		\put (96.5,15) {\tiny$x$}\end{overpic}\\[-2.5em]
	\begin{overpic}[scale=0.28,trim = 1cm 5cm 1cm 5cm, clip]{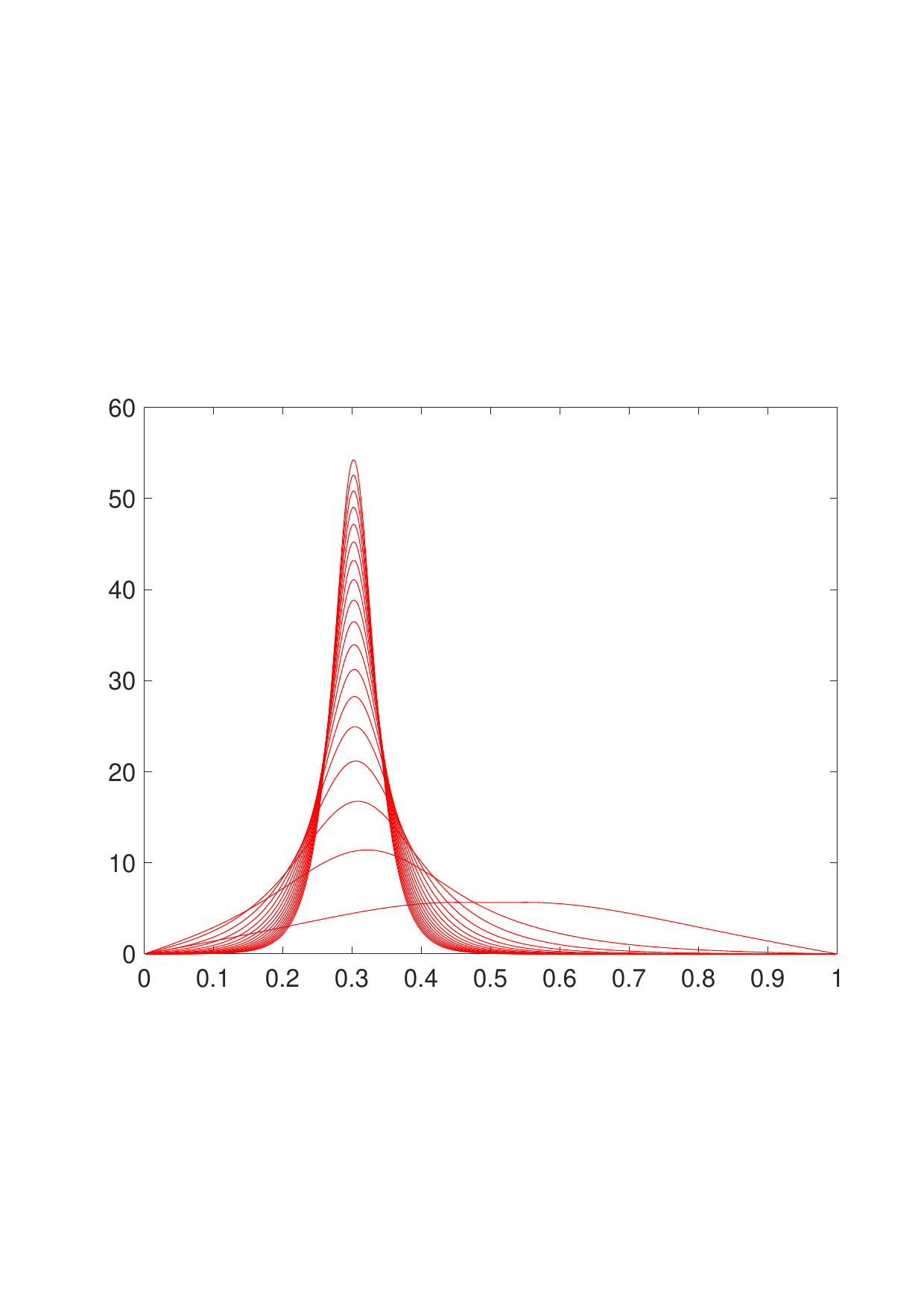} \put (12,80.5) {\tiny$u(x)$}
		\put (93,15) {\tiny$x$}\end{overpic} \begin{overpic}[scale=0.28,trim = 1cm 5cm 1cm 5cm, clip]{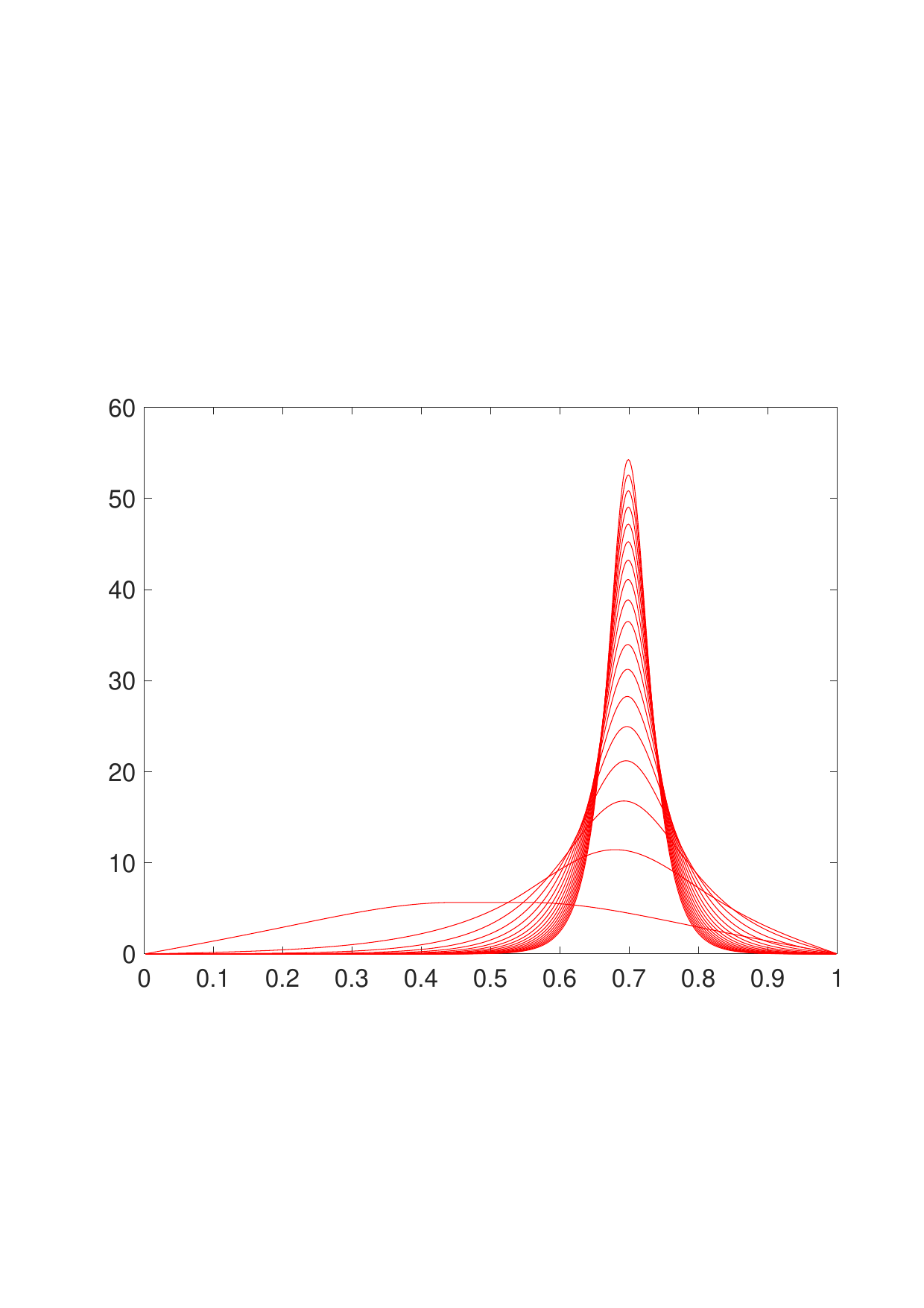} \put (12,80.5) {\tiny$u(x)$}
		\put (93,15) {\tiny$x$}\end{overpic}\\[-2.5em]
	\begin{overpic}[scale=0.28,trim = 1cm 5cm 1cm 5cm, clip]{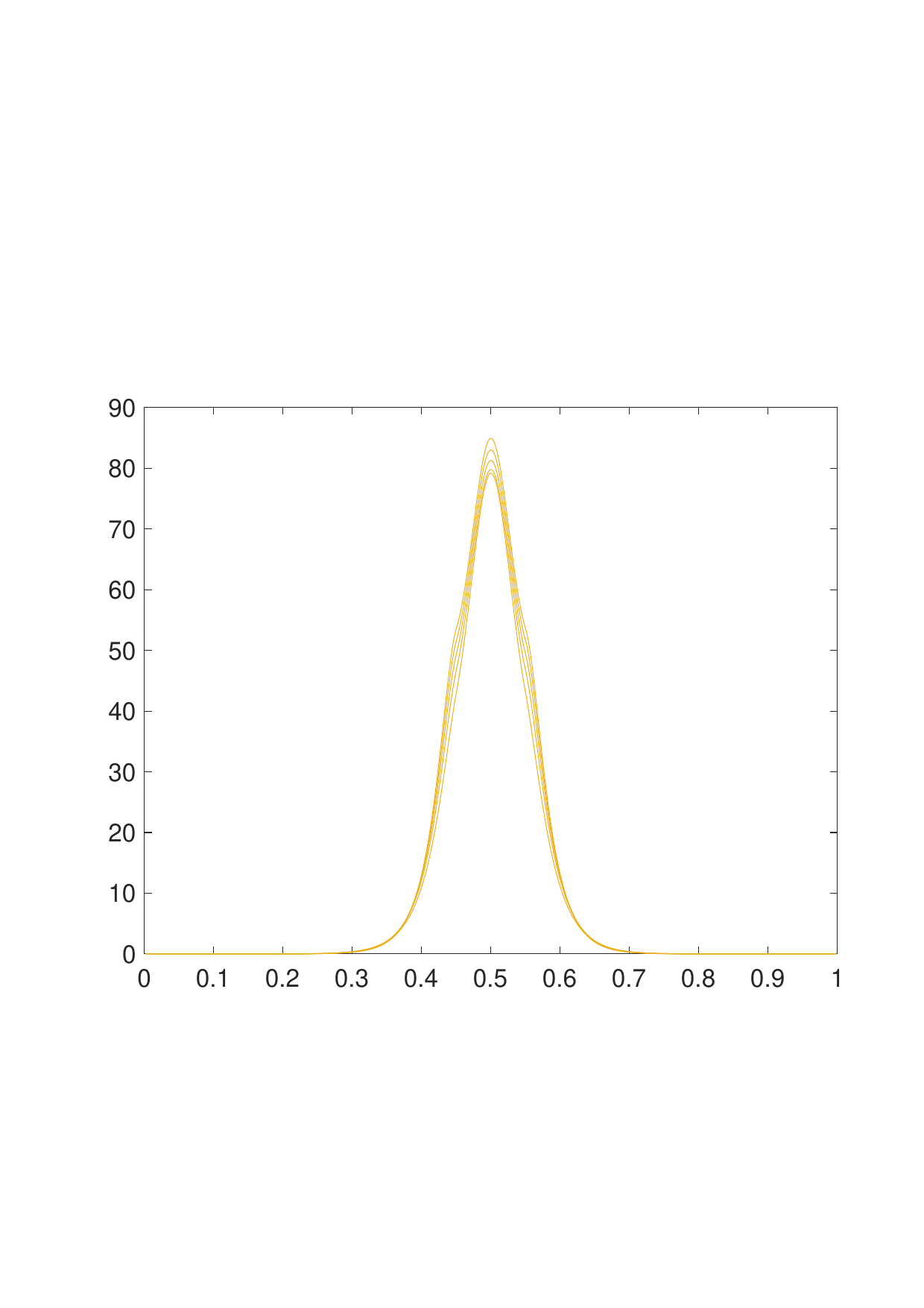} \put (12,80) {\tiny$u(x)$}
		\put (93,15) {\tiny$x$}\end{overpic} \begin{overpic}[scale=0.28,trim = 1cm 5cm 1cm 5cm, clip]{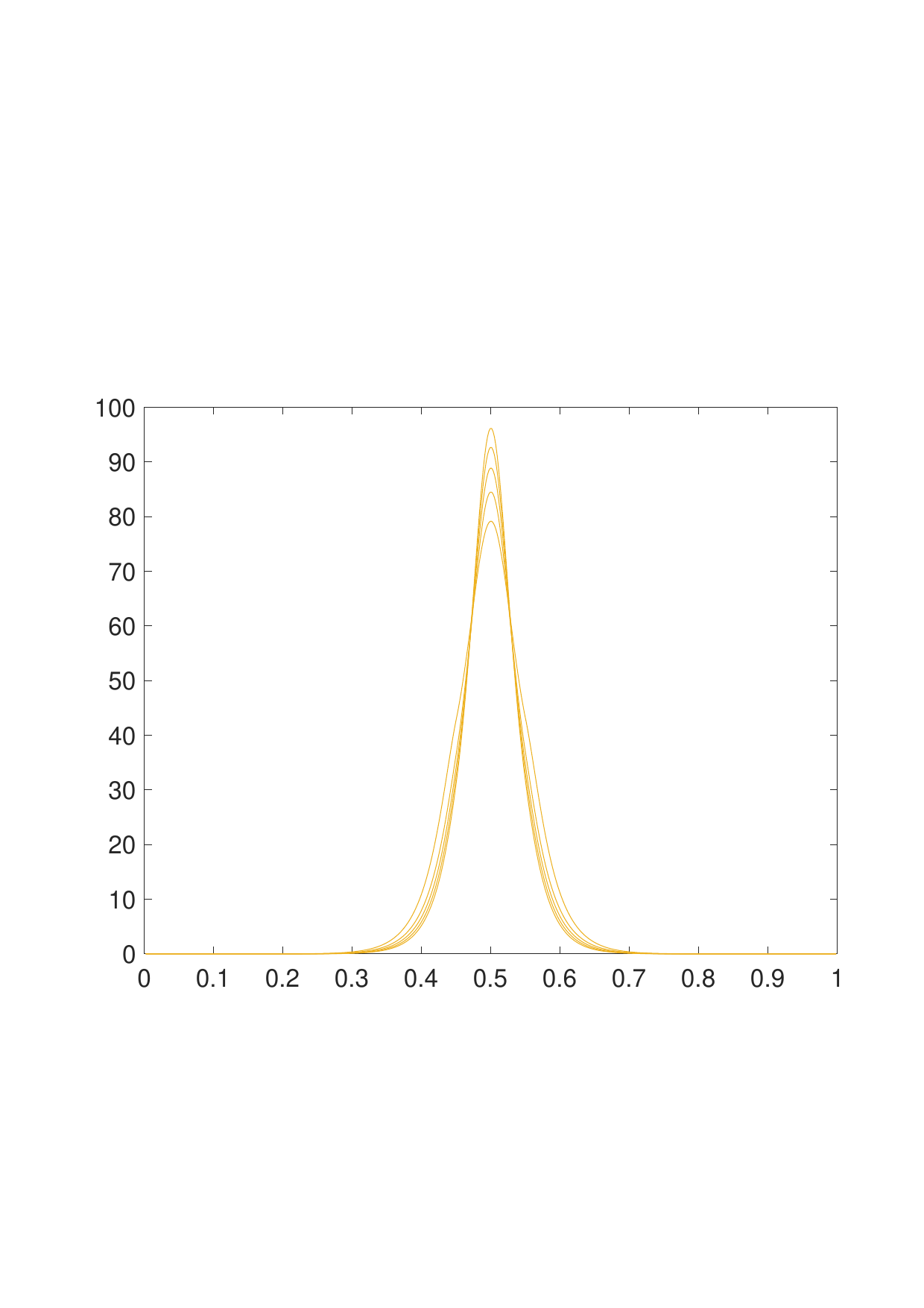} \put (12,80) {\tiny$u(x)$}
		\put (93.5,15) {\tiny$x$}\end{overpic}
	\vspace{-0.4cm}
	\caption{A series of plots of positive solutions along the branches corresponding to the bifurcation diagram of Figure \ref{Fig23}. Symmetric profiles on the principal branch have been plotted for $\l>0$ (upper left) and for $\l<0$ (upper right). Asymmetric profiles lie on the secondary branches. Symmetric profiles on the isola correspond to its upper (bottom left plot) and to its lower (bottom right plot) part.}
	\label{Fig24}
\end{figure}

As $\e$ increases up to $\e=0.5$, the qualitative behavior of the bifurcation diagram and of the profiles of the corresponding solutions does not change significantly. The only difference is that the isola becomes closer and closer to the symmetric branch of $\mathscr{C}_0^+$, as illustrated in Figure \ref{Fig25}.

\begin{figure}[ht!]
	\centering
	\begin{overpic}[scale=0.27,trim = 1cm 3cm 1cm 7cm, clip]{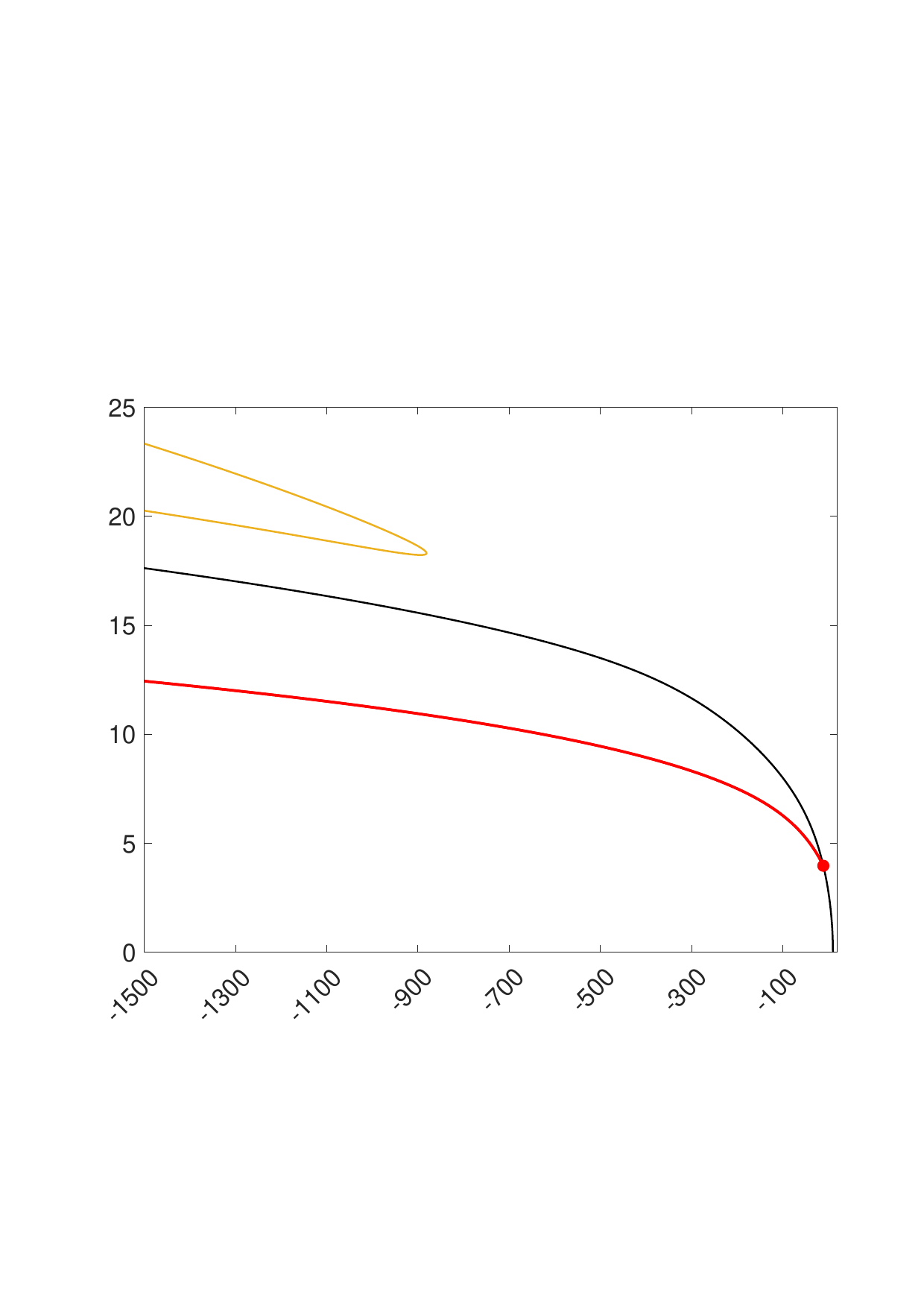}
		\put (12.5,91) {\tiny$\|u\|_2$}
		\put (94,25) {\tiny$\l$}
	\end{overpic} \quad
	\begin{overpic}[scale=0.27,trim = 1cm 3cm 1cm 7cm, clip]{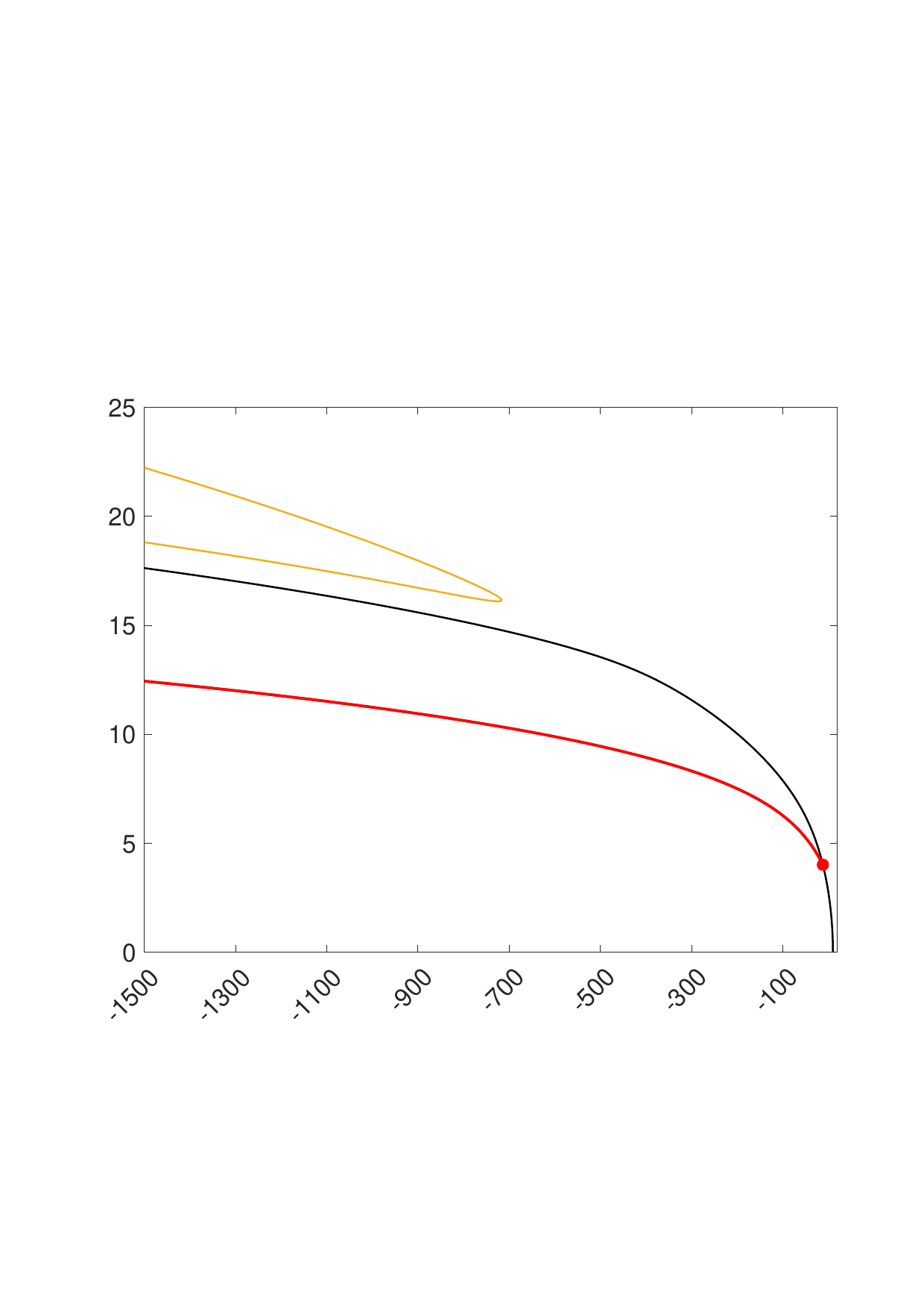}
		\put (12.5,91) {\tiny$\|u\|_2$}
		\put (94,25) {\tiny$\l$}
	\end{overpic} \quad
% \quad \includegraphics[scale=0.27,trim = 1cm 3cm 1cm 7cm, clip]{bif_diagram_h_0,1_eps_0.44.pdf}
\begin{overpic}[scale=0.27,trim = 1cm 3cm 1cm 7cm, clip]{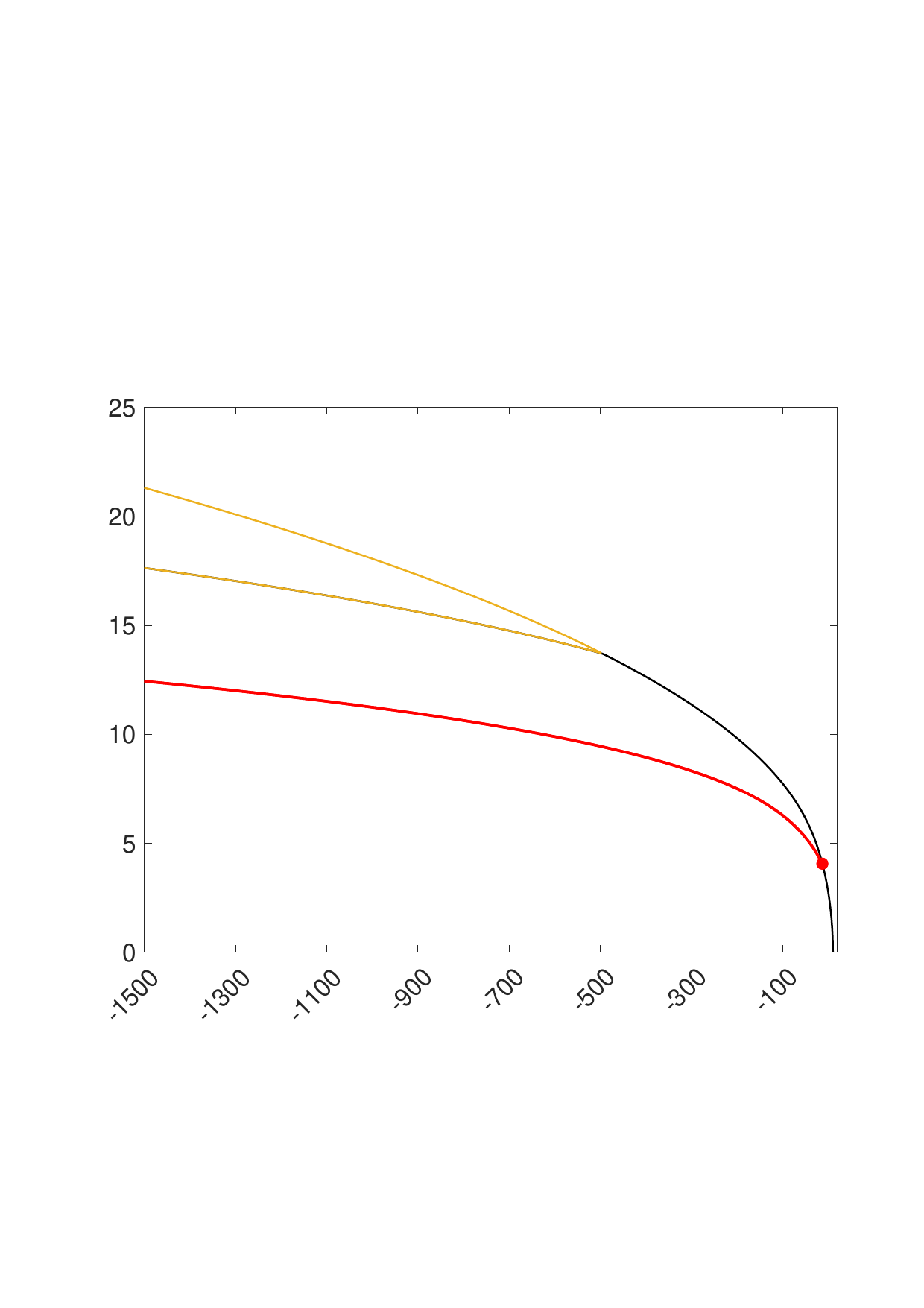}
	\put (12.5,91) {\tiny$\|u\|_2$}
	\put (94,25) {\tiny$\l$}
\end{overpic}
% \quad \includegraphics[scale=0.27,trim = 1cm 3cm 1cm 7cm, clip]{bif_diagram_h_0,1_eps_0.50.pdf}
	\vspace{-1cm}
	\caption{Bifurcation diagram for \eqref{1.1} with $a=a_{1,\e}$, $h=0.1$ and $\e=0.38$ (left), $\e=0.44$ (center) and $\e=0.50$ (right).}
	\label{Fig25}
\end{figure}

Figure \ref{Fig26} shows the profiles of the solutions for $\e=0.5$, corresponding to the right diagram of Figure \ref{Fig25}. One can observe how the solutions on the upper part of the isola, which have been represented in the bottom left plot of Figure \ref{Fig26}, become flatter and flatter in $a^{-1}(\e)$ as $\l\da -\infty$.

\begin{figure}[ht!]
	\centering
	\begin{overpic}[scale=0.28,trim = 1cm 5cm 1cm 8cm, clip]{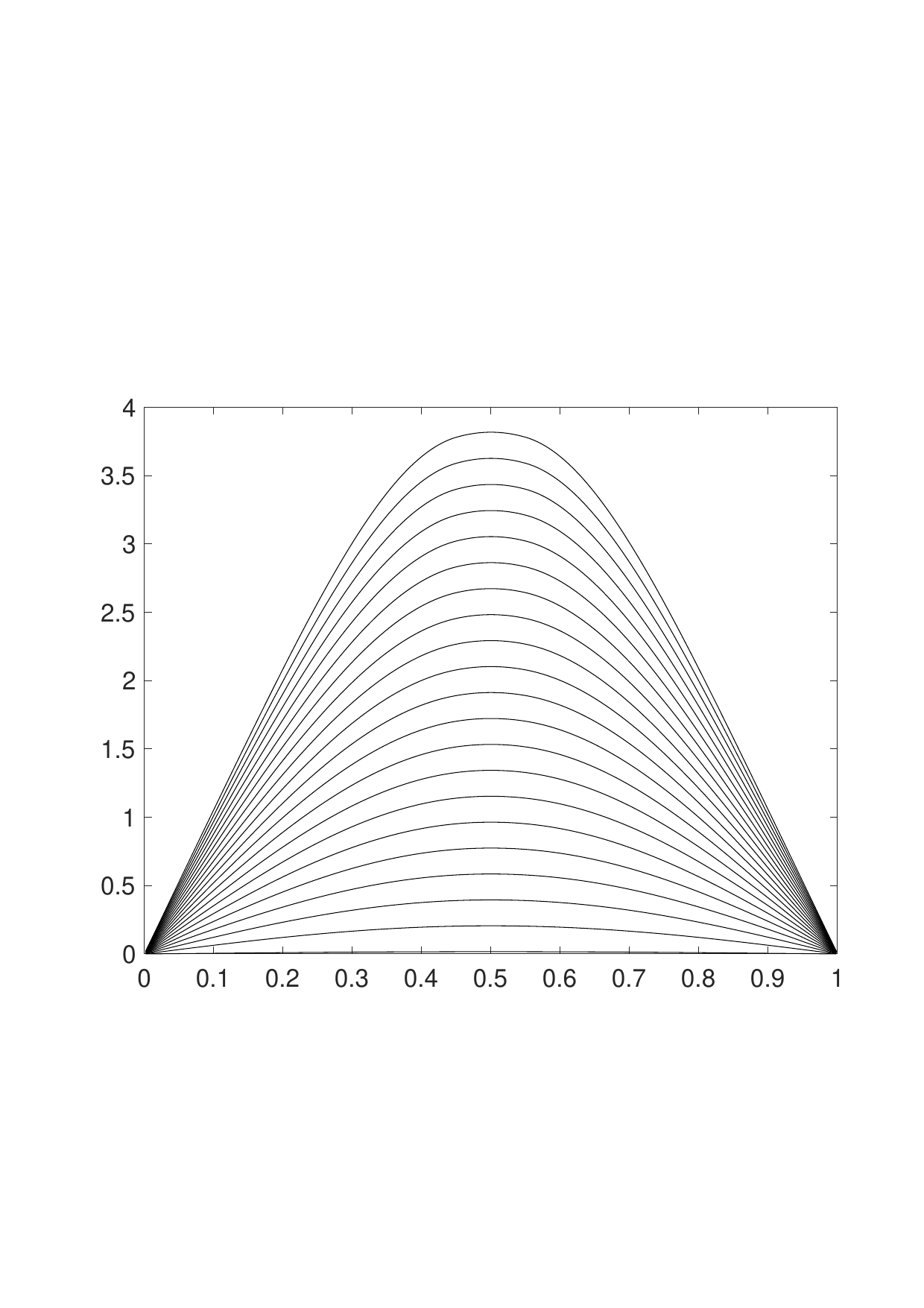} \put (12.3,83.6) {\tiny$u(x)$}
		\put (96.7,15) {\tiny$x$}\end{overpic} \begin{overpic}[scale=0.28,trim = 1cm 5cm 1cm 8cm, clip]{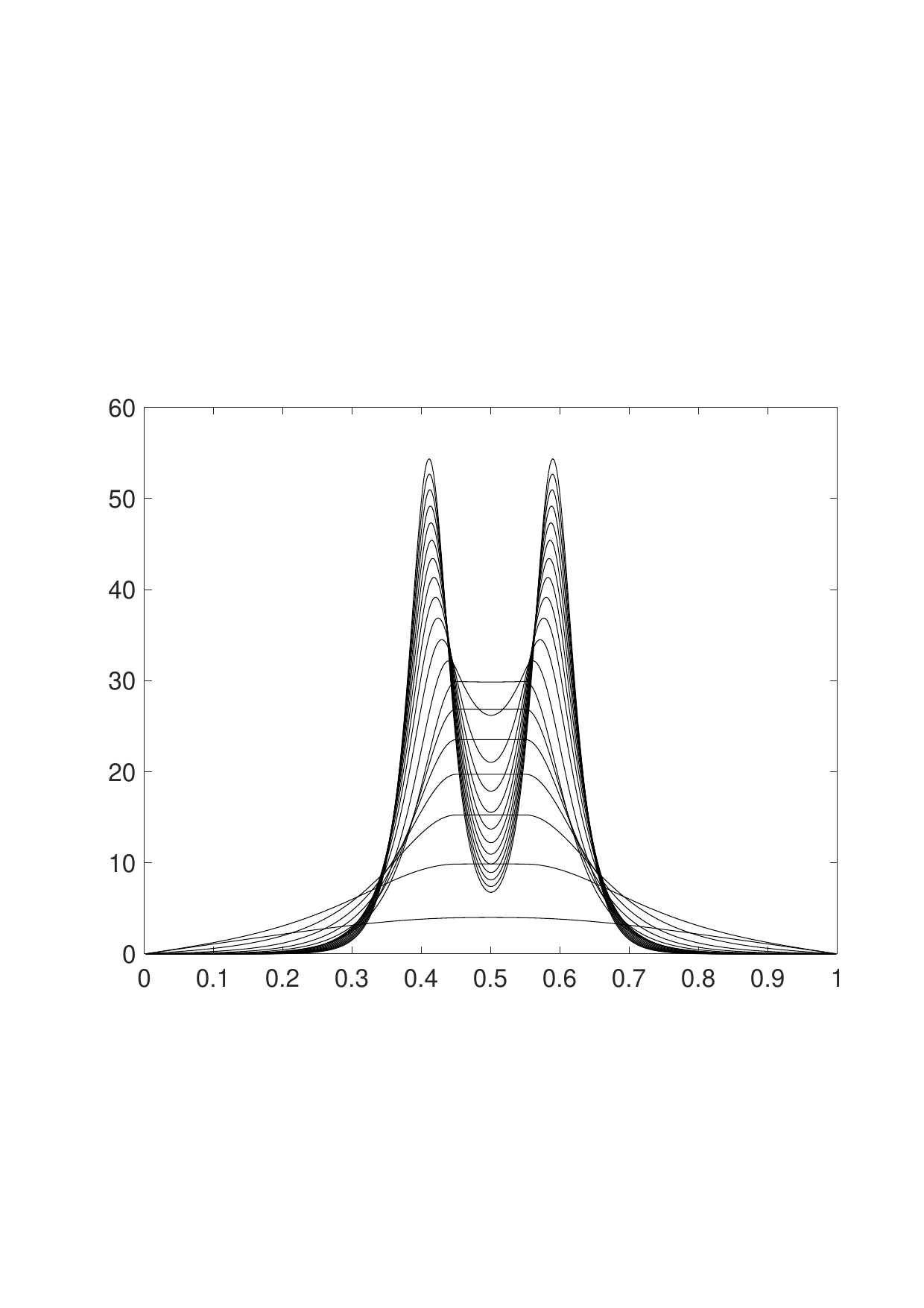} \put (12.3,83.6) {\tiny$u(x)$}
		\put (96.7,15) {\tiny$x$}\end{overpic}\\[-2.5em]
	\begin{overpic}[scale=0.28,trim = 1cm 5cm 1cm 5cm, clip]{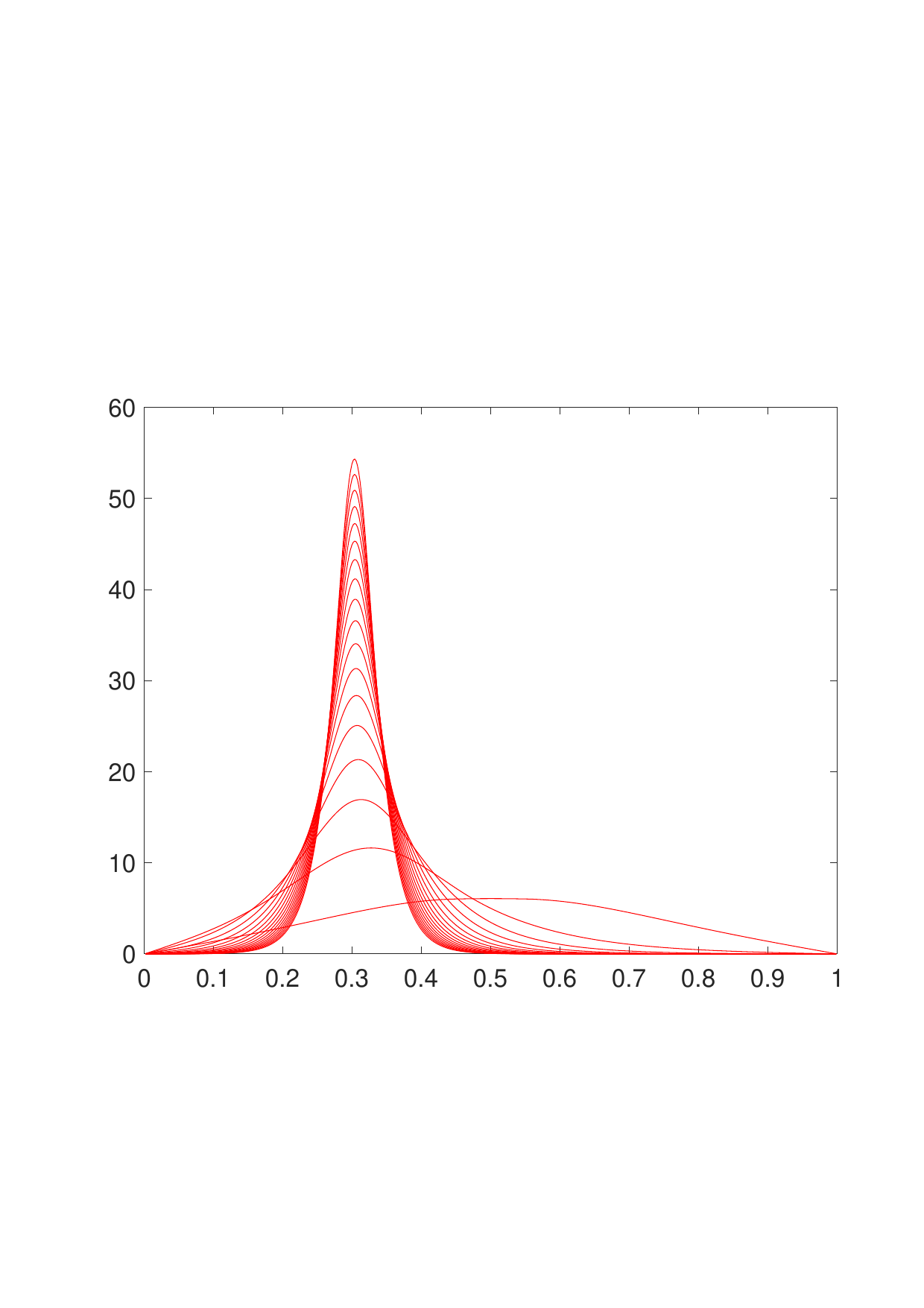} \put (12,80.5) {\tiny$u(x)$}
		\put (93,15) {\tiny$x$}\end{overpic} \begin{overpic}[scale=0.28,trim = 1cm 5cm 1cm 5cm, clip]{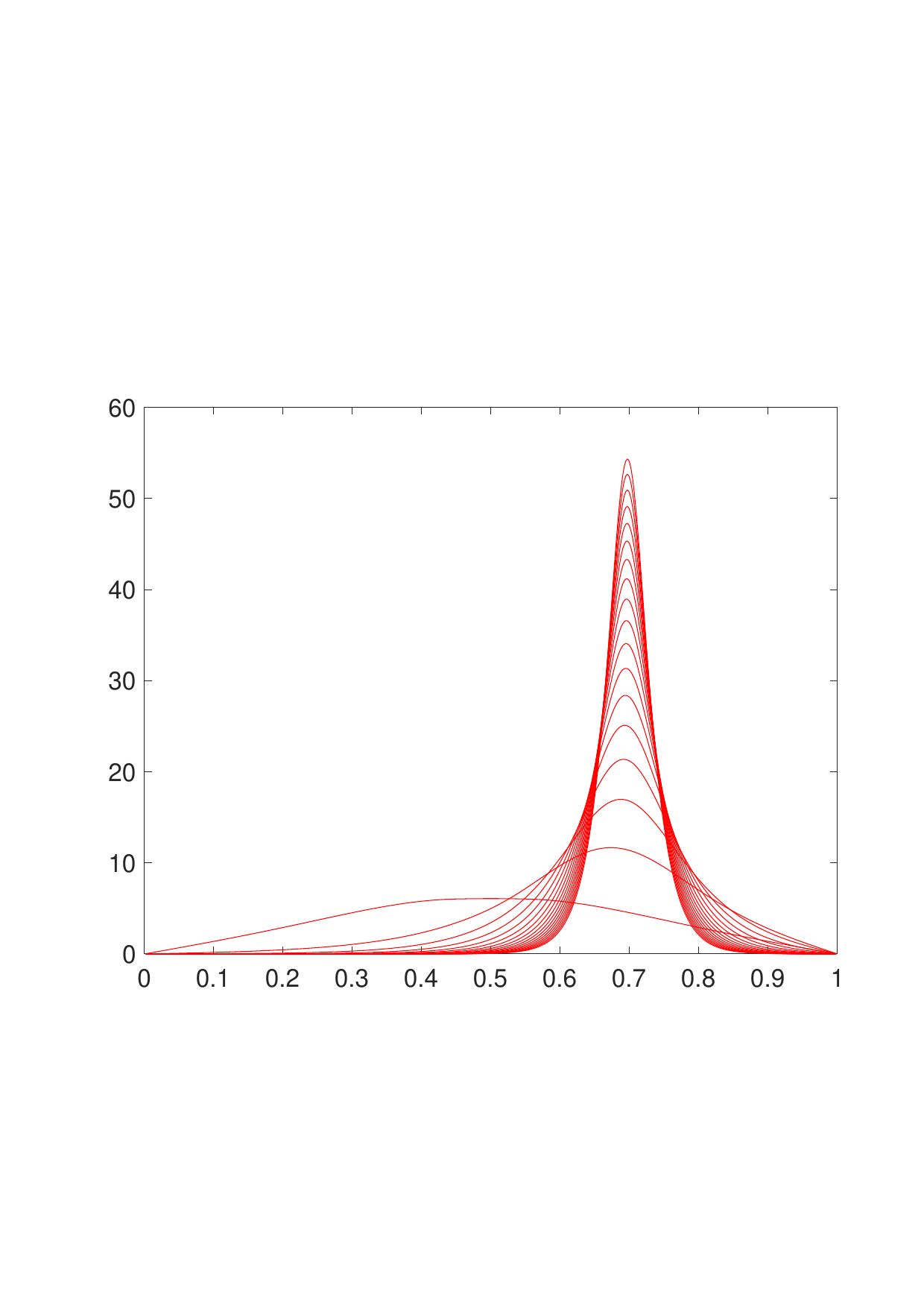} \put (12,80.5) {\tiny$u(x)$}
		\put (93,15) {\tiny$x$}\end{overpic}\\[-2.5em]
	\begin{overpic}[scale=0.28,trim = 1cm 5cm 1cm 5cm, clip]{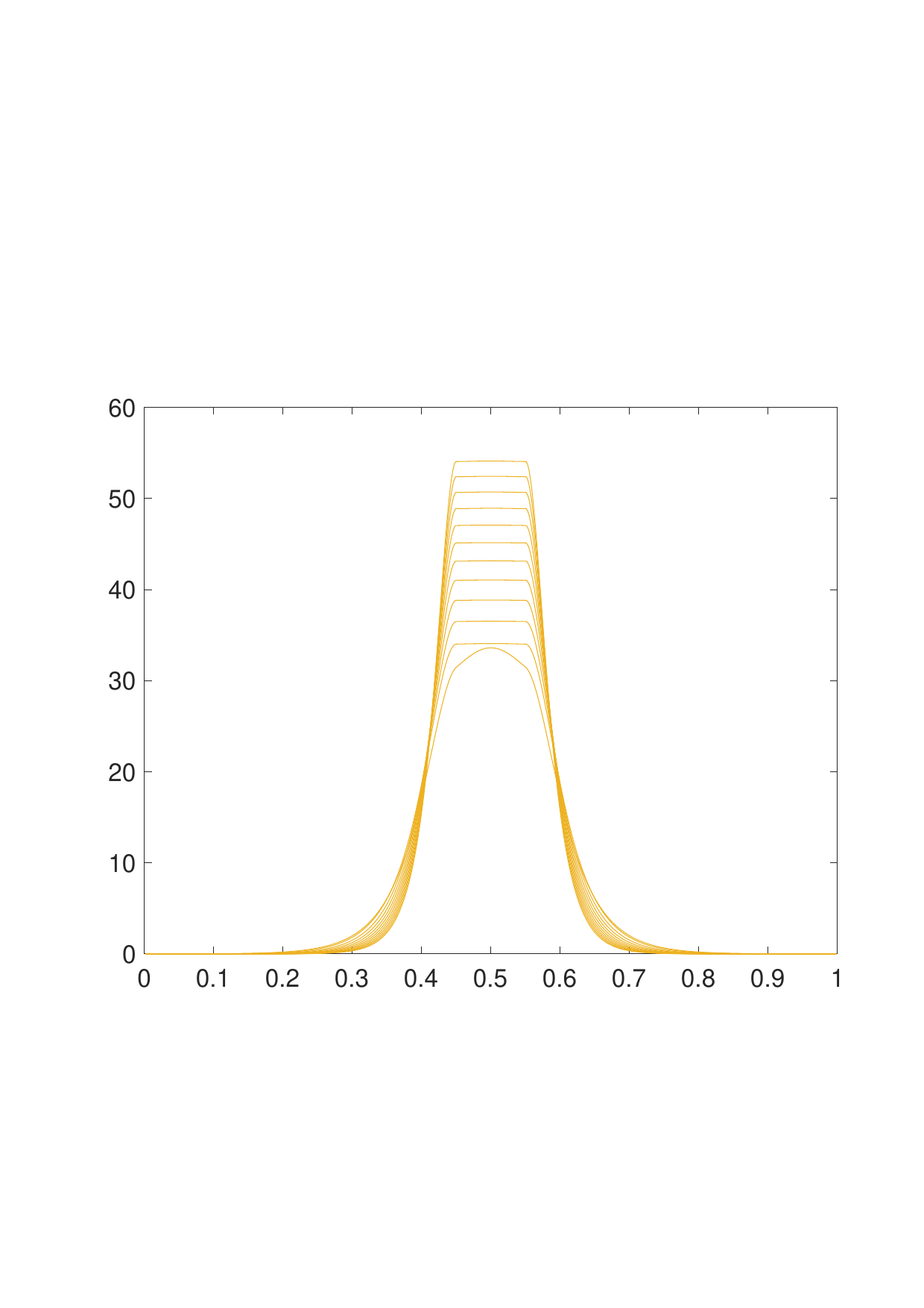} \put (12,80) {\tiny$u(x)$}
		\put (93,15) {\tiny$x$}\end{overpic} \begin{overpic}[scale=0.28,trim = 1cm 5cm 1cm 5cm, clip]{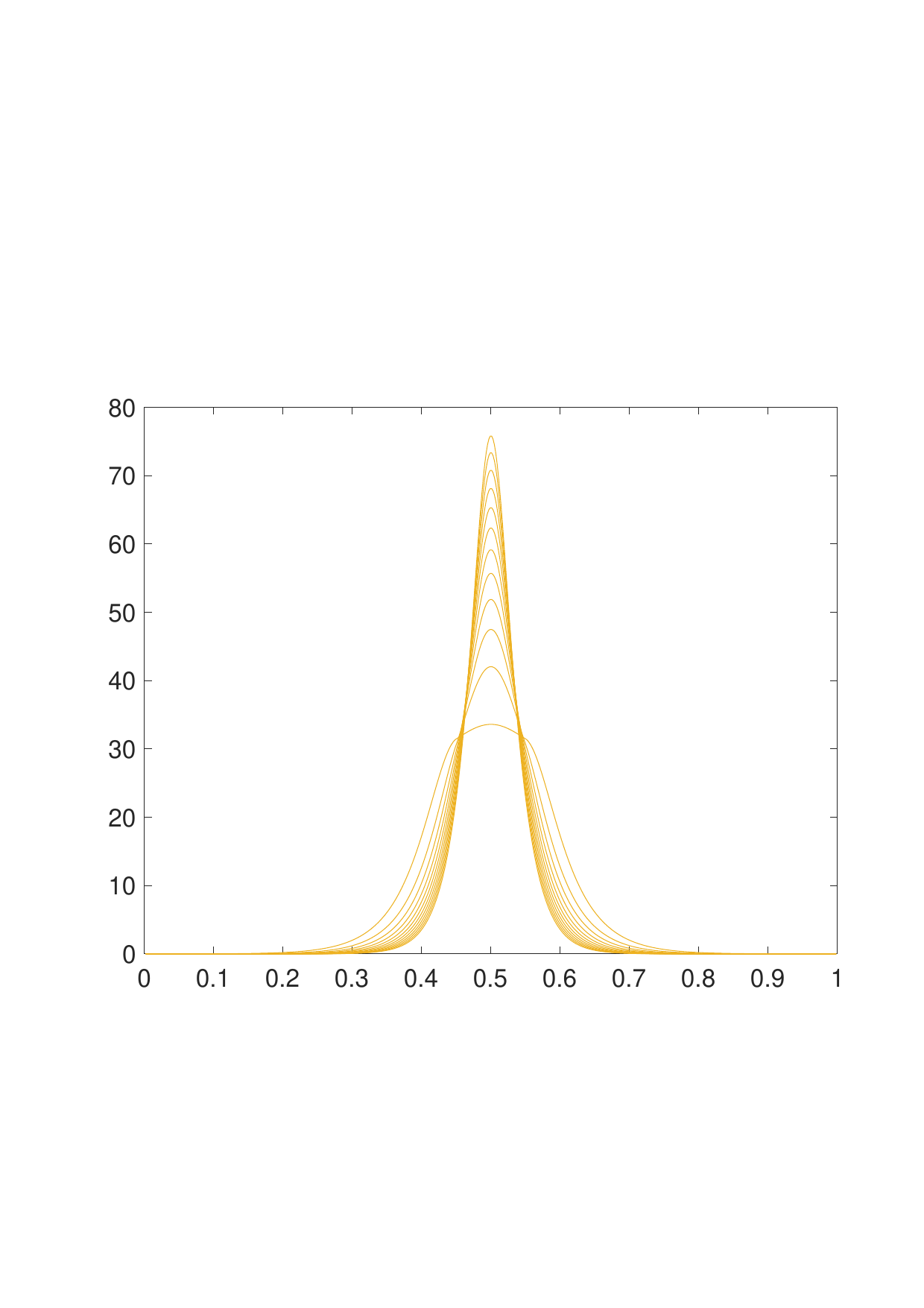} \put (12,80) {\tiny$u(x)$}
		\put (93.5,15) {\tiny$x$}\end{overpic}
	\vspace{-0.7cm}
	\caption{A series of plots of positive solutions along the branches corresponding to the right bifurcation diagram of Figure \ref{Fig25}. Symmetric profiles on the principal branch have been plotted for $\l>0$ (upper left) and for $\l<0$ (upper right). Asymmetric profiles lie on the secondary branches. Symmetric profiles on the isola correspond to its lower (bottom left plot) and to its upper (bottom right plot) part.}
	\label{Fig26}
\end{figure}

When $\e$ increases further and passes a certain critical value $\e^*\in(0.50,0.51)$, the structure of the solution set undergoes a deep change. Although this is not perceptible from the bifurcation diagrams, due to the choice of the discrete $L^2$-norm on the vertical axis (compare the diagram in the right plot of Figure \ref{Fig25}, corresponding to $\e=0.50$ with the diagram of Figure \ref{Fig27}, corresponding to $\e=0.51$), the change becomes apparent from the shape of the profiles of the positive solutions on the symmetric branch of the principal component $\mathscr{C}_0^+$. Indeed, for $\e=0.50$ such profiles have two peaks for all $\l<0$, while for $\e=0.51$ they exhibit only one peak. The opposite behavior, instead, can be observed for the profiles on the isola: the solutions lying therein have one peak for $\e=0.50$, while they have two peaks for $\e=0.51$ (cf., Figures \ref{Fig26} and \ref{Fig28}, corresponding to $\e=0.50$ and $\e=0.51$, respectively).

\begin{figure}[ht!]
	\centering
	\begin{overpic}[scale=0.3,trim = 1cm 6.5cm 1cm 7.5cm, clip]{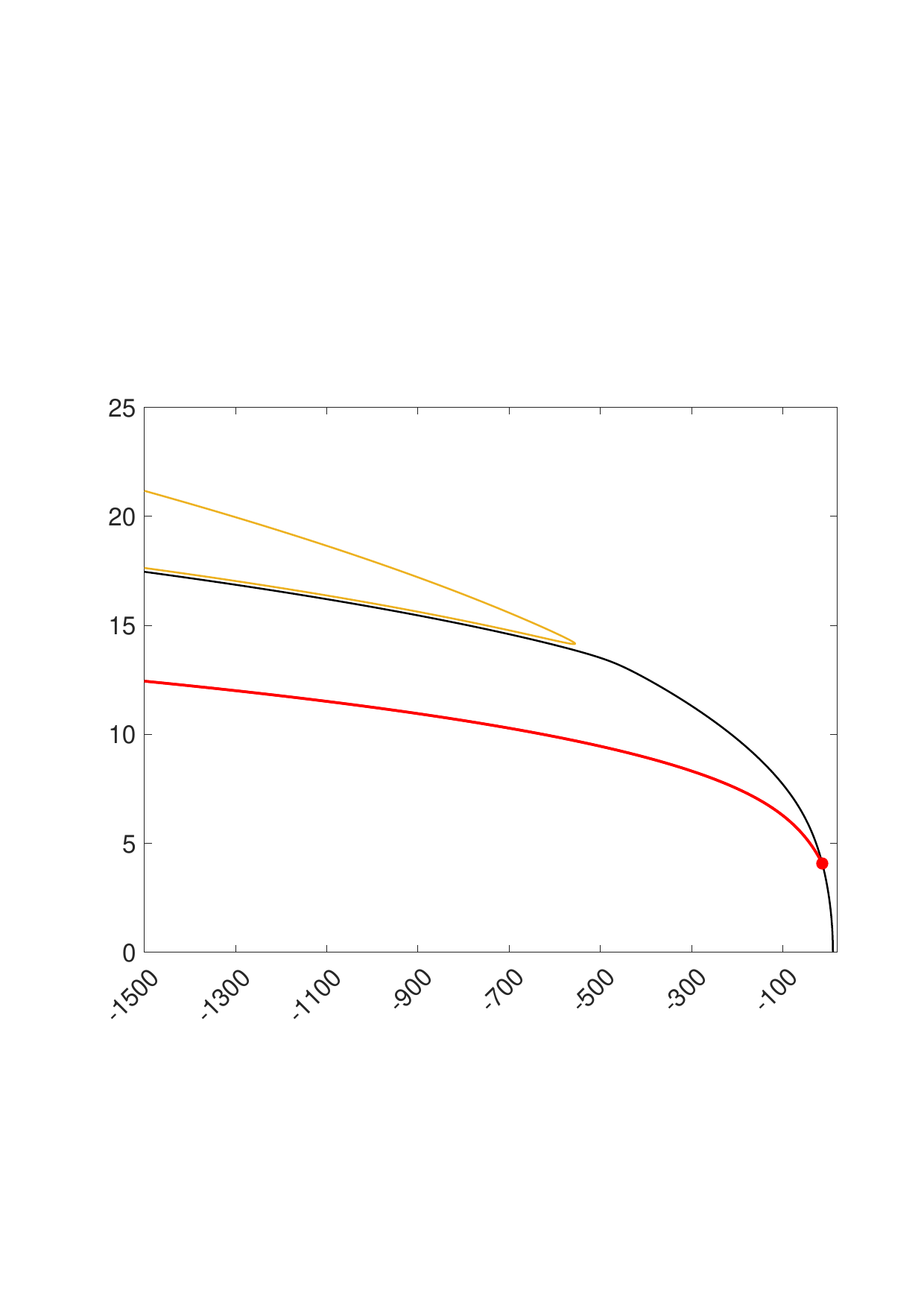}
		\put (12.5,75.5) {\tiny$\|u\|_2$}
		\put (97.5,7.5) {\tiny$\l$}
	\end{overpic}
	\caption{Bifurcation diagram for \eqref{1.1} with $a=a_{1,\e}$, $h=0.1$ and $\e=0.51$.}
	\label{Fig27}
\end{figure}

\begin{figure}[ht!]
	\centering
	\begin{overpic}[scale=0.28,trim = 1cm 5cm 1cm 7cm, clip]{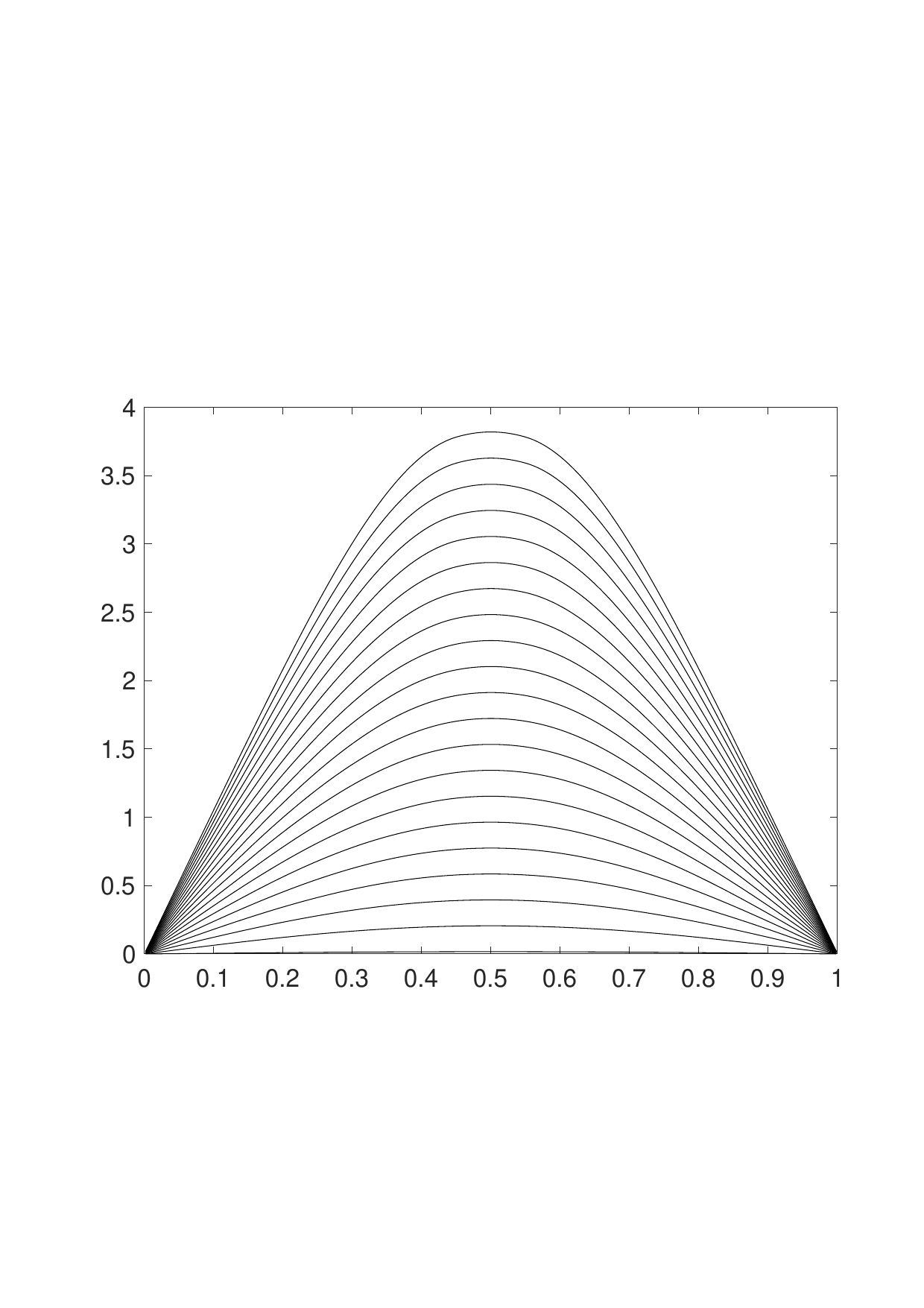} \put (12.3,83.6) {\tiny$u(x)$}
		\put (96.7,15) {\tiny$x$}\end{overpic} \begin{overpic}[scale=0.28,trim = 1cm 5cm 1cm 7cm, clip]{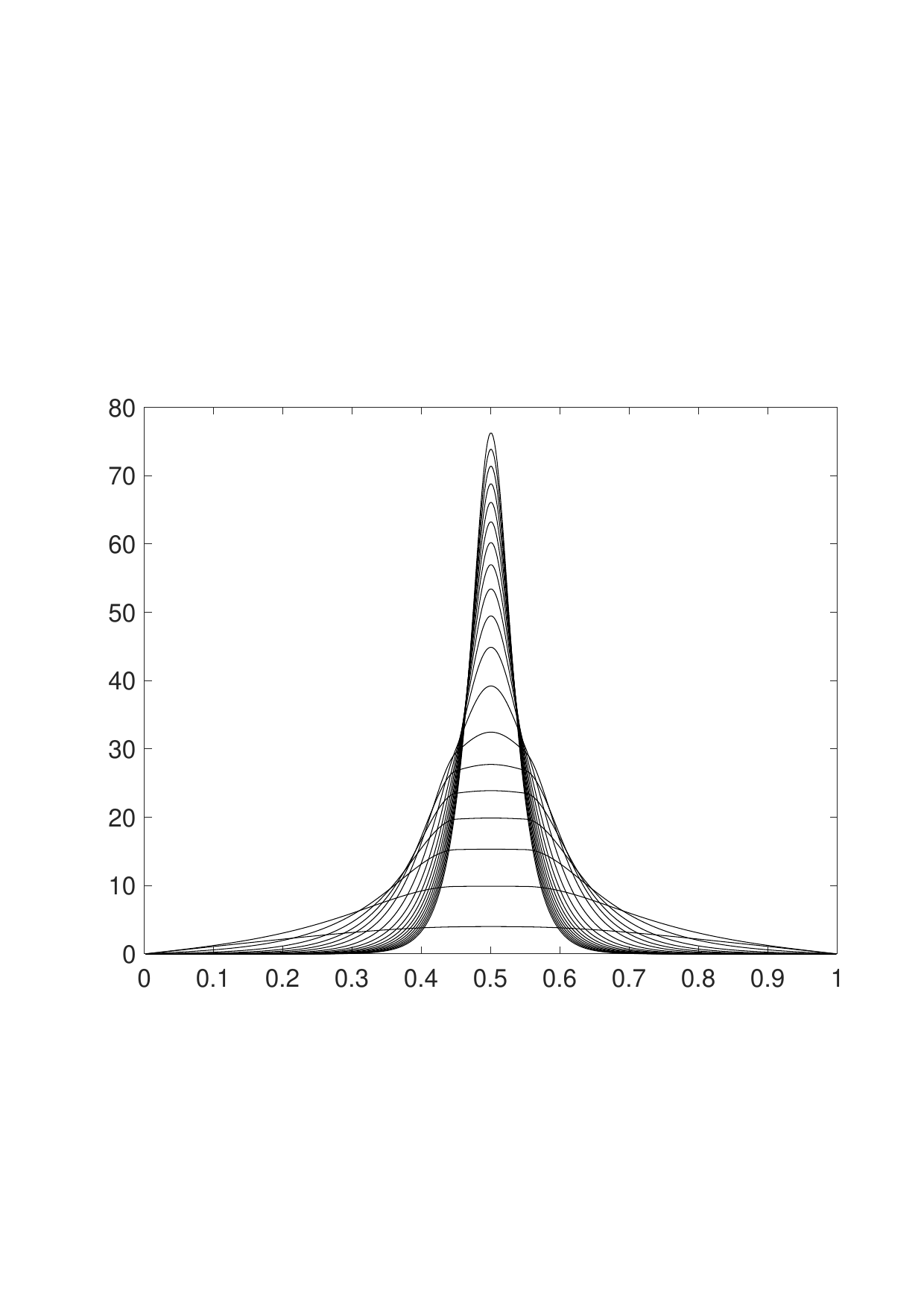} \put (12.3,83.6) {\tiny$u(x)$}
		\put (96.7,15) {\tiny$x$}\end{overpic}\\[-2.5em]
	\begin{overpic}[scale=0.28,trim = 1cm 5cm 1cm 5cm, clip]{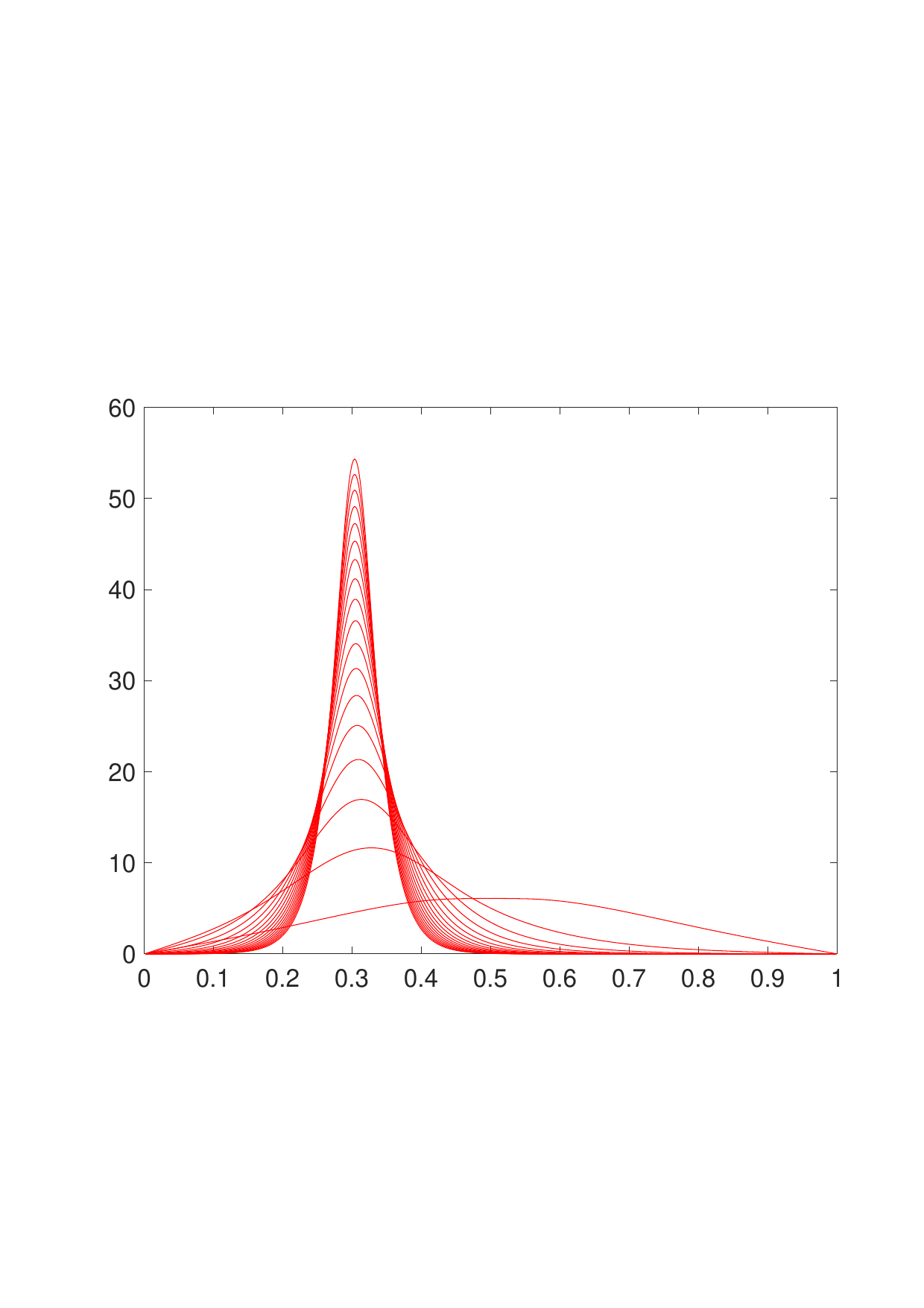} \put (12,80.5) {\tiny$u(x)$}
		\put (93,15) {\tiny$x$}\end{overpic} \begin{overpic}[scale=0.28,trim = 1cm 5cm 1cm 5cm, clip]{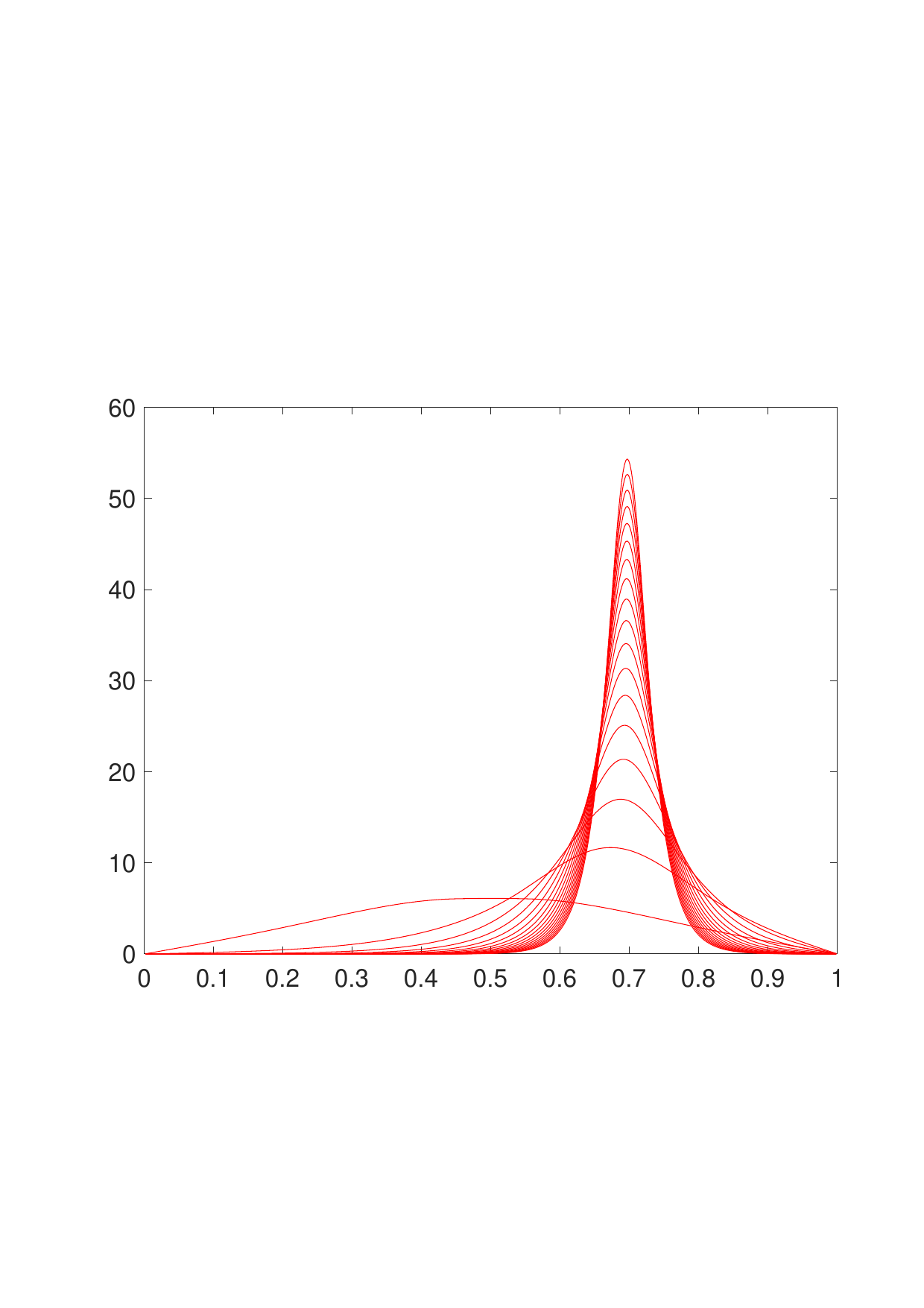} \put (12,80.5) {\tiny$u(x)$}
		\put (93,15) {\tiny$x$}\end{overpic}\\[-2.5em]
	\begin{overpic}[scale=0.28,trim = 1cm 7cm 1cm 5cm, clip]{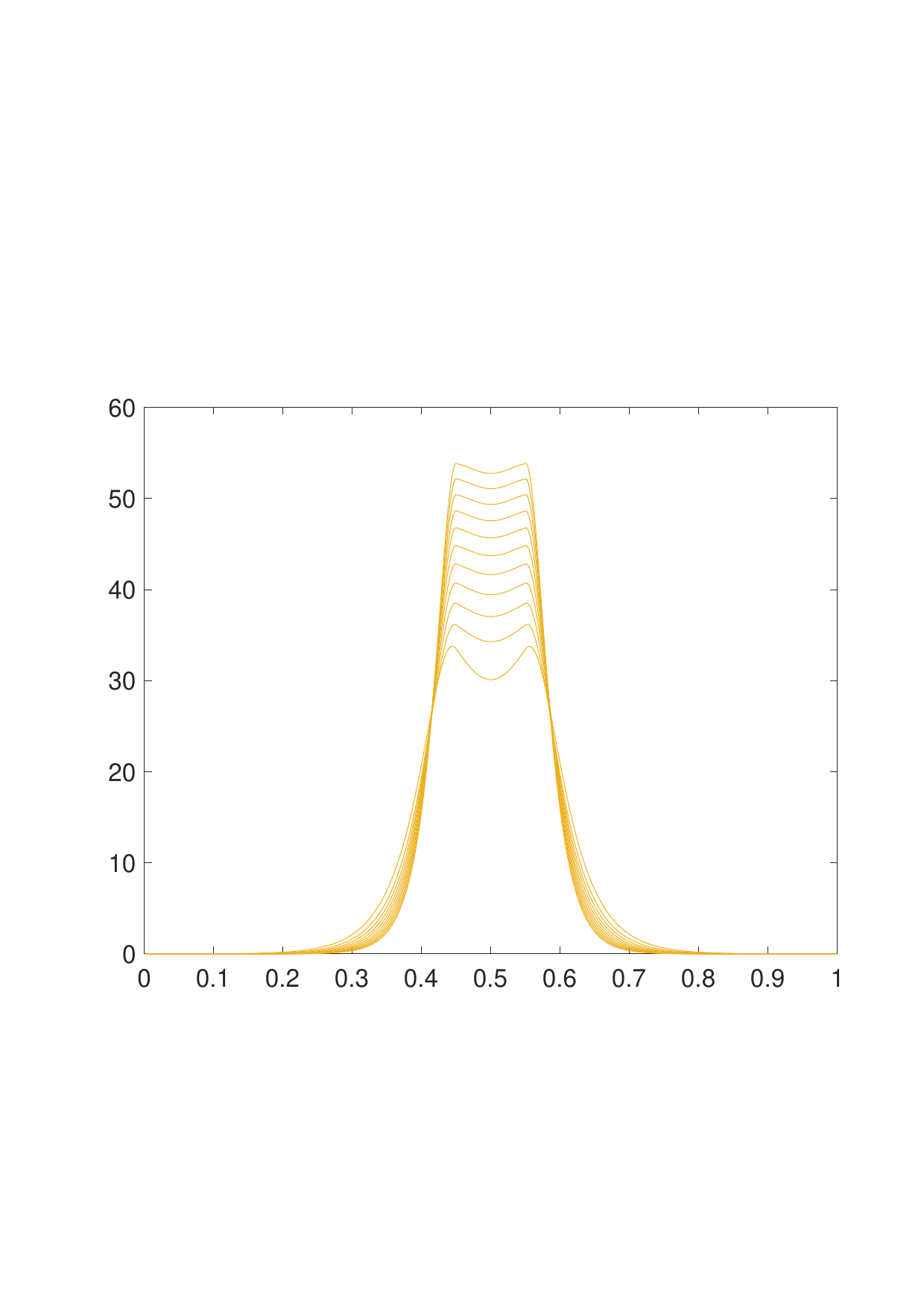} \put (12.5,73.2) {\tiny$u(x)$}
		\put (96.5,5) {\tiny$x$}\end{overpic} \begin{overpic}[scale=0.28,trim = 1cm 7cm 1cm 5cm, clip]{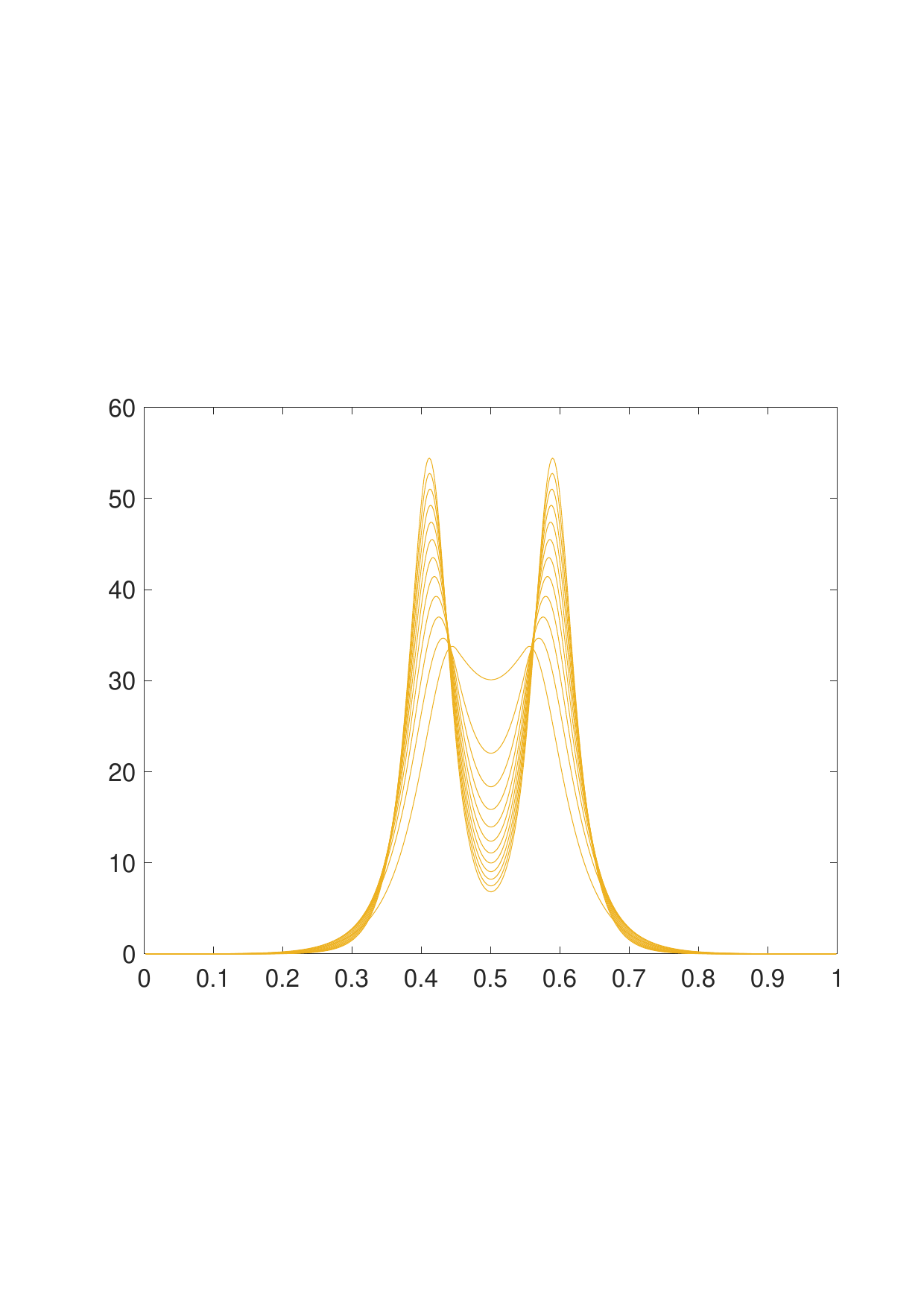} \put (12.5,73.2) {\tiny$u(x)$}
		\put (96.5,5) {\tiny$x$}\end{overpic}
	\caption{A series of plots of positive solutions along the branches corresponding to the bifurcation diagram of Figure \ref{Fig27}. Symmetric profiles on the principal branch have been plotted for $\l>0$ (upper left) and for $\l<0$ (upper right). Asymmetric profiles lie on the secondary branches. Symmetric profiles on the isola correspond to its lower (bottom left plot) and to its upper (bottom right plot) part.}
	\label{Fig28}
\end{figure}

Thus, we observe that a \emph{recombination} of the symmetric solutions on the branches of the bifurcation diagram occurs as $\e$ crosses $\e^*$. According to the simulations we have performed, once such a recombination has occurred, the same structure described for $\e=0.51$ is maintained for all further values of $\e\in(\e^*,1)$. The only difference is that, as $\e$ separates from $\e^*$ and approaches $1$, the isola progressively moves away from the branch of symmetric solutions on $\mathscr{C}_0^+$. This tendencies have been illustrated in Figure \ref{Fig29}, which represents the bifurcation diagrams for $\e=0.6$ (left) $\e=0.7$ (right), and in Table \ref{Tab2}, which collects the values of $\l_t$, the value of $\l$ corresponding to the the turning point on the isola, for a series of $\e\in(0,1)$.

\begin{figure}[ht!]
	\centering
	\begin{overpic}[scale=0.3,trim = 1cm 6.5cm 1cm 7.5cm, clip]{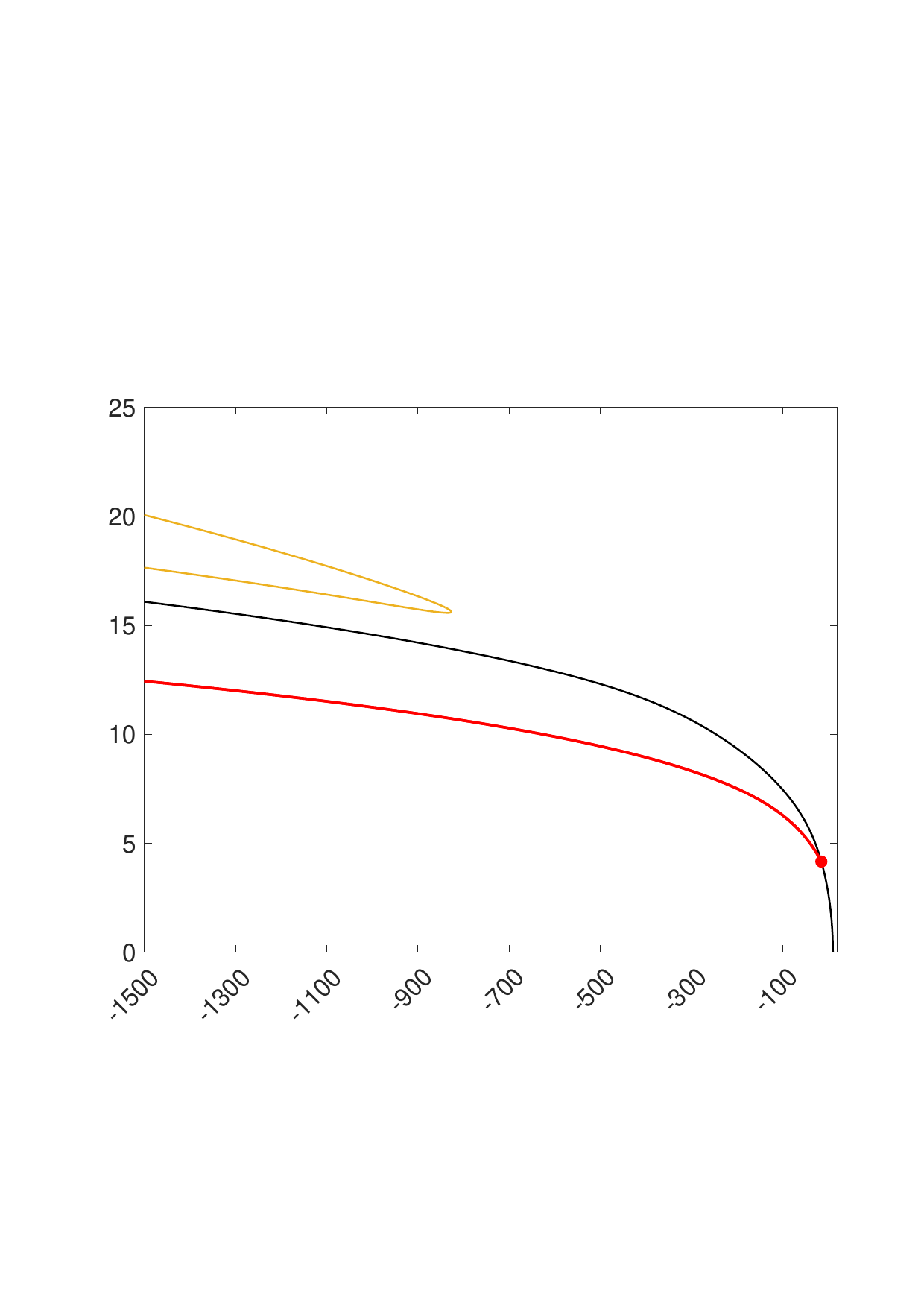}
		\put (12.5,75.5) {\tiny$\|u\|_2$}
		\put (97.5,7.5) {\tiny$\l$}
	\end{overpic} \qquad
\begin{overpic}[scale=0.3,trim = 1cm 6.5cm 1cm 7.5cm, clip]{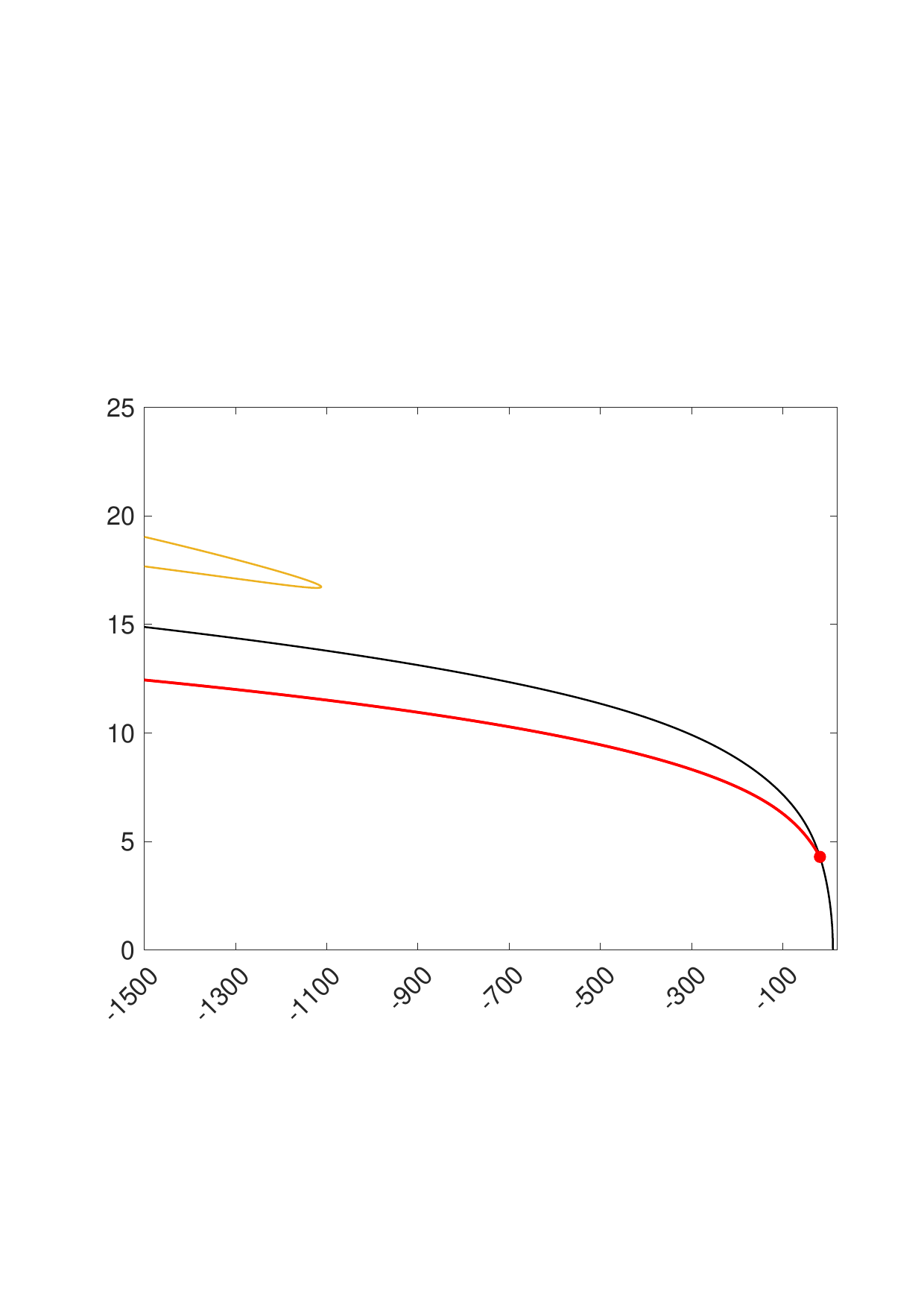}
	\put (12.5,75.5) {\tiny$\|u\|_2$}
	\put (97.5,7.5) {\tiny$\l$}
\end{overpic}
	\caption{Bifurcation diagram for \eqref{1.1} with $a=a_{1,\e}$, $h=0.1$ and $\e=0.60$ (left) and $\e=0.70$ (right).}
	\label{Fig29}
\end{figure}

\begin{table}[h!]
	\caption{Computed values of $\l_{t}$, the value of $\l$ corresponding to the turning point of the isola, for a series of $\varepsilon$.}
	\label{Tab2}
\begin{center}
	
	\begin{tabular}{ |cc|c|cc|c|cc| }
				
		\hline
		
		$\varepsilon$ & $\lambda_t$ & \qquad  & $\varepsilon$ & $\lambda_t$ & \qquad  & $\varepsilon$ & $\lambda_t$ \\
		
		\hline
		
		0.10 & -2104.88508 & \qquad  \qquad &  0.44 & -716.95617  & \qquad \qquad & 0.60 & -826.76479 \\
		
		0.20 & -1476.62785 & \qquad \qquad & 0.48 & -594.68502  & \qquad \qquad & 0.65 & -964.72097 \\
		
		0.30 & -1111.65254 & \qquad \qquad & 0.50 & -499.07238 & \qquad \qquad & 0.70 & -1112.24066 \\
		
		0.38 & -881.29507 & \qquad \qquad & 0.51 & -555.55043 & \qquad \qquad  & 0.80 & -1477.83860 \\
		
		0.42 & -772.29017 & \qquad \qquad & 0.55 & -687.82638 & \qquad \qquad  & 0.90 & -2107.12751 \\
		
		\hline
		
	\end{tabular}
	
\end{center}
\end{table}

Finally, to conclude this section, we present the results of our simulations for some values of $\e\in(0,1)$ which approach the limiting cases. Precisely, Figure \ref{Fig30} shows the bifurcation diagrams for $\e=0.1$ (left) and $\e=0.9$ (right). These diagrams, together with the values collected in Table \ref{Tab2}, leads us to conjecture that, for all $\e\in(0,1)$ there exists $\l_t=\l_t(\e)$ such that problem \eqref{1.1} with $a=a_{1,\e}$ possesses at least 5 positive solutions for $\l<\l_t$ and that
\begin{equation*}
	\lim_{\e\da 0}\l_t(\e)=-\infty, \qquad \lim_{\e\ua 1}\l_t(\e)=-\infty.
\end{equation*}
Up to the best of our knowledge these phenomena have been observed here for the first time, and no analytical tool is available to prove them theoretically.

\begin{figure}[ht!]
	\centering
	\begin{overpic}[scale=0.3,trim = 1cm 6.5cm 1cm 7.5cm, clip]{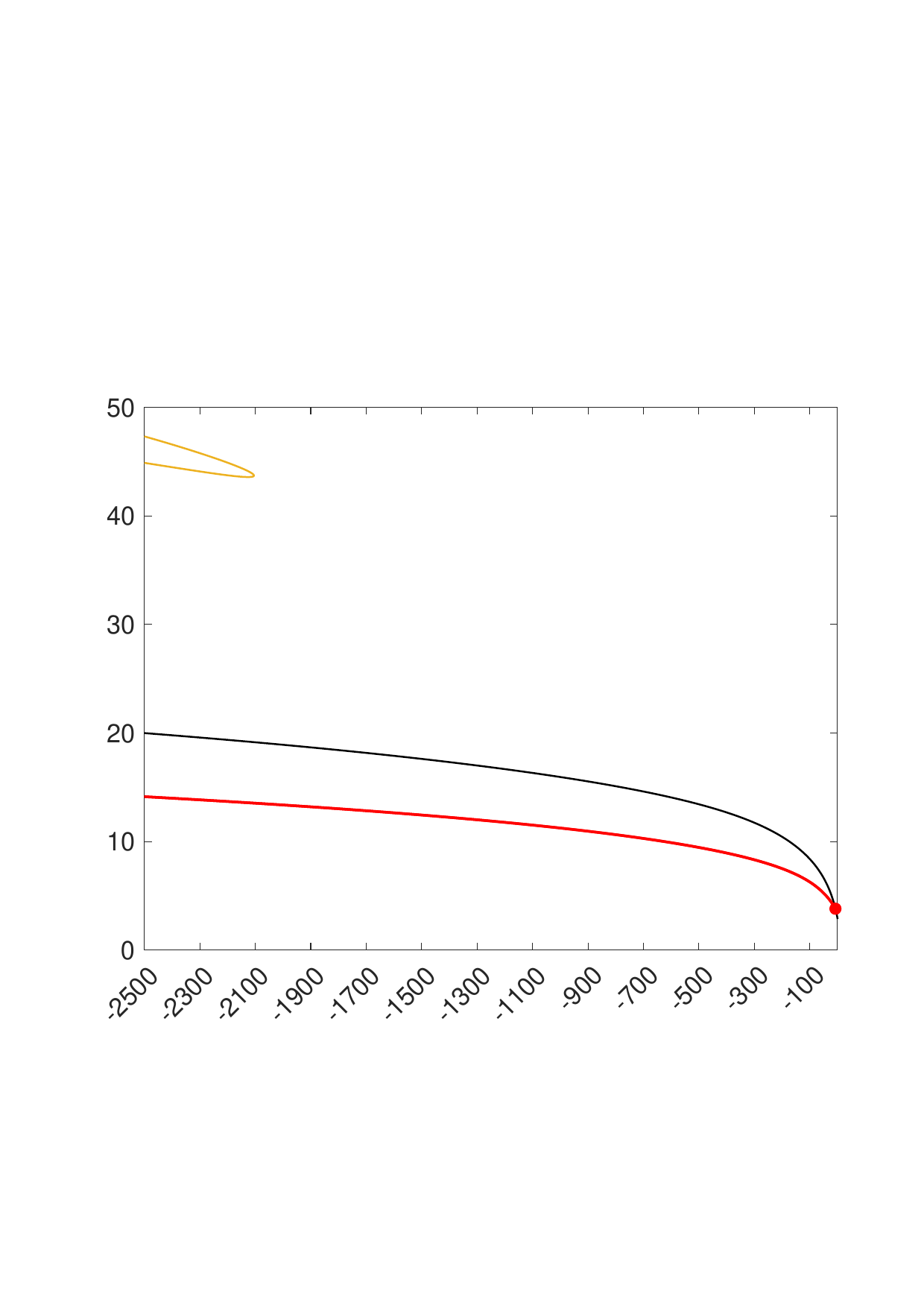}
		\put (12.5,75.5) {\tiny$\|u\|_2$}
		\put (97.5,7.5) {\tiny$\l$}
	\end{overpic} \qquad \begin{overpic}[scale=0.3,trim = 1cm 6.5cm 1cm 7.5cm, clip]{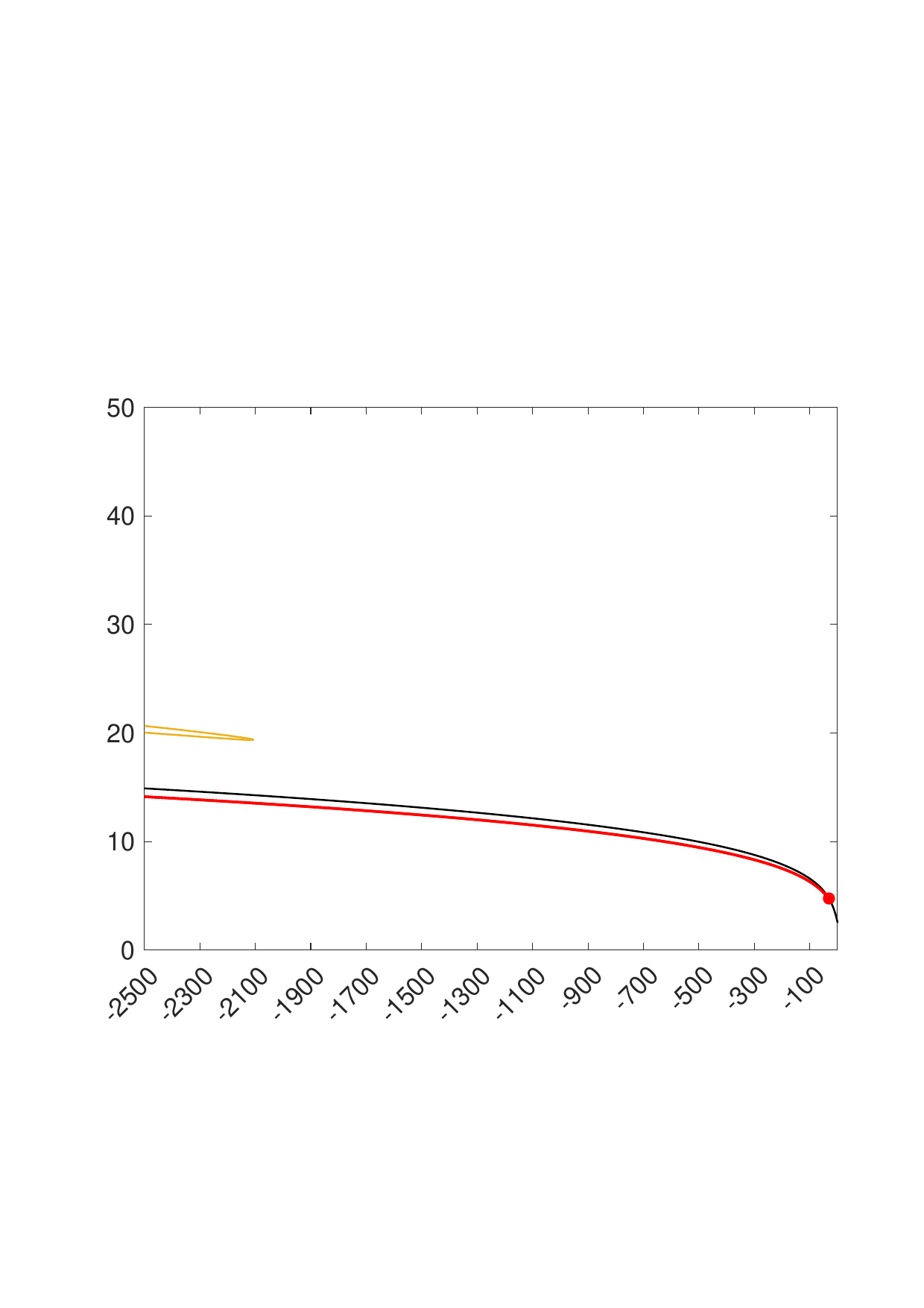}
	\put (12.5,75.5) {\tiny$\|u\|_2$}
	\put (97.5,7.5) {\tiny$\l$}
\end{overpic}
	\caption{Bifurcation diagram for \eqref{1.1} with $a=a_{1,\e}$, $h=0.1$ and $\e=0.10$ (left) and $\e=0.90$ (right).}
	\label{Fig30}
\end{figure}

\section{Technical aspects related to the numerical experiments}
\label{sec:7}

\noindent For an extensive discussion and historical review of the numerical methods used to discretize complex nonlinear bifurcation problems like the one considered in this work, we refer the reader to our previous papers \cite{CLGT,CLGTS} and the references therein. Although pseudo-spectral methods are more efficient to determine the precise location of bifurcation points and bifurcation directions from trivial solutions, and have proven to be very efficient to approximate large compact arcs of positive solutions in a variety of \emph{sublinear and superlinear indefinite problems} as those considered in \cite{GRLGNA,LGEDMM,LGMMJDE,LGMMT,LGMMT2,LGMMR,FLG,FLGN,MMPM}, in this paper we have preferred to discretize the \emph{pure superlinear  problem} \eqref{1.1} by means of a centered finite difference scheme, with a non-uniform mesh, because this requires less computational cost and provide with rather satisfactory results, even in computing solutions with large internal gradients like those arising in Figures \ref{Fig10}, \ref{Fig12}, \ref{Fig19}. The non-uniform mesh had to be specifically adapted in order to compute the positive solutions of \eqref{1.1} with $a=a_{\kappa,0}$ for extremely small $h$'s. However, to compensate for the inconvenience of getting rough approximations of the true bifurcation values from $u=0$, the finite difference scheme requires a much higher number of internal mesh points $N$ compared to the number of collocation points in the pseudo-spectral method. Indeed, the $k$-th eigenvalue ($k\leq N$) of the tridiagonal Toeplitz matrix corresponding to the discretization of the operator $-D^2=-\frac{\mathrm{d}^2}{\mathrm{d}x^2}$ is given by
\begin{equation}
	\label{7.1}
	\tilde{\l}_k=\tilde{\l}_k(N)=2(N+1)^2\left(1+\cos\left(\frac{N+1-k}{N+1}\pi\right)\right)
\end{equation}
(see, e.g., Section 2 of Gover \cite{G}), which converges to the exact value $\l_k=(k\pi)^2$ as $N\to+\infty$. Our numerical simulations confirm this result, as shown in Table \ref{Tab3}, where we indicate, for different values of $N$, the computed value of the bifurcation point of symmetric solutions from the trivial branch, as well as the computational time required to calculate these symmetric solutions up to $\l=-100$ with a fixed Keller--Yang step size equal to $3$ (see the last paragraph of this section for an explanation of these features). The simulations to obtain these values have been performed with a weight function $a_{1,0}$ defined as in Section \ref{sec:3} for $h=0.5$. This table highlights the fact that a compromise is required between precision and speed in order to calculate the bifurcation diagrams.

\begin{table}[h!]
	\caption{Values of the approximate computed value of $\l_1$, and computational time $t$ (in seconds) required to calculate the symmetric positive solutions for the weight $a_{1,0}$ with $h=0.5$ up to $\l=-100$ with a fixed step $\Delta s=3$, for different number of internal mesh points $N$. The approximate values of $\l_1$ agree with formula \eqref{7.1}.}
	\label{Tab3}
	\begin{center}
		\vline
		\begin{tabular}{cccc}
			\hline
			$N$ & Approximate computed $\l_1$ & Computational time (s) \\
			\hline
			100 & 9.868808627128601 & 2.217 \\
			200 & 9.869403719902039 & 2.656 \\
			500 & 9.869571805000305 & 4.296 \\
			800 & 9.869591832160950 & 7.295\\
			1000 & 9.869596123695374 & 10.852\\
			2000 & 9.869602560997009 & 32.766\\
			\hline
		\end{tabular}
		\hspace{-0.2cm}  \vline
	\end{center}
\end{table}

\par
As illustrated by Table \ref{Tab3}, increasing $N$ above $500$ does not substantially improve precision but has the serious handicap of increasing considerably the required computational time. For this reason, we have carried out the majority of the simulations of this work with $N=500$. Naturally, the computational performance is closely related to the hardware used, and, in this paper, calculations have been performed on a 10-core CPU M1 Macbook Pro with 16 Gb of RAM without making use of the GPU.

\par
Nonetheless, it should be emphasized that, when dealing with weight functions $a=a_{\k,\e}$ in front of the nonlinear term of \eqref{1.1} with extremely small values of $h$, an additional difficulty arise. Indeed, in such cases, the discretization of the interval $[0,1]$, i.e.,  the node distribution of the mesh must guarantee the existence of nodes within the subintervals of length $h$ where $a(x)$ vanishes. When using uniform meshes to discretize problem \eqref{1.1} in such extreme cases, a very high number of mesh points should be chosen to comply with the above condition, which would render computations extremely slow or even unaffordable.

\par
In order to overcome this difficulty, a modified Keller-Yang scheme has been implemented to deal with such cases, allowing for computations on  non-uniform meshes. These meshes are still symmetric with respect to $0.5$ but the amount of internal nodes on the vanishing subsets of $a(x)$ has been greatly increased in comparison to other subsets of the domain where such a refinement is unnecessary, keeping the total number of nodes, and thus also computational times, at a reasonable level. We would like to stress the importance of these meshes to be perfectly symmetric; otherwise, unlike the original continuous problem, the discretized one will be non-symmetric, leading to a completely different bifurcation diagram, where bifurcations from simple eigenvalues lead to genuine imperfect bifurcations due to symmetry breaking.
\par
To better illustrate the challenges of such a problem, let us give an example among all the computations performed for this work. For $a=a_{1,0}$ with $h=10^{-5}$, we used a non-uniform mesh containing $201$ nodes concentrated in $[0.4999,0.5001]$ thanks to a mesh size $\Delta x=10^{-6}$ and $490$ nodes in both $[0.01,0.499]$ and $[0.501,0.99]$ thanks to a mesh size $\Delta x=10^{-3}$, which amounts to a total of $1181$ internal nodes in $[0,1]$. Since $h$ is extremely small, this mesh results in only $9$ nodes where $a_{1,0}=0$, but still enough to take into account the effect of this very small subset where the weight vanishes. If the same mesh step $\Delta x=10^{-6}$ had been used for the entire interval $[0,1]$, the resulting uniform mesh would have consisted of $999999$ internal nodes, making the corresponding simulations completely unfeasible. Actually, this is a paradigm of why the existing packages, such as AUTO-07P, are of no utility when dealing with differential equations in the presence of spatial heterogeneities, like in problem \eqref{1.1}. Doedel and Oldeman already emphasized on page 185 of the AUTO-07P manual \cite{DO} that
\begin{quote}
	\emph{ \lq \lq given the non-adaptive spatial discretization, the computational procedure here is not appropriate for PDEs with solutions that rapidly vary in space, and care must be taken to recognize spurious solutions and bifurcations.\rq\rq}
\end{quote}
Indeed, this is  one of the main difficulties encountered in our numerical experiments.

\par
Apart from the convergence of the approximated bifurcation point from trivial solutions,  the local convergence of solution paths as $N\to+\infty$ at regular, turning and all simple bifurcation points was proved, for general Galerkin approximations, by Brezzi, Rappaz and Raviart \cite{BRR1,BRR2,BRR3}, and by L\'{o}pez-G\'{o}mez,  Molina-Meyer and  Villareal \cite{LGMMV} and L\'{o}pez-G\'{o}mez,  Eilbeck, Duncan and  Molina-Meyer in \cite{LGEDMM} for  codimension two singularities. Such results are based on the implicit function theorem (see, e.g., Theorem 3.2 of \cite{LG88}). As a consequence, in these situations, the local structure of the solution sets for the continuous and the discrete models are equivalent provided that a sufficiently fine discretization is performed.

\par
The global continuation solvers used to compute the solution curves of this paper have been built by means of the theory developped by  Allgower and Georg \cite{AG},
Crouzeix and Rappaz \cite{CrRa}, Eilbeck \cite{Ei}, Keller \cite{Ke}, L\'{o}pez-G\'{o}mez \cite{LG88} and L\'{o}pez-G\'{o}mez, Eilbeck, Duncan and Molina-Meyer \cite{LGEDMM}.
\par
In order to compute the components of symmetric solutions throughout this paper we started at a value of $\l$ lower than, but sufficiently close to,  $\pi^2$ and used the initial iterate $u_0=\sin(\pi x)$ for Newton's method, since, for such $\l$'s,  this provides us with a first-order approximation having a similar shape as the positive solutions bifurcating from $u=0$. Our correction algorithm, based on Newton's method, easily converged to a solution of problem \eqref{1.1}. Then, by performing a global continuation based on the algorithm by Keller and Yang \cite{KY}, and detecting whether some secondary bifurcation occurred, we were able to compute the entire solution component containing the symmetric solutions.
\par
As for the global subcritical folds arising in Sections \ref{sec:4} and \ref{sec:5}, which have been referred to as isolas throughout this work, we can ascertain the shape of the solutions lying on such branches from the number of the maxima of these solutions and their approximate position, in the light of Theorem \ref{th2.1} and Remark \ref{re2.2}. Based on this theoretical information, we were able to construct adequate predictions to globally compute these components, as explained in \cite{CLGT}.
\par
Regarding the isolas of Section \ref{sec:6}, instead, they have been detected thanks to the recombination of the solutions that occurs as $\e$ crosses the critical value $\e^*$. Indeed, if one compares the symmetric solutions on $\mathscr{C}_0^+$ for $\e=0.50$ and $\e=0.51$, one observes a sudden change in their profiles, since there are two peaks for $\e=0.50$, while only one peak is present for $\e=0.51$. Thus, assuming a certain underlying continuity, in the latter case, one is lead to look for the solutions with two peaks elsewhere. Precisely, if one starts with a prediction having two peaks, the correction provided by the Newton algorithm allows one to detect a first point on the isola. Similarly, for $\e=0.50$, one can initialize with a profile with one peak, and the Newton algorithm converges towards a solution having such a shape. Then, the isola is obtained by performing a global continuation, as explained above.
\par
The discrete $L^2$-norm used in this paper to represent the solutions in the bifurcation diagrams is defined as
\begin{equation*}
 \| u \|_{2} = \left( \sum_{i=1}^{N}(x_{i}-x_{i-1})  u_i^2 \right) ^{1/2},
\end{equation*}
where
\[
   0=x_0<x_1<\ldots<x_{N}<x_{N+1}=1
\]
are the mesh points and $u_i$, $i\in\{1,\ldots,N\}$, is the approximate value for $u(x_i)$ (recall that in our problem $u(x_0)=u(x_{N+1})=0$). This norm is the discrete analog of the classical norm $L^2(0,1)$. Indeed, as $N\to+\infty$, $\| u \|_{\Delta x}$ converges to $\|u\|_{L^2}$.

\par
Finally, we give some details on the computational parameters used in this paper to obtain the bifurcation diagrams. As a stopping criterion for Newton's method, we considered an iteration to be satisfactory when the norm of our \emph{augmented system}  evaluated at such an iteration, is smaller than $0.0001$. By  \lq\lq augmented system" we mean the system associated with the   Keller and Yang's algorithm \cite{KY} that makes regular the (singular) turning points through a pseudo-arc-length parametrization of fixed step size $\Delta s$ equal to $3$. In order to better approximate bifurcation points, a bisection method with the same tolerance  was implemented, approaching points where the determinant of the Jacobian of the augmented system vanishes.
\vspace{0.2cm} 

\noindent \textbf{Acknowledgements.} The authors thank Professor M. Molina-Meyer (Carlos III University, Madrid, Spain) for some useful observations concerning the presentation of Section \ref{sec:7}.

\end{document}